\newcommand{\res}{\textrm{res}}
\numberwithin{equation}{section}
\theoremstyle{plain}
\newtheorem{thm}{Theorem}[section]
\newtheorem{lem}[thm]{Lemma}
\newtheorem{prop}[thm]{Proposition}
\newtheorem{cor}[thm]{Corollary}
\newtheorem{rem}[thm]{Remark}
\theoremstyle{definition}
\newtheorem{defn}[thm]{Definition}
\newtheorem{conj}[thm]{Conjecture}
\newtheorem{exmp}[thm]{Example}
\newtheorem{qtn}[thm]{Question}
\theoremstyle{remark}
\tikzset{
  symbol/.style={
    draw=none,
    every to/.append style={
      edge node={node [sloped, allow upside down, auto=false]{$#1$}}}
  },
    labl/.style={anchor=south, rotate=90, inner sep=.5mm}
}
\newcommand{\Spec}{\textrm{Spec} \hspace{0.15em} }
\newcommand\restr[2]{{
	\left.\kern-\nulldelimiterspace
	#1
	\vphantom{\big|}
	\right|_{#2}
	}}
\newcommand{\an}{\operatorname{an}}
\newcommand{\Aut}{\textrm{Aut}}
\newcommand{\Hom}{\textrm{Hom}}
\newcommand{\ch}[1]{\widecheck{{#1}}}
\newcommand{\codim}{\operatorname{codim}}
\newcommand{\HL}{\operatorname{HL}}
\newcommand{\pos}{\operatorname{pos}}
\newcommand{\Gr}{\operatorname{Gr}}
\newcommand{\PU}{\operatorname{PU}}
\newcommand{\Ad}{\operatorname{Ad}}
\newcommand{\Lie}{\operatorname{Lie}}
\newcommand{\typ}{\operatorname{typ}}
\newcommand{\atyp}{\operatorname{atyp}}
\newcommand{\rank}{\textrm{rank}\,}
\newcommand{\Gal}{\operatorname{Gal}}
\newcommand{\MT}{\mathbf{MT}}
\newcommand{\GL}{\operatorname{GL}}
\newcommand{\SL}{\operatorname{SL}}
\newcommand{\Sp}{\operatorname{Sp}}
 \newcommand{\Addresses}{{% additional braces for segregating \footnotesize
  \bigskip
  \footnotesize

\textsc{CNRS, IMJ-PRG, Sorbonne Universit\'{e}, 4 place Jussieu, 75005 Paris, France}\par\nopagebreak
  \textit{E-mail address}, G.~Baldi: \texttt{baldi@imj-prg.fr}  \texttt{baldi@ihes.fr}  

  \medskip

\textsc{I.H.E.S., Universit\'e Paris-Saclay, CNRS, Laboratoire Alexandre Grothendieck. 35 Route de Chartres, 91440 Bures-sur-Yvette (France)}\par\nopagebreak
  \textit{E-mail address}, D.~Urbanik: \texttt{urbanik@ihes.fr}
}}
\DeclareMathOperator{\sheafhom}{\mathcal{H \kern -1pt o \kern -2pt m}}
\DeclareMathOperator{\sheafper}{\mathcal{P \kern -1pt e \kern -2pt r}}
\DeclareMathOperator{\sheafiso}{\mathcal{I \kern -1pt s \kern -2pt o}}
\DeclareMathOperator{\sheafend}{\mathcal{E \kern -1pt n \kern -2pt d}}
\DeclareMathOperator{\sheafaut}{\mathcal{A \kern -1pt u \kern -2pt t}}
\newcommand{\C}{\mathbb{C}}
\tikzset{
  trim node/.default=1cm,
  trim node/.style={
    overlay,
    append after command={% restore smaller bounding box
      ([xshift={+#1}]\tikzlastnode.north west)
      ([xshift={+-#1}]\tikzlastnode.south east)}},
  down and trim/.default=1cm,
  down and trim/.style={
    yshift=-(\pgfmatrixcurrentcolumn-1)*1.5\baselineskip,
    trim node={#1}},
  downup and trim/.default=1cm,
  downup and trim/.style={
    yshift=iseven(\pgfmatrixcurrentcolumn) ? -1.5\baselineskip : 0pt,
    trim node={#1}},
  -|/.style={to path={-|(\tikztotarget)\tikztonodes}},
  |-/.style={to path={|-(\tikztotarget)\tikztonodes}},
  -| sl/.style={-|, xslant=-1},
  |- sl/.style={|-, xslant= 1},
  center picture/.style={
    trim left=(current bounding box.center),
    trim right=(current bounding box.center)}}
\newcommand{\Q}{\mathbb{Q}}
\newcommand{\N}{\mathbb{N}}
\newcommand{\Z}{\mathbb{Z}}
\newcommand{\R}{\mathbb{R}}
\newcommand{\V}{\mathbb{V}}
\newcommand{\DT}{\mathbb{S}}
\title{Effective atypical intersections and applications to orbit closures}\date{\today}
\author{Gregorio Baldi and David Urbanik}
\begin{document}

\maketitle

\begin{abstract}
We propose a new unifying setting for dealing with monodromically atypical intersections that goes beyond the usual Zilber-Pink conjecture. In particular we give a new proof of finiteness of the maximal atypical orbit closures in each stratum of translation surfaces $\Omega \mathcal{M}_g (\kappa)$, as given by Eskin, Filip, and Wright. We also describe a concrete algorithm, implementable in principle on a computer, which provably computes all maximal orbit closures which are ``atypical''  in a sense described by Filip. The same methods also give a general algorithm for computing atypical special loci associated to systems of differential equations, and in particular give an effective and o-minimal free proof of the geometric Zilber-Pink conjecture for variations of mixed Hodge structures.
\end{abstract}

\tableofcontents

\section{Introduction}
In the last 15 years, moduli spaces of translation surfaces $(X,\omega)$ have attracted a lot of attention, thanks to the work of Eskin, Filip, McMullen, Mirzakhani, Mohammadi, M\"{o}ller, Wright, and many others. A variety of tools have been used to try to grasp them, mostly from dynamics, analysis, and algebraic geometry. In particular, the work of the aforementioned authors gives striking results on so-called \emph{orbit closures} $\mathcal{N} = \overline{\GL_2(\R)^+ \cdot (X,\omega)}$. The aim of this paper is to push the Hodge-theoretic and algebro-geometric understanding of such loci $\mathcal{N}$ one step further. Using tools from differential geometry and differential algebra we will show that one can understand the distribution of orbit closures just by knowing whether they are \emph{typical or atypical intersections}. 

This new perspective will ultimately come from a new understanding of the (geometric) Zilber--Pink philosophy in the following setting. We start with $S$ a smooth complex algebraic variety, $H$ an algebraic group (which is a semidirect product of a unipotent and a semisimple group), and $\pi : P \to S$ an algebraic $H$-torsor. The torsor $P$ will carry a $H$-principal flat connection $\nabla$ with Galois group $\Gal (\nabla )=H$. We will then study the behaviour of \emph{atypical special subvarieties} $W$ of $S$ (e.g. in the sense of Hodge or Teichm\"{u}ller theory) by pulling back to $P$ certain algebraic data from an extended period domain. Our work will crucially use recent breakthrough work of Bl\'{a}zquez-Sanz, Casale, Freitag, and Nagloo \cite{2021arXiv210203384B}. Thanks to an observation of Filip \cite[\S5]{filipnotes} (partially motivated by recent progress in Hodge theory regarding the distribution of the Hodge locus \cite{2021arXiv210708838B}), such a setting recovers the atypical orbit closures. Indeed the main motivation behind our work is \Cref{question0}, recalled below. Our result on the orbit closures rests on the deep fact that orbit closures are algebraic and admit a purely algebro-geometric characterization (see e.g. \cite[Thm. 2]{filipnotes}).

\subsection{Special subvarieties in Teichm\"{u}ller and Hodge theory, after Eskin-Filip-Wright, and Baldi-Klingler-Ullmo}\label{intro:teich}
We start by recalling the main protagonists of the theory of moduli spaces of translation surfaces, mostly following the recent notes \cite{zbMATH06455787, filipnotes}. Let $X$ be an irreducible smooth projective algebraic curve over $\C$, or simply a compact \emph{Riemann surface}. A \emph{translation surface} is a pair $(X,\omega)$, where $X$ is a compact Riemann surface and $\omega$ is a non-zero global section of the cotangent bundle of $X$ (also known as an \emph{abelian differential}). If $g \geq 1$ is the genus of $X$, then $\omega$ has $ 2g-2$ zeros, counted with multiplicities. Let $\mathcal{M}_g$ denote the moduli space of genus $g$ compact Riemann surfaces and $\Omega \mathcal{M}_g \to \mathcal{M}_g$ the bundle whose fibre over $[X]\in \mathcal{M}_g$ is just the space of all holomorphic 1-forms on $X$. It is a rank $g$ (algebraic) vector bundle on $\mathcal{M}_g$, naturally equipped with a stratification by smooth (not necessarily irreducible) algebraic subvarieties $\Omega \mathcal{M}_g (\kappa)$, where $\kappa=(\kappa_0,\dots, \kappa_n)$ satisfies $\sum_i \kappa_i = 2g-2$ and denotes the multiplicities of zeros of $\omega$. We will also write $n= \operatorname{length}(\kappa) -1 $. (In fact, to avoid orbifold issues, we will later work with finite covers of such moduli spaces).

A holomorphic 1-form $\omega$ on $X$ can be thought of as giving a collection of charts on $X$ to $\C$, such that:
\begin{itemize}
\item the transition maps are translations,
\item the charts can ramify at finitely many points corresponding to the zeros of $\omega$,
\item they are locally given by $z \mapsto \int _{z_0}^z\omega$.
\end{itemize}
Such charts induce the so-called \emph{period coordinates} on $\Omega \mathcal{M}_g (\kappa)$. We therefore obtain an action of $\operatorname{GL}_2(\R)$ on $\Omega \mathcal{M}_g (\kappa)$, locally given by the diagonal action on a product of copies of $\C \cong \R^2$. Such an action is a priori outside the realm of algebraic geometry, but it turns out to be closely related to Hodge theory.

We will study the distribution of strict orbit closures $\mathcal{N}:=\overline{\operatorname{GL}_2(\R) \cdot (X, \omega)} \subset \Omega \mathcal{M}_g (\kappa)$, where $\overline{(\cdot )}$ denotes the topological closure, also known in the literature as \emph{affine invariant submanifolds}. (For terminology and notations we try to follow \cite{filipnotes}.) 

We start by recalling a theorem of Eskin, Filip, and Wright (see \S \ref{strsec} for definitions). Related methods were earlier used by Matheus-Wright \cite{zbMATH06443238} and Lanneau-Nguyen-Wright \cite{zbMATH06821226} to prove finiteness results in the special case of Teichm\"{u}ller curves.

\begin{thm}[{\cite[Thm. 1.5]{zbMATH06890813}}]\label{thmEFWoriginal}
In each (irreducible component of each) stratum $\Omega \mathcal{M}_g (\kappa)$, all but finitely many orbit closures have rank 1 and degree at most 2. In each genus there is a finite union of orbit closures $\mathcal{N}$ of rank 2 and degree 1 such that all but finitely many of the orbit closures of rank 1 and degree 2 are a codimension 2 subvariety of one of these $\mathcal{N}$.
\end{thm}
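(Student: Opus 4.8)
The plan is to deduce \Cref{thmEFWoriginal} from the effective atypical intersection theorem established in this paper, after translating the dynamical invariants of an orbit closure into the monodromy-theoretic language of special subvarieties. The starting point is the algebro-geometric characterization of orbit closures due to Filip (cf.\ \cite[Thm.~2]{filipnotes}): each $\mathcal{N} \subseteq \Omega\mathcal{M}_g(\kappa)$ is a quasi-projective subvariety, and over a finite \'etale cover of $\mathcal{N}$ the tautological $\Z$-local system --- with fibres $H^1_{\mathrm{rel}}(X;\Z)$ remembering the tautological period class --- splits off the tautological symplectic plane, while its complement decomposes into Galois-conjugate $\mathbf{k}$-subvariations, where $\mathbf{k} = \mathbf{k}(\mathcal{N})$ is the field of definition, $d = d(\mathcal{N}) = [\mathbf{k}:\Q]$ its degree, and $r = \rank(\mathcal{N})$ records the dimension of the $\mathbf{k}$-span of the tautological part in absolute cohomology. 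In the notation of the introduction this exhibits $\mathcal{N}$ as a (weakly) special subvariety for the pair $(S,H)$ attached to the stratum, where $S$ is (a finite cover of) $\Omega\mathcal{M}_g(\kappa)$, the semisimple part of $H$ is a product of symplectic groups --- one factor per Galois conjugate --- and the unipotent part comes from the relative cohomology of the zeros of $\omega$. The first genuine step is to make this correspondence precise in the form: \{orbit closure of rank $r$ and degree $d$ in $\Omega\mathcal{M}_g(\kappa)$\} $\longleftrightarrow$ \{special subvariety whose algebraic monodromy has a prescribed type $\tau = \tau(r,d,\kappa)$\}.

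The heart of the argument is then a dimension bookkeeping that sorts the admissible types into typical and atypical ones. For each type $\tau$ I would compute the expected dimension $\delta(\tau)$ of a $\tau$-special subvariety inside $\Omega\mathcal{M}_g(\kappa)$ --- i.e.\ the relevant dimension in the sense of the atypical intersection formalism --- and verify that a $\tau$-special $\mathcal{N}$ is typical precisely when $(r,d) \in \{(1,1),(1,2)\}$, these being the ``branched covering'' and ``real multiplication over a quadratic field'' families, whereas for every other pair --- in particular whenever $r \ge 2$ or $d \ge 3$ --- the intersection defining $\mathcal{N}$ is atypical. Since $r \cdot d \le g$, only finitely many admissible types occur in each genus, so this reduces the first assertion of \Cref{thmEFWoriginal} to finiteness of the atypical special subvarieties of the finitely many atypical types. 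For the refinement I would examine the rank-$1$ degree-$2$ orbit closures that are themselves \emph{typical}: each such $\mathcal{N}$ is typical inside a unique smallest special subvariety, the type count forces that enveloping subvariety to be a rank-$2$ degree-$1$ orbit closure, and a short local computation with the relative cohomology attached to $\kappa$ shows that $\mathcal{N}$ sits there in codimension exactly $2$; finitely many such rank-$2$ degree-$1$ loci arise per genus because they are themselves maximal atypical (or equal to a component of the stratum).

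It then remains to feed this into the effective atypical intersection theorem of the paper, which provides finitely many maximal atypical special subvarieties together with an algorithm to list them; each is $\GL_2(\R)$-invariant (the action permutes maximal atypical loci, of which there are finitely many, and $\GL_2(\R)$ is connected), hence an orbit closure by the usual density argument, so we obtain finitely many maximal atypical orbit closures. A short Noetherian induction on dimension --- applying the same dichotomy inside each maximal atypical orbit closure, which again carries the structure $(S,H,P,\nabla)$ by restriction --- upgrades this to finiteness of \emph{all} orbit closures of rank $\ge 2$ or degree $\ge 3$. Translating back through the type dictionary yields both assertions of \Cref{thmEFWoriginal}.

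The step I expect to be the main obstacle is the type dictionary together with its dimension count, and within it the arithmetic: the degree $d(\mathcal{N})$ is seen only through the number of Galois-conjugate symplectic factors in the semisimple part of $H$, so the whole scheme hinges on the differential-Galois input of \cite{2021arXiv210203384B} genuinely producing $\Gal(\nabla) = H$ --- all conjugates, not a proper subgroup --- rather than on any soft property of the period map. The second delicate point is converting the abstract output ``the atypical locus is a finite algebraic union'' into the sharp combinatorial statement of \Cref{thmEFWoriginal}, in particular pinning down the codimension-$2$ inclusion of rank-$1$ degree-$2$ loci inside rank-$2$ degree-$1$ ones, which requires careful relative-cohomology bookkeeping for the stratum $\kappa$ and is not formal.
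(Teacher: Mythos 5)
Your outline correctly identifies the right toolbox (Filip's algebro-geometric characterization, Ax--Schanuel in the principal bundle, atypicality by dimension count) but has two gaps that a complete argument must close.

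First, the claim that each orbit closure ``is a (weakly) special subvariety for the pair $(S,H)$'' conflates two notions that the paper is careful to separate. By Filip's characterization (\Cref{mainthmalgfilip}), an orbit closure $\mathcal{N}$ is cut out by \emph{three} conditions: real multiplication, torsion/Abel--Jacobi, and the eigenform condition on $\omega$. The first two are Hodge-theoretic and define the weakly special locus $M[\mathcal{N}]$; the third is an extra algebraic condition on the periods of $\omega$ that is \emph{not} recorded by the monodromy datum of $\mathbb{V} = H^{1}_{\textrm{rel}}$ --- it lives on the section $\omega$, not on the filtrations. So orbit closures are generally not weakly special subvarieties of the stratum, and feeding them into the geometric Zilber--Pink theorem for $(S,\mathbb{V})$ alone (\Cref{thm:mixedZP}) only controls their weakly special hulls. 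The paper's fix is to enlarge the target: the atypicality condition is rephrased in the period torsor $P$ via $r \times \nu : P \to (E_{\mathcal{M}} \setminus \{ 0 \}) \times \ch{D}^{0}$ (\Cref{mapfromPsurj} and \Cref{ineqinsidebundle}), where the $r$-factor carries the eigenform data. That enhancement, not the bare period-map Zilber--Pink, is the thing you have to apply.

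Second, the passage from ``finitely many atypical loci'' to ``finitely many atypical orbit closures'' does not come for free the way you sketch. The Ax--Schanuel-in-families output (\Cref{protogeoZP}) is a finite collection of \emph{algebraic families} $\{h_{i}: C_{i} \to B_{i}\}$ of weakly special subvarieties, each of which a priori has a positive-dimensional base and hence infinitely many fibres. The ambient monodromy $\Sp_{2g} \ltimes \mathbb{G}_a^{2g\cdot n}$ is not $\Q$-simple, so the sharper ``strict subvariety'' conclusion of \Cref{thmhodgelocus}(1) is unavailable, and your $\GL_2(\R)$-invariance-plus-density argument does not by itself force each relevant fibre to be an orbit closure or bound their number. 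The paper closes this gap with the isolation property of Eskin--Mirzakhani--Mohammadi (\Cref{isolation}): it is used once in \Cref{remkzarcl} to see that the Zariski closure of a collection of orbit closures is a finite union of orbit closures, and again, essentially, in \Cref{orbitclnofamlem} to show each family $h_{i}$ has only finitely many fibres carrying a maximal suborbit closure. Your Noetherian induction is a reasonable way to propagate a finiteness statement down in dimension, but this dynamical input is required to establish the base case, and it is not something that Ax--Schanuel or the period-torsor formalism can replace. (Your worry about the $(r,d)$ bookkeeping is legitimate, but the paper handles it by citing Filip's direct computation via \Cref{propcomparison} rather than re-deriving it; and your worry about $\operatorname{Gal}(\nabla) = H$ is settled by \Cref{galequalsmono}.)
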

In fact, it will be better to work with an equivalent formulation of the above. We will see that in the above result, the orbit closures which arise in finite families are \emph{atypical intersections}, and the others are typical ones. Already in the introduction of \cite{zbMATH06890813} a link with the Andr\'{e}--Oort conjecture is mentioned. But it is only in the recent notes of Filip \cite[\S 5]{filipnotes} where this is made more precise. The following, again due to Eskin, Filip, and Wright \cite[Thm. 1.3, 1.5, and 1.7]{zbMATH06890813}, generalizes and gives a reformulation of \Cref{thmEFWoriginal} (see also \cite[Thm. 3]{filipnotes} and \S \ref{sectionrelatypefw} for the definition of relatively (a)typical).

\begin{restatable}{thm}{efwnew}\label{thmEFWnew}
Each orbit closure $\mathcal{M}$ contains only finitely many maximal atypical suborbit closures. If an orbit closure $\mathcal{M}$ admits relatively typical suborbit closures, then those are dense in $\mathcal{N}$. 
\end{restatable}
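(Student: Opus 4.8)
This is a theorem-statement that the authors are *reproving* via their Zilber–Pink machinery, so the plan is to realize the situation inside the abstract framework described in the introduction: $S$ a smooth complex algebraic variety, $H = U \rtimes G$ (unipotent times semisimple), $\pi\colon P \to S$ an $H$-principal bundle with a flat $H$-principal connection $\nabla$ having full Galois group $\Gal(\nabla) = H$. Concretely, I would take $S$ (a finite cover of) the stratum $\Omega\mathcal{M}_g(\kappa)$, or rather the orbit closure $\mathcal{M}$ playing the role of the ambient space, and $P$ the bundle built from the relevant variation of Hodge structure on $H^1$ of the (punctured) curve together with its symplectic-and-weight filtration data, so that the ``special subvarieties'' of $S$ in the sense of Teichmüller theory (the suborbit closures) coincide with the atypical special subvarieties attached to $\nabla$ via Filip's observation \cite[\S5]{filipnotes}. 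The first step, therefore, is the translation: cite Filip to identify suborbit closures of $\mathcal{M}$ with special subvarieties of $S$ for the appropriate $\nabla$, and to identify the numerical invariants (rank, degree, relative (a)typicality à la \S\ref{sectionrelatypefw}) with the codimension-type inequalities that define atypicality.

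Second, I would invoke the main effective atypicality theorem of the paper (the one that the earlier sections establish using Blázquez-Sanz–Casale–Freitag–Nagloo \cite{2021arXiv210203384B} and the Ax–Schanuel/differential-Galois input): for such $(S,P,\nabla)$ the atypical locus is a finite union of maximal atypical special subvarieties, each itself algebraic. Applying this with $S$ equal to the orbit closure $\mathcal{M}$ (the framework should be stable under restricting to a special subvariety, since a suborbit closure is again an orbit closure by \cite[Thm.~2]{filipnotes}) gives immediately that $\mathcal{M}$ contains only finitely many maximal atypical suborbit closures — the first sentence.

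For the second sentence — density of relatively typical suborbit closures when they exist — the plan is the dichotomy that underlies all Zilber–Pink-type statements: any special subvariety $W \subsetneq \mathcal{M}$ that is \emph{not} contained in the (finite, proper) atypical locus is typical, and typical intersections, when nonempty, are Zariski dense and in fact equidistribute; here one upgrades Zariski density to topological density using that $\GL_2(\R)$-orbit closures are the closed invariant sets and that a typical family sweeps out a positive-dimensional, invariant, hence dense subset (this is where one re-derives the ``dense'' conclusion of \cite[Thm.~1.7]{zbMATH06890813} rather than importing it). So: if $\mathcal{M}$ admits relatively typical suborbit closures at all, pick one; it is not swallowed by the atypical locus; the typicality defect being zero forces its $\GL_2(\R)$-translates / deformations to fill out $\mathcal{N}$, giving density.

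The main obstacle I anticipate is \emph{not} the finiteness half — that is a clean application of the paper's effective atypicality theorem once the framework is set up — but rather two bookkeeping points in the setup: (i) checking that the translation-surface VHS genuinely produces an $H$-bundle with connection whose differential Galois group is the *full* $H = U \rtimes G$ (the Galois-group hypothesis is essential for the Ax–Schanuel input, and subtleties about the unipotent part / the ``affine'' structure of the period map, and about passing to finite covers to kill orbifold issues, must be handled), and (ii) matching Filip's combinatorial invariants (rank and degree) precisely with the atypicality inequalities so that ``maximal atypical suborbit closure'' on the dynamics side is exactly ``maximal atypical special subvariety'' on the differential-algebra side, including the relative versions over a base $\mathcal{M}$. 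Once those dictionaries are in place, both assertions follow formally.
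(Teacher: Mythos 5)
The general outline you sketch --- realize orbit closures as atypical intersections in a period torsor, apply a family version of Ax--Schanuel, translate back --- is indeed the paper's strategy. But there is a genuine gap: you assert the finiteness half is ``a clean application of the paper's effective atypicality theorem once the framework is set up.'' It is not. The family Ax--Schanuel step, \Cref{protogeoZP}, only yields that the atypical suborbit closures project into the fibres of \emph{finitely many algebraic families} $\{h_i\colon C_i \to B_i\}$ of weakly special subvarieties of $\mathcal{M}$; a priori, infinitely many maximal atypical orbit closures could still be distributed across the fibres of a single such family. To close this gap the paper must inject dynamical rigidity: \Cref{orbitclnofamlem} uses the Eskin--Mirzakhani--Mohammadi isolation property (\Cref{isolation}) to show that each quasi-finite family $h_i$ contains only finitely many fibres carrying a maximal suborbit closure, and \Cref{remkzarcl} (isolation combined with Filip's algebraicity, \Cref{thmstrorbit}) is needed even for the preliminary reduction to non-Zariski-density. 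This is step IV (``rigidity or maximality'') of \S\ref{sec:unfi}, and it is a nontrivial import from dynamics, not a formal consequence of Ax--Schanuel.

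A lesser imprecision in your setup: the suborbit-closure conditions are \emph{not} the Hodge-theoretic atypicality conditions attached to $\nabla$ on the period domain $\ch{D}^0$ alone. The eigenform condition on $\omega$ is imposed through the additional evaluation map $r\colon P \to E_{\mathcal{M}}$ of \S\ref{rephsec}, and the atypicality inequality is measured on the target of $r \times \nu\colon P \to (E_{\mathcal{M}} \setminus \{0\}) \times \ch{D}^0$ (\Cref{mapfromPsurj}, \Cref{ineqinsidebundle}); this is precisely why the argument must run in $P$ rather than in $S\times D$. For the density half your idea is in the right spirit, but the paper's mechanism (\Cref{denseinSprop}) is transversality at a typical intersection, density of $\mathbf{H}(\mathbb{Q})$ in $\mathbf{H}(\mathbb{R})$ to produce rational translates, and Filip's characterization \Cref{mainthmalgfilip} to recognize the resulting loci as orbit closures, again with \Cref{isolation} supplying the final dynamical step rather than a direct $\GL_2(\R)$-equidistribution argument.
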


With the above formulation, the result looks similar to what happens in Hodge theory, as recently showed by Baldi, Klingler, and Ullmo \cite{2021arXiv210708838B} (see \S \ref{sec:zpapp} for all definitions, in a more general context). They consider a smooth quasi-projective irreducible complex variety $S$ supporting a pure polarized integral variation of Hodge structures ($\Z$VHS from now on, and usually simply denoted as a pair $(S,\V)$) and study the (tensorial) Hodge locus of $(S,\V)$, which can be written a countable union of algebraic subvarieties of $S$ (the so-called \emph{special subvarieties}). In \emph{op. cit.} they propose a decomposition of the Hodge locus into the \emph{typical} and \emph{atypical} part. Informally a special subvariety is atypical if it has less than the expected codimension (i.e. the one suggested by a na\"ive dimension count) and it is typical otherwise. They prove the following (see also the ICM report \cite{2021arXiv211213814K} for more details and related discussions):

\begin{thm}[{\cite[Thm. 3.1 and 3.9]{2021arXiv210708838B}}]\label{thmhodgelocus}
Let $(S, \mathbb{V})$ be a $\Z$VHS with $\mathbb{Q}$-simple algebraic monodromy. We have:
\begin{enumerate}
\item The positive period dimensional atypical Hodge locus lies in a strict algebraic subvariety of $S$.
\item The positive period dimensional typical Hodge locus is either empty or analytically (therefore Zariski) dense in $S$.
\end{enumerate}
\end{thm}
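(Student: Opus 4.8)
The plan is to work throughout with the period map $\Phi \colon S^{\an} \to \Gamma \backslash D$ attached to $(S,\V)$, its lift $\tilde\Phi \colon \tilde S \to D$ to the universal cover, and the compact dual $\check D$ (a flag variety) into which $D$ embeds as an open subset. Recall that every special subvariety $W \subseteq S$ arises as an irreducible component of $\Phi^{-1}(\Gamma_M \backslash D_M)$ for a Mumford--Tate subdomain $D_M \subseteq D$, that such a component is algebraic by Cattani--Deligne--Kaplan, and that $W$ has \emph{positive period dimension} precisely when $\dim \Phi(W) > 0$. The \emph{expected codimension} of such an intersection is $\codim_{\check D} \check D_M$, and $W$ is \emph{atypical} when its actual codimension in $S$ is strictly smaller, \emph{typical} otherwise. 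After replacing $S$ by a finite \'etale cover and restricting to the locus where the Mumford--Tate group is generic, we may assume the algebraic monodromy group $\mathbf H$ --- which is $\Q$-simple by hypothesis --- equals the derived subgroup of the generic Mumford--Tate group; this simplicity is the structural input that forbids atypicality from propagating through a product decomposition.

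For part (1) I would deduce the statement from the Ax--Schanuel theorem for $\Z$VHS of Bakker--Tsimerman together with the definability of the period map in $\R_{\mathrm{an},\exp}$ of Bakker--Klingler--Tsimerman. Attach to each atypical special subvariety $W$ its \emph{optimal} enclosing locus, the smallest weakly special subvariety of $S$ containing $W$ whose ``atypicality defect'' is at least that of $W$; Ax--Schanuel, applied to the intersection of the graph of $\tilde\Phi$ with the orbit of $\check D_M$, forces the algebraic monodromy of an atypical $W$ to be strictly smaller than the ambient constraint predicts, so that such $W$ is contained in a proper weakly special subvariety of $S$. Definability of $\Phi$ then lets one organize the collection of optimal atypical loci into a definable family, and o-minimality yields that the maximal ones --- those of positive period dimension --- are finite in number. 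Finally, if the Zariski closure $Z$ of the union of the positive-period-dimensional atypical special subvarieties were all of $S$, a very general point of $S$ would lie on one of these finitely many proper optimal loci, which is absurd; hence $Z \subsetneq S$.

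For part (2) the strategy is first to exhibit one positive-period-dimensional typical special subvariety and then to spread it densely by monodromy. For the first step, fix a Mumford--Tate subdomain $\check D_{\mathbf M} \subseteq \check D$ of a type for which the expected codimension is at most $\dim \Phi(S)$; since the corresponding expected dimension is then non-negative, an intersection-theoretic argument on $\check D$ (using that $\tilde\Phi(\tilde S)$ is a locally closed analytic subset against which the relevant cycle class pairs nontrivially) shows that $\tilde\Phi(\tilde S) \cap (g \cdot \check D_{\mathbf M})$ is non-empty and of the expected dimension for $g$ ranging over an open subset of a real-analytic family, and by construction the resulting $W$ has positive period dimension and is typical. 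For the second step, use that $\Gamma$ is Zariski dense, hence topologically dense, in $\mathbf H(\R)$, and that $\mathbf H(\R)$ acts with dense orbits on the space of subdomains of type $\check D_{\mathbf M}$; translating $W$ under $\Gamma$ therefore produces a countable family of typical special subvarieties whose union accumulates at every point of $S^{\an}$. This gives analytic density; since $S$ is irreducible and the typical Hodge locus is a countable union of closed irreducible analytic subsets, one of these is Zariski dense, and a standard argument then upgrades this to Zariski density of the full typical locus.

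The main obstacle is part (1): one must control \emph{all} atypical special subvarieties at once, and the only available route is through Ax--Schanuel for VHS plus the o-minimal finiteness of optimal loci. The delicate passage is from ``finitely many proper optimal atypical loci'' to ``the atypical locus is not Zariski dense'', which genuinely uses both the positivity of the period dimension (to exclude the trivial atypical strata contained in $S$ itself) and the $\Q$-simplicity of $\mathbf H$ (without it the generic period domain may split off a factor in which \emph{every} special sub-locus is atypical, and non-density fails). In part (2) the non-trivial input is not the dimension count but the \emph{non-emptiness} of the transversal intersection on $\check D$ together with the density of the monodromy orbit on the space of subdomains; both become accessible once the genericity of $\mathbf H$ has been arranged.
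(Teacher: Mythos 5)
Your proposal reconstructs, in outline, the proof given in the cited reference \cite{2021arXiv210708838B} rather than the route taken by the present paper. The paper does not reprove \Cref{thmhodgelocus} by the period-domain/o-minimal method you describe; instead it recovers it (for part 1) as a corollary of \Cref{thm:mixedZP}, whose proof lives in the period \emph{torsor} $P$ constructed in \S\ref{sec:RH} rather than in $S \times \check{D}$. The key ingredients there are the Ax--Schanuel theorem for foliated principal bundles of Bl\'azquez-Sanz--Casale--Freitag--Nagloo (\Cref{thm:newAS}), an over-parametrization of potential atypical intersections by algebraic families (\Cref{prop2par}), and the replacement of your definability/o-minimality step by a purely algebraic constructibility argument (\Cref{nondenseA} and \Cref{lemma432}), applied \emph{in families} via \Cref{protogeoZP}. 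Your route buys access to the by-now classical toolbox of Pila--Zannier-type arguments; the paper's route is o-minimality-free, gives a priori degree bounds (\S\ref{aprioridegbounds}), and yields a terminating algorithm (\Cref{effgeoZP}), which is the entire point of the paper.

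There is also a genuine gap in your treatment of part 2. You begin by trying to \emph{exhibit} a typical special subvariety from the numerical condition $\dim \Phi(S) \geq \codim_{\check D}\check D_{\mathbf M}$ by an ``intersection-theoretic argument on $\check D$''. This would give you more than the theorem claims and is not actually true: the theorem is a disjunction (``either empty or dense''), and proving non-emptiness from the dimension count is exactly the open \emph{density conjecture}. The intersection-theoretic argument is insufficient because, while a generic $\mathbf G(\mathbb{C})$-translate $g\cdot\check D_{\mathbf M}$ does meet $\tilde\Phi(\tilde S)$ in the expected dimension inside the flag variety $\check D$, the point so produced lies in the compact dual and need not lie in the open subset $D$ of genuine Hodge structures; one cannot force the intersection to land in the Hodge-theoretic locus. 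Once one grants non-emptiness as a hypothesis, your second step --- spreading a single typical intersection densely by $\Gamma$-translates using Zariski density and hence topological density of $\Gamma$ in $\mathbf{H}(\mathbb{R})$ --- is the correct (and is $\mathrm{BKU}$'s) argument, and it matches the analogous statement \Cref{denseinSprop} that the paper proves in the orbit-closure setting.
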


One expects the same results to be true without the assumption \emph{positive period dimensional} (in fact this would follow from the full power of the Zilber-Pink conjecture). Nevertheless, the geometric part presents several interesting incarnations (for example \Cref{thmEFWnew} is about positive dimensional subvarieties). The statement 1. is a special case of the \emph{geometric part of Zilber-Pink} (or simply geometric ZP) in the case where $\mathbb{V}$ has $\mathbb{Q}$-simple monodromy.

The following appears in \cite[pg.5]{filipnotes}, after discussing the two theorems described above, as well as the main results of \cite{BFMS} (we will come back to this point in \S \ref{sec:hodgevsdyn}).
\begin{qtn}[S. Filip]\label{question0}
Can these methods and results be put into a common framework?
\end{qtn}
The goal of our paper is indeed to provide such an unifying framework, which we believe can be of independent interest. 

\subsection{The abstract/unified setting}\label{sec:unfi}

\paragraph{Special Manifolds and Intersections:} Certain manifolds $M$ come naturally with a distinguished class of submanifolds $\{ M_i \}_{i \in I}$ exhibiting a \emph{special} behaviour. Examples include abelian varieties with sub-abelian varieties and their torsion translates; Shimura varieties and more general period domains with sub-Shimura varieties and sub-Mumford-Tate domains; and locally homogeneous spaces with totally geodesic sub-spaces. Given a submanifold $S \subset M$, one then obtains from $\{ M_i \}_{i \in I}$ a distinguished class of \emph{special intersections} $S \cap M_{i}$. Here the examples of interest are (mixed) Hodge loci, and the case of particular interest to us: orbit closures within strata of abelian differentials (cf. (\ref{Filipdiagram}) and the corresponding discussion). Zilber-Pink-type results in such settings suggest distribution behaviour of the special intersections can often be understood by simple dimension criteria. In particular, one says that a component $Y$ of $S \cap M_i$ is \emph{atypical} if 
\begin{equation}\label{firstatyp}
\codim_{M}(Y) < \codim_{M}(S)+ \codim_{M}(M_i) ,
\end{equation}
and \emph{typical} otherwise. One then expects that:
\begin{itemize}
\item[(1)] $S$ contains only finitely many maximal atypical intersections.
\item[(2)] The following are equivalent: 
\begin{itemize}
\item[(i)] $S$ contains one typical intersection;
\item[(ii)] the collection of typical intersections is dense in $S$; and
\item[(iii)] there exists an $i \in I$ such that:
\begin{displaymath}
\dim S - \codim_{M}(M_i) \geq 0.
\end{displaymath}
\end{itemize}
\end{itemize}
Our goal is to discuss two concrete instances of such viewpoint. More precisely we will describe a general strategy to tackle (1) in both the setting of orbit closures and Hodge loci of variations of mixed Hodge structures. The setting we work with might look indirect at first, so we explain here the main protagonists, and how they relate to the above picture.

\paragraph{The Differential Viewpoint:} First of all, it will be crucial for us that $S$ is algebraic, since the main tools come from algebraic geometry --- in particular we will no longer speak of manifolds, but only of algebraic varieties $S$ and complex analytic period domains $D$, and their quotients by lattices $\Gamma$. (With reference to the above viewpoint, one can think of $M$ as some quotient of $\Gamma \backslash D$ into which $S$ embeds.) We consider arbitrary (pointed) complex varieties $S$, supporting a local system $\mathbb{V}$ with algebraic monodromy group $\mathbf{H}_{S}$ --- for the applications, $\mathbb{V}$ will underline a $\Z$-variation of mixed Hodge structures. Inspired by an idea of Bakker and Tsimerman \cite{2022arXiv220805182B}, in \S \ref{sec:RH} we introduce $P= P(s_{0}) $, an algebraic principal $\mathbf{H}_{S}$-subbundle of the algebraic bundle $\sheafhom(\mathcal{V}, \mathcal{O}_{S} \otimes \mathcal{V}_{s_{0}})$ associated to $\sheafhom(\mathbb{V}, \mathbb{V}_{s_{0}}\times S)$ (via Riemann-Hilbert), where $\mathcal{V}$ is the algebraic bundle associated to $\mathbb{V}$. The bundle $P$ has an equivariant flat algebraic connection induced from the connection on $\V$, which locally foliates $P$ by flat leaves $\mathcal{L}$. Through each point $x \in P$, there is a unique leaf $\mathcal{L}_{x}$.

The gist of our approach is the following. In mixed Hodge theory, the special intersections in $S$ are particular cases of \emph{weakly special subvarieties}, i.e. the maximal subvarieties of $S$ with a given monodromy group. A recent spectacular result of Bl\'{a}zquez-Sanz, Casale, Freitag, and Nagloo \cite{2021arXiv210203384B}, known as the \emph{Ax-Schanuel theorem} asserts that, if an algebraic subvariety $V \subset P$ has an \emph{atypical intersection} $U \subset V \cap \mathcal{L}$ with some leaf $\mathcal{L}$ then the projection of $U$ to the base lies in a strict weakly special subvariety. Our strategy now proceeds as follows:

\begin{enumerate}
\item[I.] Relate the atypicality inequality \eqref{firstatyp} to an analogous one appearing in the torsor $P \to S$ constructed in \S\ref{sec:RH}. 
\item[II.] Construct an algebraic family $f : \mathcal{Z} \to \mathcal{Y}$ of subvarieties of $P$ whose fibres contain all subvarieties of $P$ which can give rise to atypical intersections when intersected with $\mathcal{L}$.
\item[III.] Use a suitable version of the Ax-Schanuel theorem \emph{in families} \S\ref{axschanfamsec} to show that, all atypical intersections with leaves parameterized by $\mathcal{Y}$ project into the fibres of finitely many families of weakly special subvarieties of $S$ (not necessarily atypical themselves); and
\item[IV.] Conclude by either using some rigidity or maximality property.
\end{enumerate}

\begin{rem}
\label{overparamrem}
We often refer to Step II as the ``over-parameterization'', since there are usually extra varieties parameterized by $f$ that need not give rise to atypical intersections inside $P$ when intersected with $\mathcal{L}$. For more details regarding what we mean by a \emph{suitable version} of the Ax-Schanuel Theorem in families in Step III, we refer to the end of \Cref{axschanfamsec}.
\end{rem}

To implement the above strategy, Hodge theory (essentially through the \emph{fixed part theorem}) is needed at two steps. It is used to guarantee, a priori, that the atypical intersections we are interested in can be over-parametrized by algebraic families (of subvarieties of $P$), as well as to say that the weakly special subvarieties one obtains after applying the Ax-Schanuel theorem fibre-by-fibre can be assembled in countably many families (of subvarieties of $S$).

Arguments of this kind are not new, see for example \cite[\S 3.6]{2021arXiv210708838B} for a history of successful applications of Ax-Schanuel-type theorems to Zilber-Pink. Also, in the case of subvarieties of Shimura varieties, a ``differential-geometric'' approach (rather than the o-minimal one of Baldi-Klingler-Ullmo, cf. also \cite{2025arXiv250203071B}) to the geometric Zilber-Pink theorem can already found in \cite{binyamini2021effective}, and the generalization to arbitrary VMHS does not pose particular difficulties. There is however a key difference between earlier approaches and ours: namely, we use (an opportune) Ax-Schanuel result in $P$ rather than the Hodge-theoretic one associated to some period domain $D$. At first sight, this might look like a small and technical difference but, in reality, it makes the scope of the Zilber-Pink theory much broader. The reason is that the Hodge-theoretic Ax theorem considers only atypical intersections inside $S \times D$, which typically has much smaller dimension than $P$. This means that there can be more atypical intersections in $P$, since there is more freedom to impose algebraic conditions on leaves $\mathcal{L} \subset P$ than on the graph of some period map. This difference is crucial for treating the case of orbit closures: the space $\Omega \mathcal{M}_g(\kappa)$ supports a $\Z$VMHS, but the atypicality of the orbit closure $\mathcal{N}$ can be detected only in an opportune automorphic bundle above the mixed Shimura variety associated to the $\Z$VMHS. Said in more plain terms: the conditions defining orbit closures include algebraic conditions on periods of the one-form $\omega$ which do not come from conditions on Hodge structures, but can be imposed inside $P$. In particular our main results are evidence in favour of a deeper Zilber-Pink philosophy that enriches already the case of varieties mapping to mixed Shimura varieties, but also includes mixed Hodge theory.

\subsection{Applications of the main result to Teichm\"{u}ller and Hodge theory}\label{sectionspec}
\subsubsection{What do we mean by effective?}\label{sec:eff}

When we use the term ``effective'' to describe a proof or a result, we will mean in the sense that the proof describes an explicit algorithm, implementable in principle on a computer, to compute the object whose existence is described by the proof; in particular, if we say that the proof of the finiteness of a set of orbit closures is effective, we mean that the proof gives an algorithm to compute this set. Note that our use of the term ``algorithm'' is to be always understood in the sense of ``algorithm that is provably guaranteed to terminate with the correct output'' --- we generally do not speak of ``non-terminating algorithms,'' nor does the word ``algorithm'' refer to a kind of heuristic procedure or method.

To make such statements truly precise in an algebro-geometric context, it is important to specify computational models. More precisely, one should specify precisely in what terms the data involved in the proof --- whether it be an algebraic variety, a connection, or an algebraic map --- is being presented, both when viewed as ``input data'' and when viewed as ``output data''. To do this carefully, we will largely follow the conventions of \cite[\S2]{urbanik2021sets}. We give additional details in \S\ref{finalsection:eff}.

\subsubsection{Teichm\"{u}ller--new proofs and new results}\label{sectionteich}
The first result is a new and effective proof of the first part of \Cref{thmEFWoriginal} (which is a special case of \Cref{thmEFWnew}), which appears in \S \ref{new1}. 
\begin{thm}[Effective finiteness for atypical intersections]\label{efffinteich}
Each stratum $\Omega \mathcal{M}_g (\kappa)$ contains only finitely many maximal atypical suborbit closures. There is moreover an algorithm that computes them.
\end{thm}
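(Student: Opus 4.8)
The plan is to specialize the general four-step strategy outlined in \S\ref{sec:unfi} to the Teichm\"uller setting, where $S = \Omega\mathcal{M}_g(\kappa)$ (or a finite cover thereof) carries its natural $\Z$VMHS coming from the tautological family of curves together with the marked points/zeros of $\omega$, and where the class of ``special subvarieties'' is the one Filip identifies in \cite[\S5]{filipnotes}: orbit closures are cut out inside the ambient (mixed) Shimura-type setting by algebraic conditions, some of which are Hodge-theoretic (the real multiplication / symplectic-splitting conditions) and some of which are conditions on the periods $\int_\gamma \omega$ of the one-form itself. The first step is therefore \textbf{(I)}: identify the correct principal bundle $P \to S$ so that the atypicality inequality \eqref{firstatyp} for a suborbit closure $\mathcal{N}' \subset \mathcal{N}$ matches an atypicality inequality for an algebraic subvariety of $P$ intersecting a flat leaf $\mathcal{L}$. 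Here one must take $P$ to be the torsor $P(s_0)$ of \S\ref{sec:RH} attached to the mixed local system, so that the extra unipotent directions encode precisely the period conditions on $\omega$; this is exactly the point emphasized in the introduction that one cannot work inside $S \times D$ but must pass to $P$.

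Next comes \textbf{(II)}, the over-parametrization: one invokes the fixed part theorem (as in \cite[\S3.6]{2021arXiv210708838B}) to produce an algebraic parameter space $\mathcal{Y}$ whose fibres are the subvarieties $V \subset P$ that can possibly cut out an atypical orbit-closure intersection with some leaf. The key input is that the conditions defining orbit closures — being algebraic by \cite[Thm.~2]{filipnotes} and of a uniform shape across the stratum — vary in an algebraic family, so the resulting $V$'s are bounded in degree and fit into a quasi-projective $\mathcal{Y}$. Then \textbf{(III)}: apply the family version of Ax-Schanuel, \Cref{protogeoZP}, built on the theorem of Bl\'azquez-Sanz--Casale--Freitag--Nagloo \cite{2021arXiv210203384B}, to conclude that every atypical $U \subset V \cap \mathcal{L}$ with $V$ in the family $\mathcal{Y}$ projects into a fibre of one of finitely many algebraic families of weakly special subvarieties of $S$; again the fixed part theorem is what lets one assemble the fibrewise output of Ax-Schanuel into countably (hence, after boundedness, finitely) many families. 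Finally \textbf{(IV)}: the maximal atypical suborbit closures are themselves weakly special and sit among the members of these finitely many families, and a rigidity/maximality argument — using that an orbit closure is determined by its monodromy together with the period data, and that maximal atypical ones cannot vary in a positive-dimensional family without producing a typical intersection — collapses each family to finitely many members. This gives the finiteness statement.

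For effectivity, one runs the same pipeline but tracks that each construction is algorithmic in the sense of \S\ref{sec:eff} (following \cite[\S2]{urbanik2021sets}): the bundle $P$ and its connection are computable from a presentation of the VMHS over $\Omega\mathcal{M}_g(\kappa)$; the parameter space $\mathcal{Y}$ is cut out by explicit equations once the defining conditions of orbit closures are written down; and the effective version of Ax-Schanuel in families — which is where one uses that the Galois group of $\nabla$ is the full $\mathbf{H}_S$, so that the differential-algebraic closure is controlled — outputs the finitely many families of weakly special subvarieties with computable equations. The last collapsing step is effective because testing whether a given weakly special subvariety is an orbit closure, and whether it is maximal atypical, reduces to a finite computation with the period and Hodge data. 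Composing these yields the claimed algorithm.

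The hard part, I expect, is \textbf{(II)} together with the precise matching in \textbf{(I)}: one must verify that Filip's algebraic characterization of orbit closures really does fit into an algebraic family of subvarieties $V \subset P$ of bounded complexity, \emph{uniformly over the stratum}, and that the atypicality of $\mathcal{N}'$ as an orbit closure is faithfully reflected as atypicality of $V \cap \mathcal{L}$ in $P$ and not merely in some smaller Hodge-theoretic ambient space. This is the step where the novelty of working in $P$ rather than in $S \times D$ is essential, and where the bookkeeping of which period conditions are ``Hodge-theoretic'' versus ``genuinely unipotent'' has to be done carefully; everything downstream is then a black-box application of the family Ax-Schanuel plus a rigidity argument.
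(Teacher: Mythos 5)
Your overall plan correctly tracks the four-step strategy of \S\ref{sec:unfi}, and you identify the essential novelty — that atypicality of an orbit closure must be detected in the torsor $P$ rather than in $S\times D$, via the map $r\times\nu$ of \Cref{mapfromPsurj} — but there is a genuine gap in your step (IV), and the effectivity sketch glosses over the hardest parts.

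\textbf{The rigidity step.} After applying \Cref{protogeoZP} and \Cref{rigidprop}, one has finitely many families $h_i:C_i\to B_i$ of weakly special subvarieties, with $C_i\to\mathcal{M}$ quasi-finite, such that every maximal atypical suborbit closure sits in a fibre of some $h_i$. To finish one must show only finitely many fibres of each $h_i$ contain a maximal suborbit closure. You assert this follows from a rigidity principle (``maximal atypical ones cannot vary in a positive-dimensional family without producing a typical intersection''), but this is exactly the point the paper cannot finesse with functional transcendence alone: \Cref{orbitclnofamlem} proves it by invoking the Isolation Property (\Cref{isolation}, due to Eskin--Mirzakhani--Mohammadi), i.e.\ a hard dynamical input. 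Your proposed mechanism (a positive-dimensional family of orbit closures would force typicality inside some intermediate subvariety) is plausible-sounding but unsubstantiated; something like it appears in the paper as \Cref{nointtypprop}, but there it runs in the opposite logical direction, is only used for the algorithm, and presupposes finiteness. Likewise, the initial reduction — passing from density of $\bigcup_i\mathcal{N}_i$ to the existence of a smaller orbit closure containing them — again uses \Cref{isolation} together with algebraicity (\Cref{thmstrorbit}, \Cref{remkzarcl}); this step is missing from your outline. So the proof is not a ``black-box application of family Ax-Schanuel plus a rigidity argument'': a genuine dynamical theorem is still doing irreducible work.

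\textbf{Effectivity.} Your algorithmic sketch is too thin to constitute a proof. The paper's algorithm (\Cref{mainorbitclosurealg}) is not merely ``run the same pipeline constructively''; it has to recover, from the computed constructible loci $\mathcal{Z}(f,e)$, the group $\mathbf{N}_R$ and the family of weakly special fibres (\Cref{terribletorsorsearch}), and this requires the no-intermediate-typicality result \Cref{nointtypprop}/\Cref{nointtypprop2} to guarantee that the monodromy of the locus cut out matches $\mathbf{H}_{\mathcal{N}}$. One then needs to effectively compute endomorphism algebras of relative Jacobians (\Cref{compendalglem}), impose eigenform and local-linearity conditions, prove that the resulting fibres are isolated (\Cref{foundthemall}) — which again uses flat linear functionals and local injectivity of $\operatorname{Dev}$, not a generic rigidity principle — and finally decide which candidates are orbit closures (\Cref{decideiforbclosure}, a genuine day-and-night procedure mixing \autoref{mainthmalgfilip} with numerical period estimates). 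None of this is ``a finite computation with the period and Hodge data'' in an obvious sense; it relies on specific structural facts about orbit closures (\Cref{mainthmalgfilip}, \Cref{isotypicsummandfixes}) that your sketch does not engage with. In short: steps (I)--(III) are essentially right, but step (IV) and the effectivity both require ingredients (notably \Cref{isolation} and the machinery of \S\ref{yfdapproxsec}--\S\ref{secappliactionseff}) that you have not supplied and that do not follow from the functional-transcendence formalism alone.
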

In particular the above gives a positive answer to the first part of the following, see also related discussions from \cite{zbMATH07666127}.

\begin{qtn}[{\cite[Qtn. 5.1.5. (see also \S 6.2 and 6.3)]{filipnotes}}]\label{qtn:filip}
Can one make the bound on the number of [maximal] atypical orbit closures effective. More generally, can one effectively bound their numerical invariants (rank, torsion corank) and arithmetic invariants (discriminant of order of number field, index of lattice)?
\end{qtn}
More precisely, we will actually obtain a new approach to \Cref{thmEFWnew} in its entirety. Namely we will give, a priori different, definitions of \emph{(a)typical suborbit closures} $\mathcal{N}$ (relatively to some $\mathcal{M}\subset \Omega \mathcal{M}_g(\kappa)$) and prove both the finiteness and density results. At least in the case where $\mathcal{M}$ agrees with some stratum our definitions will agree with the ones given by Eskin, Filip, and Wright. See indeed \Cref{propcomparison} for all details.

Another related finiteness is the one appearing in the work of Wright, recalled below (we added the word \emph{maximal} in the statement below, since we believe it is necessary).  
\begin{thm}[{\cite[Thm. 1.2]{zbMATH07225031}}]
There are only finitely many (maximal) totally geodesic submanifolds (with respect to the Kobayashi metric) of $\mathcal{M}_{g,n}$ of dimension greater than $1$.
\end{thm}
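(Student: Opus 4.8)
The plan is to reduce the statement to the finiteness of higher-rank affine invariant submanifolds inside strata of abelian differentials, which is precisely the content of \Cref{thmEFWnew} (equivalently \Cref{efffinteich}); the reduction itself is due to Wright, and what is new is that the finiteness it relies on now has an effective and o-minimal free proof, so the conclusion becomes effective as well.

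First I would move from the Kobayashi metric to the Teichm\"uller metric: by Royden's theorem these coincide on $\mathcal{T}_{g,n}$, hence their complex geodesics are exactly the Teichm\"uller disks, each generated by a unit-area quadratic differential. A totally geodesic submanifold $M \subset \mathcal{M}_{g,n}$ of dimension $\geq 2$ therefore pulls back to a union of Teichm\"uller disks in $\mathcal{T}_{g,n}$, and the set $Q_1(M)$ of unit-area quadratic differentials generating a disk contained in $M$ is closed and $\GL_2(\R)$-invariant. Passing to the canonical orientation double cover replaces quadratic differentials by abelian differentials and places a finite cover of $M$ inside a union of strata $\Omega\mathcal{M}_{g'}(\kappa')$ with $g'$ and $\operatorname{length}(\kappa')$ bounded in terms of $g$ and $n$. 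By Eskin--Mirzakhani--Mohammadi together with Filip's algebraicity theorem (\cite[Thm.~2]{filipnotes}), $M$ then corresponds to an affine invariant submanifold $\mathcal{N}_M$ in such a stratum, and $M$ is recovered from the pair consisting of $\mathcal{N}_M$ and its covering type by a fixed algebraic correspondence; in particular $M \mapsto \mathcal{N}_M$ is finite-to-one.

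The dimension hypothesis enters at the next step: the totally geodesic condition is incompatible with the ``rel'' (relative period) directions of an affine invariant submanifold, because a nonzero rel direction is never a Teichm\"uller direction. Thus all $\geq 2$ of the extra dimensions of $M$ come from variation of absolute periods, so $\mathcal{N}_M$ has rank $\geq 2$ and projects to a proper subvariety of its ambient stratum; by \Cref{propcomparison} it is therefore an atypical suborbit closure, and a maximal totally geodesic $M$ yields a maximal such one. Since there are only finitely many strata $\Omega\mathcal{M}_{g'}(\kappa')$ in the relevant range, \Cref{efffinteich} bounds (and computes) the maximal atypical suborbit closures in each of them, and finite-to-oneness of $M \mapsto \mathcal{N}_M$ then bounds the number of maximal totally geodesic submanifolds of dimension $\geq 2$; effectivity is inherited from \Cref{efffinteich}.

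The hard part is not the finiteness input, which is \Cref{thmEFWnew}, but the bridge lemmas: checking that $Q_1(M)$ is genuinely closed and $\GL_2(\R)$-invariant, that dimension $\geq 2$ really forces rank $\geq 2$ (i.e.\ ruling out rel contributions), organizing the finitely many double-cover and target-stratum types, and verifying that $M \mapsto \mathcal{N}_M$ is finite-to-one. These steps are essentially already in Wright's work, so the effort is in reconciling them with our formalism --- in particular with our notion of relatively (a)typical suborbit closure from \Cref{propcomparison} --- rather than in establishing anything genuinely new.
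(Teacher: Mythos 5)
Your proposal follows the same route as the paper: move from the Kobayashi/Teichm\"uller metric on $\mathcal{M}_{g,n}$ to the associated locus $\Omega N$ of abelian differentials, invoke Wright's Theorem~1.3 (``$\dim N \geq 2 \Rightarrow \operatorname{rank} \Omega N \geq 2$''), note that $\Omega N$ determines $N$ and lives in one of finitely many strata, and conclude from the finiteness of atypical orbit closures (\Cref{thmEFWoriginal}, made effective by \Cref{efffinteich}). One caveat: your heuristic justification for the rank $\geq 2$ step --- that rel directions can never be Teichm\"uller directions, so all extra dimensions come from absolute periods --- is considerably looser than Wright's actual argument for his Theorem~1.3 (which uses the cylinder deformation theorem, not just a pointwise tangent-vector observation); the paper sidesteps this by citing Wright's result as a black box, which is the honest thing to do here rather than trying to re-derive it.
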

We recall how the above theorem is proven, and the relationship between totally geodesic submanifolds and orbit closures, simply following the end of \cite[\S 2]{zbMATH07225031}. By Theorem 1.3 in \emph{op. cit.}, if $N$ is a totally geodesic submanifold of dimension at least 2, then $\Omega N$ has rank at least 2. Here we denote by $\Omega N$ the locus of square roots of quadratic differentials in the largest dimensional stratum of the cotangent bundle to $N$.
Since $\Omega N$ determines $N$, and there are a finite list of strata that may contain $\Omega N$ for $N$ a totally geodesic submanifold of $\mathcal{M}_{g,n}$, the result in turn follows from \Cref{thmEFWoriginal}. In particular \Cref{efffinteich} gives also an effective version of the above finiteness (since there is an algorithm to compute the finitely many orbit closures of rank at least 2).

\subsubsection{Hodge--new proofs and new results}
Let $S$ be a smooth quasi-projective variety, and $\V\to S$ an admissible graded-polarizable integral VMHS. 

\begin{defn}\label{deffam}
By a \emph{family} $f : Z \to Y$ of subvarieties of a variety $V$ we mean a pair $(f, \pi)$, where $f$ is a surjective map of varieties, and $\pi : Z \to V$ is an algebraic map restricting to a closed embedding on each fibre of $f$. 
\end{defn}

\begin{thm}[Geometric mixed ZP]\label{thm:mixedZP}
 There are only finitely many families of maximal monodromically atypical subvarieties of $(S,\V)$.
\end{thm}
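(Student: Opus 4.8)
The plan is to follow the four-step strategy (I--IV) outlined in \S\ref{sec:unfi}, applied to the setting of an admissible graded-polarizable $\Z$VMHS $\V \to S$. First I would reduce to the case of $\Q$-simple (or at least semisimple) algebraic monodromy. By the general theory of VMHS, after passing to a finite \'etale cover of $S$ the algebraic monodromy group $\mathbf{H}_S$ is connected, and one can filter $\V$ so that the relevant monodromy representation decomposes; since a monodromically special subvariety is defined by the condition that its algebraic monodromy sits inside a fixed subgroup, and since finitely many families on a finite cover push forward to finitely many families on $S$, this reduction is harmless. With this in hand, I construct the principal bundle $\pi : P \to S$ of \S\ref{sec:RH}: the $\mathbf{H}_S$-torsor inside $\sheafhom(\mathcal{H}, \mathcal{O}_S \otimes \mathcal{H}_{s_0})$ coming from $\sheafhom(\V, \V_{s_0})$ via Riemann--Hilbert, equipped with its flat algebraic connection $\nabla$ foliating $P$ by leaves $\mathcal{L}_x$.

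Step I is to translate the atypicality inequality \eqref{firstatyp} defining a monodromically atypical $Y \subset S$ into an atypical-intersection statement inside $P$. The key point is that a weakly special subvariety $Y$ with monodromy $\mathbf{H}_Y \subsetneq \mathbf{H}_S$ lifts, over $\pi^{-1}(Y)$, to an algebraic sub-bundle $P_Y \subset P$ which is a union of translates of $\mathbf{H}_Y$-orbits and whose intersection with a leaf $\mathcal{L}$ is controlled by $\dim \mathbf{H}_Y$; the monodromic atypicality of $Y$ then becomes exactly the statement that $P_Y \cap \mathcal{L}$ is an atypical intersection in the sense of the Ax--Schanuel theorem of Bl\'azquez-Sanz--Casale--Freitag--Nagloo \cite{2021arXiv210203384B}. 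Step II is to over-parametrize: using the fixed part theorem, the atypical $Y$'s (equivalently, the corresponding sub-bundles $P_Y$, rigidified by a choice of point and a choice of algebraic subgroup of $\mathbf{H}_S$ up to conjugacy, of which there are finitely many conjugacy-types but an algebraic family of conjugates) are cut out inside $P$ by an algebraic family $\mathcal{V} \to \mathcal{Y}$ of algebraic subvarieties of $P$; here one must check that the boundedness built into admissibility of the VMHS (finiteness of the monodromy up to conjugacy on each graded piece, plus control of the weight and Hodge filtrations) makes this parameter space $\mathcal{Y}$ a genuine variety of finite type.

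Step III applies the Ax--Schanuel theorem in families --- Theorem \ref{protogeoZP} in the excerpt --- to the family $\mathcal{V} \to \mathcal{Y}$: every atypical component $U \subset \mathcal{V}_y \cap \mathcal{L}$ has $\pi(U)$ contained in a strict weakly special subvariety of $S$, and moreover, as $y$ varies over $\mathcal{Y}$, these weakly special subvarieties assemble into finitely many algebraic families $Z_j \to Y_j$ of subvarieties of $S$. This last assembling is the second place the fixed part theorem enters: it guarantees that the monodromy groups occurring as $y$ varies fall into finitely many conjugacy classes and that the corresponding weakly special loci vary algebraically. Step IV concludes: among the weakly special subvarieties in these finitely many families, those that are themselves monodromically atypical and maximal form a subfamily (a constructible condition, since codimension and monodromy rank are constructible in families), and maximality together with the fact that a maximal atypical subvariety cannot be properly contained in another weakly special subvariety of the same or larger monodromy forces the maximal monodromically atypical subvarieties to occur in only finitely many of these families. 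I expect the main obstacle to be Step II/III's finiteness of the parameter space and of the output families --- i.e., extracting from admissibility of the VMHS the precise boundedness statements needed to make $\mathcal{Y}$ of finite type and to assemble the Ax--Schanuel output into finitely many algebraic families rather than a countable union; the Hodge-theoretic input (fixed part theorem, Cattani--Deligne--Kaplan-type algebraicity) is doing the real work there, and the combinatorics of keeping track of which families actually contain \emph{maximal} atypical members, as opposed to merely atypical ones, will require care.
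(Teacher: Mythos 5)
Your proposal correctly identifies the high-level strategy (steps I--IV of \S\ref{sec:unfi}) and the two key inputs (the period torsor $P$ with Ax--Schanuel, and the fixed part theorem), but it has two genuine gaps at the points where the mixed case actually differs from the pure one.

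First, the opening reduction to $\mathbb{Q}$-simple or semisimple monodromy is not available here. In the mixed setting the algebraic monodromy group $\mathbf{H}_S$ is a nontrivial semidirect product $\mathbf{H}^{ss}\ltimes\mathbf{H}^u$ of a semisimple group by a unipotent group, and there is no filtration of $\V$ that decomposes the monodromy representation into semisimple pieces while retaining the mixed Hodge data (the extension classes are exactly what is lost when passing to $\Gr^W$). This matters concretely at the over-parametrization stage: when you write that atypical $P_Y$'s come in ``finitely many conjugacy-types but an algebraic family of conjugates,'' you are implicitly using that subgroups of a semisimple group fall into finitely many conjugacy classes. That fails for unipotent subgroups, of which there are moduli. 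The paper's \Cref{wspdomainoverparamlem} gets around this by treating the semisimple and unipotent parts separately: the semisimple subgroups are handled by the classical finiteness of conjugacy classes, while the unipotent ones are over-parametrized by a uniform degree bound coming from the algebraicity of the exponential on nilpotent Lie algebras. You need some version of this argument, and the planned reduction to semisimple monodromy is a dead end.

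Second, your step IV is too thin to close the argument. Applying Ax--Schanuel in families (\Cref{protogeoZP}) tells you that each atypical germ lies \emph{inside} a strict weakly special subvariety, but it does not directly produce a family of weakly specials whose fibres are the maximal atypical ones — the germs could be strictly smaller, or the containing weakly special could itself be non-atypical, and ``maximality'' alone does not sort this out. The paper's proof works by introducing the constructible loci $\mathcal{K}(e)'\subset\mathcal{K}(e)\subset\mathcal{Y}(e)$ (conditions \eqref{Kcond1}, \eqref{Kcond2}) that excise redundant and non-maximal intersections, and then proving the two-sided statement: \Cref{L3} (every point of $\mathcal{Y}(e)\setminus\mathcal{K}(e)$ gives a germ surjecting onto a strict weakly special of dimension exactly $e$, via a case analysis on whether the containing weakly special is typical or atypical and a second application of Ax--Schanuel in the typical case) and \Cref{L10} (conversely every maximal atypical weakly special of dimension $e$ hits $\mathcal{Y}(e)\setminus\mathcal{K}(e)$). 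Only with both directions in hand can one apply the constructibility lemma to extract finitely many families. This bookkeeping is the real content of the proof and cannot be replaced by an appeal to ``rigidity or maximality.''
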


\begin{rem}
One may interpret the statement of \autoref{thm:mixedZP} as simply saying that all such weakly special subvarieties fit into a common family over a (possibly disconnected) base.
\end{rem}

\noindent Our proof of \autoref{thm:mixedZP} is effective in the sense that it gives an algorithm to compute the monodromically atypical families given the data of the algebraic vector bundle, algebraic connection, and algebraic Hodge filtration underlying $(S, \mathbb{V})$ as input. One can also obtain from the algorithm a priori bounds on the degree of the polynomials defining the fibres of the finitely many monodromically atypical families. (In fact, this is also true in many cases in the orbit closure setting as well, though we will not dwell on it; the key point is explained in \S\ref{aprioridegbounds}.)

The mixed version was not known before, not even for mixed Shimura varieties. (The prior work of Gao \cite{zbMATH06801925} resolved the so-called geometric mixed Andr\'{e}-Oort conjecture, and later the geometric Zilber-Pink conjecture \cite[Thm. 8.2]{zbMATH07305885} for the case of mixed Shimura varieties of ``Kuga type''.) The first instance of such can be found already for subvarieties of abelian varieties in \cite[\S 9]{MR3552014}. We remark here that the approach of \cite{2021arXiv210708838B} to \Cref{thmhodgelocus} is not, a priori, effective: in general the use of o-minimality is highly non-constructive and makes describing an effective procedure to determine the finitely many families described in \Cref{thm:mixedZP} difficult. Instead our approach is more similar to previous work on effective atypical intersections \cite{Daw19, urbanik2021sets}, and in particular \cite{binyamini2021effective}. Our approach can be used to give an effective version of the following corollary of \Cref{thm:mixedZP} (which already follows from \cite{2021arXiv210708838B}, only the effective part is knew).

\begin{cor}\label{corpure}
The images of the finitely many positive period dimensional families of maximal atypical subvarieties in $S$ are not Zariski dense when $\V$ has $\mathbb{Q}$-simple algebraic monodromy group.
\end{cor}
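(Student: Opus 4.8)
The plan is to deduce the corollary from \Cref{thm:mixedZP} by a dimension argument, in the spirit of \Cref{thmhodgelocus}(1). Let $\{(f_i\colon Z_i\to Y_i,\ \pi_i\colon Z_i\to S)\}_{i\in I}$ be the finite collection of families of maximal monodromically atypical subvarieties furnished by \Cref{thm:mixedZP}. Every maximal atypical subvariety of $(S,\V)$ is contained in its weakly special closure, which is monodromically atypical, hence a fibre of one of the $f_i$; so the Zariski-closed set $T:=\bigcup_{i\in I}\overline{\pi_i(Z_i)}$ contains the images in $S$ of all maximal atypical subvarieties, and it suffices to prove $T\subsetneq S$. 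Suppose not; we may assume $S$ irreducible (otherwise argue componentwise), so $\overline{\pi_{i_0}(Z_{i_0})}=S$ for some $i_0\in I$, and by Chevalley's theorem $\pi_{i_0}(Z_{i_0})$ contains a dense Zariski-open subset $U\subseteq S$.

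Now pick a very general point $s\in U$, that is, one avoiding a suitable countable union of strict subvarieties of $S$ (possible as $\C$ is uncountable). Then $s$ lies on some $F:=\pi_{i_0}(f_{i_0}^{-1}(y))$, a maximal monodromically atypical --- in particular weakly special --- subvariety of $(S,\V)$, and after shrinking $U$ we may take $y$ general in $Y_{i_0}$, so that $F$ is a generic member of the family. By the normality theorem for algebraic monodromy groups of subvarieties through a Hodge-generic point (Andr\'e--Deligne, in the mixed setting), the algebraic monodromy group $\mathbf{H}_F$ of $F$ is a normal subgroup of $\mathbf{H}_S$; as $\mathbf{H}_S$ is $\Q$-simple, either $\mathbf{H}_F=\mathbf{H}_S$ or $\mathbf{H}_F$ is trivial.

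If $\mathbf{H}_F=\mathbf{H}_S$, then $F$ --- the maximal subvariety through $s$ with algebraic monodromy $\mathbf{H}_S$ --- equals $S$, which is monodromically typical in the intrinsic normalization where the structure group of the ambient torsor $P\to S$ is taken to be exactly $\mathbf{H}_S$; this contradicts the atypicality of $F$. If $\mathbf{H}_F$ is trivial, then the period map of $(S,\V)$ is constant along $F$, and maximality forces $F$ to be a component of a fibre of the period map; since $s$ is very general this is a general fibre, whose codimension in $S$ equals the period dimension of $(S,\V)$ --- exactly the expected codimension --- so $F$ is again monodromically typical, a contradiction. (If $\V$ is isotrivial then $\mathbf{H}_S$ is trivial and there is nothing to prove.) Hence $T\subsetneq S$, and the argument is effective since $T$ is assembled from the output of the algorithm of \Cref{thm:mixedZP}.

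The step I expect to be the main obstacle --- indeed the only delicate point --- is the trivial-monodromy case: one must unwind the precise definition of \textit{monodromically atypical} used in the body of the paper to confirm that a component of a general fibre of the period map achieves equality in \eqref{firstatyp}, i.e. is genuinely typical. Morally this says that, with the ambient period space normalized intrinsically, $S$ and its period fibres are the ``most typical'' weakly special subvarieties; in the mixed, possibly non-immersive situation one must also check that the relevant fibre dimensions behave as in the classical pure case, and that the passage from atypical subvarieties to their weakly special closures does not enlarge the image locus $T$.
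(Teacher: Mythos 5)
Your overall strategy is the right one and parallels what the paper intends: pass to the finitely many families from \Cref{thm:mixedZP}, observe a maximal atypical subvariety is automatically weakly special, use Andr\'e--Deligne to get normality of $\mathbf{H}_F$ inside $\mathbf{H}_S$ at a Hodge-generic (very general) point, and exploit $\mathbb{Q}$-simplicity. The $\mathbf{H}_F = \mathbf{H}_S$ branch is handled correctly: weak speciality forces $F = S$, and $S$ is trivially monodromically typical (the inequality $0 < 0$ fails), contradiction.

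The genuine gap is in your trivial-monodromy branch, and your own closing paragraph correctly points a finger at it, but the resolution is not the one you expect. Your claim that a general fibre of $\Psi$ is monodromically typical is equivalent, unwinding \Cref{moatyp} with $\dim \Psi(F^{\an}) = 0$ and $\dim \ch{D}^{0}_{F} = 0$, to the equality $\dim \Psi(S^{\an}) = \dim \ch{D}^{0}_{S}$, i.e.\ dominance of the period map onto $\ch{D}^{0}_{S}$. This is not a hypothesis, and it fails generically: $\mathbb{Q}$-simple monodromy is perfectly compatible with a non-dominant period map (e.g.\ a curve in $\mathcal{A}_g$ with $\mathbf{H}_S = \operatorname{Sp}_{2g}$). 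The paper itself flags the issue just after \Cref{moatyp}: \emph{``unless $\Psi$ is dominant, each point is in fact a monodromically atypical weakly special subvariety.''} In that situation the family of period-dimension-zero weakly specials is atypical and has dense image, so the corollary as literally stated would be false. The intended reading --- consistent with \Cref{thmhodgelocus}(1), which explicitly restricts to positive period dimension, and with the remark after \Cref{thm:mixedgeomzp} that including the positive period dimension hypothesis forces the groups $\mathbf{N}$ to be non-trivial --- is that the corollary concerns only the families of positive period dimension. Under that caveat the trivial-monodromy case simply does not arise (trivial monodromy implies period dimension zero), so your dichotomy collapses to the $\mathbf{H}_F = \mathbf{H}_S$ case alone and the argument closes. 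In short: you do not need to show the period fibre is typical (you cannot, in general); you need to note that it is excluded from the relevant families by the positive-period-dimension convention.

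One small additional point worth making explicit: to run Andr\'e--Deligne you need $s$ Hodge-generic, and for the quoted normality you also need $\mathbf{G}_F = \mathbf{G}_S$, which holds since $s \in F$ is Hodge-generic for $(S,\V)$; then $\mathbf{H}_F$ and $\mathbf{H}_S$ are both normal in $\operatorname{Der}(\mathbf{G}_S)$ with $\mathbf{H}_F \subseteq \mathbf{H}_S$, hence $\mathbf{H}_F \lhd \mathbf{H}_S$. You invoke this implicitly but it deserves a sentence.
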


Finally we hope that an effective geometric Zilber-Pink could have interesting number theoretic applications, once plugged in the main result of \cite{BKU2}, as well as some applications to the geometry of the \emph{Noether-Lefschetz locus} (cf. \cite{2023arXiv231211246B} and for example Qtn. 8 in \emph{op. cit.}).

\subsubsection{Effective characterization of complex totally geodesic subvarieties in non-arithmetic ball quotients}\label{secballquotientsintro}
Finally, we record another interesting application of our effective proof of the geometric Zilber-Pink conjecture (this time for pure $\Z$VHS). Recall indeed the following question of Bader, Fisher, Miller and Stover (that could be compared with \Cref{qtn:filip}, see indeed \S \ref{sec:hodgevsdyn} for a deeper comparison between the two worlds). See \S \ref{sec:balleffective} for more details.

\begin{qtn}[{\cite[Qtn. 9.5]{BFMS2}}]\label{qtn:BFMS}
Can one classify the totally geodesic curves on nonarithmetic Deligne--Mostow \cite{MR586876, zbMATH03996010} orbifolds?
\end{qtn}
Once our new approach to the pure geometric Zilber-Pink is plugged in the main result of \cite{BU} we at least get an algorithm that solves their question. The following is an effective version of the finiteness theorem from \cite{BU, BFMS2}. It is an application of the construction of a certain $\Z$VHS $\widehat{\mathbb{V}}$ on the nonarithmetic Deligne--Mostow orbifolds $S$ from \cite{BU} and \Cref{corpure} (which ultimately follows from \Cref{thm:mixedZP}) applied to $(S,\widehat{\mathbb{V}})$.

\begin{cor}\label{cordm}
Let $\Gamma \subset \operatorname{PU}(1,n)$ be a Deligne-Mostow non-arithmetic lattice. Then there exist an algorithm which computes the finitely many maximal complex totally geodesic subvarieties of $S_\Gamma := \Gamma \backslash \mathbb{B}_{\mathbb{C}}^n$.
\end{cor}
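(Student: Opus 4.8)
The plan is to feed the Deligne--Mostow variation of Hodge structure into the effective geometric Zilber--Pink statement of \Cref{thm:mixedZP} (in the pure case), following the reduction by which \cite{BU, BFMS2} deduce the non-effective finiteness of maximal totally geodesic subvarieties.

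First, recall the geometric input. For a Deligne--Mostow lattice $\Gamma \subset \PU(1,n)$, the ball quotient $S_\Gamma = \Gamma \backslash \mathbb{B}^n_{\mathbb{C}}$ is a quasi-projective variety carrying a canonical polarized $\Z$VHS $\V$, namely the one attached to the family of cyclic covers of $\mathbb{P}^1$ branched along $n+3$ moving points with the weights defining $\Gamma$, whose monodromy was computed by Deligne and Mostow \cite{MR586876, zbMATH03996010}; its period map realizes $\mathbb{B}^n_{\mathbb{C}}$ as a bi-algebraic sub-period-domain of the ambient $D$. The key structural fact, established in \cite{BU} (see also \cite{BFMS2}), is twofold: (i) the positive-dimensional complex totally geodesic subvarieties of $S_\Gamma$ are special subvarieties for $(S_\Gamma, \V)$, and being complex totally geodesic is detected by a computable condition on the sub-$\Z$VHS $\restr{\V}{W}$ (concretely, the cut-out sub-local system must itself span a sub-ball-quotient, a condition readable off the monodromy / generic Mumford--Tate data); and (ii) because $\Gamma$ is \emph{non-arithmetic}, there is no positive-dimensional typical special subvariety --- the naive codimension count \eqref{firstatyp} is never met for the relevant $M_i$ --- so every positive-dimensional complex totally geodesic $W \subset S_\Gamma$ is an \emph{atypical} special subvariety of $(S_\Gamma, \V)$.

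Given this, the algorithm runs as follows. (a) Present $(S_\Gamma, \V)$ effectively: write down the smooth quasi-projective $S_\Gamma$ (as a quotient of the moduli space of weighted configurations of points on $\mathbb{P}^1$, or from the explicit ball-quotient description) together with the algebraic vector bundle, algebraic Gauss--Manin connection, and algebraic Hodge filtration underlying $\V$, all explicit from the hypergeometric data. (b) Run the algorithm underlying \Cref{thm:mixedZP} to produce the finitely many families $f_j : Z_j \to Y_j$ of maximal monodromically atypical subvarieties of $(S_\Gamma, \V)$, together with their images $\pi_j(Z_j) \subset S_\Gamma$. (c) Inside each $Z_j$, carve out the constructible sub-locus $Z_j^{\mathrm{tg}}$ of fibres that are complex totally geodesic, using the computable criterion of (i); this is again a family over a constructible piece of $Y_j$. (d) Output the maximal members of $\bigcup_j \pi_j(Z_j^{\mathrm{tg}})$ --- a finite list, since these all lie among finitely many families and maximality is checked against that finite list. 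Finiteness and correctness follow from \Cref{thm:mixedZP} together with (ii): every positive-dimensional complex totally geodesic subvariety appears in some $Z_j^{\mathrm{tg}}$, and the maximal totally geodesic ones are exactly the maximal elements of the union.

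The main obstacle is step (c): making \emph{effective} the passage from ``atypical special subvariety of $\V$'' to ``complex totally geodesic subvariety''. One must verify that the totally-geodesic condition is a \emph{computable} (constructible) condition on the base of an atypical family --- this is precisely where the geometric characterization of complex totally geodesic subvarieties of ball quotients from \cite{BU}, together with its compatibility with the Mumford--Tate/monodromy stratification produced by \Cref{thm:mixedZP}, must be invoked --- and one must be careful that the \emph{maximal} totally geodesic subvarieties need not be maximal atypical subvarieties: a maximal totally geodesic $W$ may sit strictly inside a larger atypical special subvariety, so one genuinely has to intersect with the totally-geodesic locus \emph{first} and maximize afterwards. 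A secondary, more bookkeeping, difficulty is producing the explicit algebraic presentation of $(S_\Gamma, \V)$ demanded as input by \Cref{thm:mixedZP}, and making sure the non-arithmeticity hypothesis is deployed exactly where needed: it is what excludes positive-dimensional typical special subvarieties, hence what guarantees ``totally geodesic $\Rightarrow$ atypical'' and thus that the output is finite at all.
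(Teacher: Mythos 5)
Your proposal follows the same broad strategy as the paper (present the Deligne--Mostow $\Z$VHS explicitly, run the effective geometric Zilber--Pink machine, read off the totally geodesic subvarieties), but you make the argument harder than it needs to be because you only record the one-way implication ``totally geodesic $\Rightarrow$ special.'' The input the paper takes from \cite[\S 5]{BU} is an \emph{equality}: the complex totally geodesic subvarieties of $S_\Gamma$ are \emph{exactly} the weakly special subvarieties for $(S_\Gamma, \widehat{\V})$, and by non-arithmeticity all of these are atypical. With that identification there is nothing to ``carve out'' in your step (c): every fibre of the families output by \Cref{effgeoZP} is weakly special, hence automatically totally geodesic, and conversely every totally geodesic $W$ is weakly special, hence appears among those fibres. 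Your worry in the last paragraph --- that a maximal totally geodesic $W$ might sit strictly inside a larger atypical weakly special $W'$, forcing you to intersect first and maximize afterwards --- also dissolves: such a $W'$ would itself be totally geodesic by the equivalence, contradicting maximality of $W$. So the algorithm is simply: compute $(\mathcal{H}, F^\bullet, \nabla)$ for the explicit hypergeometric family via \Cref{compconnecprop}, feed it into \Cref{effgeoZP}, and output the maximal members of the resulting finite list; no auxiliary ``computable totally-geodesic test'' needs to be devised. Without the biconditional, your step (c) is a genuine gap --- you would have to separately verify that the totally-geodesic condition is algorithmically decidable on the output families --- and the way the paper closes it is precisely by upgrading your (i) to an equivalence.
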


\noindent We will explain in \S\ref{sec:balleffective} how we model the varieties $S_\Gamma$; in particular, one can always find a representative of the commensurability class of $\Gamma$ such that $S_\Gamma$ and the natural $\mathbb{Z}$VHS over it comes from geometry.

It seems very hard to make the strategy from \cite{BFMS, BFMS2} effective, since it relies on dynamical methods (as it was for \Cref{thmEFWoriginal}). More generally, the proof of the above corollary works for any non-arithmetic lattice given as the monodromy group of an ``explicit'' family of varieties.

\subsection{Final comments: dynamics vs functional transcendence}\label{sec:hodgevsdyn}
To put \Cref{question0} into context, we quickly survey some other results partially motivated by \Cref{thmEFWoriginal}. On the dynamical side, this result inspired \cite{BFMS} (and the later \cite{BFMS2}), where it was shown that the arithmeticity of a finite-volume hyperbolic manifold is implied by the abundance of totally geodesic submanifolds. Both results inspired the main result of \cite{BU}, which already obtained finiteness results thanks to some functional transcendence. From here, the functional transcendence/Zilber-Pink viewpoint was repeatedly advanced further. The last mentioned result, and the insightful notes \cite{klin}, were the basic foundation for \cite{2021arXiv210708838B} (later generalised even further, in a different direction, in \cite{2022arXiv220611389U}), which finally hinted to the right level of generality of the theory that we develop here.

By drawing a diagram where the arrow ``$\rightsquigarrow$'' is understood as ``inspired''
\begin{center}
\cite[Thm. 1.5]{zbMATH06890813} $\rightsquigarrow$ \cite[Thm. 1.1]{BFMS} and \cite[Thm. 1.1]{BFMS2} $\rightsquigarrow$ \cite[Thm.  1.2.1]{BU} $\rightsquigarrow$ \cite[Thm. 3.1]{2021arXiv210708838B}
\end{center}
we see that the approach of our paper \emph{closes the circle}, by offering a common setting where one can deal with all of the above. The only one notable exception is \cite[Thm. 1.1]{BFMS}, since it is outside complex algebraic geometry. See also the ICM report \cite{2021arXiv211114922F} for an overview of such results as well as \cite{baldi2024rich} for more about the second arrow.

We remark here that in dynamics one often studies the frame flow and the frame bundle (e.g. \cite[page 1053]{zbMATH07513355}, to name one). Likewise for our approach it is essential to work with a frame bundle, as discussed in \S \ref{sec:RH}. It would be exciting to explore this similarity, and in particular, to find, loosely speaking, atypical intersection/functional transcendence phenomena in the theory of finite-volume hyperbolic manifolds. To conclude, we also mention that the very first results towards the geometric Andr\'{e}-Oort conjecture (a very special case of \Cref{thmhodgelocus}) were built on dynamical tools (Ratner theory to be precise), see indeed \cite{MR2180407}.

\subsection{Related works}\label{relatedwork}
Our paper is not the first one to use ideas from functional transcendence to study orbit closures, although the previous ones had different motivations. To conclude the introduction, we mention here the history, to the best of our knowledge, of results on translation surfaces/abelian differentials linked to the Zilber-Pink viewpoint. A first link between Zilber-Pink in $\mathbb{G}_m^n \times \mathbb{G}_a^n$ and orbit closures was observed by Bainbridge, Habegger, and M\"{o}ller \cite{zbMATH06681141}. Klingler and Lerer \cite{2022arXiv220206031K} started analysing strata of abelian differentials, through the lens of \emph{bi-algebraic geometry} (however the $\mathrm{GL}_2(\R)$-action does not enter in the picture). In \cite{2022arXiv220805182B}, Bakker and Tsimerman proved \cite[Conj. 2.15]{2022arXiv220206031K}, giving an \emph{Ax–Lindemann} type of statement for abelian differentials. See also the related \cite{2023arXiv230300642D}, where the authors construct a non-linear bi-algebraic curve, resp. surface, of translation surfaces of genus 7, resp. 10. Even more recently, Benirschke \cite{2023arXiv231007523B} investigated some special cases of Ax-Schanuel results for translation surfaces. In a different but related direction, we also mention the work of Pila and Tsimerman, regarding an Ax-Schanuel theorem for complex curves and differentials \cite[Thm. 3.2]{2022arXiv220204023P}. We hope the framework we consider will have more applications to this emerging viewpoint.

On the technical side, as we discussed above, the novelty of our approach is to work in the period torsor $P$. Its construction and the link between monodromy and differential Galois group is certainly not new and was noticed by several people in the nineties (but certainly known even before, since it is a generalization of Picard-Vessiot theory), and inspired by the work of Deligne \cite{zbMATH03385791}. However we weren't able to find a reference that accommodates our construction. For example O. Gabber pointed out to us \cite[\S 5]{zbMATH05718025} (in particular point b) of \S 5.4) and various papers and preprints of B. Malgrange (like \cite{zbMATH01787094}, where a more complicated setting is studied). The oldest reference we are aware of is due to Schlessinger. The role of the Gauss-Manin connection (and related foliated bundles) for the study of some Hodge loci is also highlighted in the work of Movasati, see for example \cite{zbMATH07423960}. With the aim of obtaining effective versions of the geometric Zilber-Pink for Shimura varieties (first obtained in \cite{MR3867286}), Binyamini and Daw \cite{binyamini2021effective} also noticed the advantage of the differential algebraic geometry viewpoint for atypical intersection problems (regarding the arithmetic side of the story, we refer also to Binyamini's work \cite{zbMATH07497568}). The framework considered by Binyamini and Binyamini-Daw, together with the very general Ax-Schanuel theorem of Bl\'{a}zquez-Sanz, Casale, Freitag, and Nagloo discussed above, and its Hodge theoretic incarnation first explored by Bakker and Tsimerman \cite{2022arXiv220805182B} were the main source of inspiration for our work. We remark here that, very recently, also the approach using o-minimality turned out to be effective \cite{2024arXiv240516963B}. Finally we mention also the work of Pila and Scanlon \cite{2021arXiv210505845P}, who used some interesting constructibility and model theoretic arguments related to algebraic differential equations as well as \cite{KO} (especially Thm. 6.1 in \emph{op. cit.}).

Since the first draft of this paper appeared online, it has found intersecting applications \cite{2024arXiv240619366K} \cite{2024arXiv240701304G} to the study of Ceresa cycles in families.

\subsection{Outline of paper}
In \S \ref{sectionGbund}, we recall the general formalism of $H$-torsors and principal connections (beyond the Hodge theoretic setting) and recall some basic facts on the Riemann-Hilbert correspondence and weakly special subvarieties. The main new result of \S \ref{atypicalproofofB} is the \emph{Ax-Schanuel theorem in families}, that will be behind the next applications. From \S \ref{sec:zpapp} on we crucially use Hodge theory. We first recall some of the key facts of the theory of $\Z$VMHS, and then, in \S \ref{new1}, apply such ideas to explore atypical intersections in an enhanced Hodge-theoretic setting --- incorporating both the data of a VMHS and the periods of the natural holomorphic one form $\omega$ --- to establish Eskin-Filip-Wright finiteness (\Cref{thmEFWnew}). In \S \ref{new2} we present a similar argument to obtain the geometric mixed Zilber-Pink, confirming that our (foliated) approach answers \Cref{question0}. In the rest of the paper, we explain how our results are effective and discuss algorithmic aspects. In particular in \S \ref{secappliactionseff}, we provide the effective refinements of the finiteness statements obtained in previous sections.

\subsection{Acknowledgements}
The first author thanks the organizers of the workshop `Finiteness results for special subvarieties: Hodge theory, o-minimality, dynamics' at the Lake Como School of Advanced Studies (partially supported by the Grant ERC-2020-ADG n.101020009 - TameHodge) where various discussions on Zilber-Pink and orbit closures happened (in particular with S. Filip, D. Fisher, Z. Gao, B. Klingler, L. Lerer, N. Miller, M. Stover, and E. Ullmo). In particular we thank S.Filip, whose notes \cite{filipnotes} helped us clarify the theory of orbit closures and the link with atypical intersections. We thank also O. Gabber for discussions on foliated $H$-bundles. We are also grateful to the anonymous referees for their precious comments.

The authors were partially supported by the NSF grant DMS-1928930, while they were in residence at the MSRI in Berkeley, during the Spring 2023 semester. Key parts of the paper were done also while the authors were hosted by the I.H.E.S. (France) which we thank for the excellent working conditions.

Finally we hope it will be clear to the reader that our applications to Teichm\"{u}ller theory owe a great debt to the breakthrough work of Eskin, Filip and Wright.

\section{$\nabla$-weakly special subvarieties}\label{sectionGbund}
Before starting our discussion on weakly special subvarieties and the Riemann-Hilbert correspondence, we recall a well-known theorem of Chevalley (see e.g. Prop. 3.1 in Deligne's article in the collection \cite{zbMATH03728195}).

Let $G$ be an affine algebraic group, acting linearly faithfully on a vector space $V$. This gives an inclusion $G \subset \operatorname{GL}(V)$.
\begin{thm}[Chevalley’s Theorem]\label{Chevalley} 
 For any algebraic subgroup $H\subset G$, there exists a representation $S(V)$ constructed from $V$ by standard tensor operations (i.e. taking direct sums, duals, or tensor products, in any order and any finite number of times), and a line $L\subset S(V)$, such that $H$ is the stabilizer of $L$ in $G$.
 
 If moreover $H$ is reductive, then there is a vector $v \in S(V)$, such that $H$ is the stabilizer of $v$ in $G$.
\end{thm}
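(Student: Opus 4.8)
The plan is to reduce the problem to linear algebra about the ideal of $H$ in the coordinate ring of $G$, and then to package the generators of that ideal inside a single tensor construction on $V$. First I would recall the standard fact — a consequence of the fact that $G$ acts faithfully on $V$, hence $\mathcal{O}(G)$ is generated as an algebra by matrix coefficients of $V$ together with $1/\det$ — that every finite-dimensional $G$-subrepresentation of $\mathcal{O}(G)$ embeds $G$-equivariantly into some $S(V)$ built by the allowed tensor operations (direct sums, duals, tensor products); this is where the ``standard tensor operations'' in the statement come from, and it crucially uses that inverting $\det$ only costs a dual of a top exterior power. Next, let $I \subset \mathcal{O}(G)$ be the ideal of functions vanishing on $H$. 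Since $\mathcal{O}(G)$ is Noetherian, $I$ is generated by finitely many elements, and by the fact just recalled these lie in a finite-dimensional $G$-stable subspace $M \subset \mathcal{O}(G)$; set $W = M \cap I$, a finite-dimensional subspace of $M$. The key point is that $H$ is exactly the stabilizer of the subspace $W$ inside $M$ under the (right-translation) $G$-action: if $g \in H$ then $g$ preserves $I$ and $M$, hence $W$; conversely if $g$ stabilizes $W$, then since $W$ generates $I$ as an ideal, $g$ preserves $I$, hence $g$ preserves the vanishing locus $H$, so $g \in H$ (using that $H$ is its own ``stabilizer'' as a closed subscheme, i.e. $H$ is the actual zero set of $I$).

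To pass from the stabilizer of a subspace $W \subset M$ of dimension $d$ to the stabilizer of a \emph{line}, I would take the $d$-th exterior power: $\Lambda^d W \subset \Lambda^d M$ is a line $L$, and a standard determinant argument shows that $g \in G$ stabilizes $W$ if and only if it stabilizes the line $L = \Lambda^d W$. Since $M$ embeds equivariantly in some $S(V)$, so does $\Lambda^d M$ (again a standard tensor operation), and $L$ becomes the desired line in $S(V)$ whose stabilizer is $H$. That proves the first assertion.

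For the reductive case, the extra input is that $H$ is linearly reductive, so the $H$-action on the line $L$ — a one-dimensional $H$-representation, i.e. a character $\chi : H \to \mathbb{G}_m$ — together with its inverse $\chi^{-1}$ on $L^\vee$ admits an $H$-equivariant splitting off: concretely, replace $L$ by $L \otimes L^\vee \cong \mathbb{C}$ sitting inside $S(V) \otimes S(V)^\vee$, on which $H$ acts trivially, and pick $v$ to be a nonzero vector of this trivial line. One must check $H$ is still exactly the stabilizer of $v$: the stabilizer of $v$ is contained in the stabilizer of the line it spans, which is contained in $H$ by construction; and $H$ fixes $v$ because $H$ acts trivially on that line. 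A slightly more careful version, if one wants $v$ genuinely in $S(V)$ rather than a doubled tensor space, is to note that any character of a reductive group extends — after passing to a suitable tensor power of $S(V)$ absorbing $\det$-type factors — to a character of $G$ and can thus be twisted away; but the $L \otimes L^\vee$ trick is cleanest and I would present that.

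The main obstacle I anticipate is the very first reduction — carefully proving that every finite-dimensional $G$-submodule of $\mathcal{O}(G)$ sits inside a tensor construction $S(V)$ — since this is the step where faithfulness of the representation is used in an essential way (through the generation of $\mathcal{O}(G)$ by matrix coefficients and $\det^{-1}$), and one has to be honest about handling the $\det^{-1}$ by allowing duals of $\Lambda^{\dim V} V$. Everything after that (the ideal-generation argument, the exterior-power trick, the reductive twist) is formal.
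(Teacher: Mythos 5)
The paper does not prove this theorem but simply cites Deligne's treatment in the Hodge-cycles/Motives volume, and your argument reproduces the standard proof found there and is correct. The one point worth tightening is the reductive case: ``$L^\vee \subset S(V)^\vee$'' should be read as the annihilator of an $H$-stable complement of $L$ (which is precisely what linear reductivity supplies), and the inclusion $\mathrm{Stab}(L\otimes L^\vee) \subset H$ is not literally ``by construction'' but follows because, under the identification $S(V)\otimes S(V)^\vee \cong \mathrm{End}(S(V))$, the line $L\otimes L^\vee$ is spanned by a rank-one operator whose image is $L$, so any $g$ normalizing that line must carry $L$ to itself.
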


\subsection{Preliminary definitions}
We start with a pointed complex irreducible smooth variety $(S,s_{0})$ and a local system $\mathbb{V}$ on $S$, corresponding to a representation $\rho : \pi_1(S,s_{0})\to \GL(\mathbb{V}_{s_{0}})$.

\begin{defn}\label{monodromydef}
Given an irreducible (not necessary smooth) subvariety $(Y,s_{0}) \subset  (S,s_{0})$, we define its \emph{algebraic monodromy group} $\mathbf{H}_{Y}=\mathbf{H}_{Y, \mathbb{V}}=\mathbf{H}_{Y, \mathbb{V},s_{0}}$ as the identity component of the Zariski closure of the induced monodromy representation $\pi_1(Y^{\textrm{nor}},s_{0}) \to \GL(\mathbb{V}_{s_{0}})$, where $Y^{\textrm{nor}} \to Y$ denotes the normalization. When $\mathbb{V}$ and the point $s_{0}$ are understood, we will just write $\mathbf{H}_{Y}$.
\end{defn}

Note that algebraic monodromy groups are unchanged by passing to an open subvariety of $Y$, or replacing $Y$ with a smooth resolution --- this is because if $X$ is an irreducible complex normal variety and $U \subset X$ is an open subvariety, $\pi_{1}(U) \to \pi_{1}(X)$ is surjective.

\begin{defn}\label{def:weaklysp}
A geometrically irreducible variety $Y \subset S$ is said to be \emph{weakly special (for $\V$)} if it is maximal under inclusion among geometrically irreducible subvarieties of $S$ for the dimension of $\mathbf{H}_{Y}$. In other words, any $Y'$ strictly containing $Y$ satisfies $\mathbf{H}_{Y} \varsubsetneq \mathbf{H}_{Y'}$.
\end{defn}
Directly from the definition $S$ is a weakly special subvariety, and in cases of interest --- for instance when $\mathbb{V}$ comes from a family of cohomology groups of algebraic varieties inducing a quasi-finite period map --- every closed point $s \in S$ will also be weakly special. We will primarily interest ourselves in other weakly specials, not equal to either $S$ or a closed point. The former we call \emph{strict} weakly special subvarieties.

\subsection{A recap of Riemann-Hilbert}\label{sec:RH}

Given a smooth irreducible algebraic variety $S$ and complex local system $\V\to S$, there is the \emph{Riemann-Hilbert correspondence} functor
\begin{align*} 
\tau : \textrm{LocSys}(S^{\textrm{an}}) &\to \textrm{MIC}_{\textrm{reg}}(S) \\
\mathbb{V} &\mapsto \tau(\mathbb{V}) = (\mathcal{V}, \nabla) .
\end{align*}
Here $\textrm{MIC}_{\textrm{reg}}$ denotes the category of locally free $\mathcal{O}_S$-modules $\mathcal{V}$ on $S$ with regular-singular integrable connection $\nabla$. The functor has the property that the space of $\nabla^{\textrm{an}}$-flat sections of the locally free sheaf $\mathcal{V}^{\textrm{an}}$ may be identified with $\mathbb{V}$ up to natural isomorphism. Moreover, the analytic vector bundle $\mathbb{V} \otimes_{\mathbb{C}} \mathcal{O}^{\textrm{an}}$ with its natural flat connection has a unique algebraic structure $(\mathcal{V}, \nabla)$ with regular singularities. The functor $\tau$ is an equivalence of (Tannakian) categories \cite{zbMATH03385791}.

Given a fixed such $(\mathcal{V}, \nabla)$, the tensor powers $\mathcal{V}^{a,b} := \mathcal{V}^{\otimes a} \otimes (\mathcal{V}^{*})^{\otimes b}$ also carry natural algebraic connections, where we use $(-)^{*}$ to denote the dual vector bundle. Correspondingly, write $\mathbb{V}^{a,b} = \mathbb{V}^{\otimes a} \otimes (\mathbb{V}^{*})^{\otimes b}$. Now let $\mathbf{H}_{S}$ be the algebraic monodromy group of $\mathbb{V}$. Using Chevalley's \Cref{Chevalley} on invariants, we may understand the group $\mathbf{H}_{S}$ as the identity component of the stabilizer of a one-dimensional subsystem $\mathbb{L} \subset \bigoplus_{a,b} \mathbb{V}^{a,b}$, and hence it admits an algebraic realization $\mathbf{H}_{S,s}$ in each fibre $\GL(\mathcal{V}_{s})$ as the identity component of the stabilizer of $\tau(\mathbb{L})_{s}$.

Associated to this data we may construct a certain algebraic subbundle $P(s_{0})$ of the principal left $\GL(\mathcal{V}_{s_{0}})$-bundle $\sheafper(\mathcal{V}) := \sheafiso(\mathcal{V}, \mathcal{O}_{S} \otimes_{\mathbb{C}} \mathcal{V}_{s_{0}})$; here the fibres $\sheafper(\mathcal{V})_{s}$ above points $s \in S$ are identified with the set of isomorphisms $\textrm{Iso}(\mathcal{V}_{s}, \mathcal{V}_{s_{0}})$. We write points of $\sheafper(\mathcal{V})$ as pairs $(\beta, s)$, where $\beta : \mathcal{V}_{s} \to \mathcal{V}_{s_{0}}$. To construct $P(s_{0})$ we define
\begin{equation}\label{eqPP}
 P(s_{0}) := \{ (\beta, s) \in \sheafper(\mathcal{V}) : \beta(\tau(\mathbb{L})_{s}) = \tau(\mathbb{L})_{s_{0}} \} 
\end{equation}
where $\mathbb{L} \subset \bigoplus_{a,b} \mathbb{V}^{a,b}$ is as above. This defines an algebraic subbundle of $\sheafper(\mathcal{V})$, and using flat frames we may observe that each $P(s_{0})$ is naturally a left torsor for the Zariski closure $\mathbf{H}^{\textrm{full}}_{S}$ of the monodromy representation at $s_{0}$.

\begin{prop}
Locally in the \'etale topology on $S$, the torsor $P(s_{0})$ is trivial. More precisely around any point $s \in S$ there exists an \'etale map $\iota : U \to S$ whose image contains $s$ such that $\iota^{*} P(s_{0}) \simeq \bigsqcup_{i=1}^{k} \mathbf{H}_{S} \times U$ as an algebraic bundle, where $k$ is the number of connected components of $\mathbf{H}^{\textrm{full}}_{S}$.
\end{prop}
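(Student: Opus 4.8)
The statement is essentially formal once one knows, as explained above, that $P(s_{0})$ is a left torsor under the group $\mathbf{H}^{\textrm{full}}_{S}$. The plan is to combine the general principle that a torsor under a smooth affine algebraic group is trivial on an \'etale neighbourhood of any point of the base with the elementary remark that $\mathbf{H}^{\textrm{full}}_{S}$, regarded merely as a variety, is a disjoint union of $k$ translates of its identity component $\mathbf{H}_{S}$.

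First I would check that the structure morphism $\pi \colon P(s_{0}) \to S$ is smooth and surjective. Surjectivity is clear, since every fibre of a torsor is non-empty: over a geometric point it is a trivial torsor under $\mathbf{H}^{\textrm{full}}_{S}$, hence isomorphic to $\mathbf{H}^{\textrm{full}}_{S}$. For smoothness, observe that $\pi$ is faithfully flat and locally of finite presentation --- standard for a torsor under a flat group scheme of finite type, using that flatness is fppf-local on the base --- so $\pi$ is an fppf cover of $S$; moreover the base change of $\pi$ along itself is isomorphic to the projection $\mathbf{H}^{\textrm{full}}_{S} \times P(s_{0}) \to P(s_{0})$, which is smooth because $\mathbf{H}^{\textrm{full}}_{S}$ is smooth over $\mathbb{C}$ (an affine algebraic group in characteristic zero is reduced, hence smooth). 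Since smoothness is fppf-local on the base, $\pi$ is smooth.

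Next I would invoke the classical fact (see e.g. EGA~IV, \S17.16, or the local structure theorem for smooth morphisms) that a smooth surjective morphism admits sections \'etale-locally on the target: picking any point $x \in P(s_{0})$ over the given $s \in S$, we obtain an \'etale map $\iota \colon U \to S$ whose image contains $s$, together with a section $\sigma \colon U \to \iota^{*} P(s_{0})$. Such a section trivializes the pulled-back torsor in the usual way: the morphism $\mathbf{H}^{\textrm{full}}_{S} \times U \to \iota^{*} P(s_{0})$, $(h,u) \mapsto h \cdot \sigma(u)$, is an isomorphism of $U$-schemes, its inverse being assembled from the division morphism of the torsor. Finally, writing $\mathbf{H}^{\textrm{full}}_{S} = \bigsqcup_{i=1}^{k} C_{i}$ for the decomposition into connected components and choosing $g_{i} \in C_{i}(\mathbb{C})$, each $C_{i}$ is the coset $g_{i} \mathbf{H}_{S}$, and left multiplication by $g_{i}^{-1}$ gives an isomorphism of varieties $C_{i} \cong \mathbf{H}_{S}$. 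Hence $\mathbf{H}^{\textrm{full}}_{S} \cong \bigsqcup_{i=1}^{k} \mathbf{H}_{S}$ as varieties, and composing with the trivialization yields $\iota^{*} P(s_{0}) \cong \bigsqcup_{i=1}^{k} \mathbf{H}_{S} \times U$ as algebraic bundles over $U$, as desired.

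I do not expect any genuine obstacle here: all the content sits in having set up $P(s_{0})$ as a torsor (via flat frames). The only step with any weight is the existence of the \'etale-local section, and even that is a standard fact; if one wanted to make it self-contained, the single point needing care is to factor the smooth morphism $\pi$ Zariski-locally on $P(s_{0})$ through an \'etale map to an affine space over $S$ and then arrange that the relevant zero section meets the chosen open subset. I would also remark that a purely \emph{analytic}-local trivialization is much cheaper --- over a simply connected polydisc a flat frame of $\mathcal{H}^{\textrm{an}}$ identifies $P(s_{0})^{\textrm{an}}$ with $\mathbf{H}^{\textrm{full}}_{S} \times (\text{polydisc})$ outright --- the value of the argument above is that it upgrades this to the \'etale-algebraic statement.
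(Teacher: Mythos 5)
Your proposal follows essentially the same route as the paper: establish that $\pi : P(s_{0}) \to S$ is fppf (flat + locally of finite presentation), combine this with the torsor trivialization $P(s_{0}) \times \mathbf{H}^{\textrm{full}}_{S} \simeq P(s_{0}) \times_{S} P(s_{0})$ and the smoothness of $\mathbf{H}^{\textrm{full}}_{S}$ in characteristic zero, and conclude that fppf-local triviality upgrades to \'etale-local triviality; the paper cites this last step as the general fact that an fppf torsor under a smooth group is an \'etale torsor, whereas you unpack it via \'etale-local sections of the smooth surjection $\pi$. These are the same argument.

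The one place you should be more careful is the opening step, where you assert that $\pi$ is faithfully flat because this is ``standard for a torsor under a flat group scheme of finite type, using that flatness is fppf-local on the base.'' As stated this is circular: to descend flatness along the cover $P(s_{0}) \to S$, you already need to know that $\pi$ is an fppf cover, which is precisely what you are trying to establish. At this stage $P(s_{0})$ is only known to be a torsor in the informal sense provided by the flat-frame construction, not yet an fppf torsor. The paper breaks the circle by checking flatness directly in the analytic topology (using that $P(s_{0})^{\textrm{an}} \to S^{\textrm{an}}$ is a locally trivial fibration, a fact you yourself invoke at the end of your remark). Alternatively one could argue via miracle flatness, since $P(s_{0})$ and $S$ are smooth and the fibres are all isomorphic to $\mathbf{H}^{\textrm{full}}_{S}$. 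Either fix closes the gap, and the rest of your argument --- the base-change computation showing smoothness, the \'etale-local section, and the decomposition of $\mathbf{H}^{\textrm{full}}_{S}$ into $k$ cosets of $\mathbf{H}_{S}$ --- is correct.
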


\begin{proof}
We may observe the map $P(s_{0}) \to S$ is of finite presentation and flat; the latter condition may be checked in the analytic topology, and is implied by the fact that $P(s_{0})$ is analytically a fibration. This implies that $P(s_{0})$ is an fppf torsor: we have a trivialization $P(s_{0}) \times \mathbf{H}^{\textrm{full}}_{S} \xrightarrow{\sim} P(s_{0}) \times_{S} P(s_{0})$ given on points by $((\beta, s), g) \mapsto ((\beta, s), (g \circ \beta, s))$. Then use the fact that every algebraic fppf torsor is an \'etale torsor; cf. \cite{localtorsorMO}. 
\end{proof}

\noindent We may interpret the statement of the proposition as saying that each $P(s_{0})$ is an \'etale torsor for the Zariski closure of the monodromy of $\mathbb{V}$.

We will usually work instead with the component $P$ of $P(s_{0})$ which contains the point $(\textrm{id}_{s_{0}}, s_{0})$, where $\textrm{id}_{s_{0}} : \mathcal{V}_{s_{0}} \to \mathcal{V}_{s_{0}}$ is the identity map. Then $P$ is an \'etale torsor for $\mathbf{H}_{S}$.

\subsection{Torsor preliminaries}
\label{princprelimsec}

In this section we first introduce the formalism of $H$-torsors (also known as \emph{principal bundles}) and principal connections. The reader may refer for example to \cite[\S 2]{2021arXiv210203384B} for more details, although we will try to be as self contained as possible. Throughout we work over the complex numbers.

Let $S$ be an (irreducible) smooth variety, and $H$ a linear algebraic group. Let $\pi : P \to S$ be an $H$-\emph{torsor}; that is $H$ acts on $P$ (on the right, with the action simply denoted by $\cdot$) and such that the action induces an isomorphism
\begin{displaymath}
P \times H \simeq P \times_S P, \  (p,g )\mapsto (p,p\cdot g).
\end{displaymath}
In particular the fibres of $\pi$ are principal homogeneous spaces, and the choice of a point $p \in P_s:=\pi^{-1}(s)$ induces an isomorphism $H \cong P_s $, given by $g \mapsto p \cdot g$. We have the following exact sequence of $S$-bundles
\begin{equation}\label{firstdefver}
0 \to \textrm{ker} \, d \pi \to TP \to TS \times_{S} P \to 0 ,
\end{equation}
where $\textrm{ker} \, d \pi$ is the so called \emph{vertical bundle}. A \emph{connection} $\nabla$ is simply a section of \eqref{firstdefver}. We say that $\nabla$ is $H$-\emph{principal} if it is $H$-equivariant. We additionally say that $\nabla$ is \emph{flat} if it lifts to a morphism of Lie algebras: the equation $\nabla_{[v,w]} = [\nabla_{v}, \nabla_{w}]$ holds on the level of maps of $\mathbb{C}$-schemes. From now on all connections are $H$-principal and flat.

Since $\nabla $ is flat, the space of rational $\nabla$-horizontal vector field is a singular foliation on $P$. Moreover, if $\nabla$ is an algebraic flat connection we obtain a regular foliation $\mathcal{F}$, cf. \cite[\S 2.1 and 2.2]{2021arXiv210203384B}.
\begin{defn}
An \emph{integral submanifold} of $\mathcal{F}$ is an $m$-dimensional immersed analytic submanifold $S\subset P$
(not necessarily embedded in $P$  whose tangent space at each point is generated by the values
of vector fields in $\mathcal{F}$ (i.e. the image of $\nabla$). Maximal connected integral submanifolds are called \emph{leaves}. 
\end{defn}
Frobenius theorem ensures that through any regular point passes a unique leaf. Finally, we notice here that $\mathcal{F}$ has also \emph{vertical} leaves included
in the fibers of $P$ at non regular points of $\nabla$, but we will never consider such case. (In particular when speaking of leaves, we always mean \emph{horizontal} leaves, cf. \cite[end of \S 2.2]{2021arXiv210203384B}.)

\begin{exmp}\label{examplebundle}
The component $P$ containing $(\textrm{id}_{s_{0}}, s_{0})$ of the bundle $P(s_{0})$ introduced in \S\ref{sec:RH} is a principal $\mathbf{H}_{S}$-bundle that naturally comes with an algebraic flat connection. To translate between the notion of connection in \S\ref{sec:RH} and the notion of connection given here, we consider the algebraic connection
\[ \delta : \sheafhom(\mathcal{V}, \mathcal{O}_{S} \otimes_{\mathbb{C}} \mathcal{V}_{s_{0}}) \to \Omega^1_S \otimes \sheafhom(\mathcal{V}, \mathcal{O}_{S} \otimes_{\mathbb{C}} \mathcal{V}_{s_{0}}) . \]
Using this map we construct a map of algebraic bundles 
\[ \nabla : TS \times_S \sheafper(\mathcal{V}) \to T\sheafper(\mathcal{V}) . \]
This map sends a pair $(v, (\beta, s))$ to the differential operator $\nabla(v, (\beta, s))$ at $(\beta, s)$ which computes the derivative of a function on $\sheafper(\mathcal{V})$ in the flat direction along $v$. By using $\delta$ to write out an explicit system of algebraic differential equations, one can see $\nabla$ is an algebraic map. Then $\nabla$ restricts to a map $TS \times_S P \to T P$, which gives the desired section of (\ref{firstdefver}).
\end{exmp}

\begin{rem}\label{remkomin}
We remark here that similar considerations appeared (at least in the VHS setting) in \cite[Thm. 2.5]{2021arXiv211003489C} and \cite[Lem. 2.8]{2022arXiv220805182B}. In these references, compared to our account, the authors use \emph{o-minimality} to produce an algebraic bundle $P$.
\end{rem}

\section{Atypical weakly special subvarieties}\label{atypicalproofofB}

\subsection{Ax-Schanuel after Bl\'{a}zquez-Sanz, Casale, Freitag, and Nagloo}\label{sectionAS}

\begin{defn}[{\cite[Def. 3.4]{2021arXiv210203384B}}]\label{sparsegroup}
An algebraic group $H$ is \emph{sparse} if for any strict Lie subalgebra $\mathfrak{l} \subsetneq \mathfrak{h}= \operatorname{Lie}(H)$, the algebraic envelope $\mathfrak{l}^a$ of $\mathfrak{l}$ (i.e. the smallest algebraic Lie subalgebra containing $\mathfrak{l}$) is again a strict algebraic Lie subalgebra of $\mathfrak{h}$ (i.e. there exists a strict algebraic subgroup $L= L_\mathfrak{l^{a}} \subset H$ such that $\mathfrak{l} \subset \operatorname{Lie}(L)$).
\end{defn}
\noindent We remark that a subgroup of a sparse group does not have to be sparse. The main applications of the paper will concern the case where $H$ is a semidirect product of a unipotent group and a semisimple one; such groups are sparse. For example, for the main results on the orbit closures in $\Omega \mathcal{M}_g(\kappa)$, $H$ will actually be the complex group $\Sp_{2g}\ltimes \mathbb{G}_{\operatorname{a}}^{2g\cdot n}$ (where $n+1$ is the length of $\kappa$).

Fix a principal $H$-torsor $\pi : P \to S$ with flat $H$-equivariant connection $\nabla$, as in \S\ref{princprelimsec}. Assume that $H$ is sparse. See \cite[\S 2.2 and 2.3]{2021arXiv210203384B} for more details on the next definition:

\begin{defn}
\label{mininvdef}
An irreducible subvariety $V \subset P$ is a \emph{minimal $\nabla$-invariant variety} if it is the Zariski closure of a $\nabla$-horizontal leaf. 
\end{defn}

\begin{defn}[{\cite[Lem. 2.3]{2021arXiv210203384B}}]
Let $V$ be a minimal $\nabla$-invariant variety. Then the \emph{Galois group of $S$}
\begin{displaymath}
H_V:=\Gal(\nabla)=\Gal (S)=\{g\in H : V \cdot g =V\},
\end{displaymath}
is an algebraic subgroup of $H$ and $\pi_{|V}$ is an $H_V$-torsor.
\end{defn}

In our setting, without loss of generality, from now on we assume that $H=\Gal(\nabla)=\Gal(S)$. This will always be true for us as a consequence of \Cref{galequalsmono} below.

\begin{defn}\label{def:nablasp}
A Zariski closed irreducible subvariety $Y \subset S$ is \emph{$\nabla$-special} if its Galois group $\Gal(Y) = \Gal (\nabla_{| Y})$ is strictly contained in $\Gal(S)$ and it is maximal among the subvarieties with fixed Galois group $\Gal(Y)$.
\end{defn}

The following is the main theorem of Bl\'{a}zquez-Sanz, Casale, Freitag, and Nagloo. (For an effective version of the below see also \cite[Thm. 4]{2022arXiv220604304D}.)

\begin{thm}[{\cite[Thm. 3.6]{2021arXiv210203384B}}]\label{thm:newAS}
Let $H$ be a sparse group, and let $\nabla$ be an $H$-principal connection on $\pi : P \to S$ with Galois group $H$. Let $V$ be a subvariety of $P$, $x\in V$, and let $\mathcal{L} \subset P$ be a horizontal leaf through $x$. Let $U$ be an analytic irreducible component of $V \cap \mathcal{L}$.

 If \begin{displaymath}
\operatorname{codim}_{P} U < \operatorname{codim}_P V + \operatorname{codim}_P\mathcal{L} ,
\end{displaymath}
(or equivalently $\dim V < \dim U + \dim H$),
then the projection of $U$ in $S$ is contained in a $\nabla$-special subvariety (in the sense of \Cref{def:nablasp}). 
\end{thm}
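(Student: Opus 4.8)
\textit{Proof sketch.} The plan is to bypass any period-domain or o-minimality input and argue directly with the formalism of minimal $\nabla$-invariant varieties and their Galois groups, using the sparseness hypothesis only through the already-quoted structural fact that, for a minimal $\nabla$-invariant variety $W$, the set $\{g\in H: W\cdot g=W\}$ is an algebraic subgroup and $\pi_{|W}$ is a principal bundle for it. The first step is a reduction to the base $Y:=\overline{\pi(U)}$. Because any horizontal leaf $\mathcal{L}$ is, by definition, tangent to the image of $\nabla$, which by \eqref{firstdefver} is a subbundle mapping isomorphically onto $TS\times_S P$, the map $\pi_{|\mathcal{L}}$ is an immersion; hence $\pi_{|U}$ is an immersion and $\dim U=\dim\pi(U)=\dim Y$. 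Note also $\dim\mathcal{L}=\dim S$, so the hypothesis $\codim_P U<\codim_P V+\codim_P\mathcal{L}$ is exactly $\dim V<\dim U+\dim H$, as stated. Replacing $S$ by the smooth locus (or normalization) of $Y$, $P$ by $P_Y:=\pi^{-1}(Y)$, and $\nabla$ by the restricted connection $\nabla_Y$, one checks that the horizontal distribution of $\nabla_Y$ is $(\operatorname{im}\nabla)\cap TP_Y$, of rank $\dim Y=\dim U$, and it contains $TU$; thus $U$ is open in a single leaf $\mathcal{L}_Y$ of $\nabla_Y$ through $x$.

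The second step is a dimension count. Let $W:=\overline{\mathcal{L}_Y}^{\mathrm{Zar}}$, a minimal $\nabla_Y$-invariant variety; since $\mathcal{L}_Y$ is a connected complex manifold and $U$ is a nonempty open subset of it, the identity theorem gives $\overline{U}^{\mathrm{Zar}}=W$. Let $G:=\Gal(\nabla_Y)=\{g\in H: W\cdot g=W\}$. By the quoted structure result (\cite[Lem.\ 2.3]{2021arXiv210203384B}, reproduced above), $G$ is an algebraic subgroup of $H$ and $\pi_{|W}\colon W\to Y$ is a $G$-principal bundle, so $\dim W=\dim Y+\dim G$. Suppose $G=H$. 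Then $\dim W=\dim Y+\dim H=\dim P_Y$ and $W\subseteq P_Y$, so $W$ is an irreducible component of $P_Y$ of top dimension; but $W=\overline{U}^{\mathrm{Zar}}\subseteq \overline{V}^{\mathrm{Zar}}$ (using $U\subseteq V$), whence $\dim V\ge\dim W=\dim Y+\dim H=\dim U+\dim H$, contradicting the atypicality hypothesis. Therefore $G=\Gal(\nabla_Y)\subsetneq H$.

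Finally, since $\Gal(\nabla_Y)\subsetneq H=\Gal(\nabla_S)$, the collection of irreducible closed subvarieties $Z\subseteq S$ with $Y\subseteq Z$ and $\Gal(\nabla_{|Z})\subsetneq H$ is nonempty; by Noetherian induction on dimension it has a maximal element $Z_0$, which is maximal among \emph{all} irreducible closed subvarieties with Galois group strictly contained in $\Gal(\nabla)$ (any strictly larger such subvariety would again contain $Y$, contradicting maximality), hence $\nabla$-special in the sense of \Cref{def:nablasp}, and necessarily $Z_0\neq S$. Then $\pi(U)\subseteq Y\subseteq Z_0$, which is the desired conclusion.

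I expect the main obstacle to be the reduction step: one must justify carefully that restricting to $Y$ (which may be singular, so that one actually works over $Y^{\mathrm{nor}}$ or $Y^{\mathrm{sm}}$) preserves the relevant structures — that $U$ indeed lies in one leaf of $\nabla_Y$, that $\overline{U}^{\mathrm{Zar}}$ is minimal $\nabla_Y$-invariant, and that the Galois group $\Gal(\nabla_{|Y})$ computed there matches the one in \Cref{def:nablasp}. It is also worth emphasizing that the entire argument leans on the sparseness of $H$ precisely through the cited claim that the Galois group of a minimal invariant variety is an algebraic subgroup and $\pi_{|W}$ is principal for it; a genuinely self-contained treatment would need to reprove that via the differential Galois theory underlying \cite{2021arXiv210203384B}, and that is where the real work lies.
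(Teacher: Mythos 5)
The statement you are proving is cited from Bl\'{a}zquez-Sanz, Casale, Freitag, and Nagloo; the paper under review does not supply its own proof, so there is nothing to compare against directly. That said, your proposed proof has a genuine gap, and it is instructive to pin down exactly where.

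The problem is the very first reduction. You set $Y := \overline{\pi(U)}$ (which, for the rest of the argument to make sense --- normalizations, Lemma~2.3, the Galois group of $\nabla_Y$ --- must mean the Zariski closure) and then assert $\dim U = \dim \pi(U) = \dim Y$. The first equality is fine, since $\pi|_{\mathcal{L}}$ is an immersion. The second equality is false in general: the Zariski closure of an analytic subset of an algebraic variety can have strictly larger dimension than the set itself (the graph of $\exp$ in $\mathbb{C}^2$ is one-dimensional, but its Zariski closure is all of $\mathbb{C}^2$). The inequality $\dim \overline{\pi(U)}^{\mathrm{Zar}} > \dim \pi(U)$ is precisely the ``transcendence defect'' that Ax--Schanuel-type theorems are designed to control; assuming equality at the outset assumes a very strong form of what has to be proved. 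Once one drops the equality, the rest of the argument collapses: $U$ is no longer an \emph{open} subset of the leaf $\mathcal{L}_Y$ of $\nabla_Y$ through $x$ (it is a proper integral submanifold of strictly smaller dimension), so the identity theorem no longer gives $\overline{U}^{\mathrm{Zar}} = W := \overline{\mathcal{L}_Y}^{\mathrm{Zar}}$, one cannot place $W$ inside $V$, and the dimension count used to rule out $\Gal(\nabla_Y) = H$ never gets off the ground.

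A second, independent sign that something is wrong is that your argument never actually uses sparseness: Lemma~2.3 of \cite{2021arXiv210203384B} (that the stabilizer of a minimal $\nabla$-invariant variety is an algebraic subgroup for which the projection is a principal bundle) holds without any sparseness hypothesis --- it is stated and proved before sparseness is introduced in that paper. But the Ax--Schanuel statement does require sparseness, as the paper itself notes by pointing to \cite[Ex.\ 3.5]{2022arXiv220805182B}. A proof that does not visibly invoke the hypothesis must be scrutinized for precisely the kind of implicit over-assumption identified above: in the genuine BSCFN argument, sparseness is what converts a Lie subalgebra arising from the tangent data of the intersection into the Lie algebra of a proper \emph{algebraic} subgroup of $H$, and this is what controls the Zariski closure of the projection. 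Your argument skips this step entirely by taking the controlled Zariski closure for granted. (The Noetherian-induction step at the end, needed to produce a \emph{maximal} subvariety with $\Gal \subsetneq H$, also deserves more care --- chains of subvarieties with the same bounded Galois group need not stabilize just because the Galois groups do --- but this is a secondary issue compared to the main gap.)
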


\begin{rem}
We refer to \cite[Ex. 3.5]{2022arXiv220805182B} for a discussion about how the Ax-Schanuel theorem fails when $H$ does not have sparse monodromy/differential Galois group.
\end{rem}

We will apply this theorem through the following corollary:

\begin{cor}\label{cor:asloc}
Let $S$ be a smooth algebraic variety and let $\V\to S$ be a complex local system. Let $(P, \nabla)$ be the associated torsor of \autoref{examplebundle}. Assume that the monodromy group $H=\mathbf{H}_S$ of $\V$ is sparse. Let $V$ be a subvariety of $P$, $x\in V$, and let $\mathcal{L} \subset P$ be a formal horizontal leaf through $x$. Let $U$ be an analytic irreducible component of $V \cap \mathcal{L}$

 If \begin{displaymath}
\operatorname{codim}_{P} U < \operatorname{codim}_P V + \operatorname{codim}_P\mathcal{L} ,
\end{displaymath}
then the projection of $U$ in $S$ is contained in a weakly special subvariety (in the sense of \Cref{def:weaklysp}).
\end{cor}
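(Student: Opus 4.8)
The plan is to deduce \Cref{cor:asloc} from \Cref{thm:newAS} by verifying that the hypotheses of the latter are met in the Riemann--Hilbert setting of \autoref{examplebundle}, and then translating the conclusion ``$\nabla$-special'' into the conclusion ``weakly special''. First I would observe that by \autoref{examplebundle} the pair $(P,\nabla)$ is a principal $\mathbf{H}_S$-bundle with an algebraic flat $\mathbf{H}_S$-equivariant connection, and that $\mathbf{H}_S$ is sparse by hypothesis. The only genuine point to check before invoking \Cref{thm:newAS} is that $\mathbf{H}_S$ is in fact the full Galois group $\Gal(\nabla)=\Gal(S)$; this is exactly the content of the forthcoming \Cref{galequalsmono}, which identifies the differential Galois group of the connection constructed via Riemann--Hilbert with the (connected) algebraic monodromy group of $\mathbb{V}$. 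With that in hand, and since a formal horizontal leaf through $x$ is (the formal germ of) an analytic horizontal leaf, the codimension inequality $\codim_P U < \codim_P V + \codim_P \mathcal{L}$ is precisely the atypicality hypothesis of \Cref{thm:newAS}, so we conclude that $\pi(U)$ is contained in a $\nabla$-special subvariety $Y \subset S$ in the sense of \Cref{def:nablasp}.

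It then remains to show that a $\nabla$-special subvariety $Y$ is a weakly special subvariety in the sense of \Cref{def:weaklysp}, or at least is contained in one (which suffices for the statement). The key is the identity $\Gal(\nabla_{|Y}) = \mathbf{H}_{Y}$, again an instance of \Cref{galequalsmono} applied to $Y^{\mathrm{nor}} \to S$ (using that algebraic monodromy is insensitive to normalization, as recorded after \Cref{monodromydef}): the restriction of the period torsor $P$ to $Y$ is, up to the torsor subtleties, the period torsor associated to $\mathbb{V}|_Y$, whose Galois group is $\mathbf{H}_Y$. Thus ``$\Gal(Y) \subsetneq \Gal(S)$'' translates to ``$\mathbf{H}_Y \subsetneq \mathbf{H}_S$'', and ``$Y$ maximal with this property'' translates to ``$Y$ maximal among geometrically irreducible subvarieties with $\dim \mathbf{H}_Y$ bounded below $\dim \mathbf{H}_S$''. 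Strictly speaking the maximality condition of \Cref{def:weaklysp} is maximality for \emph{the value} of $\dim \mathbf{H}_Y$, not merely for the property of being a proper subgroup, so a $\nabla$-special $Y$ need not itself be weakly special; but any such $Y$ is contained in a subvariety $Y'$ maximal for the dimension of its algebraic monodromy group, i.e.\ in a weakly special subvariety, and $\pi(U) \subseteq Y \subseteq Y'$. This gives the claim.

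The main obstacle I anticipate is the precise comparison between the \emph{differential-algebraic} Galois group $\Gal(\nabla)$ appearing in \cite{2021arXiv210203384B} and the \emph{topological} algebraic monodromy group $\mathbf{H}_S$ of \Cref{monodromydef} --- i.e.\ the proof of \Cref{galequalsmono} --- together with the bookkeeping of connected components: the Zariski closure $\mathbf{H}_S^{\mathrm{full}}$ of the monodromy representation may be disconnected, $P(s_0)$ decomposes into $k$ components, and one must check that restricting attention to the component $P$ through $(\mathrm{id}_{s_0}, s_0)$ and to identity components of monodromy groups is compatible with the leaf-by-leaf analysis in \Cref{thm:newAS} (which is stated for the literal Galois group $H$). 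The analogous comparison in the pure Hodge-theoretic case is classical (going back to the relation between monodromy and differential Galois groups recalled in the introduction), so I expect this to be a matter of care rather than a new difficulty; once it is settled, the corollary follows formally. Everything else --- the passage from formal to analytic leaves, and the ``contained in a weakly special'' softening of the maximality condition --- is routine.
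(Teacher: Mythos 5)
Your proposal is correct and follows the paper's own route: invoke \Cref{thm:newAS} for the bundle of \autoref{examplebundle} (using sparseness), and reduce the translation from ``$\nabla$-special'' to ``weakly special'' to the identity $\mathbf{H}_S = \Gal(\nabla)$ of \Cref{galequalsmono}, together with the observation that both notions are insensitive to normalization and that the torsor construction is compatible with restriction. One small overcaution: under the identification $\Gal(\nabla_{|Y}) = \mathbf{H}_Y$, a proper $\nabla$-special $Y$ \emph{is} already weakly special (for any $Y' \supsetneq Y$ one has $\mathbf{H}_{Y'} = \mathbf{H}_S \supsetneq \mathbf{H}_Y$, which is exactly the defining condition of \Cref{def:weaklysp}), so your ``enlarge to a weakly special $Y'$'' step, while harmless, is not actually needed.
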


To prove \autoref{cor:asloc} we check that $\nabla$-special subvarieties (in the sense of \Cref{def:nablasp}) are indeed weakly special (as in \Cref{def:weaklysp}). See also \cite[Sec. 3]{2021arXiv211003489C}. We note that both the definition of algebraic monodromy group and differential Galois group of a variety $Y \subset S$ are stable under replacing $Y$ with either a smooth resolution or normalization. Moreover the construction of the torsor in \autoref{examplebundle} is compatible with restriction and pullback. It is therefore enough to prove the following.

\begin{lem}
\label{galequalsmono}
Let $P$ be constructed as in \Cref{examplebundle}. Then $\mathbf{H}_{S} = \operatorname{Gal}(\nabla)$.
\end{lem}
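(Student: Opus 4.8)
The plan is to show that the differential Galois group $\Gal(\nabla)$ of the bundle $P = P(s_0)$ constructed in \Cref{examplebundle} coincides with the algebraic monodromy group $\mathbf{H}_S$. The natural strategy is to identify the horizontal leaves of $\nabla$ explicitly in terms of flat sections of $\mathbb{V}$, and then read off both groups from the same geometric object. First I would recall that, by the Riemann--Hilbert correspondence, the analytic bundle $\mathcal{H}^{\an}$ is trivialized by flat sections which are identified with $\mathbb{V}$; concretely, on a simply connected analytic open $U \ni s_0$, parallel transport along paths in $U$ gives for each $s \in U$ a canonical isomorphism $\beta_s : \mathcal{H}_s \to \mathcal{H}_{s_0}$, and $s \mapsto (\beta_s, s)$ traces out precisely the leaf $\mathcal{L}_{(\mathrm{id}_{s_0}, s_0)}$ through the base point. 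Since $\mathbb{L} \subset \bigoplus_{a,b}\mathbb{V}^{a,b}$ is a flat subsystem, parallel transport preserves it, so $\beta_s(\tau(\mathbb{L})_s) = \tau(\mathbb{L})_{s_0}$ and the leaf indeed lies in $P$. More generally the leaf through $(\beta, s_0)$ for $\beta \in \mathbf{H}_{S,s_0}^{\mathrm{full}}$ is the $\beta$-translate of this one, and analytic continuation of leaves around loops $\gamma \in \pi_1(S, s_0)$ acts by right multiplication by the monodromy $\rho(\gamma)^{-1}$ (or $\rho(\gamma)$, depending on convention).

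Next I would compute $\Gal(\nabla) = \{ g \in \mathbf{H}_S^{\mathrm{full}} : V \cdot g = V\}$ where $V$ is the Zariski closure of the horizontal leaf $\mathcal{L}_{(\mathrm{id}_{s_0}, s_0)}$, i.e.\ the minimal $\nabla$-invariant subvariety through the base point. By the previous paragraph, $V$ is the Zariski closure of the set of all $(\beta_\gamma \cdot \rho(\delta), s)$ obtained by analytic continuation over all loops, which is the Zariski closure inside $P$ of the orbit of the leaf under the monodromy group acting on the right. Since $P$ is a torsor for $\mathbf{H}_S$ (the identity component; $P = P(s_0)$'s base-point component), $V$ is a union of $\Gal(\nabla)$-cosets and its "size in the fibre direction" is governed by the Zariski closure of the group generated by the monodromy, intersected with $\mathbf{H}_S$. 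The key point is that $\mathbf{H}_S$ by definition is the identity component of the Zariski closure of the monodromy, so the stabilizer computation forces $\Gal(\nabla) = \mathbf{H}_S$: any $g$ stabilizing $V$ must preserve the closure of the monodromy orbit, and conversely every element in the identity component of the monodromy closure does so because $V$ already contains a Zariski-dense subset of such translates of the leaf.

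To make this precise I would use the torsor trivialization from the Proposition just above \Cref{examplebundle}: étale-locally $P \simeq \mathbf{H}_S \times U$, and under this identification the leaves become graphs of the local parallel-transport maps, so that $V \cap (\mathbf{H}_S \times U)$ is (a union of cosets of) the graph of $s \mapsto \beta_s$ times the closure of the monodromy subgroup. Then $V \cdot g = V$ reads as $g$ normalizing — in fact centralizing on the relevant component — the monodromy closure from the right while preserving its identity component, which pins down $g \in \mathbf{H}_S$. For the reverse inclusion $\mathbf{H}_S \subseteq \Gal(\nabla)$ one notes $\pi_{|V} : V \to S$ is an $\mathbf{H}_S$-principal bundle (it is the base-point component of $P$, itself an $\mathbf{H}_S$-torsor) and $V$ is $\nabla$-invariant, so $\Gal(\nabla) \supseteq \mathbf{H}_S$ by the very definition of the Galois group as $\{g : V\cdot g = V\}$ together with $V$ being a single $\mathbf{H}_S$-orbit-bundle.

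The main obstacle I anticipate is bookkeeping the distinction between $\mathbf{H}_S^{\mathrm{full}}$ (the full Zariski closure of monodromy, possibly disconnected) and its identity component $\mathbf{H}_S$, and correctly matching the "minimal $\nabla$-invariant variety" of \Cref{mininvdef} — which by definition is the Zariski closure of a \emph{single} horizontal leaf — with the connected torsor $P$ rather than the full $P(s_0)$; one must check that restricting to the identity component on both sides is consistent, i.e.\ that the leaf's Zariski closure is exactly $P$ (the connected component), which is where the hypothesis that monodromy is Zariski-dense in $\mathbf{H}_S$ inside each fibre, hence the leaf meets a fibre in a dense subset of $\mathbf{H}_S$, gets used. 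A secondary subtlety is ensuring the identification of $\nabla$-horizontal leaves with parallel transport is literally the one making \Cref{examplebundle}'s connection agree with the Gauss--Manin connection, which the excerpt sets up but which I would spell out via the explicit differential equation defining $\nabla$ there.
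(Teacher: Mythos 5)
Your proposal follows essentially the same route as the paper's proof: both reduce the claim to showing that the minimal $\nabla$-invariant variety $V$ (the Zariski closure of a horizontal leaf) equals all of $P$, by observing that analytic continuation of a leaf traces out a monodromy orbit in each fibre, whose Zariski closure in the fibre is $\mathbf{H}_S$. Your exposition is lengthier — the remarks about normalizing/centralizing in the middle paragraph are not needed once $V = P$ is known, and the worry you flag at the end (that the leaf's Zariski closure is exactly $P$) is precisely what the paper settles by ``allowing $s$ to vary'' — but the underlying argument matches the paper's.
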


\begin{proof}
The construction of $P$ implies that $\operatorname{Gal}(\nabla) \subset \mathbf{H}_{S}$, so it suffices to observe that any $V$ as in \Cref{mininvdef} necessarily contains a monodromy orbit in each of its fibres above points $s \in S(\mathbb{C})$. But any point $p \in V(\mathbb{C})$ mapping to $s \in S(\mathbb{C})$ contains a leaf $\mathcal{L}$ passing through $p$, and hence also contains any analytic continuation of $\mathcal{L}$ which passes through the fibre of $P \to S$ above $s$. The points of all such analytic continuations above $s$ are in the image of a monodromy orbit at $s$ coming from the monodromy representation associated to $\mathbb{V}$. Allowing $s$ to vary one sees that $V = P$. 
\end{proof}

\begin{exmp}\label{exsub}
We stress that \Cref{galequalsmono} uses crucially that the algebraic structure on $P$ arises from a connection with regular singularities. If one were to consider a different algebraic structure on $P(\C)$, one only knows that $\mathbf{H}_{S} \subsetneq \Gal(\nabla)$. For instance, the connections given by $\nabla = d - df$ on $\mathbb{A}^{1} \setminus \{ 0 \}$ and with $f$ a function on $\mathbb{A}^{1} \setminus \{ 0 \}$ all have an underlying constant local system, and hence $\mathbf{H}_{S} = \{ \textrm{id} \}$, but can have non-trivial Galois groups. 
\end{exmp}

\subsection{A constructibility fact}\label{sec:constr}

A useful fact which we will use repeatedly is the following:

\begin{prop}
\label{nondenseA}
Let $(P, \nabla)$ be an algebraic $H$-torsor with flat connection on an algebraic variety $S$ and let $f : \mathcal{Z} \to \mathcal{Y}$ be a family of subvarieties of $P$ (in the sense of \Cref{deffam}). Then the locus
\begin{equation}\label{yfe}
\mathcal{Y}(f,e) := \{ (x, y) \in P \times \mathcal{Y} : \dim_{x} (\mathcal{L}_{x} \cap \mathcal{Z}_{y}) \geq e \} 
\end{equation}
is algebraically constructible.
\end{prop}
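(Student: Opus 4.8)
\textbf{Proof proposal for \Cref{nondenseA}.}

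The plan is to exhibit $\mathcal{Y}(f,e)$ as the image of a constructible set under a projection, after first understanding the fibre-dimension condition $\dim_{x}(\mathcal{L}_{x} \cap \mathcal{Z}_{y}) \geq e$ in purely algebraic terms. The obstacle is that the leaves $\mathcal{L}_{x}$ are only \emph{analytic}, not algebraic, so we cannot naively form an incidence variety inside $P \times P \times \mathcal{Y}$. The key trick, which is standard in this circle of ideas (cf.\ the use of jet spaces and formal leaves in \cite{2021arXiv210203384B, binyamini2021effective}), is to replace the analytic leaf through $x$ by its formal germ, which \emph{is} recoverable from the algebraic connection $\nabla$: the Taylor coefficients of the flat germ through $x$ are polynomial in the coefficients of the differential equation and in $x$, so the locus of pairs $(x, y)$ for which the formal leaf germ at $x$ meets $\mathcal{Z}_y$ in a formal subscheme of dimension $\geq e$ is cut out by algebraic conditions on finitely many jets. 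Concretely, I would fix an \'etale trivialization as in the Proposition of \S\ref{sec:RH} so that locally $P \cong H \times U$ with $\nabla$ given by an explicit algebraic system, form the bundle of $N$-jets of sections, and write down the (algebraic) subvariety of jets satisfying the flatness ODE; the leaf germ at $x$ is then the unique formal solution with that initial condition.

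With this in hand the argument runs as follows. First, reduce to the case where $f : \mathcal{Z} \to \mathcal{Y}$ has irreducible equidimensional fibres and $\mathcal{Y}$ is irreducible, by Noetherian induction and stratification of $\mathcal{Y}$ (this is harmless since a finite union of constructible sets is constructible). Second, for each fixed $(x,y)$, observe that $\dim_{x}(\mathcal{L}_{x} \cap \mathcal{Z}_{y}) \geq e$ holds if and only if the formal completion of $\mathcal{Z}_y$ along the leaf-germ at $x$ has dimension $\geq e$ as a formal scheme; since both $\mathcal{Z}_y$ and the leaf germ are determined by algebraic data, this is equivalent to a condition on the ranks of certain Jacobian-type matrices built from the defining equations of $\mathcal{Z}_y$ and from the first $N$ jets of the flat germ, for $N$ large enough (bounded in terms of the degrees involved). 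Third, the condition ``a certain matrix with entries that are regular functions on an algebraic variety has rank $\leq r$'' is Zariski closed, so intersecting and taking complements over the finitely many relevant ranks, we get that $\mathcal{Y}(f,e)$ is constructible in each chart. Finally, patch over the finitely many \'etale charts (images of constructible sets under \'etale, hence finite-type, maps are constructible by Chevalley's theorem) to conclude.

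The main obstacle I anticipate is making rigorous the passage ``analytic local intersection dimension $=$ formal intersection dimension $=$ jet-rank condition,'' i.e.\ producing a \emph{uniform} bound $N = N(f)$ on the jet order needed, independent of the point $(x,y)$ in a given stratum. This is a Artin-approximation / uniform-Artin-Rees type statement: one needs that the local intersection multiplicity and dimension of $\mathcal{L}_x \cap \mathcal{Z}_y$ are detected after truncating the leaf germ to order $N$, with $N$ depending only on the degrees of the equations cutting out $\mathcal{Z}_y$ and on the degree of the connection. One can either cite the effective bounds on such truncations available in the differential-algebraic literature (e.g.\ \cite{2022arXiv220604304D, binyamini2021effective}) or prove it directly by a flatness/semicontinuity argument: the scheme-theoretic intersection of the $N$-jet of the leaf with $\mathcal{Z}_y$ forms a family over $\mathcal{Y}(f,e)$'s ambient space, and upper-semicontinuity of fibre dimension together with properness of the relevant Hilbert-scheme-type parameter space forces stabilization at finite $N$. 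Everything else — the Zariski-closedness of rank conditions, Chevalley's theorem for constructibility of images, Noetherian reduction to irreducible equidimensional fibres — is routine.
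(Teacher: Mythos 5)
Your opening reduction --- stratify $\mathcal{Y}$, pass to irreducible equidimensional fibres, and argue chart-by-chart with Chevalley at the end --- matches the paper's setup. But the heart of the argument takes a genuinely different path, and your version has a gap that you yourself flag without closing.

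You propose to replace the analytic leaf $\mathcal{L}_x$ by its $N$-jet and to express $\dim_x(\mathcal{L}_x \cap \mathcal{Z}_y) \geq e$ as a rank condition on finitely many Taylor coefficients. For this to work you need a uniform bound $N = N(f)$ large enough that truncating the leaf to order $N$ still detects the true local dimension of the intersection. You identify this as the main obstacle and offer two remedies (cite effective truncation bounds, or a Hilbert-scheme/semicontinuity argument), but neither is carried out, and the second is not obviously correct as stated: the $N$-jet of the leaf can intersect $\mathcal{Z}_y$ in a \emph{larger} set than the full leaf does --- think of a leaf given by $h = e^s - 1$ meeting a fibre $\mathcal{Z}_y$ that is a degree-$n$ Taylor truncation of the same graph with $n > N$; the jet-leaf then lies inside $\mathcal{Z}_y$, giving a positive-dimensional intersection, while the true intersection is a point. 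So $N$ really must dominate all degrees appearing in the family, which is a quantitative effective-differential-algebra statement, not a consequence of semicontinuity alone. As written, this step is a genuine gap.

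The paper's proof avoids higher-order jets entirely. It uses only the \emph{tangent-space} condition $\dim(T_x\mathcal{L}_x \cap T_x f^{-1}(y)) \geq e$, which is manifestly algebraically constructible because the tangent distribution of the foliation, $\bigcup_{\mathcal{L}} T\mathcal{L}$, is algebraic (being the image of $\nabla$). As a one-jet condition this over-counts, exactly as your truncated jets would, but the fix is a Noetherian induction rather than a jet bound: if $e_f$ is the generic (minimal) dimension of $\mathcal{L}\cap f^{-1}(y)$, then for $e > e_f$ the tangent-space locus fails on a dense open of $\mathcal{Z}$ (the generic foliated intersection is smooth of dimension $e_f$), so its closure $\mathcal{Z}'$ is a \emph{proper} closed subvariety; on the other hand every point of $\mathcal{Z}(f,e)$ lies in $\mathcal{Z}'$, since an $e$-dimensional germ of $\mathcal{L}_x\cap f^{-1}(y)$ has a dense smooth locus where the tangent space is $\geq e$-dimensional. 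One then restricts the family to $f' : \mathcal{Z}' \to \mathcal{Y}$ and recurses, terminating by Noetherianity. This exchanges your global effective jet bound for a first-order computation iterated Noetherian-finitely many times, which is precisely why the paper's version can be made effective (\S\ref{aprioridegbounds}) without importing external differential-algebraic truncation results.
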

Above and in the rest of the text, by $\dim_{x}$ we denote the local analytic dimension at the point $x$.

\begin{proof}
We begin with some preliminary reductions.
\begin{itemize}
\item[-] By stratifying the base $\mathcal{Y}$, we may assume that $\mathcal{Y}$ is smooth, irreducible, and use \cite[\href{https://stacks.math.columbia.edu/tag/05F9}{Lemma 05F9}]{stacks-project} to ensure all fibres of $f$ have the same dimension. 
\item[-] Let $\mathcal{Z}_{1}, \hdots, \mathcal{Z}_{\ell}$ be the irreducible components of $\mathcal{Z}$, and write $f_{i}$ for the associated families. Then
\[ \mathcal{Y}(f,e) = \bigcup_{i} \mathcal{Y}(f_{i}, e) \]
so it suffices to assume $\mathcal{Z}$ is irreducible.
\item[-] Consider the torsor $(Q, \delta)$ obtained from $(P,\nabla)$ by pulling back along the natural projection $\pi : S \times \mathcal{Y} \to S$. From the definition of the fibre product one obtains an embedding $\mathcal{Z} \subset Q$. Then $\mathcal{Z} \subset Q$ is foliated by the intersections $\mathcal{L}_{z} \cap f^{-1}(y)$ where $\mathcal{L}_{z} \subset Q$ is an analytic leaf passing through $z \in \mathcal{Z}$. The locus $\mathcal{Y}(f,e)$ is just the image of 
\[ \mathcal{Z}(f,e) := \{ z \in \mathcal{Z} : \dim_{z} \mathcal{L}_{z} \cap f^{-1}(f(z)) \geq e \} , \]
so it suffices to show the constructibility of $\mathcal{Z}(f,e)$. 
\end{itemize}
Now we argue by induction on $\dim \mathcal{Z}$. Note that we have
\[ \mathcal{Z} = \mathcal{Z}(f,0) \supset \mathcal{Z}(f,1) \supset \cdots \supset \mathcal{Z}(f, e) \supset \mathcal{Z}(f,e+1) \supset \cdots \]
so there is some first integer $e$, call it $e_{f}$, such that $\mathcal{Z}(f, e_{f}+1)$ is strictly contained in $\mathcal{Z}$.  Suppose we can produce some closed algebraic locus $\mathcal{Z}'$ containing $\mathcal{Z}(f, e_{f} + 1)$ and strictly contained in $\mathcal{Z}(f, e_{f})$. Then because $\mathcal{Z}$ is irreducible, this implies $\dim \mathcal{Z}' < \dim \mathcal{Z}$. If we then consider the family $f' : \mathcal{Z}' \to \mathcal{Y}$ obtained by restriction, we will have as a consequence of $\mathcal{Z} \supset \mathcal{Z}' \supset \mathcal{Z}(f, e_{f} + 1)$ that
\[ \mathcal{Z}'(f', e) := \{ z \in \mathcal{Z}' : \dim_{z} \mathcal{L}_{z} \cap f'^{-1}(f'(z)) \geq e \} = \mathcal{Z}(f, e) \]
for all $e \geq e_{f} + 1$, so replacing $\mathcal{Z}$ with $\mathcal{Z}'$ and $f$ with $f'$ we will be done by the induction hypothesis on $\dim \mathcal{Z}$.

Now consider the locus 
\[ \mathcal{Z}(f,e_{f} + 1)^{1} := \{ z \in \mathcal{Z} : \dim T_{z} \mathcal{L}_{z} \cap T_{z} f^{-1}(f(z)) \geq e_{f} + 1 \} . \]
Because $\nabla$ is algebraic, so is the bundle $\bigcup_{\mathcal{L} \subset P} T \mathcal{L}$, hence the locus $\mathcal{Z}(f,e_{f} + 1)^{1}$ is defined by algebraically constructible conditions. Let us take $\mathcal{Z}'$ to be the Zariski closure of $\mathcal{Z}(f, e_{f}+1)^{1}$; since $\mathcal{Z}(f, e_{f}+1)^{1}$ is algebraically constructible, this is the same as the topological closure. We observe that $\mathcal{Z}'$ is strictly contained in $\mathcal{Z} = \mathcal{Z}(f, e_{f})$: it suffices to produce an open analytic neighbourhood of $\mathcal{Z}(f, e_{f})$ which does not intersect $\mathcal{Z}(f, e_{f}+1)^{1}$, which we can do since the locus where the intersections $\mathcal{L}_{z} \cap f(f^{-1}(z))$ are smooth of dimension $e_{f}$ is open in $z$. But $\mathcal{Z}' \supset \mathcal{Z}(f, e_{f}+1)$ by construction, so this completes the proof.
\end{proof}

%It remains to show that $\mathcal{Z}'$ contains $\mathcal{Z}(f, e_{f}+1)$. Consider a component $C$ of some intersection $\mathcal{L} \cap f^{-1}(y)$ which has dimension $\geq e_{f} + 1$. Then $C$ has open smooth locus, so intersects $\mathcal{Z}(f,e_{f}+1)^{1}$ in an analytically dense set. In particular, the closure of $\mathcal{Z}(f,e_{f}+1)^{1}$ in the complex analytic topology is just $\mathcal{Z}(f, e_{f}+1)$. But $\mathcal{Z}(f,e_{f}+1)^{1}$ is algebraically constructible, so this agrees with $\mathcal{Z}'$. This shows that $\mathcal{Z}' = \mathcal{Z}(f, e_{f}+1)$ is algebraic, and so by the above discussion completes the proof.

Once we have an algorithmic way of working with torsors and their connections, the above proof gives an algorithm for computing the loci $\mathcal{Z}(f,e)$ and $\mathcal{Y}(f,e)$. This will be the essential input in making our results effective in \S\ref{finalsection:eff}.

\subsubsection{A priori degree bounds} 
\label{aprioridegbounds}

Although it is not our main focus, let us explain how one can use the above proof to obtain an a priori bound on the degree of the closure of $\mathcal{Y}(f,e)$, and therefore also the closure of its image in $S$. Since in applications one will use such $\mathcal{Y}(f,e)$ to construct atypical (weakly) special loci, including atypical orbit closures, we believe this can be of interest in situations where computing the orbit closure itself is computationally difficult but computing the degree estimates is more tractable. 

The basic input is the following elementary fact (see for e.g. \cite{63463}): 

\begin{lem}
Let $f : \mathbb{A}^{m} \to \mathbb{A}^{m}$ be a polynomial map. Suppose $f = (f_{1}, \hdots, f_{m})$, and let $\kappa = \textrm{max}_{i} \{ \deg f_{i} \}$. Then for an algebraic subvariety $W \subset \mathbb{A}^{m}$ of dimension $r$, one has $\deg \overline{f(W)}^{\textrm{Zar}} \leq [\deg V] \kappa^{r}$. \qed
\end{lem}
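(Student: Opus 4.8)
The plan is to reduce the statement to the classical B\'ezout inequality $\deg(X_{1} \cap \cdots \cap X_{k}) \le \prod_{i} \deg X_{i}$, valid for arbitrary closed subvarieties of affine space when the left-hand side is interpreted as the sum of the degrees of the irreducible components of the intersection. First I would reduce to the case $W$ irreducible: writing $W = \bigcup_{j} W_{j}$ into irreducible components of dimensions $r_{j} \le r$, one has $\overline{f(W)}^{\operatorname{Zar}} = \bigcup_{j} \overline{f(W_{j})}^{\operatorname{Zar}}$, hence $\deg \overline{f(W)}^{\operatorname{Zar}} \le \sum_{j} \deg \overline{f(W_{j})}^{\operatorname{Zar}}$; and, assuming the bound for each $W_{j}$ and that $\kappa \ge 1$ (the case $\kappa = 0$ being trivial, as then $f$ is constant and the image is a reduced point), one gets $\sum_{j} \deg W_{j} \cdot \kappa^{r_{j}} \le \big(\sum_{j} \deg W_{j}\big)\kappa^{r} = \deg W \cdot \kappa^{r}$. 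So assume $W$ irreducible, put $Z := \overline{f(W)}^{\operatorname{Zar}}$ and $d := \dim Z \le r$.

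Next I would set up the generic linear sections. By the theorem on the dimension of fibres there is a dense open $Z^{\circ} \subset Z$ over which every fibre of $f|_{W} : W \to Z$ has dimension exactly $r - d$. Since $\dim(Z \setminus Z^{\circ}) < d$, a generic affine-linear subspace $L \subset \mathbb{A}^{m}$ of codimension $d$ meets $Z$ in exactly $\deg Z$ reduced points, all of which lie in $Z^{\circ}$. Then $W' := W \cap f^{-1}(L)$ maps onto the finite set $Z \cap L$ with every fibre of dimension $r - d$, so each irreducible component of $W'$ has dimension $\le r-d$ and, over each point of $Z \cap L$, at least one has dimension exactly $r-d$. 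Choosing a further affine-linear subspace $L' \subset \mathbb{A}^{m}$ of codimension $r - d$ generic with respect to $L$, the intersection $W' \cap L'$ is finite and still surjects onto $Z \cap L$, since a generic $L'$ meets each of the finitely many top-dimensional components of $W'$. Therefore
\[ \deg Z = |Z \cap L| \le |W' \cap L'| \le \deg\big(W \cap f^{-1}(L) \cap L'\big), \]
the last step using that the number of points of a zero-dimensional subvariety is at most its degree.

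Finally I would invoke B\'ezout. The subspace $L$ is cut out by $d$ affine-linear forms $\ell_{1}, \dots, \ell_{d}$ on the target $\mathbb{A}^{m}$, so $f^{-1}(L) = \{ \ell_{1} \circ f = \cdots = \ell_{d} \circ f = 0 \}$ is an intersection of $d$ hypersurfaces, each $\ell_{i} \circ f$ a polynomial of degree $\le \kappa$; and $L'$ is an intersection of $r - d$ hyperplanes of degree $1$. Applying the B\'ezout inequality to $W$, these $d$ hypersurfaces, and these $r-d$ hyperplanes gives
\[ \deg\big(W \cap f^{-1}(L) \cap L'\big) \le \deg W \cdot \kappa^{d} \cdot 1^{\,r-d} = \deg W \cdot \kappa^{d} \le \deg W \cdot \kappa^{r}, \]
which combined with the previous display yields $\deg Z \le \deg W \cdot \kappa^{r}$.

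The main obstacle is entirely one of care with genericity: one must arrange, in the right order, that $L$ is transverse to $Z$ in $\deg Z$ reduced points lying in $Z^{\circ}$, and then that $L'$ — chosen depending on $L$ — still meets every top-dimensional component of $W \cap f^{-1}(L)$. These are standard consequences of the fact that a generic linear subspace of the appropriate codimension meets a fixed variety properly and avoids any fixed subvariety of smaller dimension, but they must be applied sequentially. The only other point worth flagging is the use of the B\'ezout inequality for possibly improper intersections; this is the form one gets from the refined B\'ezout theorem, or one may slightly perturb the defining forms to make all intersections proper without changing any degrees.
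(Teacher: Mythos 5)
The paper gives no proof of this lemma, only the citation \cite{63463}; your argument is correct and is the standard one used there (and throughout the effective algebraic geometry literature): slice $Z=\overline{f(W)}$ with a generic affine--linear $L$ of codimension $d=\dim Z$ to realize $\deg Z$ as a point count, pull back along $f$ to replace $L$ by $d$ hypersurfaces of degree $\le\kappa$, cut down the remaining $r-d$ fibre directions with a second generic linear space $L'$, and invoke the refined affine B\'ezout inequality. You correctly handled the one genuinely delicate point, namely the sequential genericity (choose $L$ first so that $Z\cap L$ lies in the open locus $Z^{\circ}$ of correct fibre dimension and is reduced, then choose $L'$ depending on $L$ so that it meets every top-dimensional component of $W\cap f^{-1}(L)$ and avoids the smaller ones). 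One small remark: your parenthetical that $\kappa=0$ is ``trivially'' fine is not quite right --- when $\kappa=0$ and $r\ge 1$ the asserted bound $[\deg W]\cdot 0^{r}=0$ is actually smaller than $\deg(\overline{f(W)})=1$, so the lemma as literally stated fails there --- but this is a degenerate edge case of the statement itself, not a defect of your argument, and irrelevant to how the paper uses the lemma (where $\kappa\ge1$ always).
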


If one then runs the argument of \autoref{nondenseA}, then this fact can be used to bound the degree of $\overline{\mathcal{Y}(f,e)}^{\textrm{Zar}}$ and its image in $S$ after choosing explicit coordinates for $S, P,\mathcal{Z}$, $\mathcal{Y}$, etc, and that we have an explicit presentation for $\nabla$ as a section of (\ref{firstdefver}). Indeed, since $\mathcal{Y}(f,e)$ is the image of $\mathcal{Z}(f,e)$, this reduces to bounding the degree of $\mathcal{Z}(f,e)$. Then $\mathcal{Z}(f,e)$ is constructed as the closure of loci of the form $\mathcal{Z}(f',e')^{1}$ for appropriate $(f',e')$ determined by $(f,e)$. These loci, in turn, are projections of loci in the tangent bundle $TP$ defined by explicit equations, so we may bound their degrees as well. The number of such loci needed in the proof can be bounded by bounding the number of steps needed in the Noetherian induction, which can be done by using the degree bound on each $\mathcal{Z}'$ appearing in the proof to bound the number of its components as well. 

\subsection{Ax-Schanuel theorems for families}
\label{axschanfamsec}

We keep the notation and setting of \Cref{nondenseA} and assume that the algebraic group $H$ is sparse. Given a family $h$ of subvarieties of $S$, we refine the set $\mathcal{Y}(f,e)$ introduced in the previous section \eqref{yfe}, as follows:
\begin{align*}
\mathcal{Y}(f,e,h) := \left\{ (x, y) \in P \times \mathcal{Y} : \begin{array}{c} \textrm{ there exists an embedded analytic germ } \\ (x, \mathcal{D}) \subset (x, \mathcal{Z}_{y} \cap \mathcal{L}_{x}) \textrm{ of dimension } \\ \textrm{ at least } e \textrm{ mapping into a fibre of }h\end{array} \right\}
\end{align*}
The following results complement \autoref{cor:asloc}. They say that if one applies the Ax-Schanuel theorem to a family of subvarieties then the resulting intersections belong to a family of weakly special subvarieties. Many variants of such results are possible; see the discussion at the end of this section. (For a family version of the Ax-Schanuel theorem in a special case see \cite[Prop. 2.1]{2019arXiv190504364P}.)

For the next two results, we fix some family $f : \mathcal{Z} \to \mathcal{Y}$ of subvarieties of $P$ and an integer $e \geq 1$. We suppose that there exists a countable collection of families $\{ h_{i} : C_{i} \to B_{i} \}_{i=1}^{\infty}$ of subvarieties of $S$ such that all weakly special subvarieties of $S$ are among the fibres of these families.

\begin{thm}
\label{protogeoZP2}
Consider a constructible subset $\mathcal{K} \subset P \times \mathcal{Y}$. Suppose that each germ $(x, \mathcal{D})$ of dimension $\geq e$ and corresponding to a point $\mathcal{Y}(f,e) \setminus \mathcal{K}$ maps into a fibre of some family in the collection $\{ h_{i} : C_{i} \to B_{i} \}_{i \in I}$, where $I \subset \mathbb{N}$ is some subset. Then in fact this is true for some finite subset of $I$.
\end{thm}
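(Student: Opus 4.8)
The plan is to reduce the statement to a Noetherian-type finiteness by exploiting the constructibility established in \autoref{nondenseA} together with a careful bookkeeping of the countably many families $\{h_i\}_{i\in I}$. First I would recall that $\mathcal{Y}(f,e)$ is algebraically constructible (this is exactly \autoref{nondenseA}), hence so is $\mathcal{Y}(f,e)\setminus\mathcal{K}$ since $\mathcal{K}$ is constructible. Next, for each $i\in I$, I would like to say that $\mathcal{Y}(f,e,h_i)$ is constructible as well: the condition defining it — existence of an embedded analytic germ inside $\mathcal{Z}_y\cap\mathcal{L}_x$ of dimension $\geq e$ mapping into a fibre of $h_i$ — can be rephrased using the pullback bundle trick from the proof of \autoref{nondenseA}. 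Concretely, pull $(P,\nabla)$ back to $S\times B_i$ and intersect with the appropriate incidence variety cutting out "$x$ lies over a point of $C_i$ in the fibre over $b\in B_i$"; then $\mathcal{Y}(f,e,h_i)$ is (a projection of) a locus of the shape $\mathcal{Z}(f',e')$ for a modified family $f'$, hence constructible by the same Noetherian-induction argument. I would state and prove this as a preliminary lemma.

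The core of the argument is then a chain-condition / compactness observation. By hypothesis, every germ $(x,\mathcal{D})$ of dimension $\geq e$ represented by a point of $\mathcal{Y}(f,e)\setminus\mathcal{K}$ maps into a fibre of some $h_i$, $i\in I$; equivalently $\mathcal{Y}(f,e)\setminus\mathcal{K}\subseteq\bigcup_{i\in I}\mathcal{Y}(f,e,h_i)$. For each finite subset $J\subset I$ set $\mathcal{U}_J:=\overline{\bigl(\mathcal{Y}(f,e)\setminus\mathcal{K}\bigr)\setminus\bigcup_{i\in J}\mathcal{Y}(f,e,h_i)}^{\mathrm{Zar}}$, a closed subset of the ambient variety; these form a directed family (under reverse inclusion as $J$ grows) of closed subvarieties of a Noetherian space. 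By the descending chain condition the family stabilizes: there is a finite $J_0$ with $\mathcal{U}_{J_0}=\mathcal{U}_{J}$ for all finite $J\supseteq J_0$. I then need to upgrade "the Zariski closures stabilize" to "the actual (constructible) sets stabilize", i.e. to conclude $\mathcal{Y}(f,e)\setminus\mathcal{K}\subseteq\bigcup_{i\in J_0}\mathcal{Y}(f,e,h_i)$. Suppose not: pick a point $p$ in the left side not in the right side. Then $p\in\mathcal{Y}(f,e,h_{i_1})$ for some $i_1\notin J_0$; enlarging $J_0$ to $J_0\cup\{i_1\}$ strictly shrinks the complement set but, by the stabilization, cannot shrink its Zariski closure — so the removed part $\mathcal{Y}(f,e,h_{i_1})\cap\bigl((\mathcal{Y}(f,e)\setminus\mathcal{K})\setminus\bigcup_{J_0}\mathcal{Y}(f,e,h_i)\bigr)$ is a nonempty constructible set contained in, but nowhere dense in, a component of $\mathcal{U}_{J_0}$. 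Iterating and using that each step removes a genuinely new constructible piece while the closure never changes, one produces an infinite strictly decreasing chain of constructible subsets all with the same closure whose complements (relative to that closure) are increasing — and since a constructible set is a finite union of locally closed pieces, an infinite strictly increasing chain of constructible subsets of a fixed constructible set is impossible. This contradiction gives the result with $I_0:=J_0$ (possibly enlarged by the finitely many indices needed in the iteration, which is still finite).

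The main obstacle I anticipate is precisely the last step — passing from stabilization of Zariski closures to stabilization of the constructible sets themselves — because a priori adding a new family $h_i$ could keep chipping away at lower-dimensional strata indefinitely without ever changing any closure. The clean way around this is to run the stabilization argument \emph{stratum by stratum in dimension}: decompose $\mathcal{Y}(f,e)\setminus\mathcal{K}$ into its constructible pieces of each dimension $d=\dim,\dim-1,\ldots,0$, apply the descending-chain argument to the top-dimensional stratum to kill it with finitely many $h_i$'s (here "kill" genuinely works in codimension zero since a constructible set dense in an irreducible variety that is covered by finitely-many-among-countably-many constructible subsets must already be covered, by a Baire-category / irreducibility argument in the Zariski topology over an uncountable base), then descend to the next stratum, which has strictly smaller dimension, and induct. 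The induction terminates because $\dim\mathcal{Y}(f,e)<\infty$, and at each of the finitely many stages only finitely many indices are consumed, so the total index set used is finite. I would also remark that for the applications one can take the $h_i$ to be the (countably many) families of weakly special subvarieties furnished by the fixed-part theorem, which is where \autoref{protogeoZP} then follows by feeding in $\mathcal{K}=\varnothing$ and the dimension bound $\dim\mathcal{Z}_y<\dim H+e$ to guarantee every relevant germ is genuinely Ax-Schanuel-atypical and hence (by \autoref{cor:asloc}) lands in a weakly special subvariety.
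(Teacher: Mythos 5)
Your proposal follows the same two-step structure as the paper's proof: (i) show that each $\mathcal{Y}(f,e,h_i)$ is constructible by running the \autoref{nondenseA} machinery on the family obtained by intersecting fibres of $f$ with the preimages under $P\to S$ of fibres of $h_i$ (the paper calls this $f\cap h_i$); (ii) apply a ``countable constructible cover admits a finite subcover'' principle to $\mathcal{Y}(f,e)\setminus\mathcal{K}\subset\bigcup_i\mathcal{Y}(f,e,h_i)$. The only real divergence is at step (ii): the paper simply invokes \autoref{lemma432}, which is taken from an external reference (\cite[Lem.~4.32]{urbanik2021sets}), whereas you reprove that compactness statement from scratch. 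Your first attempt (descending chain on the Zariski closures $\mathcal{U}_J$) does indeed have the gap you identify, and your proposed repair --- dimension induction plus a Baire-category/uncountability argument --- is the right idea and is essentially the proof of the cited lemma. One small imprecision worth flagging in your parenthetical: the Baire argument on an irreducible top-dimensional component $Z_0$ does \emph{not} directly show that finitely many $U_r$ cover the whole dense constructible piece $\mathcal{I}\cap Z_0$; what it gives is that a \emph{single} $U_r$ must be Zariski dense in $Z_0$ (else $Z_0$ would be a countable union of proper closed subsets, impossible over $\mathbb{C}$), hence contains a dense open subset of $Z_0$, so that $\mathcal{I}\setminus U_r$ has strictly lower-dimensional Zariski closure. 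Only after the dimension induction terminates do you obtain the finite subcover. Since you do run the induction, the argument goes through; the lemma statement inside the parenthesis is just stated a bit too strongly for the inductive step where you use it.
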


%\begin{thm}
%\label{protogeoZP2}
%Consider a constructible subset $\mathcal{K} \subset P \times \mathcal{Y}$. Instead of the dimension condition $ < \dim H + e$ in \autoref{protogeoZP}, suppose one knows that each germ $(x, \mathcal{D})$ of dimension $\geq e$ and corresponding to a point $\mathcal{Y}(f,e) \setminus \mathcal{K}$ maps into a fibre of some family in the collection $\{ h_{i} : C_{i} \to B_{i} \}_{i \in I}$, where $I \subset \mathbb{N}$ is some subset. Then in fact this is true for some finite subset of $I$.
%\end{thm}

\begin{proof}
We define $f \cap h_{i}$ to be the family over $\mathcal{Y}_{i} := \mathcal{Y} \times B_{i}$ of subvarieties of $P$ whose fibres are the intersections of fibres of $f$ with the subbundles $\restr{P}{h^{-1}_{i}(b_{i})} \subset P$. We have a natural map $\mathcal{Y}_{i} \to \mathcal{Y}$. Then we have that $\textrm{im}[ \mathcal{Y}(f \cap h_{i}, e) \to P \times \mathcal{Y} ] = \mathcal{Y}(f,e,h_{i})$, so $\mathcal{Y}(f,e,h_{i})$ is constructible by \autoref{nondenseA}. By assumption we have that $\mathcal{Y}(f,e) \setminus \mathcal{K}$ is contained in the countable union $\bigcup_{i=1}^{\infty} \mathcal{Y}(f,e,h_{i})$, but since everything is constructible, it is in fact contained in finitely many of these sets by \autoref{lemma432} below.
\begin{lem}
\label{lemma432}
Let $Y $ be a complex variety and $\mathcal{I}\subset Y(\C)$ be a constructible subset, and $\{ U_r \}_{r = 1}^{\infty}$ be constructible subsets of $Y(\mathbb{C})$. If $\mathcal{I} \subset \bigcup_{r = 1}^{\infty} U_r$, then in fact it lies in the union of finitely many of the $U_{r}$.
\end{lem}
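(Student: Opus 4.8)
The plan is to reduce to the case where the ambient variety $Y$ is affine and then argue by Noetherian induction on the Zariski closure $\overline{\mathcal{I}}^{\mathrm{Zar}}$. First I would decompose $\mathcal{I}$ into finitely many irreducible constructible pieces, so that it suffices to treat the case where $\overline{\mathcal{I}}^{\mathrm{Zar}} = Z$ is irreducible; replacing $Y$ by $Z$, we may assume $\mathcal{I}$ is a dense constructible subset of an irreducible variety $Y$, so that $\mathcal{I}$ contains a nonempty Zariski-open subset $\mathcal{O} \subset Y$. The key point is a Baire-category-type observation: since $Y(\mathbb{C})$ with its complex (analytic) topology is a Baire space, and each $U_r$ is constructible, the countable cover $\mathcal{I} \subset \bigcup_r U_r$ forces at least one $U_r$ to be ``large''. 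More precisely, write each $U_r = \bigcup_j (A_{r,j} \setminus B_{r,j})$ as a finite union of locally closed sets; then $\mathcal{O} \subset \bigcup_{r,j} \overline{A_{r,j} \setminus B_{r,j}}^{\mathrm{Zar}} = \bigcup_{r,j} \overline{A_{r,j}}^{\mathrm{Zar}}$, and by the Baire category theorem applied in the analytic topology of the irreducible variety $\mathcal{O}$ (which is a connected complex manifold after removing its singular locus, hence Baire and irreducible so that no countable union of proper closed analytic subsets covers it), some $\overline{A_{r_0,j_0}}^{\mathrm{Zar}}$ must equal all of $Y$. Hence $A_{r_0, j_0}$ is dense in $Y$, so $U_{r_0}$ contains a nonempty Zariski-open $\mathcal{O}' \subset Y$.

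Now set $\mathcal{I}' := \mathcal{I} \setminus U_{r_0}$. This is again constructible, and it is contained in the proper closed subset $Y \setminus \mathcal{O}'$, so $\overline{\mathcal{I}'}^{\mathrm{Zar}} \subsetneq \overline{\mathcal{I}}^{\mathrm{Zar}}$. Moreover $\mathcal{I}' \subset \bigcup_{r \neq r_0} U_r$, still a countable union of constructible sets. By Noetherian induction on the Zariski closure, $\mathcal{I}'$ is contained in a finite subunion of the $\{U_r\}_{r \neq r_0}$; adjoining $U_{r_0}$ gives the desired finite subcover of $\mathcal{I}$. The base case, $\overline{\mathcal{I}}^{\mathrm{Zar}} = \emptyset$, is trivial.

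I expect the main obstacle to be the clean justification of the Baire-category step: one needs that an irreducible complex quasi-projective variety, in its analytic topology, cannot be written as a countable union of proper Zariski-closed subsets. This is standard (it follows from the Baire category theorem once one knows the smooth locus is a connected complex manifold, together with the fact that a proper closed analytic subset of a connected complex manifold has empty interior), but it should be invoked carefully since it is the only non-formal ingredient; everything else is bookkeeping with constructible sets. An alternative that avoids analytic topology entirely is to use the fact that an irreducible variety over $\mathbb{C}$ (or any uncountable algebraically closed field) is not a countable union of proper subvarieties — this is a purely algebraic statement provable by a dimension/cardinality argument — and this is perhaps the cleanest route to state in the paper.
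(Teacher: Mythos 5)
Your proof is correct. The paper does not give a self-contained argument here: it simply invokes \cite[Lem.\ 4.32]{urbanik2021sets}, specializing a more general statement there (which involves a decreasing sequence of constructible sets) to the constant sequence $\mathcal{I}, \mathcal{I}, \ldots$ Your argument supplies what the paper leaves implicit, and it is in the expected spirit: a Noetherian induction on $\overline{\mathcal{I}}^{\mathrm{Zar}}$, with the only non-formal input being that an irreducible complex variety is not a countable union of proper closed subvarieties. Two small points of hygiene worth tightening if this were to be written up. First, the reduction to the irreducible case should be folded \emph{inside} the Noetherian induction (if $\overline{\mathcal{I}}^{\mathrm{Zar}}$ is reducible, split along its components, each of which gives a constructible piece with strictly smaller closure, and appeal to the inductive hypothesis), rather than performed once up front, since after subtracting $U_{r_0}$ the closure of $\mathcal{I}'$ may again be reducible. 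Second, when you write $U_r = \bigcup_j (A_{r,j}\setminus B_{r,j})$ you should stipulate that $A_{r,j}$ is irreducible closed with $B_{r,j}$ a proper closed subset, so that $\overline{A_{r,j}\setminus B_{r,j}}^{\mathrm{Zar}} = A_{r,j}$ and the locally closed piece $A_{r_0,j_0}\setminus B_{r_0,j_0}$ is genuinely Zariski-open in $Y$ once $A_{r_0,j_0}=Y$. Your final remark is the right call: the purely algebraic version of the key fact (an irreducible variety over an uncountable algebraically closed field is not a countable union of proper subvarieties) is the cleanest route and avoids any appeal to the analytic topology or Baire category, and is presumably what \cite{urbanik2021sets} uses as well.
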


\begin{proof}
This follows from \cite[Lem. 4.32]{urbanik2021sets}, where one takes the sequence of intersections in \cite[Lem. 4.32]{urbanik2021sets} to be trivial (all equal to $\mathcal{I}$) and the countable union to be $\mathcal{I} = \bigcup_{r=1}^{\infty} (\mathcal{I} \cap U_{r})$. 
\end{proof}
\end{proof}

\begin{cor}
\label{protogeoZP}
Suppose that each fibre of $f$ has dimension $< \dim H + e$. Then the set $\mathcal{Y}(f,e)$ introduced in \autoref{nondenseA} is contained in $\bigcup_{i=1}^{m} \mathcal{Y}(f,e,h_{i})$ for some index $m$ after reordering the $i$'s.
\end{cor}

\begin{proof}
Recall that $(P, \nabla)$ is an algebraic $H$-torsor with flat connection on $S$ and that $f : \mathcal{Z} \to \mathcal{Y}$ is a family of subvarieties of $P$ such that each fibre of $f$ has dimension $< \dim H + e$. Thanks to the Ax-Schanuel theorem \autoref{cor:asloc}, a point in the set $\mathcal{Y}(f,e)$ has to map into a fibre of some family in the collection $\{ h_{i} : C_{i} \to B_{i} \}_{i \in I}$. (From the beginning of the section, we assumed that $H$ was sparse as needed to apply the aforementioned corollary, as well as the existence of a countable collection of families $h_i$ over-parametrizing the weakly special subvarieties of $S$). We are therefore in the setting of \Cref{protogeoZP2} (with $\mathcal{K}=\emptyset$) and so we can conclude that only finitely many $ \mathcal{Y}(f,e,h_{i})$ are needed.
\end{proof}

We conclude this section explaining some subtleties of the general strategy outlined in \Cref{sec:unfi} to control atypical intersections, and how it relates to the above statements. When applying the above results, one faces (at least) two challenges: 
\begin{enumerate}
\item Construct the ``output'' families $h_i$.
\item Construct the ``input'' algebraic family $f : \mathcal{Z} \to \mathcal{Y}$ of subvarieties of $P$;
\end{enumerate}
\Cref{sec:familieswp} will be devoted to 1. and will use various powerful inputs from Hodge theory. The same families will appear when dealing with orbit closures as well as with the mixed geometric Zilber-Pink conjecture. 

In the orbit closure (resp. the VHS) setting, step 2. will be achieved in \Cref{orbitoverparamlem} (resp. \Cref{prop2par}). Nevertheless, in practice, a new problem appears and the above statements are not ready-to-use even after the two steps are achieved; or at least they require some extra care to be applied. Namely different algebraic subvarieties of $P$ can give rise to the same intersection with a leaf $\mathcal{L}_x$ and the same issue can happen when working with families of intersections. Morally speaking, especially when dealing with the geometric Zilber-Pink conjecture, we would like to consider a ``normalized/optimal'' family $f$ where, whenever an atypical intersection with a leaf appears, such an intersection is Zariski dense in the fiber. Albeit this can be achieved by an induction argument and refining the input family at each step, we will take a more direct approach by allowing ourselves to refine the set $\mathcal{Y}(f,e)$ in various steps in order to take care of such ``un-normalised intersections''. This is why we gave statement with $\mathcal{Y}(f,e) \setminus \mathcal{K}$ which serves as a model for the more complicated argument around \Cref{L3}. We will give more details in \S\ref{geozpproofsec}.

\section{Introduction to variations of mixed Hodge structures: period maps and Zilber-Pink}\label{sec:zpapp}

In this section we briefly recall the Zilber--Pink conjecture for admissible variations of polarizable integral mixed Hodge structures and explain more carefully \Cref{thmhodgelocus} (and its variants in the mixed setting). There is a first version of the Zilber--Pink conjecture appearing in \cite{klin}, we explain here the \emph{strong version}, following \cite{2021arXiv210708838B} (in \emph{op. cit.} the authors state only the pure version, but, as they notice, it is easy to obtain the general mixed formulation).

\subsection{Notations}
First of all some general notations/conventions. In this section an algebraic variety $S$ is a reduced scheme of finite type over the field of complex numbers, not
    necessarily irreducible. If $S$ is an algebraic (resp. analytic) variety, by a subvariety
  $Y \subset S$ we always mean a \emph{closed} algebraic
  (resp. analytic) subvariety. (Usually we assume $S$ to be smooth and consider subvarieties that are not necessarily smooth themselves). A pure $\Q$-Hodge structure of weight $n$ on a finite dimensional $\Q$-vector space $V_\Q$ is a decreasing filtration $F^\bullet$ on the complexification $V_\C$ such that $V_\C= \oplus_{p\in \Z} F^{p} \oplus \overline{F^{n-p}}$. The category of pure $\mathbb{Q}$-Hodge-structures is Tannakian, and the category of polarizable pure $\mathbb{Q}$-Hodge-structures is semisimple (all pure Hodge structures appearing in this paper will be polarizable). Recall that the Mumford--Tate group
$\MT(V) \subset \GL(V)$ of a $\mathbb{Q}$-Hodge structure  
$V$ is the Tannakian group of the Tannakian subcategory $\langle
V\rangle ^\otimes$ of $\mathbb{Q}$-Hodge structures
generated by $V$. Equivalently, it is the smallest $\mathbb{Q}$-algebraic subgroup of
$\GL(V)$ whose base-change to $\mathbb{R}$ contains the image of $h: \mathbb{S} \to
\GL(V_{\mathbb{R}})$ (here $\mathbb{S}$ denotes the \emph{Deligne torus}, i.e. the Weil restriction to $\R$ of the complex multiplicative group). It is also the fixator in $\GL(V)$
of the Hodge tensors for $V$. As $V$ is polarised, this is a reductive group. 
See for example \cite{2021arXiv210708838B} for more Hodge-theoretic details. We will recall below in more detail the case of VMHS, but assuming some familiarity with the pure setting. As a reference for Tannakian categories we refer to the article of Deligne and Milne from \cite{zbMATH03728195} and for general facts in Hodge theory to \cite{MR0498551}. For basic knowledge on variations of \emph{mixed} Hodge structures and \emph{mixed} Mumford–Tate domains, we refer for example to \cite[\S 2-5]{2021arXiv210110938G} and also to \cite{zbMATH00047436}.

\subsection{The Tannakian category of $\Q$VMHS, first definitions}\label{mixedvhs}
A mixed $\Z$-Hodge structure ($\Z$MHS) is a triple $(V, W_\bullet, F^\bullet) $ consisting of a (torsion-free) finitely generated $\Z$-module $V$, a finite ascending filtration $W_\bullet$ of $V_\Q:= V \otimes _{\Z}\Q$ (called the \emph{weight filtration}) and a finite decreasing filtration $F^\bullet$ of $V_\C$ (called the \emph{Hodge filtration}) such that for each $n \in \Z$, 
\begin{displaymath}
(\Gr_n^W V_\Q, \Gr_n^W F^\bullet)
\end{displaymath}
is a pure $\Q$-Hodge structure of weight $n$. A \emph{graded polarization} of a mixed $\Z$-Hodge structure is the datum of a polarization on the pure $\Q$-Hodge structure $\Gr^W V = \bigoplus_{n} \Gr^{W}_{n} V$.

The following is \cite[1.4, 1.5]{zbMATH00047436} and explains under which conditions a homomorphism $h: \DT_\C\to \GL(V_\C)$ gives a MHS:
\begin{lem}\label{lemmamixed}
Let $ V$ be a finite dimensional $\Q$-vector space. A morphism $h: \DT _\C\to \GL(V_\C)$ defines a $\Q$MHS on $V$ if and only if there exists a connected $\Q$-algebraic subgroup $\mathbf{G} \subset\GL(V) $ such that $h$ factors through $\mathbf{G} _\C$ and satisfies the following conditions (where we write $W_{-1}$ for the unipotent radical of $\mathbf{G}$):
\begin{itemize}
\item The composition morphism $\DT_\C \to \mathbf{G}_\C \to (\mathbf{G}/W_{-1})_\C$ is defined over the reals;
\item Precomposing $\DT \to (\mathbf{G}/W_{-1})_\R$ with the weight homomorphism $\mathbb{G}_{m,\R} \to \DT $ we obtain a cocharacter of the center of $(\mathbf{G}/W_{-1})_\R$ defined over $\Q$;
\item The weight filtration on $\Lie \mathbf{G}$ defined by $\Ad \circ h$ satisfies $W_0 (\Lie \mathbf{G}) = \Lie \mathbf{G}$ and $W_{-1}(\Lie \mathbf{G} ) = \Lie W_{-1}$.
\end{itemize}

\end{lem}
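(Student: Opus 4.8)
The statement is the classical characterization of when a real algebraic homomorphism $h : \mathbb{S}_{\C} \to \GL(V_{\C})$ determines a $\Q$-mixed Hodge structure on $V$, extracted from Deligne's work as recorded in \cite[1.4, 1.5]{zbMATH00047436}. I would prove the two directions separately, translating between the ``tensorial/filtration'' language of a $\Q$MHS and the ``group-theoretic'' language of the data $(\mathbf{G}, h)$.

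First I would set up the dictionary. Given $h$, define $W_\bullet$ on $V_\Q$ to be the weight filtration associated to the action of $\mathbb{G}_{m,\R}$ via the weight cocharacter $w : \mathbb{G}_{m,\R} \to \mathbb{S}$ composed with $h$ (so $W_k V$ is the sum of the subspaces on which $w(t)$ acts by $t^i$ for $i \le k$), and define $F^\bullet$ on $V_\C$ by the Hodge filtration of the cocharacter $\mu_h$ extracted from $h_\C$ in the usual way (i.e. $h_\C$ restricted to the first $\mathbb{G}_m$-factor of $\mathbb{S}_\C = \mathbb{G}_m \times \mathbb{G}_m$). The content to check, for the ``if'' direction, is that the three bulleted conditions on $(\mathbf{G},h)$ are exactly what is needed so that: (a) $W_\bullet$ is defined over $\Q$ — this is where the first bullet (that $\mathbb{S}_\C \to (\mathbf{G}/W_{-1})_\C$ is defined over $\R$, hence its weight cocharacter lands in the center and is, by the second bullet, defined over $\Q$) enters, together with the third bullet which forces the unipotent radical $W_{-1}(\mathbf{G})$ to be precisely the weight-$(-1)$ part of $\Lie\mathbf{G}$, so that $W_\bullet$ on $V$ is rational because it is cut out by a $\Q$-subgroup filtration; and (b) for each $n$, the induced map $\mathbb{S} \to \GL(\Gr_n^W V)$ defines a pure Hodge structure of weight $n$ — here one uses that modulo $W_{-1}$ everything is reductive and defined over $\R$ with the correct central weight cocharacter, which is exactly the classical criterion for a pure Hodge structure (the standard ``$h$ factors through a reductive $\R$-group with the weight on the center'' statement). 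Conversely, for the ``only if'' direction, given a graded-polarizable $\Q$MHS I would take $\mathbf{G}$ to be the Mumford–Tate group $\MT(V)$ (or more precisely its connected component), use that $h$ factors through $\MT(V)_\C$ by definition, identify $W_{-1}(\MT(V))$ with the unipotent radical, and verify the three bullets: the first two are the statement that the pure Hodge structure on $\Gr^W V$ has its Mumford–Tate group reductive and defined over $\R$ with rational central weight cocharacter, and the third is the compatibility of the weight filtration on $\Lie\MT(V)$ (via $\Ad\circ h$) with the unipotent radical, which is a standard fact about the Lie algebra of a Mumford–Tate group of a mixed Hodge structure.

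The key technical tool throughout is Chevalley's theorem (\Cref{Chevalley}) together with the Tannakian formalism: one realizes all relevant subgroups (the $\mathbf{G}$, its unipotent radical $W_{-1}$, the reductive quotient) as stabilizers of tensors or lines, and one transports the Hodge-theoretic rationality statements across these identifications. Concretely, rationality of $W_\bullet$ on $V$ follows because it is the restriction of the filtration by $\Q$-subgroups $W_\bullet \mathbf{G}$, and the purity of each $\Gr^W_n$ follows from the reductivity (guaranteed by polarizability) plus the first two bullets applied to $\mathbf{G}/W_{-1}$.

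The main obstacle I expect is the bookkeeping around the \emph{center} in the second bullet — namely verifying that the weight cocharacter of $(\mathbf{G}/W_{-1})_\R$ obtained from $h$ indeed lands in the \emph{center} and is \emph{defined over $\Q$}, and conversely that this is forced by the structure of a $\Q$MHS. This is the crux because it is precisely the point where the ``integrality/rationality of the weight filtration'' is encoded group-theoretically, and it requires care in distinguishing the cocharacter lattice of the torus $\mathbb{S}$, its image in $\mathbf{G}$, and how the Galois action on cocharacters interacts with passing to the quotient $\mathbf{G}/W_{-1}$. Everything else — the reductive pure case, the compatibility of $W_\bullet$ on $\Lie\mathbf{G}$ with the unipotent radical — is either classical or a direct unwinding of definitions, so I would treat those quickly and concentrate the argument on the center/rationality point. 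Since the result is cited verbatim from \cite{zbMATH00047436}, in the actual write-up it would be reasonable to give this as a guided recollection rather than a from-scratch proof, pointing to \cite[1.4, 1.5]{zbMATH00047436} for the details one chooses to omit.
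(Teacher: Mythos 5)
The paper gives no proof of this lemma; it is imported verbatim with a direct citation to \cite[1.4, 1.5]{zbMATH00047436}, and no argument is supplied in the text. Your proof plan is a reasonable reconstruction of the standard argument (the weight-cocharacter/Hodge-cocharacter dictionary, Mumford--Tate group for the converse direction), and you correctly observe at the end that the appropriate treatment in the write-up is simply to cite that reference --- which is exactly what the paper does.
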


\begin{defn}\label{mixedhodgedatumdef}
A \emph{mixed Hodge datum} is a pair $(\mathbf{G},D)$ where $\mathbf{G}$ is a connected linear algebraic $\Q$-group (with unipotent radical $\mathbf{W}_{-1}$) and $D$ is a $\mathbf{G}(\R)\mathbf{W}_{-1}(\C)$-conjugacy class of homomorphisms $\mathbb{S}\to \mathbf{G}(\C)$, satisfying the conditions of \Cref{lemmamixed}.
\end{defn}
For the next definition, see for example \cite[Sec. 2.5]{klin}.
\begin{defn}
Let $S$ be a smooth quasi-projective complex variety and $\mathcal{O}_S$ its sheaf of holomorphic functions. A \emph{variation of mixed $R$-Hodge structures} ($\Z$\textit{VMHS}) over $S$ is a triple $(\mathbb{V}, W_\bullet, F^\bullet)$, where:
\begin{itemize}
  \item[(1)] $\mathbb{V}$ is a locally constant $\Z_S$-module on $S$,
  \item[(2)] $W_\bullet$ is a finite increasing filtration (called the weight filtration) of the $\Q$-local system $\mathbb{V}_\Q$ by $\Q$-local sub-systems,
  \item[(3)] $F^\bullet$ is a finite descending filtration (called the Hodge filtration) of the holomorphic vector bundle $\mathcal{V} := \mathbb{V} \otimes_{R_S} \mathcal{O}_S$ by holomorphic subbundles,
\end{itemize}
such that:
\begin{itemize}
  \item[(a)] for each $s \in S$, the triple $(\mathbb{V}_s, (W_\bullet)_s, F^\bullet_s)$ is a mixed $\Z$-Hodge structure.
  \item[(b)] the flat connection $\nabla : \mathcal{V} \longrightarrow \mathcal{V} \otimes \Omega^1_S$ whose sheaf of horizontal sections is $\mathbb{V}_\mathbb{C}$ satisfies the Griffiths’ transversality condition
  \begin{equation*}
    \nabla F^\bullet \subset \Omega^1_S \otimes F^{\bullet - 1}.
  \end{equation*}
\end{itemize}

A graded polarization $q$ for $(\mathbb{V}, W_\bullet, F^\bullet)$ is a sequence
\[
q_k : \mathrm{Gr}^W_k(\mathbb{V}_\Q) \times \mathrm{Gr}^W_k(\mathbb{V}_\Q) \longrightarrow \Q(-k)_S
\]
of $\nabla$-flat bilinear forms inducing graded polarisations $q_{k,s}$ on the mixed $\Z$-Hodge structure $(\mathbb{V}_s, (W_\bullet)_s, F^\bullet_s)$ for all $s \in S$.
\end{defn}

An important \emph{admissibility} condition for variations of mixed Hodge structure was introduced by Kashiwara in \cite{zbMATH04006406} (see also, for example, \cite[Def. 3.3]{2021arXiv210110938G}). From now on we denote by $\Z$VMHS the category of graded-polarizable and admissible VMHS over $S$. When $\Z$ is replaced by $\Q$, this is a Tannakian category, as discussed for example in \cite[\S 2.4]{klin}, and references therein. (The fact that the category of admissible $\Q$VMHS is abelian due to Kashiwara, see \cite[\S 4.5 and Prop. 5.2.6]{zbMATH04006406} for the relevant statements.)

To ease the notation, we will simply say: let $\mathbb{V}=(\mathbb{V}, W_\bullet, F^\bullet)$ be a $\Z$VMHS and denote, for example, by $\mathbb{V}_s$ the $\Z$MHS above the point $s\in S$.

\begin{defn}\label{mtdef}
For every $s\in S$, the \emph{Mumford-Tate} group $\mathbf{G}_s(\mathbb{V})$ of the Hodge structure $\V_s$ is the Tannakian group of $\langle \V_s\rangle^\otimes$ of $\Q$MHS (here the exact faithful $\Q$-linear tensor functor $\operatorname{For}: \Q MHS \to \operatorname{Vec}_\Q$ is simply the forgetful functor).
\end{defn}
 It is a connected $\Q$-algebraic group, and 
\begin{itemize}
\item reductive if $\V_s$ (or $\V$) is pure; in general
\item it is an extension of $\mathbf{G}_s (\textrm{Gr}^W V)$ by a unipotent group (here $W$ denotes the weight filtration on $\V$).
\end{itemize}

We note that $\textrm{Gr}^W$ can be viewed as an exact functor \cite[Them. 2.3.5(iv)]{MR0498551} from $\mathbb{Q}$VMHS to $\mathbb{Q}$VHS. We have also a notion of \emph{Mumford--Tate group} of the $\Q$VMHS $\V$, simply as the Tannakian group of $\V$. Here we pick the fibre functor associated to some base point $s\in S$, that is the functor
\begin{displaymath}
\omega_s: \Q \operatorname{VMHS} \to \Q \operatorname{MHS} \to \operatorname{Vec}_\Q, \hspace{2em}  \V \mapsto \V_s \mapsto \operatorname{For}(\V_s) .
\end{displaymath}
We denote the Mumford--Tate group by $\MT(\V, \omega_s)$. Thanks to \cite[II, Thm. 3.2]{zbMATH03728195}, the resulting group, by the general theory, will not depend on such choice of a base point $s$ (that is there's a \emph{unique natural} isomorphism $\MT(\V, \omega_s)\cong \MT(\V, \omega_{s'})$). See also \cite{MR1154159} for an equivalent point of view. By the general Tannakian formalism we have a canonical inclusion $\mathbf{MT}(\V_s) \subset \mathbf{MT}(\V, \omega_s)$, at every $s\in S$.

We define\footnote{It is easy to see that our definition agrees with the others in the literature, even if it does not often appear in this shape. Indeed usually the fibre functor $\omega_s$ is implicit and one just writes $\mathbf{MT}(\V_s) \varsubsetneq \mathbf{MT}(\V)$, or often the inclusion is understood up to \emph{parallel transport}.} the following subset of $S^{\an}=S(\C)$.
\begin{equation}
\HL(S, \V^\otimes):=\{s\in S^{\an} : \mathbf{MT}(\V_s) \varsubsetneq \mathbf{MT}(\V, \omega_s)  \}.
\end{equation}
It is naturally a countable union of complex subspaces, and in fact Cattani, Deligne and Kaplan \cite[Thm. 1.1]{CDK} proved the following
in the pure case, and Brosnan, Pearlstein, and Schnell in the mixed case \cite{zbMATH05757878}. We also refer to \cite[Thm. 1.6]{BKT} (see also \cite{zbMATH07720398}) for an alternative
proof, using o-minimality, and also to \cite{bakker2020quasiprojectivity}, for the mixed case.

\begin{thm} \label{CDK}
Let $S$ be a smooth connected complex quasi-projective algebraic variety and
$\V$ be a $\Z$VMHS over $S$. 
Then $\HL(S, \V^\otimes)$ is a countable union of
closed irreducible algebraic subvarieties of $S$: the strict maximal special
subvarieties of $S$ for $\V$.
\end{thm}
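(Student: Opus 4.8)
The plan is to reduce the mixed case to the pure case via the weight filtration, and to handle the combinatorial/extension data separately using the structure of mixed Hodge data.

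\begin{proof}[Proof sketch]
First I would recall the setup: $\V$ carries a weight filtration $W_\bullet$ by sub-$\Z$VMHS, and $\Gr^W\V = \bigoplus_n \Gr^W_n\V$ is a $\Z$VHS (using that $\Gr^W$ is an exact functor, \cite[Thm. 2.3.5(iv)]{MR0498551}). By the pure case \cite[Thm. 1.1]{CDK}, the locus $\HL(S, (\Gr^W\V)^\otimes)$ is a countable union of closed irreducible algebraic subvarieties of $S$. So the first step is to show that the Mumford--Tate jump of $\V_s$ inside $\MT(\V,\omega_s)$ is controlled by two pieces of data: the Mumford--Tate group of $(\Gr^W\V)_s$, and the position of the extension class of $\V_s$ (an element of a unipotent group, namely the unipotent radical $\mathbf{W}_{-1}$ of $\MT(\V,\omega_s)$, modulo its natural $\mathbf{G}_s(\Gr^W\V)$-action). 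Concretely, $\mathbf{MT}(\V_s) \subsetneq \mathbf{MT}(\V,\omega_s)$ if and only if either the reductive part drops --- i.e. $s \in \HL(S,(\Gr^W\V)^\otimes)$ --- or the reductive part is maximal but the unipotent extension data is degenerate, i.e. lies in a proper sub-extension compatible with the full $\mathbf{G}_s(\Gr^W\V)$. This is where \Cref{lemmamixed} and the description of mixed Hodge data (\Cref{mixedhodgedatumdef}) enter.

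The second step is to algebraize the ``unipotent'' part. The key point is that the extension data of the VMHS is encoded by a section of a certain (relative, unipotent) algebraic group scheme over $S$ --- morally, by normal functions --- and the condition that this section lands in a sub-group-scheme is algebraic by the theory of admissible normal functions (Brosnan--Pearlstein, Brosnan--Pearlstein--Schnell, cited in the paper as \cite{zbMATH05757878}, \cite{2020arXiv200612403B}); this is precisely the content that makes the zero locus of a normal function, and more generally the locus where an admissible normal function is "torsion" or lies in a sub-object, a countable union of algebraic subvarieties. So I would stratify: over each irreducible component $T$ of $\HL(S,(\Gr^W\V)^\otimes)$ (and over $S$ itself), the restriction $\restr{\V}{T}$ has constant generic reductive Mumford--Tate group, and the extra jumping of the full Mumford--Tate group on $T$ is detected by the degeneracy locus of finitely many admissible normal functions (one for each relevant sub-extension, of which there are countably many since there are countably many $\Q$-sub-Hodge-structures to extend). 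Each such degeneracy locus is again a countable union of closed algebraic subvarieties by admissibility, and the union over all components $T$ and all sub-extensions remains a countable union.

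The third step is bookkeeping: the intersection/union of countably many countable unions of closed algebraic subvarieties is again one, and each such subvariety, being a locus where the Mumford--Tate group is contained in a proper subgroup and maximal with that property after shrinking, is irreducible (or can be decomposed into irreducible components, of which there are countably many) --- these are the claimed maximal strict special subvarieties. Identifying these components with the ``special subvarieties'' in the usual sense is then a matter of unwinding the definition of special subvariety for a VMHS (maximal for its generic Mumford--Tate group), exactly as in the pure case.

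The main obstacle I expect is the second step: making precise and algebraic the statement that the ``drop'' in the unipotent part of the Mumford--Tate group is a countable union of algebraic subvarieties. In the pure case everything reduces to the rigidity of Hodge tensors and \cite{CDK}, but in the mixed case one genuinely needs the algebraicity of degeneracy loci of admissible normal functions, which is a nontrivial theorem; fortunately the paper explicitly allows invoking \cite{zbMATH05757878} and \cite{2020arXiv200612403B}, so the role of the proof here is really to organize the reduction cleanly rather than to prove algebraicity from scratch. A secondary technical point is ensuring the generic reductive Mumford--Tate group is genuinely locally constant after the stratification, which follows from the fixed part theorem applied to $\Gr^W\V$ over each stratum.
\end{proof}
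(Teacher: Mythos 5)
The paper does not actually prove \autoref{CDK}: immediately before the statement it is presented as a citation, attributed to Cattani--Deligne--Kaplan in the pure case and to Brosnan--Pearlstein--Schnell (the text's ``Patrick'' is a typo) in the mixed case, with an o-minimal alternative via Bakker--Klingler--Tsimerman and its mixed extension also referenced. So there is no in-paper proof to compare your sketch against.

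That said, your proposal is a reasonable outline of the Brosnan--Pearlstein--Schnell route that the paper cites: stratify by the pure Hodge locus of $\Gr^W\V$ (handled by CDK), then on each stratum detect the remaining jump of the unipotent radical of the Mumford--Tate group via degeneracy loci of admissible normal functions, whose algebraicity is exactly the content of the cited results. One imprecision worth flagging: the statement that $\mathbf{MT}(\V_s) \subsetneq \mathbf{MT}(\V,\omega_s)$ is detected by ``the extension class landing in a proper sub-extension'' is only a heuristic. The unipotent radical of $\mathbf{MT}(\V_s)$ is cut out as the stabilizer of all Hodge tensors in arbitrary tensor constructions $\V_s^{\otimes a}\otimes(\V_s^\vee)^{\otimes b}(m)$, not just by sub-objects of $\V_s$ itself, so the honest reduction is to the locus where a flat section of some $\V^{a,b}(m)$ becomes a Hodge class --- which is precisely what the admissible-normal-function machinery is set up to handle, but requires passing to tensor powers rather than reasoning only about extensions of $\V_s$. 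Modulo this, the overall bookkeeping (countably many tensor constructions, countably many integral flat sections, countable unions of closed algebraic loci) is sound, and the identification of irreducible components with maximal strict special subvarieties is standard once algebraicity is in hand.
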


\noindent In general, the notion of \emph{special subvariety} is defined as follows:

\begin{defn}
Let $(S, \V)$ be a $\Z$VMHS. A \emph{special} subvariety $Y \subset S$ is a closed irreducible complex algebraic subvariety which is maximal for the Mumford-Tate group $\mathbf{G}_{Y}$ associated to $(Y, \restr{\mathbb{V}}{Y})$. 
\end{defn}

For us, from now on, all special subvarieties, so in particular all components of the Hodge locus, will be equipped with their \emph{reduced} algebraic structure.

\subsection{Period maps and atypical intersections}\label{sectonperiodmaps}

From now on we look only at admissible $\Z$VMHS $\V$ on a smooth quasi-projective variety $S$. Then the data of such a pair $(S,\V)$ is the same datum as a \emph{(mixed) period map}. 
 Let $(\mathbf{G},D)$ be a (connected, mixed) Hodge datum as in \cite[Def. 3.3]{klin} associated to $(S, \V)$, and $\Gamma \backslash D$ the associated mixed Mumford--Tate domain (called also Hodge variety by some authors). (Here and in the sequel, we always assume that $\Gamma$ is a torsion free finite index subgroup of $\mathbf{G} (\Z)$). We will be mainly concerned with period maps:
\begin{displaymath}
\Psi: S^{\operatorname{an}}\to \Gamma \backslash D,
\end{displaymath}
that is locally liftable, holomorphic and horizontal maps (see \cite[\S 3.5]{klin}).

\begin{defn}\label{perioddim} \hfill
  \begin{enumerate}
    \item
  A subvariety $Z$ of $S$ is said of {\em positive period dimension for $\V$} if $\Psi(Z^{\an})$ has
  positive dimension. 
 \item
  The \emph{Hodge locus of positive period dimension} 
  $\HL(S, \V^\otimes)_{\pos}$ is the
  union of the special subvarieties of $S$
  for $\V$ of positive period dimension. 
\end{enumerate}
\end{defn}

Here we follow the atypicality notion first introduced in \cite{2021arXiv210708838B}, rather than the one originally proposed by Klingler in \cite{klin}.

\begin{defn}\label{defatyphodge}
A special subvariety $Y\subset S$, with Hodge datum $(\mathbf{G}_Y,D_{Y})$ (associated to $\V_{| Y^{\operatorname{sm}}}$) is \emph{atypical} if
\begin{equation}\label{atypfirst}
\codim_{\Gamma \backslash D} \Psi (Y^{\operatorname{an}}) < \codim_{\Gamma \backslash D} \Psi(S^{\operatorname{an}}) + \codim_{\Gamma \backslash D} \Gamma_Y \backslash D_Y,
\end{equation}
or equivalently,
 \begin{displaymath}
 \dim \Psi (S^{\an})-\dim \Psi(Y^{\an})< \dim D- \dim D_Y .
 \end{displaymath}
Otherwise, it said to be \emph{typical}. Here $\Gamma_Y $ simply denotes $\Gamma \cap \mathbf{G}_Y(\Q)$. In general, the induced map $\Gamma_{Y} \backslash D_{Y} \to \Gamma \backslash D$ is known to be quasi-finite, so the formal ``codimension'' makes sense.
\end{defn}

\begin{defn}
The \emph{atypical Hodge locus} $\HL(S,\V^\otimes)_{\atyp} \subset \HL(S, \V^\otimes)$
  (resp. the \emph{typical Hodge locus} $\HL(S,\V^\otimes)_{\typ} \subset \HL(S, \V^\otimes)$) is
  the union of the atypical (resp. strict typical) special subvarieties of $S$ for $\V$.
\end{defn}

In this paper we will be mainly concerned with the case of atypical special subvarieties. In particular we want to investigate the following:
\begin{conj}[Zilber--Pink conjecture for $\Z$VMHS]
$\HL(S,\V^\otimes)_{\operatorname{atyp}}$ is algebraic, i.e. there are only finitely many maximal (reduced) atypical special subvarieties.
\end{conj}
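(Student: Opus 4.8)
The statement above is, at present, a conjecture: what the machinery of this paper delivers unconditionally is its \emph{positive period dimensional} part, while the residual finiteness of the atypical special \emph{points} is an Andr\'e--Oort type assertion requiring arithmetic input beyond functional transcendence. The plan is therefore to split the conjecture into these two pieces and to explain how the first is already in hand.

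\textbf{Positive period dimension, via a reduction to the monodromic setting.} Stratify $\HL(S,\V^\otimes)_{\atyp}$ by period dimension. Let $Y$ be an atypical special subvariety with $\dim \Psi(Y^{\an})>0$; then $\mathbf{H}_Y \subsetneq \mathbf{H}_S$ (otherwise maximality forces $Y=S$), so $Y$ lies in a proper weakly special $Y^{\mathrm{ws}}$ with $\mathbf{H}_{Y^{\mathrm{ws}}}=\mathbf{H}_Y$, and by the theorem of the fixed part --- i.e.\ the normality of $\mathbf{H}_Y$ in the derived Mumford--Tate group (Andr\'e, Deligne) --- the restriction $\restr{\V}{Y^{\mathrm{ws}}}$ is again an admissible $\Z$VMHS with $\Psi(Y^{\mathrm{ws}})$ Hodge-generic in $D_{Y^{\mathrm{ws}}}$. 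Decomposing $\codim_{\Gamma\backslash D}(\Gamma_Y\backslash D_Y)$ as the codimension of $\Gamma_{Y^{\mathrm{ws}}}\backslash D_{Y^{\mathrm{ws}}}$ in $\Gamma\backslash D$ plus that of $\Gamma_Y\backslash D_Y$ inside it, and comparing with \eqref{atypfirst}, one of two things occurs: either $Y^{\mathrm{ws}}$ is itself monodromically atypical in $S$, hence a fibre of one of the finitely many families produced by \Cref{thm:mixedZP} (via the Ax--Schanuel in families \Cref{protogeoZP}); or $Y^{\mathrm{ws}}$ is monodromically typical and $Y$ is already an atypical special subvariety of the strictly lower-dimensional pair $(Y^{\mathrm{ws}},\restr{\V}{Y^{\mathrm{ws}}})$. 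Running this dichotomy recursively on $\dim S$, and using the density statement of \Cref{thmhodgelocus} (suitably generalized) to keep the recursion under control, reduces the positive period dimensional case to \Cref{thm:mixedZP} together with the base of the recursion, where $\Psi$ is generically an isomorphism and one is left with atypical special \emph{points}.

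\textbf{The Andr\'e--Oort core, and the main obstacle.} This base case, together with the zero period dimensional stratum directly, is the genuine difficulty: here $\Psi(Y)$ is a point of a fixed weakly special --- hence totally geodesic and defined over $\Qbar$ --- subdomain, and atypicality of $Y$ is the ``isolated special point'' instance of Zilber--Pink for that Mumford--Tate sub-datum, i.e.\ an Andr\'e--Oort statement. I would attack it along Pila--Zannier lines: (i) functional transcendence (Ax--Lindemann/Ax--Schanuel for $\Psi$), available from \S\ref{atypicalproofofB} or from \cite{BKT}; (ii) a lower bound $\#\,\Gal(\Qbar/E)\cdot x \gg \Delta(x)^{\delta}$ on Galois orbits of atypical special points $x$ in terms of a suitable arithmetic complexity $\Delta(x)$; (iii) a polynomial upper bound on $\Delta(x)$ in terms of the height of $x$ in the fixed algebraic model of $S$; and (iv) point counting --- o-minimal Pila--Wilkie using definability of $\Psi$ \cite{BKT}, or its effective refinements (Binyamini) --- which together force the atypical special points into finitely many positive-dimensional weakly special subvarieties feeding back into the previous step. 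An ascending-chain argument on the poset of candidate atypical special subvarieties then upgrades ``contained in a proper subvariety'' to ``finitely many maximal''. The decisive missing ingredient is (ii)--(iii): for (mixed) Shimura varieties these are available through work of Tsimerman, Daw--Orr, Gao and others, but for a general $\Z$VMHS no lower bound on Galois orbits of special points is known, and producing one appears to require genuinely new arithmetic --- a ``large image'' statement for the Mumford--Tate torsor together with isogeny and height estimates for the underlying motives. Pending such a result the conjecture remains open; what the present paper contributes is the unconditional, and \emph{effective}, proof of its positive period dimensional case --- via the reduction above and \Cref{thm:mixedZP} --- the functional-transcendence engine of \S\ref{atypicalproofofB} replacing the non-constructive use of o-minimality.
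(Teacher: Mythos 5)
The statement you are asked about is stated in the paper as a \emph{conjecture}, and the paper offers no proof of it: what it proves is the geometric (family-level, monodromic) statement \Cref{thm:mixedZP}, together with the remark that the full finiteness ``would follow from the full power of the Zilber-Pink conjecture''. Your proposal correctly identifies this status and your division into a functional-transcendence part plus an Andr\'e--Oort-type arithmetic core (Galois orbit lower bounds, height/complexity comparisons, point counting) is exactly the kind of discussion the paper gestures at; on that level you are aligned with the authors.

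However, you overclaim what is unconditionally ``in hand''. \Cref{thm:mixedZP} produces finitely many \emph{families} of maximal \emph{monodromically} atypical weakly special subvarieties; it does not yield finiteness of maximal atypical \emph{special} subvarieties of positive period dimension. Even under $\mathbb{Q}$-simple monodromy, the paper (and \cite{2021arXiv210708838B}) only conclude that the positive period dimensional atypical Hodge locus lies in a strict algebraic subvariety --- the images of the families are not Zariski dense --- which is strictly weaker than the conjecture restricted to positive period dimension. Your recursion does not close this gap: when $Y^{\mathrm{ws}}$ is a fibre of one of the finitely many families, the base of that family may be positive dimensional, so infinitely many distinct atypical special subvarieties can occur among or inside its fibres, and passing to $(Y^{\mathrm{ws}},\restr{\V}{Y^{\mathrm{ws}}})$ then requires a finiteness statement \emph{uniform} over infinitely many such $Y^{\mathrm{ws}}$, which the invoked ``density statement'' does not provide. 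There is also a subtlety in your dichotomy step: atypicality of $Y$ in $S$ (a Mumford--Tate-theoretic inequality in $\Gamma\backslash D$) does not formally transfer to atypicality of $Y$ for the quotient or restricted datum on $Y^{\mathrm{ws}}$ without comparing the Hodge-theoretic and monodromic notions carefully, which is precisely where the known arguments need the extra hypotheses. So the honest summary is: the positive period dimensional case of the conjecture is \emph{not} proven here either; what is proven is the geometric ZP, and both the zero-dimensional (Andr\'e--Oort-type) and the ``finiteness within the families'' parts remain open.
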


\subsection{Weakly special subvarieties in Hodge theory}
\label{wspsubsinHdgthy}

Essentially from \Cref{CDK}, we observe that there are countably many special subvarieties for $(S,\V)$. It is also easy to observe that an intersection of special subvarieties is a union of special subvarieties. In this section we compare such notions and proprieties with the more general concept of \emph{weakly special subvarieties} introduced in \Cref{def:weaklysp}. These arguments are essentially easy generalizations of the ones appearing in \cite{2021arXiv210708838B, KO} to the mixed case and most of them appear in \cite{2021arXiv210110938G}.

Much of what we will need is a consequence of the following \cite[Thm. 1]{MR1154159} (see also \cite{MR0498551}). Let $\V$ be a $\Z$VMHS on $S$. For any closed point $s\in S$, let $\mathbf{G}_{S,s}$ be the Mumford-Tate group of $(S, \V)$ at $\omega_{s}$, and $\mathbf{H}_{S,s}$ the algebraic monodromy at $s$ in the sense of \Cref{monodromydef} applied to the $\mathbb{Q}$-local system $\V_{\mathbb{Q}}$.

\begin{thm}[André-Deligne Monodromy Theorem]\label{monodromytheorem}
The monodromy group $\mathbf{H}_{S,s}$ is a normal subgroup of the derived subgroup of $\mathbf{G}_{S,s}$.
\end{thm}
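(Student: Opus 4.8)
\textbf{Proof proposal for the André--Deligne Monodromy Theorem (\Cref{monodromytheorem}).}

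The plan is to reduce to the pure case and then invoke the classical statement of Deligne (building on André), since the theorem as stated here is exactly \cite[Thm.\ 1]{MR1154159} and we are allowed to quote it — but let me sketch how one would prove it from scratch in the VMHS setting, as the paper will presumably want a self-contained account. First I would recall the setup: fix a base point $s$, write $\mathbf{G} = \mathbf{G}_{S,s}$ for the Mumford--Tate group of the whole VMHS (the Tannakian group at $\omega_s$) and $\mathbf{H} = \mathbf{H}_{S,s}$ for the connected algebraic monodromy group. The first step is to observe that monodromy preserves the weight filtration $W_\bullet$ and, after passing to $\mathrm{Gr}^W$, acts through a representation whose algebraic monodromy $\mathbf{H}^{\mathrm{gr}}$ is the image of $\mathbf{H}$ in $\mathbf{G}^{\mathrm{gr}} := \mathbf{G}_{S,s}(\mathrm{Gr}^W\mathbb{V})$; likewise $\mathbf{G}^{\mathrm{gr}}$ is the image of $\mathbf{G}$, and the kernel on both sides is unipotent (contained in $\mathbf{W}_{-1}$). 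Since normality is preserved under quotients and the unipotent part is handled separately via Kashiwara's admissibility (the relative monodromy weight filtration exists, forcing the unipotent monodromy to sit inside the unipotent radical compatibly), the crux is the pure case.

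In the pure case the key input is that $\mathbf{H}$ is \emph{semisimple}. This follows from the theorem of the fixed part: the largest sub-VHS on which monodromy acts trivially is a sub-Hodge-structure, so $\mathbb{V}_s$ decomposes $\mathbf{H}$-equivariantly, and more to the point the invariants of $\mathbf{H}$ in any tensor construction $\mathbb{V}_s^{a,b}$ underlie a Hodge substructure; combined with the fact that $\mathbb{V}$ is polarizable this shows $\mathbf{H}$ acts semisimply on every $\mathbb{V}_s^{a,b}$, hence $\mathbf{H}$ is reductive, and since $\mathbf{H}$ has no nontrivial characters (monodromy preserves a polarization form and the determinant local systems on tensor pieces are finite, killed by passing to $\pi_1$ of a cover — or directly, any character would give a one-dimensional sub-VHS on which $\mathbf{H}$ acts trivially after a further twist) one concludes $\mathbf{H}$ is semisimple. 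Then I would invoke the normality: the subgroup $\mathbf{N} \subset \mathbf{G}$ generated by all $\mathbf{G}(\mathbb{Q})$-conjugates of $\mathbf{H}$ is a normal $\mathbb{Q}$-subgroup, and one shows the quotient VHS $\mathbb{V}_s / (\text{$\mathbf{N}$-isotypic part})$ — more precisely the maximal constant sub-local-system after modding out — has trivial monodromy; by the theorem of the fixed part this forces $\mathbf{N}$ and $\mathbf{H}$ to have the same ``non-constant'' content, and a Tannakian argument (the derived group of $\mathbf{G}$ is generated by $\mathbf{H}$ together with the Mumford--Tate groups of the fibres, using that generic Mumford--Tate equals the full $\mathbf{G}$ and fibre Mumford--Tate groups normalize $\mathbf{H}$) yields $\mathbf{H} \trianglelefteq \mathbf{G}^{\mathrm{der}}$.

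The main obstacle, I expect, is the passage from the pure to the mixed case — specifically, controlling how the unipotent part of the monodromy interacts with the unipotent radical $\mathbf{W}_{-1}$ of the Mumford--Tate group. The reductive quotient is handled cleanly by the pure theory applied to $\mathrm{Gr}^W\mathbb{V}$, but to conclude normality in $\mathbf{G}^{\mathrm{der}}$ rather than merely in the reductive quotient one needs Kashiwara's admissibility in an essential way: admissibility guarantees the existence of the relative monodromy weight filtration, which is what pins down the unipotent monodromy operators inside $\mathrm{Lie}\,\mathbf{W}_{-1}$ in a manner compatible with the weight filtration on $\mathrm{Lie}\,\mathbf{G}$ described in \Cref{lemmamixed}. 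In practice, since \cite[Thm.\ 1]{MR1154159} already records this statement (André treats the pure case and the mixed refinement is in the cited reference), the cleanest route for the paper is simply to cite it; the sketch above indicates why it is true and where the content lies.
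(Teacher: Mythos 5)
Your proposal correctly identifies that the statement is exactly \cite[Thm.~1]{MR1154159} and that the right move is simply to cite it, which is precisely what the paper does (it gives no proof, only the citation to Andr\'e together with a pointer to \cite{MR0498551}). Your accompanying sketch of the underlying argument is a useful gloss but goes beyond what the paper records; note only the small inaccuracy that \cite{MR1154159} already treats the mixed case directly rather than being merely the pure-case precursor.
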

\noindent We notice here that, in the general mixed case, the derived subgroup of $\mathbf{G}_{S,s}$ is not a semisimple group.

\begin{defn}
\label{weakmtdef}
 A subset $D'$ of the classifying space $D$ is called \emph{a weak Mumford-Tate subdomain} if there exists an element $t\in D'$ and a normal subgroup $\mathbf{N}$ of the derived group of $\MT(t)$ such that $D'= \mathbf{N}(\R)^+ \mathbf{N}^u(\C)\cdot t$, where $\mathbf{N}^u$ denotes the unipotent radical of $\mathbf{N}$.
\end{defn}

A \emph{weakly
special subvariety} of the Hodge variety $\Gamma
\backslash D$ is an irreducible component of the image in $\Gamma \backslash D$ of a weak Mumford-Tate subdomain. The datum $((\mathbf{H}, D_H), t)$ is called a \emph{weak Hodge subdatum} of
$(\mathbf{G}, D)$. Given $(S,\V)$ with associated period map $\Psi: S \to \Gamma \backslash D$, every weakly special subvariety (in the sense of \Cref{def:weaklysp}) of $(S,\V)$ can be described as an irreducible component of the preimage along $\Psi$ of a weakly special submanifold of the Hodge variety $\Gamma
\backslash D$; see \cite[Cor. 2.9]{2022arXiv220805182B}. With this new description of weakly special subvarieties in terms of the period domain $\Gamma \backslash D$, we obtain the following:

\begin{lem}\label{lemmawaklysp}
A special subvariety $Y$ for $(S,\V)$ is weakly special. The property of being weakly special is closed under taking intersections and passing to irreducible components.
\end{lem}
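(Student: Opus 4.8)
\textbf{Proof plan for \Cref{lemmawaklysp}.}

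The plan is to deduce both assertions from the description of weakly special subvarieties recalled just before the statement, namely that the weakly special subvarieties of $(S,\V)$ (in the sense of \Cref{def:weaklysp}) are exactly the irreducible components of preimages $\Psi^{-1}(\Gamma_H \backslash D_H)$ of weakly special submanifolds of $\Gamma \backslash D$, together with the André--Deligne monodromy theorem (\Cref{monodromytheorem}). First I would handle the claim that a special subvariety $Y$ is weakly special. By \Cref{monodromytheorem} applied to $(Y, \restr{\V}{Y})$, the algebraic monodromy $\mathbf{H}_Y$ is a normal subgroup of the derived group of $\mathbf{G}_Y$; hence the Mumford--Tate subdomain $D_Y$ defining $Y$ is in particular a weak Mumford--Tate subdomain (take $\mathbf{N} = \mathbf{G}_Y^{\mathrm{der}}$ in \Cref{weakmtdef}, or more precisely observe that $D_Y$ is cut out using all of $\mathbf{G}_Y$, which contains its derived group, and that a special subdomain is a fortiori a weak one). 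Since $Y$ is by definition maximal for $\mathbf{G}_Y$, and the monodromy group is determined by $\mathbf{G}_Y$ via normality, one checks that $Y$ is also maximal for $\mathbf{H}_Y$: any $Y' \supsetneq Y$ has $\mathbf{G}_{Y'} \supsetneq \mathbf{G}_Y$, and I would argue (using that $\mathbf{H}_{Y'}$ is normal in $\mathbf{G}_{Y'}^{\mathrm{der}}$ and surjects onto the relevant quotient) that $\mathbf{H}_{Y'} \supsetneq \mathbf{H}_Y$, so $Y$ satisfies \Cref{def:weaklysp}. This is the step I expect to require the most care, since one must rule out the possibility that enlarging $Y$ enlarges the Mumford--Tate group without enlarging the monodromy — this is precisely where normality and the structure of $\mathbf{G}_Y$ as an extension of a reductive group by a unipotent one are used, and the mixed case adds the subtlety (flagged in the text) that $\mathbf{G}_{S,s}^{\mathrm{der}}$ need not be semisimple.

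Next I would prove closure under intersection. Let $Y_1, Y_2$ be weakly special and let $Z$ be an irreducible component of $Y_1 \cap Y_2$. Using the description via the period domain, write $Y_j$ as a component of $\Psi^{-1}(M_j)$ with $M_j = \Gamma_{H_j} \backslash D_{H_j}$ weakly special in $\Gamma \backslash D$. Then $Z$ maps into a component of $M_1 \cap M_2$, so it suffices to show that weak Mumford--Tate subdomains are closed under intersection (then take components) — indeed, if $D' = \mathbf{N}(\R)^+ \mathbf{N}^u(\C) \cdot t$ and $D'' = \mathbf{N}'(\R)^+ \mathbf{N}'^u(\C) \cdot t$ share a common point $t$ (which we may arrange by choosing $t$ in the intersection), their intersection contains $(\mathbf{N} \cap \mathbf{N}')(\R)^+ (\mathbf{N} \cap \mathbf{N}')^u(\C) \cdot t$, and one verifies equality on the nose or at least that the component through $t$ is of this form, with $\mathbf{N} \cap \mathbf{N}'$ normal in the derived group of the Mumford--Tate group of $t$ because an intersection of normal subgroups is normal. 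Passing back through $\Psi^{-1}$ and taking irreducible components, one concludes $Z$ is (a component of the preimage of a weakly special submanifold, hence) weakly special. Closure under passing to irreducible components is then essentially immediate from the definition, since the defining property in \Cref{def:weaklysp} is about the monodromy group, which by the remark after \Cref{monodromydef} is insensitive to the ambient reducibility; alternatively it is built into the phrase ``irreducible component'' in the period-domain description. The only genuine obstacle, as noted, is the maximality bookkeeping in the first part; the rest is formal manipulation with normal subgroups and the standard dictionary between subvarieties of $S$ and of $\Gamma \backslash D$.
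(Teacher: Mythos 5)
Your second and third claims (closure under intersection and under passing to irreducible components) are handled essentially the way the paper intends: use the period-domain description of weakly special subvarieties as irreducible components of $\Psi^{-1}(M)$ for $M$ a weakly special submanifold of $\Gamma\backslash D$, observe that intersections of weak Mumford--Tate subdomains through a common point are again (finite unions of) weak Mumford--Tate subdomains, pull back, and take components. You gloss over the verification that a component $Z$ of $Y_1\cap Y_2$ is a \emph{whole} component of $\Psi^{-1}(N)$ rather than merely contained in one, but this is a short argument (the component $Z^+$ of $\Psi^{-1}(N)$ containing $Z$ lies in $Y_1\cap Y_2$, is irreducible, and contains the component $Z$, hence equals $Z$), so I would not call it a genuine gap.

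The first claim, that a special subvariety is weakly special, is where there is a real problem. You correctly flag the crux: one must rule out $Y'\supsetneq Y$ with $\mathbf{G}_{Y'}\supsetneq \mathbf{G}_Y$ but $\mathbf{H}_{Y'}=\mathbf{H}_Y$. But the tools you put forward for this --- normality of $\mathbf{H}_{Y'}$ in $\mathbf{G}_{Y'}^{\mathrm{der}}$ and an unspecified surjectivity ``onto the relevant quotient'' --- do not suffice. André--Deligne (\Cref{monodromytheorem}) gives only normality, and normality alone is compatible with $\mathbf{H}_{Y'}=\mathbf{H}_Y$ while $\mathbf{G}_{Y'}\supsetneq\mathbf{G}_Y$; nothing in the cited results produces the surjectivity you allude to. The missing ingredient is the \emph{theorem of the fixed part} (Schmid in the pure case, extended to admissible VMHS; explicitly invoked elsewhere in the paper and crucial to the period-domain description in \cite[Cor.~2.9]{2022arXiv220805182B}): the Hodge tensors cutting out $\mathbf{G}_Y$ are $\mathbf{H}_Y$-invariant, hence extend to flat sections of $\V^\otimes$ over $Y'$ whenever $\mathbf{H}_{Y'}=\mathbf{H}_Y$, and by the fixed part theorem a flat section of $\V^\otimes$ over $Y'$ which is Hodge at one point is Hodge at every point of $Y'$. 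This forces $\mathbf{G}_{Y'}\subseteq\mathbf{G}_Y$, contradicting the maximality defining $Y$. Without this input (or, equivalently, without citing both directions of the period-domain characterization, whose proof encapsulates it), the claim ``$\mathbf{H}_{Y'}\supsetneq\mathbf{H}_Y$'' does not follow. I would also note that your opening assertion that $D_Y$ itself is a weak Mumford--Tate subdomain via $\mathbf{N}=\mathbf{G}_Y^{\mathrm{der}}$ is a detour: the relevant weak subdomain for $Y$ is the $\mathbf{H}_Y$-orbit $D^0_Y\subset D_Y$, and it is the fixed part theorem which guarantees that $\Psi(Y)$ actually lands in $\Gamma_Y\backslash D^0_Y$, so it reappears here as well.
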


\noindent The closure of special and weakly special subvarieties under intersection gives natural notions of special and weakly special closures: the special (resp. weakly special) closure of a variety $Y \subset S$ is the intersection of all special (resp. weakly special) subvarieties in which it is contained.

To conclude we record the following fact about the monodromy of $\Z$VMHS (actually the same holds true even for $\C$VMHS, see \cite[Prop. 1.13]{zbMATH04071082} and \cite[Thm. 1.1]{wrightdel} for related discussions and details). It is essentially a corollary of \Cref{monodromytheorem} and \cite[Lem. 3.3]{2021arXiv211003489C}, which asserts that if $G$ is a linear algebraic group whose quotient by its unipotent radical $G^u$ is semisimple, then $G$ is sparse (recall \autoref{sparsegroup}). 
\begin{lem}\label{lemmasparse}
The algebraic monodromy of a $\Z$VMHS is sparse.
\end{lem}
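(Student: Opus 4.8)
The plan is to reduce the statement to the criterion \cite[Lem. 3.3]{2021arXiv211003489C} quoted above: since that result says a connected linear algebraic group $G$ is sparse as soon as $G/G^u$ is semisimple, it suffices to fix a closed point $s \in S$ and show that the algebraic monodromy group $\mathbf{H} := \mathbf{H}_{S,s}$ of the $\Q$-local system $\V_\Q$ satisfies: $\mathbf{H}/\mathbf{H}^u$ is semisimple. (One then base-changes to $\C$, which affects neither semisimplicity nor sparseness, and it is precisely the $\C$-group $\mathbf{H}_S = \mathbf{H}_{S,s}\times_\Q\C$ of \Cref{cor:asloc} that is fed into \Cref{thm:newAS}.) Write $\mathbf{G} := \mathbf{G}_{S,s}$ for the Mumford--Tate group of $(S,\V)$ at $\omega_s$; recall from the discussion after \Cref{mtdef} that $\mathbf{G}$ is connected and that $\mathbf{G}/\mathbf{G}^u = \mathbf{G}_s(\Gr^W\V)$ is reductive, since $\Gr^W\V$ carries a polarizable pure Hodge structure. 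By the Andr\'e--Deligne monodromy theorem (\Cref{monodromytheorem}), $\mathbf{H}$ is a normal subgroup of the derived group $\mathbf{G}^{\mathrm{der}}$.

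The key step is to locate $\mathbf{H}^u$ inside $\mathbf{G}^u$, which is purely group-theoretic but requires a little care because normality is not transitive. Since $\mathbf{H}^u$ is characteristic in $\mathbf{H}$ and $\mathbf{H} \trianglelefteq \mathbf{G}^{\mathrm{der}}$, it is a connected normal unipotent subgroup of $\mathbf{G}^{\mathrm{der}}$, hence $\mathbf{H}^u \subseteq (\mathbf{G}^{\mathrm{der}})^u$. In turn $(\mathbf{G}^{\mathrm{der}})^u$ is characteristic in $\mathbf{G}^{\mathrm{der}}$, and $\mathbf{G}^{\mathrm{der}}$ is characteristic in $\mathbf{G}$, so $(\mathbf{G}^{\mathrm{der}})^u$ is a connected normal unipotent subgroup of $\mathbf{G}$, i.e. $(\mathbf{G}^{\mathrm{der}})^u \subseteq \mathbf{G}^u$; hence $\mathbf{H}^u \subseteq \mathbf{G}^u$. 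Now consider the projection $p : \mathbf{G} \to \mathbf{G}/\mathbf{G}^u$. The kernel of $p|_{\mathbf{H}}$ is $\mathbf{H}\cap\mathbf{G}^u$, which is unipotent and normal in $\mathbf{H}$, hence contained in $\mathbf{H}^u$; combined with $\mathbf{H}^u\subseteq\mathbf{G}^u$ this gives $\ker(p|_{\mathbf{H}}) = \mathbf{H}^u$, so $\mathbf{H}/\mathbf{H}^u \cong p(\mathbf{H})$. Since $\mathbf{H}\trianglelefteq\mathbf{G}^{\mathrm{der}}$, the (connected) image $p(\mathbf{H})$ is normal in $p(\mathbf{G}^{\mathrm{der}}) = (\mathbf{G}/\mathbf{G}^u)^{\mathrm{der}}$, which is semisimple because $\mathbf{G}/\mathbf{G}^u$ is connected reductive; and a connected normal subgroup of a semisimple group is semisimple. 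Therefore $\mathbf{H}/\mathbf{H}^u$ is semisimple, and \cite[Lem. 3.3]{2021arXiv211003489C} applies to give that $\mathbf{H}$ is sparse.

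There is no genuinely hard step here — the lemma is, as indicated in the text, a formal consequence of the Andr\'e--Deligne theorem together with the cited criterion. The only points requiring attention are bookkeeping ones: one must not assume $\mathbf{H}$ is normal in all of $\mathbf{G}$ (only in $\mathbf{G}^{\mathrm{der}}$), and in the genuinely mixed case $\mathbf{G}^{\mathrm{der}}$ is itself non-reductive, so one has to pass through the reductive quotient $\mathbf{G}/\mathbf{G}^u$ rather than argue inside $\mathbf{G}^{\mathrm{der}}$ directly — the identification $\ker(p|_{\mathbf{H}}) = \mathbf{H}^u$ being exactly what makes the argument close up. Finally one records that the statement, as used, concerns the $\C$-monodromy group $\mathbf{H}_S$ of \Cref{cor:asloc}, but since $\mathbf{H}_S = \mathbf{H}_{S,s}\times_\Q\C$ and both the semisimplicity of $\mathbf{H}/\mathbf{H}^u$ and the hypothesis of \cite[Lem. 3.3]{2021arXiv211003489C} are insensitive to this base change, this is immediate.
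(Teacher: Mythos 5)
Your proof is correct and takes exactly the route the paper indicates: the paper gives only the one-line sketch that \Cref{lemmasparse} is a corollary of \Cref{monodromytheorem} and the criterion \cite[Lem.~3.3]{2021arXiv211003489C} (semisimple modulo unipotent radical implies sparse), and your argument is a careful fleshing-out of that sketch, with the essential bookkeeping point — that one cannot argue inside $\mathbf{G}^{\mathrm{der}}$ directly in the mixed case and must instead show $\ker(p|_{\mathbf{H}}) = \mathbf{H}^u$ in order to identify $\mathbf{H}/\mathbf{H}^u$ with a normal subgroup of the semisimple group $(\mathbf{G}/\mathbf{G}^u)^{\mathrm{der}}$ — handled correctly.
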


\subsection{Parameter spaces of mixed Hodge structures and weak atypicality}\label{paraspace}

For our exposition we essentially follow \cite[\S 3.5, and \S 6]{bakker2020quasiprojectivity}.

Let $V$ be a finite dimensional real vector space equipped with an increasing filtration $W_\bullet$ and a collection of non-degenerate bilinear forms $q_k : \Gr^W_k \otimes \Gr^W_k \to \R$ that are $(-1)^k$-symmetric. Fix a partition of the dimension of $V$ into non-negative integers $h^{p,q}$ satisfying Hodge symmetry. For any integer $k$ we denote by $\ch{\Omega}_k$ the complex projective algebraic variety parametrizing the $(q_k)_{\C}$-isotropic filtrations $\{F'^p_k\}$ on $\Gr^W_k V_{\C}$ with $\dim F'^p_k = \sum _{r\geq p} h^{r,k-r}$. The complex group $\Aut (q_k)$ acts transitively by algebraic automorphisms on $\ch{\Omega}_k$.

Let $(V, W_\bullet, F^\bullet, q) $ be a mixed $\Z$-Hodge structure as above, polarized by $q$. We denote by $D$ the corresponding \emph{mixed period domain}: the set of decreasing filtrations $F'^\bullet$ of $V_\C$ such that $(V_{\R}, W_\bullet, F'^\bullet, q)$ is a real mixed Hodge structure graded-polarized by $q$ and such that
\begin{displaymath}
\dim_\C (F'^p \Gr_{p+q}^W\cap \overline{F'}^q \Gr_{p+q}^W )=\dim F^p \Gr_{p+q}^W\cap \overline{F}^q \Gr_{p+q}^W =h^{p,q}.
\end{displaymath}
By definition, $D$ is a semi-algebraic open subset of the smooth projective complex variety $\ch{D}$ that parametrizes the decreasing filtrations of $V_\C$ by complex vector subspaces such that the filtration induced on the graded pieces $\Gr^W_k V_\C$ is inside $\ch{\Omega}_k$ for each $k$. In particular $\ch{D}$ maps to the product of $\ch{\Omega}_k$. 

Given a $\Z$VMHS $\V$ on $S$, we have associated a mixed Hodge datum $(\mathbf{G}_S,D_S)$ as in \Cref{mixedhodgedatumdef} (we always assume that $\mathbf{G}_{S}$ is the generic Mumford-Tate group of $\V$). In this case, $\mathbf{G}_{S}$ comes with a faithful linear representation $\rho: \mathbf{G}_{S} \to GL(V)$, and thanks to this representation, we obtain a morphism from $D_S$ to the appropriate classifying space $\ch{D}$ introduced above. 

\paragraph{Notation} We denote by $\ch{D}_S$ the Zariski closure of $D_S$ in $\ch{D}$ (it comes with a transitive action of $\mathbf{G}_S(\C)$ and by $\ch{D}^0_S\subset \ch{D}_S$ the monodromy orbit of some base point $h_0 \in D_S$, and write $D^{0}_{S} = \ch{D}^{0}_{S} \cap D_S$. In particular to a $\Z$VMHS $\V$ on $S$ we obtain the complete \emph{Hodge-monodromy datum}:
\begin{displaymath}
(\mathbf{H}_S \subset \mathbf{G}_S, D^0_S \subset D_S)
\end{displaymath}

\begin{rem}
\label{opennessremark}
We note that $D_{S}$ (resp. $D_{S}^{\circ}$) is open in $\ch{D}_{S}$ (resp. $\ch{D}^{0}_{S}$), see \cite[Thm. A.1]{2021arXiv211003489C}.
\end{rem}

\noindent In \S\ref{new2} we prove the so called \emph{geometric part of Zilber-Pink}. For geometric Zilber-Pink we will work with a finer notion of atypicality, that we now recall. This corresponds to what, in \cite[Def. 5.3]{2021arXiv210708838B} is called \emph{monodromically atypical}. See also Lem. 5.6, Ex 5.7, and Rmk. 5.7 in \emph{op. cit.} for a more detailed discussion, in the pure case, of such a definition and comparisons with other variants. (The link between the two notions ultimately comes from \Cref{monodromytheorem}.) 
\begin{defn}\label{moatyp}
A weakly special subvariety $Y$ for $(S,\V)$ is called \emph{monodromically atypical} if 
 \begin{equation}\label{eqatypi}
  \dim \Psi (S^{\an})-\dim \Psi(Y^{\an})< \dim  \ch{D}^{0}_{S} - \dim  \ch{D}^{0}_{Y}.
 \end{equation}
\end{defn}

\begin{rem}
Suppose $\Psi$ is quasi-finite, so every point is a weakly special subvariety. Then unless $\Psi$ is dominant, each point is in fact a monodromically atypical weakly special subvariety.
\end{rem}

Geometric Zilber-Pink says, roughly, that there are finitely many \emph{families} of maximal monodromically atypical subvarieties. (The more precise statement gives additional information about the Hodge-theoretic data defining those families.) In the case where the ambient monodromy group of $S$ is $\mathbb{Q}$-simple, this can be used to recover special cases of the true Zilber-Pink conjecture for positive-dimensional atypical intersections, which is why one often prefers the statement appearing in \autoref{thmhodgelocus}. 

One subtlety about the generalization of the argument from \cite{2021arXiv210708838B} and other prior work in the pure setting is that semisimplicity of monodromy is often used (usually to say that a group, up to conjugation, has only finitely many subgroups). This is not true in the general mixed case, so our extension will require more care to ensure that atypical mixed intersections can still be appropriately ``over-parameterized'' (recall \autoref{overparamrem}) in line with the strategy discussed in \S\ref{sec:unfi}.

We also recall the Ax-Schanuel theorem for period maps. It is implied by the more general theorem \autoref{thm:newAS}. The original proof follows a series of papers, see \cite{MR3958791, 2021arXiv210110968C, 2021arXiv210110938G, 2022arXiv220805182B}. 

\begin{thm}[Ax-Schanuel theorem in the period domain]\label{astheoremperioddomain}  
Let $W \subset S \times \ch{D}^0$ be an algebraic subvariety. Let $U$ be an
irreducible complex analytic component of $W \cap S \times_{\Gamma \backslash D^{0}} D^{0}$
such that 
\begin{displaymath}
\codim_{S \times \ch{D}^0} U< \codim_{S \times \ch{D}^0} W+ \codim_{S
  \times \ch{D}^0} (S \times_{ \Gamma \backslash D^0}
D^{0})\;\;.  
\end{displaymath}
Then the projection of $U$ to $S$ is contained in a strict weakly
special subvariety of $(S,\V)$.
\end{thm}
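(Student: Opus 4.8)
The plan is to deduce \Cref{astheoremperioddomain} from the general Ax--Schanuel theorem in the period torsor, \Cref{cor:asloc} (itself a consequence of \Cref{thm:newAS}), applied to the bundle $(P,\nabla)$ of \Cref{examplebundle} attached to $\V_\Q$. This is legitimate: by \Cref{lemmasparse} the algebraic monodromy group $\mathbf{H}_S$ of $\V$ is sparse, and by \Cref{galequalsmono} one has $\mathbf{H}_S=\Gal(\nabla)$, so $(P,\nabla)$ satisfies the hypotheses of \Cref{cor:asloc}. The only real work is to transport a statement about $S\times\ch{D}^0$ into a statement about $P$ and its flat leaves.

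First I would build a ``period map on the torsor''. Since the Hodge filtration $F^\bullet$ is an algebraic subbundle of $\mathcal H$, the rule $(\beta,s)\mapsto\beta(F^\bullet_s)$ defines an algebraic map $q\colon P\to\ch{D}$; evaluating it along flat leaves shows its image lies in the monodromy orbit $\ch{D}^0_S=\mathbf{H}_S(\C)\cdot h_0$ (which I identify with the $\ch{D}^0$ of the statement), and that on each fibre $P_s$ the map $q$ is the orbit map of the $\mathbf{H}_S$-action. Hence $\Phi:=(\pi,q)\colon P\to S\times\ch{D}^0_S$ is surjective with all fibres of dimension $d:=\dim\mathbf{H}_S-\dim\ch{D}^0_S=\dim\operatorname{Stab}_{\mathbf{H}_S}(h_0)$, and $\Phi$ carries each flat leaf $\mathcal L\subset P$, locally biholomorphically, onto an open piece of a component of the graph $\mathcal G:=S\times_{\Gamma\backslash D^0}D^0$ (on a leaf, $q$ is simply a local lift of $\Psi$ read through the projection $\pi|_\mathcal L$).

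Now, given algebraic $W\subset S\times\ch{D}^0_S$ and an analytic component $U$ of $W\cap\mathcal G$ satisfying the codimension hypothesis, I would set $V:=\Phi^{-1}(W)\subset P$, pick $p\in U$, and lift $p$ to a point $x\in P$ with $\Phi(x)=p$ on the leaf $\mathcal L_x$ mapping near $p$ to the relevant component of $\mathcal G$ (such $x$ exists because $q|_{P_s}$ is surjective onto $\ch{D}^0_S$). As $\Phi|_{\mathcal L_x}$ is locally biholomorphic onto $\mathcal G$ near $p$, the analytic germs of $V\cap\mathcal L_x$ at $x$ correspond, with equal dimensions, to those of $W\cap\mathcal G$ at $p$; let $\tilde U$ be the one mapping onto $U$, so $\dim\tilde U=\dim U$ and, since $\pi\circ\Phi=\pi_S$, $\pi(\tilde U)=\pi_S(U)$. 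The crucial bookkeeping: equidimensionality of $\Phi$ gives $\codim_P V=\codim_{S\times\ch{D}^0_S}W$; leaves have dimension $\dim S$, so $\codim_P\mathcal L_x=\dim\mathbf{H}_S$; and $\codim_P\tilde U=\codim_{S\times\ch{D}^0_S}U+d$. Substituting, $\codim_P\tilde U<\codim_P V+\codim_P\mathcal L_x$ is seen to be equivalent to $\codim_{S\times\ch{D}^0_S}U<\codim_{S\times\ch{D}^0_S}W+\dim\ch{D}^0_S$, i.e.\ (since $\codim_{S\times\ch{D}^0_S}\mathcal G=\dim\ch{D}^0_S$) to the hypothesis of \Cref{astheoremperioddomain}. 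Thus \Cref{cor:asloc} applies to $(V,x,\mathcal L_x,\tilde U)$ and shows $\pi_S(U)=\pi(\tilde U)$ is contained in a weakly special subvariety; as the output of \Cref{thm:newAS} is a $\nabla$-special subvariety, hence proper in $S$, it is a strict weakly special subvariety, which is the claim.

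The step I expect to be the main obstacle is the dimension bookkeeping: one must carefully check that $\Phi$ is surjective onto $S\times\ch{D}^0_S$ with equidimensional fibres of dimension exactly $\dim\operatorname{Stab}_{\mathbf{H}_S}(h_0)$, so that the correction terms $d$ cancel and the codimension inequality in $P$ matches, on the nose, the one imposed on $S\times\ch{D}^0$. Secondary technical points are the algebraicity of $q$ (which rests on the algebraicity of the Hodge filtration over $S$), the identification of the $\ch{D}^0$ appearing in the theorem with the monodromy orbit $\ch{D}^0_S$, and the verification that $\Phi$ sends leaves biholomorphically onto local pieces of $\mathcal G$.
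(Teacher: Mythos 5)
Your proposal is correct and takes essentially the same route as the paper: in \S\ref{constructionBT} the paper sketches exactly the reduction you carry out, pulling back along the surjective algebraic map $\pi\times\nu : P \to S\times\ch{D}^0$ (your $\Phi=(\pi,q)$; the paper's $\nu$ also records $\beta(W_{\bullet,s})$, but since $W_\bullet$ is flat and $\mathbf{H}_S$-invariant this is constant on $P$, so your $q$ has the same image) and observing that the integral leaves map onto $S\times_{\Gamma\backslash D^0}D^0$. Your contribution is to make explicit the equidimensionality of $\Phi$ and the precise cancellation of the stabilizer dimension $d$ in the codimension bookkeeping, which the paper only asserts with the remark that surjectivity preserves the atypicality condition.
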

Notice that $S \times_{\Gamma \backslash D^{0}} D^{0}$ is simply the image of the graph of $\tilde{\Phi}: \widetilde{S} \to D^{0}$ under $
\widetilde{S} \times D^{0} \to S \times D^{0}$. Then the intersection $W \cap S \times_{\Gamma \backslash D^{0}} D^{0}$ can be identified with the intersection in $S \times D^{0}$ between $W$ and the image of $ \widetilde{S}$ in $S \times D^{0}$ along the map $(\pi, \tilde{\Phi})$. 

\subsection{Relation with period torsors associated to $\mathbb{V}$}\label{constructionBT}

The relationship between \autoref{astheoremperioddomain} and \autoref{cor:asloc} is ultimately explained by the torsor $P$ in \autoref{examplebundle} associated to the underlying local system of $\mathbb{V}$ and appears also in the work of Bakker-Tsimerman \cite{2022arXiv220805182B}. Let $(\mathbf{H}_{S}, D^{0}_{S})$ be the weak Hodge datum associated to $\mathbb{V}$. We may fix a basepoint $s_{0} \in S$, and regard this data as being associated to the fibre $\mathbb{V}_{s_{0}}$. Then in particular, $\ch{D}^{0} = \ch{D}^{0}_{S}$ is a variety parameterizing mixed Hodge data at $s_{0}$. Recall that $P$ was constructed as subbundle of the total space of the vector bundle $\sheafhom(\mathcal{V}, \mathcal{O}_{S} \otimes_{\mathbb{C}} \mathcal{V}_{s_{0}})$, where $\mathcal{V}$ is the algebraic bundle underlying $\mathbb{V} \otimes_{\mathbb{Z}} \mathcal{O}_{S^{\textrm{an}}}$. Then we obtain a natural algebraic map $\nu : P \to \ch{D}^{0}$. Concretely, this map acts as 
\[ [\eta \in \Hom(\mathcal{V}_{s}, \mathcal{V}_{s_{0}})] \mapsto \eta(W_{\bullet,s}, F^\bullet_s) , \]
where we use that both the Hodge flag and the filtration by weight are given by algebraic subbundles of $\mathcal{V}$. This is a combination of the fact that the weight filtration on $\mathbb{V}$ is given by local subsystems to which we can apply the Riemann-Hilbert functor, together with the corresponding result for the Hodge filtration on the pure graded quotients, which is a consequence of work of Schmid \cite{sch73}. To check that the map is well-defined (the image lands inside $\ch{D}^{0}$ rather than some larger flag variety), it suffices by $\mathbf{H}_{S}(\mathbb{C})$-invariance to check this for a single leaf, where it follows from the fact that the associated period map lands inside $\ch{D}^{0}$. 

To recover \autoref{astheoremperioddomain} from \autoref{cor:asloc} one pulls back the intersections appearing in the statement of \autoref{astheoremperioddomain} along the map $\pi \times \nu : P \to S \times \ch{D}^{0}$. We note that the image of a leaf $\mathcal{L} \subset P$ which arises by parallel translating the identity $\textrm{id} \in \Hom(\mathbb{V}_{s_{0}}, \mathbb{V}_{s_{0}})$ using the integral structure of $\mathbb{V}$ has as image in $S \times \ch{D}^{0}$ exactly the fibre product $S \times_{\Gamma \backslash D^{0}} D^{0}$ appearing in \autoref{astheoremperioddomain}. Since the map $\pi \times \nu$ is surjective, the atypicality condition is preserved. See indeed the discussion in \cite{2022arXiv220805182B}.

\subsection{Families of weakly specials in the mixed setting}\label{sec:familieswp}
In our applications of the results of \S\ref{axschanfamsec}, we will need the following fact.

\begin{prop}
\label{rigidprop}
Let $(S,\mathbb{V})$ be a $\Z$VMHS. Then there exists a countable collection $\{ h_{i} : C_{i} \to B_{i} \}_{i=1}^{\infty}$ of families of irreducible algebraic varieties of $S$, all of whose fibres are weakly special, and such that every weakly special subvarieties arises as a fibre in some such family. Moreover, the maps $C_{i} \to S$ may be assumed quasi-finite. 
\end{prop}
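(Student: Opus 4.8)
The plan is to construct the families $\{h_i\}$ by Noetherian induction over Hilbert scheme components, using the Hodge-theoretic "fixed part" theorem to control how the monodromy group varies in families. First I would recall that any weakly special $Y \subset S$ is, by \autoref{lemmawaklysp} and the discussion following \Cref{weakmtdef}, an irreducible component of $\Psi^{-1}(M)$ for a weakly special submanifold $M \subset \Gamma\backslash D$ of the form $\Gamma_{\mathbf{N}}\backslash D_{\mathbf{N}}$, where $\mathbf{N}$ is a normal subgroup of the derived group of the Mumford--Tate group of a point $t \in D$. The key reduction is that, up to $\mathbf{G}(\Q)$-conjugacy, there are only countably many such pairs $(\mathbf{N}, D_{\mathbf{N}})$ even in the mixed case: although $\mathbf{G}_S$ is not semisimple, the relevant normal subgroups $\mathbf{N}$ are controlled because $\mathbf{N}$ contains the unipotent radical of the Mumford--Tate group of $Y$ (being normal in the derived group) and its reductive quotient is a normal subgroup of the semisimple group $\mathbf{G}_S^{\mathrm{der}}/W_{-1}$, of which there are only finitely many up to conjugacy. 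So the countably many weakly special subvarieties organize into countably many "types" indexed by a conjugacy class of weak Hodge subdatum.

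Next, for a fixed type, I would produce a single algebraic family. Fix a weak Hodge subdatum $((\mathbf{H}, D_H), t)$. The locus in $S$ where the period map sends a neighborhood into a $\Gamma$-translate of the corresponding weakly special submanifold is a Hodge-theoretic locus, hence a countable union of algebraic subvarieties by the mixed analogue of \Cref{CDK} (the theorem of Brosnan--Pearlstein, or Bakker--Brunebarbe--Tsimerman in the mixed case). The components of the same type all have the same "relative position" data, and I would assemble them as follows: take the relevant component of the Hilbert scheme (or Chow variety) of $S$ parameterizing subvarieties with the prescribed numerical invariants, restrict to the (constructible, by the definability of period maps — here one can instead use the algebraicity statement directly rather than o-minimality, or cite \cite{2021arXiv211003489C}) sublocus parameterizing those subvarieties $Z$ such that $(Z, \mathbb{V}|_Z)$ has the correct generic Mumford--Tate group, and take $B_i$ to be its normalization (or a stratification into smooth pieces) with $C_i \subset Z \times B_i$ the universal family. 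The fixed part theorem enters to guarantee that the generic Mumford--Tate group is constant on each such component (so the condition "fibre is weakly special of this type" is closed/constructible on the base), and to ensure that a weakly special subvariety is determined by its monodromy so that distinct fibres are genuinely distinct subvarieties — giving a family in the sense of \Cref{deffam}. Quasi-finiteness of $C_i \to S$ follows because a general point of $S$ can lie on only finitely many weakly special subvarieties of bounded degree and fixed type (two weakly special subvarieties of the same type through the same general point would have the same monodromy hence coincide), which one upgrades to quasi-finiteness of the map of varieties by a standard argument (pass to the open dense locus where fibres of $C_i \to S$ are finite; what remains is lower-dimensional, handle by Noetherian induction on $S$, or simply discard it since it contributes no new weakly special subvarieties not already captured by a family on a smaller $S$).

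The main obstacle I expect is exactly the failure of semisimplicity flagged in the paper right before \Cref{astheoremperioddomain}: in the pure case one says "a semisimple group has, up to conjugacy, finitely many algebraic subgroups" to get the countability of types for free, but the mixed Mumford--Tate group has a nontrivial unipotent radical and infinitely many subgroups (e.g. infinitely many sub-$\mathbb{G}_a$'s up to conjugacy in $\mathbb{G}_a^2$). The resolution is that the weak Hodge subdata that actually occur are not arbitrary: $\mathbf{N}$ is normal in the derived group of an actual Mumford--Tate group $\mathbf{G}_t$, and normality is a very strong constraint — the unipotent part of $\mathbf{N}$ is a sub-mixed-Hodge-structure of $\mathrm{Lie}\,\mathbf{G}_t$ stable under the reductive part, and the possibilities for these are governed by the decomposition of $\mathrm{Lie}\,W_{-1}$ into isotypic pieces, of which there are finitely many. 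So I would spend the most care here, isolating the precise statement (likely: "for a fixed $\mathbb{Z}$VMHS $(S,\mathbb{V})$, the weak Hodge subdata of $(\mathbf{G}_S, D_S)$ that arise from weakly special subvarieties fall into finitely many $\mathbf{G}_S(\Q)$-conjugacy classes, after further decomposing each class according to finitely many integral structures"), and cite \Cref{monodromytheorem} together with the structure theory of \cite{2021arXiv210110938G} and \cite{bakker2020quasiprojectivity} for the mixed period domain. Once this finiteness-of-types input is secured, the rest is the standard Hilbert-scheme/fixed-part-theorem packaging and poses no essential difficulty beyond bookkeeping.
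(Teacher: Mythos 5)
Your route --- parametrize weakly special subvarieties by a weak Hodge datum ``type'' and then use Hilbert schemes to assemble families --- is genuinely different from the paper's. The paper instead fixes a special subvariety $Y^{\textrm{sp}}$ (countably many, by \autoref{CDK}) and a $\mathbb{Q}$-subgroup of its generic Mumford--Tate group (countably many, trivially), constructs on a finite cover $\widetilde{Y}^{\textrm{sp}}$ the quotient period map $\Xi$ associated to $(\mathbf{G}_Y, D_Y)/\mathbf{H}_Y$, observes that the irreducible components of the fibres of $\Xi$ and their images in $Y^{\textrm{sp}}$ are exactly the weakly specials with that monodromy datum, and then uses the quasi-projectivity of period-map images \cite{bakker2020quasiprojectivity} (equivalently, a proper compactification of $\Xi$ plus Stein factorization) to realize these fibres as a single algebraic family, which is pushed forward to $S$ via \autoref{imghasfamlem}. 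The family structure thus comes for free from the algebraic structure of $\Xi$; no Hilbert scheme enters.

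Two points in your proposal need repair. First, you devote most of the effort to showing that weak Hodge subdata fall into \emph{finitely} many $\mathbf{G}_S(\mathbb{Q})$-conjugacy classes, flagging the unipotent radical as the obstruction; but \autoref{rigidprop} asks only for \emph{countably} many families, and that is immediate from the countability of special subvarieties together with the countability of $\mathbb{Q}$-algebraic subgroups of a $\mathbb{Q}$-group. The finiteness you are chasing is harder than what is needed, and your sketch of it (controlling $\mathbf{N}^{u}$ via isotypic pieces of $\mathrm{Lie}\,W_{-1}$) is not a complete argument. A finiteness statement of this flavour does become relevant in the proof of geometric Zilber--Pink, but there the paper produces it by Noetherian induction on families (see \autoref{lemmapre} and \autoref{loctopfiblem}), not by a structure theorem for mixed groups. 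Second, the Hilbert-scheme packaging requires the locus in the base parameterizing fibres of the prescribed monodromy type to be algebraically constructible; you acknowledge this and resolve it with a gesture toward definability or o-minimality, which is precisely what the paper's effectivity goals preclude. This can presumably be made rigorous via a Whitney-stratification argument controlling monodromy in families, but as written it is a genuine gap; the paper's quotient-period-map construction sidesteps the constructibility question entirely because the family is an honest algebraic morphism from the outset.
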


\begin{proof}
Let $Y \subset S$ be a weakly special subvariety with associated monodromy/Hodge datum $(\mathbf{H}_{Y}\subset \mathbf{G}_{Y}, D_{Y})$. Let $Y^{\textrm{sp}}$ be the special closure of $Y$, or alternatively, the special subvariety containing $Y$ defined by $D_{Y}$. Then we can construct a finite cover $e : \widetilde{Y}^{\textrm{sp}} \to Y^{\textrm{sp}}$ and a period map $\Xi : \widetilde{Y}^{\textrm{sp}} \to \Gamma_{\widetilde{Y}^{\textrm{sp}}/Y} \backslash D_{\widetilde{Y}^{\textrm{sp}}/Y}$ associated to the quotient datum $(\mathbf{G}_{\widetilde{Y}^{\textrm{sp}}/Y}, D_{\widetilde{Y}^{\textrm{sp}}/Y}) = (\mathbf{G}_{Y}, D_{Y})/\mathbf{H}_{Y}$ (in the sense of \cite[\S 5]{2021arXiv210110938G}). From \cite[Prop. 5.1 (ii)]{2021arXiv210110938G}, the irreducible components $\widetilde{Z}$ of the fibres of $\Xi$ are weakly special subvarieties of $\widetilde{Y}^{\textrm{sp}}$ for $\restr{\mathbb{V}}{\widetilde{Y}^{\textrm{sp}}}$, and their images $Z$ in $Y^{\textrm{sp}}$ are weakly special subvarieties of $Y^{\textrm{sp}}$ for $\restr{\mathbb{V}}{Y^{\textrm{sp}}}$ (use that the algebraic monodromy group is unchanged by passing to a finite covering). Moreover, applying \autoref{lemmawaklysp} to $Y^{\textrm{sp}}$ and the weakly special closure of some such $Z$ one sees that these varieties are all weakly special for $S$, and $Y$ is among their fibres.

Although a priori depending on $Y$, the construction above in fact depends only on the choice of $Y^{\textrm{sp}}$ and a $\mathbb{Q}$-subgroup of $\mathbf{G}_{Y}$; there are countably many such choices. From either the main result of \cite{bakker2020quasiprojectivity}, or by first taking a proper compactification of $\Xi$ as in \cite[Prop. 2.4]{bakker2020quasiprojectivity} and applying the Stein factorization, it follows that the fibres of $\Xi$ belong to a common algebraic family $g : \widetilde{Y}^{\textrm{sp}} \to T$. We conclude thanks to \autoref{imghasfamlem} below, applied to $(g,e)$. 
\end{proof}

\begin{lem}
\label{imghasfamlem}
Let $e : N \to M$ be a quasi-finite map, and $g : Q \to T$ a family of subvarieties of $N$. Then there exists a family $h : C \to B$, possibly over a disconnected base, of subvarieties of $M$ whose fibres are exactly the closures of images under $e$ of components of fibres of $g$.
\end{lem}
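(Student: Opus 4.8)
The plan is to prove \autoref{imghasfamlem} by leveraging the fact that quasi-finite maps can be factored through a proper map, and that families of subvarieties behave well under proper pushforward and taking images. First I would recall the setup: we have a quasi-finite map $e : N \to M$ and a family $g : Q \to T$ of subvarieties of $N$, i.e.\ a surjective map $g$ together with a map $\pi_{Q} : Q \to N$ restricting to a closed embedding on each fibre of $g$. The goal is a family $h : C \to B$ of subvarieties of $M$ whose fibres are exactly the Zariski closures $\overline{e(\pi_{Q}(Q_{t}))}$ as $t$ ranges over $T$.

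The main technical step is to reduce to the proper case. By Zariski's main theorem, $e : N \to M$ factors as $N \xhookrightarrow{j} \overline{N} \xrightarrow{\bar{e}} M$ where $j$ is an open immersion and $\bar{e}$ is proper (finite, even, after normalizing if desired, but proper suffices). Replacing $Q$ by its image under $j \circ \pi_{Q}$ and taking fibrewise closures inside $\overline{N}$, one obtains a family $\bar{g} : \overline{Q} \to T$ of subvarieties of $\overline{N}$: here one uses that the fibrewise-closure operation on a family over a Noetherian base can itself be organized into a family, after stratifying $T$ so that the relevant Hilbert polynomials are locally constant (this is the standard ``flattening stratification'' / relative Hilbert scheme argument, and the base is allowed to be disconnected, which is why the statement permits that). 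Then $\bar{e}$ being proper, the set-theoretic image $\bar{e}(\overline{Q}_{t})$ is already Zariski closed for each $t$, so $\overline{e(\pi_{Q}(Q_{t}))} = \bar{e}(\overline{Q}_{t}) \cap \text{(image of the closure)}$, and in fact equals $\bar{e}(\overline{Q}_{t})$ since $Q_{t}$ is dense in $\overline{Q}_{t}$ and $\bar{e}$ is continuous and closed.

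Having reduced to the proper case, I would construct $C$ as follows. Consider the composite $\overline{Q} \xrightarrow{\bar{e} \circ \bar{\pi}} M$ together with $\bar{g} : \overline{Q} \to T$; the fibrewise image defines a set-valued function $t \mapsto \bar{e}(\overline{Q}_{t}) \subset M$. To realize this as an honest algebraic family I would use Stein factorization or, more simply, stratify $T$ into locally closed pieces over which the Hilbert polynomial of $\bar{e}(\overline{Q}_{t})$ (with respect to a fixed projective embedding of a compactification of $M$) is constant; over each such piece the images are the fibres of the family obtained by taking the scheme-theoretic image of $\overline{Q} \times_{T} (\text{stratum}) \to M \times (\text{stratum})$ and checking it is flat (which holds, after further stratification, by generic flatness). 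Assembling over the finitely many strata gives $h : C \to B$ over the disconnected base $B = \bigsqcup B_{j}$, with $\pi_{C} : C \to M$ the projection, restricting to a closed embedding on each fibre by construction. The fibres are then exactly the desired closures of images, and replacing $B$ by the image of $T$ (remove strata where the image degenerates or is empty) ensures every $\overline{e(\pi_{Q}(Q_{t}))}$ actually appears.

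The step I expect to be the main obstacle is the organization of the fibrewise-image/fibrewise-closure operation into a genuine algebraic family over a (possibly disconnected) base: one must pass from a constructible, fibrewise-defined assignment to a flat family, and this requires a flattening-stratification argument plus the observation that scheme-theoretic image commutes with flat base change on a stratum. The properness of $\bar{e}$ is what makes the images closed and hence makes the Hilbert-polynomial stratification well behaved; without it the images would only be constructible and no such family could exist, which is precisely why the reduction via Zariski's main theorem is essential. Everything else — the quasi-finite-to-proper factorization, density arguments, and the final bookkeeping to ensure surjectivity of $h$ onto the set of target subvarieties — is routine.
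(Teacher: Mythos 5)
Your construction produces a family whose fibre over $t$ is the whole closed set $\overline{e(g^{-1}(t))}$, i.e.\ the union $\bigcup_{Y} \overline{e(Y)}$ over the irreducible components $Y$ of $g^{-1}(t)$. But the lemma asks for a family whose fibres are the \emph{individual} irreducible sets $\overline{e(Y)}$, one per component $Y$; your restatement of the goal in the first paragraph already drops the word ``components.'' The two differ precisely when $g$ has reducible fibres, which is the case of interest in the application (\autoref{rigidprop}): the fibres of the period map $\Xi$ can have several irreducible components, each of which is a separate weakly special subvariety of $S$, and \autoref{rigidprop} needs each of these, individually, to appear as a fibre of one of the families $h_i$. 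A union of several weakly specials over a single $t \in T$ is generally not itself weakly special, so the family you build cannot serve.

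Extracting the components is the real content of the lemma and is where the paper's Hilbert-scheme argument earns its keep: it first uses a uniform degree bound on the images (coming directly from quasi-finiteness of $e$; your factorization via Zariski's main theorem is a fine alternative route to the same complexity control) to get a finite-type family parameterizing all \emph{irreducible} subvarieties of bounded degree of a compactification of $M$, and then cuts down to the constructible locus of parameters whose fibre agrees with a component of some $\overline{e(g^{-1}(t))}$, via a dimension condition that becomes manageable once the fibres of $g$ are arranged to be equidimensional. You would need something of this shape --- start from a parameter space of irreducible subvarieties and cut down --- rather than starting from the (possibly reducible) fibrewise images and hoping to split off their components afterwards, for which there is no clean algebraic mechanism over a stratified base. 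Everything else in your sketch (the proper factorization, the fibrewise-closure stratification, the flattening of the scheme-theoretic image) is sound and would carry over once this missing step is added.
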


\begin{proof}
We may assume that $e$ is dominant, and that $N$, $M$ and $T$ are irreducible. By stratifying $T$ with respect to the dimension of its fibres, we may assume that all fibres have the same dimension. By removing a closed locus from $Q$, one can further assume that the fibres of $g$ are equidimensional.

 We observe that there exists some finite-type family $h : C \to B$ of subvarieties of $M$, all of whose fibres are irreducible, and such that all components of varieties of the form $\overline{e(g^{-1}(t))}^{\textrm{Zar}}$ occur as fibres. This can be deduced by observing that, with respect to a fixed line bundle $\mathcal{L}$ on some projective compactification $M \subset \overline{M}$, the varieties $\overline{e(g^{-1}(t))}^{\textrm{Zar}}$ all have uniformly bounded degree, so the locus of interest can be cut out of a Hilbert scheme associated to $\overline{M}$. We then argue that the locus $L$ of $b \in B$ such that $h^{-1}(b)$ agrees with a component of $\overline{e(g^{-1}(t))}^{\textrm{Zar}}$ for some $t$ is constructible. It suffices to consider the induced family $h' : C' \to B$ whose fibre above $b$ is $e^{-1}(h^{-1}(b))$, and construct the locus in $T \times B$ where some component of $g^{-1}(t)$ contains some component of $h'^{-1}(b)$. Since both families have equidimensional fibres, this can be achieved by imposing a dimension condition on the intersection and so constructibility of this locus follows from \cite[\href{https://stacks.math.columbia.edu/tag/05F9}{Lemma 05F9}]{stacks-project}. Having done this, we can partition $L$ by locally closed subvarieties of $B$, and replace $B$ with the disjoint union of the strata in this partition. Afterwards, the fibres of $h$ are exactly all those varieties of the form $\overline{e(g^{-1}(t))}^{\textrm{Zar}}$ for some $t \in T$.
\end{proof}

\section{Recap on orbit closures}\label{sec:orbitapp}

In this section we survey some known results on the orbit closures, expanding the discussion from the beginning of \Cref{intro:teich}, and recall how they relate to the map
\begin{displaymath}
\Omega \mathcal{M}_{g}(\kappa)\to \Omega \mathcal{A}_{g,n}.
\end{displaymath}
We mainly follow \cite{zbMATH06455787, filipnotes}, but see also \cite{zbMATH05183579} for more background about the ergodic viewpoint.

In the sequel, $\mathcal{M}_g$ denotes the moduli space of genus $g$ compact Riemann surfaces and $\Omega \mathcal{M}_g \to \mathcal{M}_g$ the bundle whose fibre over $[X]\in \mathcal{M}_g$ is the space of all (non-zero) holomorphic 1-forms on $X$ (in particular it has dimension $4g-3$). It is a rank $g$ (algebraic) vector bundle on $\mathcal{M}_g$ (with the zero section removed), naturally equipped with a stratification by smooth (not necessarily irreducible) algebraic subvarieties $\Omega \mathcal{M}_g (\kappa)$, where $\kappa=(\kappa_0,\dots, \kappa_n)$ satisfies $\sum_i \kappa_i = 2g-2$ and denotes the multiplicities of zeros of $\omega$. Notice that $\Omega \mathcal{M}_g(\kappa)$ has dimension $2g+n$, where $n=\textrm{length}(\kappa)-1$. For example the only open stratum is $\Omega  \mathcal{M}_g (1,...,1)$ (where the 1 is repeated $2g-2$ times).

Below we use some rudiments of variational (mixed) Hodge theory. We have already discussed such theory in greater generality in \S\ref{sec:zpapp}, but here we only deal with variation of Hodge structures (VMHS) whose weight filtration has two steps. In fact in the orbit closure setting all of our period spaces will belong to the theory of mixed Shimura varieties of \emph{Kuga type}; see \cite{zbMATH07305885} for an introduction to this notion. 

\begin{rem}
In what follows, we often secretly view $\Omega \mathcal{M}_{g}$ and its strata as orbifolds or Deligne-Mumford stacks. To avoid this, one can replace $\Omega \mathcal{M}_g$ by a fixed covering over which the pairs $(X, \omega)$ glue together into a universal algebraic family and always work just with algebraic varieties and variations of Hodge structure on this covering. This will be harmless for our considerations (the finiteness/abundance of orbit closures in a stratum can equivalently be detected on a finite covering).
\end{rem}

\subsection{Recollection and first properties}
 We first recall some basic facts about \emph{relative cohomology}. Let $(X,Z)$ be a pair of a Riemann surface (i.e. an irreducible smooth projective algebraic curve over $\C$) and a $n+1$ points $Z=\{z_0, z_1, \dots, z_n\} \subset X$ (that will usually be given by the zeros of some (non-zero) $1$-form $\omega$ on $X$). We have a short exact sequence:

\begin{equation}\label{eqrelcoho}
0 \to \tilde{H}^0(Z; \Z) \to H^1(X,Z; \Z)\to H^1(X;\Z)\to 0.
\end{equation}
Above, $\tilde{H}^0(Z; \Z)$ is the \emph{reduced cohomology}: it denotes the group of all formal linear combinations of points in $Z$, modulo the element which takes each point in $Z$ with coefficient $1$ (it is the dual of the reduced homology group).
We will always refer to the middle term as \emph{relative cohomology} and the the last term as \emph{absolute cohomology}. Note that the form $\omega$ naturally induces a class inside both $H^1(X,Z(\omega); \Z)$ and $H^1(X;\Z)$ by integrating over homological cycles and using homology-cohomology duality.

We have a version of \eqref{eqrelcoho} for flat bundles above $\Omega \mathcal{M}_g (\kappa)$ (see also \cite[3.1.13]{filipnotes}):
\begin{equation}
\label{bundextseq}
0 \to W_0 \to H^1_{rel}\to H^1 \to 0.
\end{equation}
The notation is justified by the fact that $W_0$ denotes the weight-zero local subsystem for the mixed VHS structure on $H^{1}_{rel}$. 

The $\GL_2(\R)$-action on $H^1(X,Z(\omega);\C)$ is given by identifying $\C$ with $\R^2$ and considering
\begin{displaymath}
H^1(X,Z(\omega);\C)\cong H^1(X,Z(\omega);\R)\otimes_{\R} \R^2.
\end{displaymath}
The action is now the natural $\GL_2(\R)$ on the $\R^2$ at the very end of the equation (it is less straightforward to describe in a more global way the $\GL_2(\R)$ action on the moduli space of translation surfaces).

We now recall the concept of developing map and monodromy. To do so, let $\operatorname{Mod}(X,Z)$ be the mapping class group of diffeomorphisms of $X$ preserving the subset $Z$. We fix a base point $s:= (X, \omega )\in \Omega \mathcal{M}_g (\kappa)$, and look at the developing map/period coordinates on its universal covering
\begin{equation}\label{def:dev}
\operatorname{Dev}: \widetilde{\Omega \mathcal{M}_g (\kappa)}\to H^1(X,Z(\omega); \C).
\end{equation}
Concretely, the map sends $(X',\omega')$ to $\omega' \in H^1(X',Z(\omega'); \C)$ and then applies the flat transport to obtain an element of $H^1(X,Z(\omega); \C)$.
By a theorem of Veech \cite[Thm. 7.5]{zbMATH04190113}, locally $\operatorname{Dev}$ is biholomorphic. Moreover $\operatorname{Dev}$ is equivariant for a representation of
\begin{displaymath}
\pi_1(\Omega \mathcal{M}_g (\kappa))\to  \operatorname{Mod}(X,Z),
\end{displaymath}
where the mapping class groups acts on cohomology by
\begin{equation}\label{eqmonodromy}
L : \operatorname{Mod}(X,Z)\to \Sp (H^1(X,Z))\cong \Sp (H^1(X))\ltimes \operatorname{Hom} (H^1(X),\tilde{H}^0(Z; \C) ).
\end{equation}

All monodromy groups presented so far are naturally subgroups of $\operatorname{Aut}(H^1_{rel,s})$: this is the group of automorphisms of $H^1_{rel,s}$ that act as the identity on $W_{0,s}$ and act symplectically on $H^1_s$. If we pick a splitting $H^1_s \to H^1_{rel,s}$, we obtain a semidirect product decomposition $\Sp(H^1_s)\ltimes\Hom (H^1_s, W_{0,s})$. Indeed we have
\begin{displaymath}
0 \to \Hom (H^1_s, W_{0,s})\to \operatorname{Aut}(H^1_{rel,s})\to \Sp (H^1_s)\to 0.
\end{displaymath}
 Recall that an element $\xi \in \Hom (H^1_s, W_{0,s})$ acts as a unipotent automorphism of $H^1_{rel,s}$ by $v \mapsto v + \xi (p(v))$ (where $p: H^1_{rel}\to H^1$ is the projection to absolute cohomology). All automorphisms of $H^1_{rel,s}$ that act as the identity on $W_{0,s}$ and symplectically on $H^1_s$ are in fact of this form.

The bundle $H^1_{rel}$ carries naturally the structure of an admissible, graded-polarized, variation of $\Z$-mixed Hodge structures on (each irreducible component of) $\Omega \mathcal{M}_g (\kappa)$. The weight filtration on $\V$ has two steps (for facts and definitions about mixed VHS, we refer to \Cref{sec:zpapp}):
\begin{equation}\label{mainvhsfororbits}
  0 \to W_0 (\V)\to \V\to \operatorname{Gr}_1^W(\V) \cong p^* R^1f_*\Z\to 0.
\end{equation}
Here $p:  \Omega \mathcal{M}_g (\kappa) \to \mathcal{M}_g $ is just the projection, and $f: \mathcal{C}_g\to \mathcal{M}_g$ is the universal family of genus $g$ curves. The local system $W_0 (\V)$ becomes isomorphic to $ \Z(0)^{n}$ after passing to a finite covering. See also \cite[\S 3.3 and \S 3.4]{2022arXiv220206031K} for a related discussion (in \emph{op. cit.} the authors find $n-1$ copies of $ \Z(0)$ since they started with $n$ points, whereas we had $n+1$-points; this is coherent with our choice of denoting the multiplicities of the zeros of $\omega$ by $(\kappa_0, \dots, \kappa_n)$). 

Finally, we refer to \cite{zbMATH02001031} (see also \cite[3.1.16]{filipnotes}) for a description of the connected components of $\Omega \mathcal{M}_g (\kappa) $, that we will implicitly use when speaking of atypical orbit closures.

\subsection{Structure of orbit closures}
\label{strsec}

We recall a special case of a theorem of Eskin, Mirzakhani, and Mohammadi \cite[Thm. 2.1]{zbMATH06487150} see also \cite[Thm. 1]{filipnotes}:
\begin{thm}[Linearity/topological rigidity]\label{linearity}
Each orbit closure $\mathcal{N} \subset \Omega \mathcal{M}_g(\kappa)$ is locally in period coordinates a \emph{linear manifold}, i.e. any sufficiently small open $U\subset \mathcal{N}$ is such that $\operatorname{Dev}(U)$ is an open set inside a linear subspace (see \eqref{def:dev} for the definition of the developing map). 
\end{thm}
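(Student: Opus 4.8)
The plan is to follow the architecture of the Eskin--Mirzakhani--Mohammadi program, which splits into two stages: first a classification of invariant measures, then the promotion of that classification to a statement about orbit closures. Since the theorem is genuinely deep, I would treat the measure classification as essentially a black box (as the present paper does) and concentrate on explaining how linearity is extracted from it.

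First I would reduce the statement about the full $\GL_2(\R)$-action to a statement about the action of the upper-triangular subgroup $P = AN \subset \SL_2(\R)$: since $\operatorname{SO}(2)$ is compact, the closures $\mathcal{N} = \overline{\GL_2(\R)\cdot(X,\omega)}$ and $\overline{P \cdot (X,\omega)}$ are assembled from the same local pieces, so it suffices to understand $P$-invariant data. The heart of the matter is then the measure classification of Eskin--Mirzakhani: every $P$-invariant ergodic probability measure $\nu$ on (a component of) $\Omega\mathcal{M}_g(\kappa)$ is \emph{affine}, meaning its support is the image of an affine subspace $\mathcal{L}$ of a period coordinate chart, cut out by linear equations with real coefficients on $H^1(X,Z;\R)$ (hence complex-linear in $H^1(X,Z;\C)$), and $\nu$ is the natural Lebesgue-class measure on $\mathcal{L}$. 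The key inputs to that classification are the entropy and conditional-measure machinery along the unstable horospherical foliation, an exponential drift / time-change argument adapting Benoist--Quint, quantitative non-divergence à la Eskin--Masur to prevent escape of mass, and fine control of the Kontsevich--Zorich cocycle through the Hodge norm and Forni-type bounds; I would invoke this rather than reprove it.

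Next I would promote the measure classification to the orbit-closure statement. Given $(X,\omega)$, one forms the $P$-orbit (ergodic) averages over expanding boxes and extracts a weak-$*$ limit $\nu$; by non-divergence $\nu$ is a probability measure, it is $P$-invariant, hence a convex combination of affine measures, and an isolation argument — using that affine invariant submanifolds are locally finite, and that each is either a whole component of the stratum or sits inside a proper one with quantitative gaps (the isolation estimates of EMM) — forces $\nu$ to be a single affine measure whose support is precisely $\overline{P\cdot(X,\omega)}$. Undoing the reduction to $P$ shows $\mathcal{N}$ coincides with this affine invariant submanifold, so for small open $U \subset \mathcal{N}$ the image $\operatorname{Dev}(U)$ is open inside a linear subspace, which is the assertion.

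The main obstacle is unambiguously the middle step: the exponential-drift argument behind the measure classification is intricate and relies essentially on the analytic structure of the Teichm\"uller flow and the Hodge bundle, and there is no shortcut around it. By contrast, the reduction to $P$ and the measures-to-orbit-closures upgrade are comparatively formal, modulo the standard non-divergence and isolation estimates.
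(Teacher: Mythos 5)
The paper does not prove this statement at all: it is labelled ``Linearity/topological rigidity'' and immediately preceded by ``We recall a special case of a theorem of Eskin, Mirzakhani, and Mohammadi \cite[Thm. 2.1]{zbMATH06487150}, see also \cite[Thm.~1]{filipnotes}.'' It is imported as a black box, exactly like \Cref{isolation} and \Cref{mainthmalgfilip}, and the paper's contribution begins only downstream, in combining it with Filip's algebraicity result and the atypical-intersection machinery. So there is no ``paper's own proof'' to compare against.

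That said, your sketch is a fair high-level reconstruction of the Eskin--Mirzakhani--Mohammadi architecture for the cited theorem: reduce to the upper-triangular group $P$, invoke the Eskin--Mirzakhani classification of $P$-invariant ergodic measures as affine, and then upgrade measures to orbit closures via non-divergence, generic-point/averaging arguments, and the isolation estimates. A couple of small clarifications are worth flagging. First, the measure classification is really a separate theorem (Eskin--Mirzakhani) from the orbit-closure and equidistribution statement (Eskin--Mirzakhani--Mohammadi Thm.~2.1); the paper cites the latter, which already packages the averaging-and-isolation step. Second, your parenthetical ``hence complex-linear'' is fine under the usual convention that the linear subspace is $\mathcal{L}\otimes_{\R}\C \subset H^1(X,Z;\C)$ for an $\R$-subspace $\mathcal{L}\subset H^1(X,Z;\R)$, but it is worth stating that convention explicitly, since a generic $\R$-linear subspace of $H^1(X,Z;\C)$ would not be $\GL_2(\R)$-invariant. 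None of this affects the logic of the paper, since the authors explicitly avoid reproving any of this dynamics and instead lean on the algebraicity (\Cref{mainthmalgfilip}) and isolation (\Cref{isolation}) inputs.
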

Let $\mathcal{N} \subset \Omega \mathcal{M}_g(\kappa)$ be an orbit closure, and denote by $T \mathcal{N}$ its tangent bundle. The linearity of orbit closures, as recalled in \Cref{linearity}, allows one to realize $T \mathcal{N}$ as a local subsystem of $H^1_{rel}$. As a consequence we have another short exact sequence of real bundles above $\mathcal{N}$:
\begin{equation}
0 \to W_0 (T \mathcal{N})\to T \mathcal{N}\to H^1 (T \mathcal{N})\to 0 .
\end{equation}
To explain the notation: $H^1 (T \mathcal{N})$ is just the image of $T \mathcal{N} \subset H^1_{rel}$ in $H^1$  (the absolute cohomology), and $W_0(T \mathcal{N}) = T \mathcal{N} \cap W_0$.

Let $T \subset \SL_2(\mathbb{R})$ denote the upper triangular group.

\begin{thm}[Isolation property, {\cite[Thm. 2.3]{zbMATH06487150}, see also \cite[Thm. 4.1.8.]{filipnotes}}]\label{isolation}
For every sequence of orbit closures $(\mathcal{N}_i)$ there are $T$-invariant measures $\mu_i$, and after passing to a subsequence $(\mathcal{N}_i,\mu_i)$ there is another linear immersed submanifold $\mathcal{M}$, with a finite $T$-invariant measure $\mu$ such that $\mathcal{N}_i\subset \mathcal{M}$ and $\mu_i \to \mu$.
\end{thm}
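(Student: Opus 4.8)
The statement of \Cref{isolation} is a deep theorem of Eskin--Mirzakhani--Mohammadi which we use essentially as a black box; nonetheless, here is the strategy of its proof, which I would follow. The starting point is the \emph{measure classification theorem} of Eskin--Mirzakhani: every ergodic probability measure on a stratum that is invariant under the upper triangular subgroup $T$ is \emph{affine}, meaning that in period coordinates its support is a linear (hence affine invariant) submanifold and the measure is Lebesgue along the corresponding linear subspace --- this is the measure-theoretic counterpart of \Cref{linearity}. Attaching to each orbit closure $\mathcal{N}_i$ its canonical affine probability measure $\mu_i$, the problem becomes one about limits of affine measures.

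The second step is a compactness argument: viewing the $\mu_i$ as Radon measures on a suitable compactification of $\Omega\mathcal{M}_g(\kappa)$, pass to a weak-$*$ convergent subsequence with limit $\mu$. The heart of the matter --- and the main obstacle --- is to show that \emph{no mass escapes to the boundary}, i.e. that $\mu$ is still a probability measure on the open stratum. This is the genuine ``isolation'' input: one constructs, uniformly in $i$, a proper function on the stratum behaving like a Lyapunov function for the $T$-action along $\operatorname{supp}\mu_i$, using the quantitative non-divergence estimates for the $\SL_2(\R)$-action near the cusps together with the linearity of the supports. Granting this, $\mu$ is a $T$-invariant probability measure, and after averaging over $\SL_2(\R)$ (using $\SL_2(\R)$-recurrence) it is $\SL_2(\R)$-invariant, so the measure classification theorem applies again and exhibits $\mu$ as affine, supported on some linear immersed submanifold $\mathcal{M}$ carrying a finite $T$-invariant measure.

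It remains to check that $\mathcal{N}_i \subset \mathcal{M}$ for all large $i$. Since no mass escapes, $\operatorname{supp}\mu = \mathcal{M}$ contains the Hausdorff limit of the supports $\operatorname{supp}\mu_i = \mathcal{N}_i$; combined with the linearity of all the $\mathcal{N}_i$ --- so that containment can be tested on tangent spaces at a generic point --- and an upper-semicontinuity argument for the associated linear subspaces, one concludes $\mathcal{N}_i \subseteq \mathcal{M}$ eventually, which is the assertion. As indicated, essentially all of the difficulty is concentrated in the non-divergence of mass; everything else is formal once the measure classification theorem is in hand.
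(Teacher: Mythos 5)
The paper does not prove this statement: it is cited directly from Eskin--Mirzakhani--Mohammadi \cite{zbMATH06487150} and used as an input, so treating it as a black box matches the paper's own usage. Your sketch of how the black box is actually proved is broadly right in outline --- attach the affine measures supplied by measure classification, rule out escape of mass via a Margulis-type drift function, then apply measure classification again to identify the weak-$*$ limit as the affine measure of some $\mathcal{M}$.

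The last step, however, is not obtained the way you describe. Weak-$*$ convergence $\mu_i \to \mu$ does \emph{not} force $\operatorname{supp}\mu_i$ into a neighbourhood of $\operatorname{supp}\mu$: a piece of $\mathcal{N}_i$ carrying vanishing $\mu_i$-mass can stray arbitrarily far, so ``Hausdorff limit of supports plus upper-semicontinuity of tangent spaces'' does not give $\mathcal{N}_i \subset \mathcal{M}$. The actual content of the isolation theorem is precisely here: if $\mathcal{N}_i \not\subset \mathcal{M}$, linearity forces $\mathcal{N}_i \cap \mathcal{M}$ to be a proper affine subset of $\mathcal{N}_i$, hence $\mu_i$-null, and a \emph{second} application of the non-divergence/drift machinery, now run relative to $\mathcal{M}$, shows that a definite fraction of $\mu_i$-mass must then leave every compact neighbourhood of $\mathcal{M}$ --- contradicting $\mu_i \to \mu_\mathcal{M}$. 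That second drift argument carries most of the difficulty specific to the isolation conclusion, and your sketch omits it. (Two minor imprecisions: the Eskin--Mirzakhani classification already handles $T$-invariant ergodic measures, so no upgrade to $\SL_2(\R)$-invariance is needed; and one cannot in any case ``average over $\SL_2(\R)$'' since it is non-compact.)
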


\noindent Note that the above result implies the same statement with $T$ replaced by $\SL_2(\mathbb{R})$, which is how we will use it; see \cite[Rem. 4.1.9 (ii)]{filipnotes}.

We can now recall the following theorem of Filip \cite{zbMATH06575346} (which builds on the earlier work \cite{zbMATH06641855}, as well as earlier work of Wright \cite{zbMATH06323273} and M\"{o}ller \cite{zbMATH05015436}), see also \cite[Thm. 2 and Thm. 4.4.2]{filipnotes}. We denote by $K=K_{\mathcal{N}}\subset \R$ the smallest field over which $\mathcal{N}$ is $K$-linear in the sense of \Cref{linearity}. Denote by $\mathcal{J}$ be the abelian scheme over $\Omega \mathcal{M}_g(\kappa)$ whose fiber at $(X,\omega)$ is simply the Jacobian of $X$. We have:

\begin{thm}[S. Filip]\label{mainthmalgfilip}
Let $\mathcal{N}\subset \Omega \mathcal{M}_g(\kappa)$ be an orbit closure. There exists, up to isogeny, a factor $\mathcal{F}\subset \mathcal{J}_{| \mathcal{N}}$ and a subgroup $\mathcal{S}$ of the free abelian group on the zeros of $1$-forms, such that:
\begin{enumerate}
\item $\mathcal{F}$ admits real multiplication by $K$ (i.e. $K$ is equal to the rational endomorphism ring of $\mathcal{F}$ over the generic point of $\mathcal{N}$), which is, in particular, a totally real number field;
\item The Abel-Jacobi map, possibly twisted by real multiplication, $AJ: \mathcal{S}\to \mathcal{F}$ assumes torsion values;
\item For each $(X,\omega)\in \mathcal{N}$, $\omega$ is an eigenform for the real multiplication on the fibre of $\mathcal{F}$ above $X$ (see also \cite[\S 4.2.17]{filipnotes}).
\end{enumerate}
Furthermore, these conditions, together with a dimension bound (cf. \Cref{explaindimbound}), characterize $\mathcal{N}$.
\end{thm}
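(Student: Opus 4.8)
Since this is Filip's theorem (\cite{zbMATH06575346}), the plan is to recall the structure of his argument rather than reprove it from scratch; it splits into an \emph{existence} half (producing $\mathcal{F}$, $\mathcal{S}$, and the three conditions) and a \emph{characterization} half (the dimension bound). For the existence half I would first exploit \Cref{linearity}: the local linearity of $\mathcal{N}$ realizes $T\mathcal{N}$ as an $\mathbb{R}$-local subsystem of $H^1_{rel}$, and (by Eskin--Mirzakhani--Mohammadi, the ambient input behind \Cref{isolation}) $\mathcal{N}$ carries an $\SL_2(\mathbb{R})$-invariant measure. Projecting to absolute cohomology gives $H^1(T\mathcal{N}) \subset H^1$, and the analytic heart of the argument --- the semisimplicity and rigidity of the Kontsevich--Zorich cocycle, combined with $\SL_2(\mathbb{R})$-invariance --- shows that $H^1(T\mathcal{N})$ underlies a sub-variation of Hodge structure. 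Deligne's semisimplicity theorem for polarized $\mathbb{Q}$-VHS then yields, after a finite cover, a $\mathbb{Q}$-splitting $H^1\otimes\mathbb{Q} = V\oplus V^{\perp}$ with $H^1(T\mathcal{N})\subseteq V\otimes\mathbb{R}$; the summand $V$ cuts out, up to isogeny, the abelian subvariety $\mathcal{F}\subset\mathcal{J}$.

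Next I would recover the arithmetic. Let $K\subset\mathbb{R}$ be the field over which $\mathcal{N}$ is linear in the sense of \Cref{linearity}; its finiteness over $\mathbb{Q}$ follows because $T\mathcal{N}$ is cut out by equations with algebraic coefficients, and scalar multiplication by $K$ in period coordinates descends to an action of $K$ on $V$ by Hodge endomorphisms of $\mathcal{F}$. Compatibility of this action with the polarization on $V$, together with $K\subset\mathbb{R}$, forces it to be self-adjoint, hence $K$ totally real and acting as genuine real multiplication; one then checks $K=\operatorname{End}^0(\mathcal{F}_{\mathcal{N}})$, which is (1). Condition (3) is then immediate: the $\SL_2(\mathbb{R})$-orbit directions span a two-real-dimensional $K$-invariant subspace of $H^1(T\mathcal{N})$ containing the class of $\omega$, so this must be a single $K$-eigenspace and $\omega$ is an eigenform.

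For condition (2) I would turn to the mixed part. The two-step weight filtration \eqref{mainvhsfororbits} and the extension \eqref{bundextseq} identify $W_0(T\mathcal{N}) = T\mathcal{N}\cap W_0$ with the linear relations among the zeros of $\omega$ imposed by $\mathcal{N}$; dually, $W_0/W_0(T\mathcal{N})$ picks out a subgroup $\mathcal{S}$ of the free abelian group on the zeros. Restricting the extension class of \eqref{bundextseq} to $\mathcal{S}$ gives a normal function whose values are, up to a twist by the real multiplication, the Abel--Jacobi images of the corresponding zero-divisors in $\mathcal{F}$; $\SL_2(\mathbb{R})$-invariance plus definedness over $K$ constrains the relevant relative periods to lie in $K$-rational, hence $\mathbb{Q}$-rational, cohomology classes, which is exactly the assertion that this normal function takes torsion values in the fibres of $\mathcal{F}$ --- condition (2). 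For the characterization, one observes that the locus $Z\subset\Omega\mathcal{M}_g(\kappa)$ defined by ``$\mathcal{F}$ has real multiplication by $K$ with $\omega$ an eigenform, and $AJ(\mathcal{S})$ is torsion'' is algebraic: it is a Hilbert-modular-type eigenform locus inside the relevant mixed Shimura variety of Kuga type, intersected with a union of preimages of torsion multisections. Hence $\mathcal{N}\subseteq Z$ and $\mathcal{N}$ is a union of irreducible components of $Z$, and the dimension bound of \Cref{explaindimbound} --- matching $\dim\mathcal{N}$ with the dimension predicted by the RM, eigenform and torsion data --- forces $\mathcal{N}$ to be exactly such a component.

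The main obstacle is the first step: producing the sub-VHS and the $\mathbb{Q}$-splitting of $H^1$. This is where the genuinely deep input lies --- Filip's semisimplicity and rigidity results for the Kontsevich--Zorich cocycle, together with Deligne's theorem --- and there is no formal shortcut; by contrast the real multiplication structure, the eigenform property, and the torsion of the Abel--Jacobi map become comparatively routine once the splitting is in hand. The converse direction is also delicate precisely in the $W_0$-direction: one must rule out that $Z$ is strictly larger than $\mathcal{N}$ along relative cohomology, and that is exactly the content of \Cref{explaindimbound}.
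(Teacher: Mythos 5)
The paper does not reprove this statement; it is quoted directly as Filip's theorem (\cite{zbMATH06575346}, building on \cite{zbMATH06641855}), so there is no in-paper argument to compare against. Your outline is a faithful high-level reconstruction of Filip's original proof and is consistent with the way the paper uses the result, in particular the dimension-bound discussion in \Cref{explaindimbound}. Two places where your gloss compresses genuinely hard content of the cited work: first, Deligne semisimplicity does not by itself produce a $\mathbb{Q}$-splitting from knowing $H^1(T\mathcal{N})\otimes\mathbb{R}$ is a sub-$\mathbb{R}$-VHS --- one must use Filip's rigidity results for the Kontsevich--Zorich cocycle to show that the Galois conjugates of this real subspace are again sub-VHS, so that together they span a $\mathbb{Q}$-factor $V$, and the real multiplication by $K$ emerges from this Galois-conjugate structure rather than merely from ``linearity over $K$''; second, the torsion of the twisted Abel--Jacobi map in condition (2) does not follow simply from $K$-rationality of relative periods, but rests on a theorem-of-the-fixed-part type argument for the mixed variation combined with $\SL_2(\mathbb{R})$-invariance of the corresponding normal function. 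Neither compression is an error --- it is precisely where the depth of \cite{zbMATH06575346} lies, and you flag this correctly.
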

(See \cite[\S 4.5]{filipnotes} for more details on the \emph{twisted torsion} condition.) 

\begin{rem}
Notice also that in \cite[Thm. 2 and Thm. 4.4.2]{filipnotes}, the factor $\mathcal{F}$ is implicitly understood as a $\Q$-\emph{factor} of the associated pure $\mathbb{Z}$VHS. See also \cite[Thm. 2.7]{zbMATH05015436}, for a related statement highlighting the possibility of the splitting up to isogeny.
\end{rem}

\begin{cor}\label{thmstrorbit}
Each orbit closure is algebraic, that is a Zariski closed subvariety of $\Omega \mathcal{M}_g$. In fact, orbit closures are defined over $\overline{\Q}$, and their Galois conjugates are again orbit closures. 
\end{cor}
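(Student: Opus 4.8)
The plan is to derive both assertions of \Cref{thmstrorbit} directly from the characterization in \Cref{mainthmalgfilip}, using the Hodge-theoretic algebraicity results recalled earlier. For the algebraicity, the point is that the three conditions cutting out $\mathcal{N}$ --- that a factor $\mathcal{F} \subset \mathcal{J}$ of the relative Jacobian carry real multiplication by the totally real field $K$, that the (possibly twisted) Abel--Jacobi map $AJ : \mathcal{S} \to \mathcal{F}$ assume torsion values, and that $\omega$ be an eigenform for this real multiplication --- are all Hodge-theoretic conditions on the $\Z$VMHS $H^1_{rel}$ over the stratum, i.e. conditions cutting out loci in the associated mixed Shimura variety of Kuga type (cf. \cite{zbMATH07305885}). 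Concretely, real multiplication by $K$ forces the Mumford--Tate group of the pure quotient $\Gr^W_1$ to commute with the $K$-action, the torsion condition is membership of $AJ(\mathcal{S})$ in a fixed torsion section of the Kuga bundle, and the eigenform condition is an incidence relation between the Hodge flag and the $K$-isotypic decomposition of $\mathcal{F}$. Each such locus is a countable union of closed irreducible algebraic subvarieties by \Cref{CDK}, used here in its mixed form. First I would intersect the finitely many loci corresponding to these conditions; then, invoking the dimension bound of \Cref{mainthmalgfilip} to single out the correct irreducible component, I would conclude that $\mathcal{N}$ is Zariski closed in $\Omega\mathcal{M}_g(\kappa)$, hence in $\Omega\mathcal{M}_g$.

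For the field of definition, I would use that the stratum $\Omega\mathcal{M}_g(\kappa)$, the abelian scheme $\mathcal{J}$ over it, its weight filtration, and its torsion sections are all defined over $\Q$. Deligne's theorem that Hodge classes on abelian varieties are absolutely Hodge --- applied to $\mathcal{J}$ together with its mixed Kuga enhancement (where the analogous absoluteness is also available) --- then implies that the components of the Hodge loci described above, viewed inside $\Omega\mathcal{M}_g(\kappa)_{\Qbar}$, are defined over $\Qbar$ and form a set permuted by $\Aut(\C/\Q)$. Since $\mathcal{N}$ is an irreducible component of an intersection of such loci, it is itself defined over $\Qbar$.

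For the Galois stability, fix $\sigma \in \Aut(\C/\Q)$ and consider the conjugate $\sigma(\mathcal{N})$, still an irreducible subvariety of the stratum $\Omega\mathcal{M}_g(\kappa)$ (which is $\sigma$-stable, being defined over $\Q$). Applying $\sigma$ to the data $(\mathcal{F}, \mathcal{S}, K, \omega)$ of \Cref{mainthmalgfilip}, one checks that $\sigma(\mathcal{F})$ is a Jacobian factor over $\sigma(\mathcal{N})$ with real multiplication by the still totally real field $\sigma(K) \cong K$, that $AJ$ still takes torsion values on $\sigma(\mathcal{S})$ since torsion sections are Galois-stable, and that $\sigma(\omega)$ is still an eigenform for the $K$-action; moreover $\dim \sigma(\mathcal{N}) = \dim \mathcal{N}$, so the dimension bound is preserved. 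By the ``furthermore'' clause of \Cref{mainthmalgfilip}, these conditions characterize an orbit closure, so $\sigma(\mathcal{N})$ is again one.

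The hard part will be the bookkeeping: presenting Filip's ``twisted torsion'' and ``eigenform'' conditions faithfully as algebraic (and $\Qbar$-rational, Galois-equivariant) conditions inside a mixed Shimura datum of Kuga type, and checking that the dimension bound genuinely isolates $\mathcal{N}$ as a single irreducible component of the relevant Hodge locus rather than cutting out a larger family. Granting the structure theory of mixed Shimura varieties of Kuga type together with Deligne's absolute Hodge theorem for abelian varieties and its Kuga-type extension, the remaining verifications are routine.
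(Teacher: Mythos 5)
The paper itself offers no proof of \Cref{thmstrorbit}: it is stated as an immediate corollary of Filip's theorem (\Cref{mainthmalgfilip}), and the heavy lifting is delegated to \cite{zbMATH06575346}. Your proposal is essentially a reconstruction of Filip's own argument and is structurally correct, so the comparison is not "your route vs. the paper's route" but rather "your route vs. what the paper silently cites."

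Two points deserve sharpening. First, you describe the eigenform condition as "an incidence relation between the Hodge flag and the $K$-isotypic decomposition," i.e. as a Hodge-theoretic (CDK-type) condition. The paper goes out of its way in \S\ref{sec:unfi} to insist that this condition is \emph{not} a condition on the Hodge structure alone: it constrains the specific vector $\omega \in \mathcal{H}_{\textrm{abs}}$, not just the Hodge flag, and so it is only visible as an algebraic condition on the total space of the bundle $\Omega\mathcal{A}_{g,n} \to \mathcal{A}_{g,n}$, not on the mixed Shimura variety itself. This matters for your $\Qbar$-rationality step: \Cref{CDK} and Deligne's absolute Hodge theorem give you the $\Qbar$-rationality of the RM and torsion loci (honest mixed Hodge loci), but for the eigenform locus you should instead argue that the $K$-action on the relative Jacobian is by algebraic correspondences (defined over $\Qbar$ by absolute Hodge) and hence the eigenspace decomposition of algebraic de Rham cohomology is $\Qbar$-rational; the eigenform locus is then the $\Qbar$-rational incidence variety in the $\Omega$-bundle. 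Your argument can absorb this, but as written it conflates two different mechanisms.

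Second, in the Galois-stability step you invoke the "furthermore" clause to conclude that the conjugate locus "is again" an orbit closure. Note that the precise converse in Filip's paper (the analogue of the characterization statement, Thm.\ 1.3 there) is: if an irreducible algebraic subvariety satisfying the three conditions \emph{contains} an orbit closure of the same dimension, then it equals that orbit closure. To close the loop for $\sigma(\mathcal{N})$, one must know that $\sigma(\mathcal{N})$ is locally linear in period coordinates and meets the $\GL_2(\R)$-action in a way that produces an orbit closure inside it of the same dimension; this is not automatic from the three conjugated conditions alone and requires the additional structural argument (the conditions force local linearity, and the resulting linear subspace is $\GL_2(\R)$-invariant). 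You should acknowledge that the "furthermore" clause, as used, packages this extra geometric input, rather than being a purely formal unicity statement.
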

From \Cref{mainthmalgfilip}, one may introduce \emph{real multiplication, torsion and eigenform} invariants (see \Cref{diagramatyp}, for a geometric description). In particular, we write
\begin{itemize}
\item  $r=r _\mathcal{N}$, the \emph{(cylinder) rank} $\frac{1}{2}\dim H^1(T\mathcal{N})$ (which is in fact an integer, since $T\mathcal{N}$ is known to be symplectic thanks to \cite{zbMATH06801922});
\item $d=d_\mathcal{N}$, to simply denote the degree of the number field defining the linear equations of $\mathcal{N}$ (that is $[K:\Q]$);
\item And finally $t=t_\mathcal{N}$ the \emph{torsion corank} is $\operatorname{rank} W_0(T \mathcal{N})$.
\end{itemize}
With these notations, the dimension bound appearing at the end of \Cref{mainthmalgfilip} is just
\begin{displaymath}
\dim \mathcal{N}=2r_{\mathcal{N}}+t_{\mathcal{N}}.
\end{displaymath}

\begin{rem}\label{remkzarcl}
Combining \Cref{thmstrorbit} and \Cref{isolation}, we observe that the Zariski closure of a collection of orbit closures is a finite union of orbit closures. Cf. the beginning of the proof of \cite[Thm. 5.4.5.]{filipnotes}.
\end{rem}

\begin{rem}
\label{explaindimbound}
Let us comment on the ``dimension bound'' appearing in \autoref{mainthmalgfilip} (cf. \cite[Rem. 5.5]{zbMATH06575346} and the discussion in the proof of \cite[Cor. 5.7]{zbMATH06575346}). Any locus $L$ satisfying the first three conditions in \autoref{mainthmalgfilip} will always have dimension \emph{at most} $2r+t$, where $r$ and $t$ are integers naturally associated to the data. This occurs for the following reasons.
\begin{itemize}
\item[(1)] The third condition in \autoref{mainthmalgfilip} implies that, over $L$, the form $\omega$ lies in an eigensubspace of $H^{1}_{\textrm{abs}}$ which has dimension $2r$, and hence imposes $2g - 2r$ linear conditions.
\item[(2)] For holomorphic forms over $L$ lying in such an eigensubspace, the second condition in \autoref{mainthmalgfilip} implies linear conditions on the relative periods (cf. \cite[Rem. 1.5]{zbMATH06575346}). As explained in \cite[\S4.5.4]{filipnotes}, the number of such conditions imposed on periods of $\omega$ is $n-t$, where $n$ is the number of marked points associated to the ambient stratum $\Omega \mathcal{M}_{g}(\kappa)$. 
\item[(3)] The conditions are independent, so since $\rank H^{1}_{\textrm{rel}} = 2g + n$ one finds in flat coordinates over $L$ that the form $\omega$ lies in a subspace of dimension $2r + t$.
\item[(4)] The restriction of the developing map (\ref{def:dev}) to $L$ is locally injective, so this implies $\dim L \leq 2r + t$.
\end{itemize}
\end{rem}

\subsection{Orbit closures and relative dynamical (a)typicality}\label{sectionrelatypefw}

In this subsection we first recall the definition of (a)typicality appearing in \Cref{thmEFWnew}. Actually, rather than giving the intrinsic definition as in \cite{zbMATH06890813, filipnotes} in terms of \emph{algebraic hulls}, we give one that captures the essential property of algebraic hull computations which will be relevant to us.

\begin{defn}\label{tautplane}
 At each $p =(X,\omega)\in \mathcal{N}$, we have the \emph{tautological 2-dimensional subspace} $T(p) := \operatorname{Span} (Re(\omega), Im(\omega)) \subset (H^1_{rel})_p$ (and analogously for $H^1_{abs}$). 
\end{defn}
We remark here that the above construction gives a $\mathbf{SL}_2(\R)$-invariant subbundle, which is not flat. See also \cite[\S 5.3.7.]{filipnotes}.

\begin{defn}
Given an orbit closure $\mathcal{N}$, the \emph{algebraic hull} for the $\mathbf{GL}_2(\R)$-action on $H^1_{rel,\mathcal{N}}$ is the subgroup of the monodromy group (at a fixed point $p\in \mathcal{M}$) $\mathbf{H}_{\mathcal{N}, \mathbb{R}}$ stabilizing the tautological plane $T =T(p)$, introduced in \Cref{tautplane}.
\end{defn}

The following is implicit in \cite[Thm. 1.3]{zbMATH06890813}, and the (a)typical terminology can be found, explicitly, in \cite[\S 5.1.13]{filipnotes} (see also \S 5.1.16 and Thm. 5.4.5 in \emph{op. cit.}):

\begin{defn}[Eskin-Filip-Wright]\label{dyanatyp}
Let $\mathcal{N}\subset \mathcal{M}$ be an orbit closures. We say that $\mathcal{N}$ is \emph{(dynamically) typical relatively to $\mathcal{M}$} if the algebraic hull of $\mathcal{N}$ is equal, up to compact factors, to the algebraic hull of $\mathcal{M}$. Otherwise, $\mathcal{N}$ is said \emph{(dynamically) atypical relatively to $\mathcal{M}$}.
\end{defn}

We are finally ready to recall the theorem of Eskin, Filip, and Wright stated in the introduction (to ease the notation, we omit the parenthetical ``dynamically''): 
\efwnew*
Our goal is to introduce an a priori different notion of (a)typicality defined directly from their Hodge-theoretic description, rather than a dynamical description. Even this Hodge-theoretic viewpoint is strongly influenced by Filip's results described above, and it is essentially taken from \cite{filipnotes}. As we will see below, some extra care is needed when $\mathcal{M}$ is different from the ambient stratum of $\Omega \mathcal{M}_g$. (In particular our proof will use the results from \Cref{strsec} but not the ones on equidistribution of algebraic hulls from \cite{zbMATH06890813}.)

\section{Finiteness of atypical orbit closures and density of the typical ones}\label{new1}

In this section we study orbit closures via various \emph{period maps}. Denote by $\mathcal{A}_g$ the moduli space of principally polarized abelian varieties of dimension $g$, and by $j$ the Torelli morphism $j: \mathcal{M}_g \to \mathcal{A}_g, \ X\mapsto (J(X), \theta_X)$ which associates to a genus $g$ curve $X$ its Jacobian $J(X)$ (naturally equipped with its principal polarization $\theta_X$). For every $n$, let $\mathcal{A}_{g,n} \to \mathcal{A}_g$ denote the fibration whose fibre over a point $[A]\in \mathcal{A}_g$ is $\operatorname{Sym}^{[n]}A$, the unordered $n$-tuples of points of $A$.

The varieties $\mathcal{A}_{g}$ and $\mathcal{A}_{g,n}$ can be interpreted as mixed period spaces (or mixed Shimura varieties). For $\mathcal{A}_{g}$ this is the standard Siegel space description, and a detailed construction of $\mathcal{A}_{g,1}$ as a mixed Shimura variety appears in \cite[\S2]{zbMATH07305885}. In general one has a natural bijection 
\[ \mathcal{A}_{g,n} \simeq \{ (H, E) : H \in \Sp_{2g}(\mathbb{Z}) \backslash \mathbb{H}_{g}, \hspace{0.5em} E \in \textrm{Ext}^{1}_{\textrm{MHS}}(H, \mathbb{Z}^{n})) \} , \]
where we view $\Sp_{2g}(\mathbb{Z}) \backslash \mathbb{H}_{g} \simeq \mathcal{A}_{g}$ as the moduli space for principally polarized pure weight one $\mathbb{Z}$-Hodge structures, and we view $\mathbb{Z}^{n}$ as a pure weight zero Hodge structure; see \cite[4.3.14]{filipnotes}. Using (\ref{bundextseq}) and applying this bijection, one obtains an algebraic mixed period map $\varphi : \Omega \mathcal{M}_{g}(\kappa) \to \mathcal{A}_{g,n}$ which sends $(X, \omega)$ to the isomorphism class of the extension described by (\ref{eqrelcoho}). The data of $\varphi$ is equivalent to the data of the mixed $\mathbb{Z}$VHS $H^{1}_{\textrm{rel}}$ (at least when viewed as a map of orbifolds). Note that $\varphi$ is quasi-finite, cf. the discussion at the end of \cite[\S 3.4]{2022arXiv220206031K}. We will typically work with the factorization
\begin{displaymath}
\Omega \mathcal{M}_g(\kappa) \xrightarrow{\Omega \varphi} \Omega \mathcal{A}_{g, n} \to  \mathcal{A}_{g, n},
\end{displaymath}
of $\varphi$, where the first arrow is obtained by seeing the differential $\omega$ on the Jacobian of $X$.

\begin{rem}\label{mixedrem} 
With the language of (mixed) Shimura varieties one can equivalently describe $\mathcal{A}_{g, n}$ as follows. The classifying space for the extensions of a weight one $\Z$VHS of dimension 2g by $\Z(0)$ is the mixed Shimura variety
$\mathbb{A}_g$. It is the universal principally polarized abelian variety of dimension $g$ over $\mathcal{A}_g$. Finally
\begin{displaymath}
\mathcal{A}_{g, n}\cong  \mathbb{A}_g\times_{\mathcal{A}_g} \mathbb{A}_g \times_{\mathcal{A}_g} \dots \times_{\mathcal{A}_g}\mathbb{A}_g/S_n,
\end{displaymath}
product of $n$ factors, modulo the action of $S_n$ (the symmetric group on $n$ letters). In fact, $\mathcal{A}_{g,n}$ is a \emph{mixed Shimura variety of Kuga type}, see \cite[\S 2]{zbMATH07305885} for more details on this. This means that, with the notation of \S\ref{sec:zpapp}, that $W_{-2}=0$ and $W_{-1}$ is the unipotent radical of its generic Mumford--Tate group, which is simply given by $\mathbb{G}_a^{2g \cdot n}$.
\end{rem}

\subsection{Orbit closures as (a)typical intersections, inspired by Filip}\label{diagramatyp}
In this section we give a geometric reinterpretation of the conditions appearing in \Cref{mainthmalgfilip}. Here we sligthly depart from the exposition of \cite{filipnotes} (which was already implicit in \cite{zbMATH06575346}), and explain the difference in \Cref{filipfollowing}.

We will think of an orbit closure $\mathcal{N}$ (with associated invariants $r,d,t$ as in \Cref{strsec}) as the reduced subvariety underlying an irreducible component of $(\Omega \varphi)^{-1}(E[\mathcal{N}])$, where $E[\mathcal{N}] \subset \Omega \mathcal{A}_{g,n}$ is an algebraic subvariety constructed using the conditions appearing in \autoref{mainthmalgfilip}. In accordance with the structure of \autoref{mainthmalgfilip}, the variety $E[\mathcal{N}]$ is constructed in three stages, depicted in the following diagram:

\begin{equation}
\label{Filipdiagram}
\begin{tikzcd}
  \mathcal{N} \arrow[r] \arrow[d, hookrightarrow]
  &   E[\mathcal{N}] \arrow[r,  two heads] \arrow[d, hookrightarrow]
   &    M[\mathcal{N}]  \arrow[r,  two heads] \arrow[d, hookrightarrow]
    & S[\mathcal{N}] \arrow[d, hookrightarrow] \\    
  \Omega \mathcal{M}_g(\kappa ) \arrow[r]
&\Omega \mathcal{A_{g,n}} \arrow[r,  two heads]
& \mathcal{A}_{g,n} \arrow[r,  two heads]
&\mathcal{A}_{g}
 \end{tikzcd}.
 \end{equation}

We now give details and definitions regarding the objects appearing in the above diagram. 
\begin{enumerate}
\item Let $S[\mathcal{N} ] \subset \mathcal{A}_g$ be the smallest weakly special subvariety of $\mathcal{A}_g$ containing the image of $\mathcal{N}$. It lies in the locus $R[\mathcal{N}]$ of $g$-dimensional principally polarized abelian varieties that have real multiplication \emph{of the same type of $\mathcal{N}$} (in the sense of \Cref{mainthmalgfilip}). The latter is a special subvariety of $\mathcal{A}_{g}$ whose the generic Mumford-Tate group is isomorphic to $\left(\operatorname{Res}_{K/\Q} {\operatorname{GSp}_{2r,K}} \right)\times \operatorname{GSp}_{2 (g-dr)}$. 
\item Let $M [\mathcal{N}] \subset \mathcal{A}_{g,n}$ be the smallest weakly special subvariety of $\mathcal{A}_{g,n}$ containing $\mathcal{N}$. It lies in $T[\mathcal{N}]$, the bundle over $R [\mathcal{N}]$ consisting of $n$-tuples of points satisfying the same torsion conditions as $\mathcal{N}$.
\item Let $E[\mathcal{N}]$ be the the bundle of $1$-forms over $M[\mathcal{N}]$ inside the $K$-eigenspace containing the restriction of the section $\omega$ to $\mathcal{N}$. 
\end{enumerate}
The notation $S[\mathcal{N}]$ is chosen as we think of the variety $S[\mathcal{N}]$ as giving the \emph{Shimura condition}. Similarly, $M[\mathcal{N}]$ gives the \emph{mixed-Shimura condition}, and $E[\mathcal{N}]$ includes also the eigenform condition in addition to the previous two. 

The conditions $R[\mathcal{N}]$ and $T[\mathcal{N}]$ are simply geometric reinterpretations\footnote{We stress here the fact that the first two conditions of \Cref{mainthmalgfilip} are written in terms of rational Hodge structures (as already noticed and used by Filip). At first sight, the \emph{twist by real multiplication} appearing in the condition on the Abel-Jacobi variety could look like a statement about $K$-Hodge structures, but this becomes a $\Q$-condition after including Galois conjugate conditions.} of the first two items in \autoref{mainthmalgfilip}, whereas the conditions $S[\mathcal{N}]$ and $M[\mathcal{N}]$ may in principle be stronger. The condition $E[\mathcal{N}]$ corresponds to the third condition in \autoref{mainthmalgfilip}. One can think of $M[\mathcal{N}]$ as a variety that records all mixed Hodge-theoretic conditions present on $\mathcal{N}$, and $E[\mathcal{N}]$ as a variety that records, in addition to $M[\mathcal{N}]$, the third condition in \autoref{mainthmalgfilip}.

We now give the \emph{intersection theoretic} (a)typicality condition that will be suitable for our method. The reader interested only in the $\mathcal{M}=\Omega \mathcal{M}_g (\kappa)$ case can skip the rest of this section, and just consult equation \eqref{eq123} below.

\begin{defn}[Relative (a)typicality --- Intersection theoretic version]\label{defatyfili}
Let $\mathcal{M}$ be an orbit closure in some stratum $\Omega \mathcal{M}_g (\kappa)$ (possibly equal to component of the latter). Then an orbit closure $\mathcal{N} \subset \mathcal{M}$ is called \emph{(intersection theoretically) atypical} (relative to $\mathcal{M}$) if
\begin{displaymath}
\codim_{E[\mathcal{M}]} \mathcal{N} < \codim_{E[\mathcal{M}]} (E[\mathcal{N}]) +  \codim_{E[\mathcal{M}]} (\mathcal{M}),
\end{displaymath}
and \emph{(intersection theoretically) typical} (relative to $\mathcal{M}$) otherwise.
\end{defn}
The codimensions appearing in \autoref{defatyfili} are formal, but one can make sense of them by taking the image of $\mathcal{N}$ in $\Omega \mathcal{A}_{g,n}$. For most of this section, we will just work with the intersection-theoretic notion of (a)typical.

\begin{prop}[Filip \cite{filipnotes} + $\epsilon$]\label{propcomparison}
Let $\mathcal{N} \subset \mathcal{M}=\Omega \mathcal{M}_g(\kappa)$ be a suborbit closure. Then for all but finitely many exceptional orbit closures, the following are equivalent:
\begin{enumerate}
\item $\mathcal{N}$ is dynamically typical relatively to $\mathcal{M}$ (see \Cref{dyanatyp});
\item $\mathcal{N}$ is intersection theoretically typical relatively to $\mathcal{M}$ (see \Cref{defatyfili});
\item $\mathcal{N}$ belongs to the three cases of \cite[Prop. 5.1.6]{filipnotes}: torus covering, Hilbert modular surface, Weierstrass curve.
\end{enumerate}
\end{prop}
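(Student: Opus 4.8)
The plan is to establish the chain of equivalences by first reducing all three notions to statements about the invariants $(r,d,t)$ attached to $\mathcal{N}$, and then invoking Filip's classification. First I would unwind the intersection-theoretic condition in the case $\mathcal{M} = \Omega\mathcal{M}_g(\kappa)$: by \Cref{explaindimbound} we have $\dim\mathcal{N} = 2r+t$, while $E[\mathcal{N}]$ is cut out inside $\Omega\mathcal{A}_{g,n}$ by the eigenform condition (imposing $2g-2r$ conditions on $\omega$), the torsion condition (imposing $n-t$ conditions on relative periods), and the condition that the base lie in $S[\mathcal{N}]\subset R[\mathcal{N}]$. So $\codim_{E[\mathcal{M}]}\mathcal{N}$ and $\codim_{E[\mathcal{M}]}E[\mathcal{N}]$ can both be written, up to the error coming from the possibly-strict inclusions $S[\mathcal{N}]\subsetneq R[\mathcal{N}]$, $M[\mathcal{N}]\subsetneq T[\mathcal{N}]$, as explicit linear expressions in $r,d,t,g,n$. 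The inequality defining intersection-theoretic atypicality then becomes (for all but finitely many $\mathcal{N}$, where the weakly-special closures $S[\mathcal{N}], M[\mathcal{N}]$ agree with the special loci $R[\mathcal{N}], T[\mathcal{N}]$) a numerical inequality among these invariants --- exactly the one that, by inspection, holds precisely when $\mathcal{N}$ is \emph{not} of rank $r$ equal to the rank of the ambient stratum and with maximal torsion corank, i.e. precisely when $\mathcal{N}$ falls into one of the three bounded families of \cite[Prop. 5.1.6]{filipnotes}. This gives (2) $\Leftrightarrow$ (3).

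Next I would establish (1) $\Leftrightarrow$ (3). This is essentially the content of \cite[Thm. 1.3, 1.5, 1.7]{zbMATH06890813} together with the discussion in \cite[\S 5.1]{filipnotes}: the algebraic hull of a suborbit closure $\mathcal{N}$ of the stratum, modulo compact factors, is determined by the splitting of the monodromy on $T\mathcal{N}$ into the tautological plane and its complement, and by \cite[Thm. 1.3]{zbMATH06890813} this hull equals that of the full stratum for all but finitely many $\mathcal{N}$; the finitely many exceptions are classified by Eskin--Filip--Wright and Filip into exactly the torus-covering, Hilbert-modular-surface and Weierstrass-curve cases. I would cite this rather than reprove it, since reproving it would require the equidistribution-of-algebraic-hulls input that the paper explicitly wishes to avoid; the point here is only the bookkeeping that matches the dynamical trichotomy with the numerical invariants $(r,d,t)$, which is recorded in \cite[\S 5.1.13, \S 5.1.16, Thm. 5.4.5]{filipnotes}.

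Finally, to close the triangle it suffices to combine the two equivalences above; alternatively one can argue (2) $\Leftrightarrow$ (1) directly by checking that the algebraic-hull computation of \cite[Thm. 1.3]{zbMATH06890813} is, up to compact factors and up to the finitely many exceptions, equivalent to the statement that the image of $\mathcal{N}$ in $\Omega\mathcal{A}_{g,n}$ meets $E[\mathcal{M}]$ in the expected dimension --- both sides being governed by whether $\omega$ is forced into a proper eigensubspace and whether extra torsion relations among relative periods hold. The main obstacle will be controlling the "all but finitely many" clause precisely: one must ensure that for all but finitely many $\mathcal{N}$ the weakly special closures $S[\mathcal{N}]$ and $M[\mathcal{N}]$ coincide with the special subvarieties $R[\mathcal{N}]$ and $T[\mathcal{N}]$ (so that $E[\mathcal{N}]$ has the "expected" codimension computed purely from $r,d,t$), since otherwise the intersection-theoretic codimension count acquires correction terms. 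This follows because a weakly special subvariety strictly between, say, a point and $R[\mathcal{N}]$ would force $\mathcal{N}$ to carry additional monodromy-theoretic constraints, hence additional numerical constraints, and by \Cref{isolation} together with \Cref{remkzarcl} only finitely many orbit closures in a given stratum carry any fixed such extra constraint; a Noetherian/boundedness argument on the finitely many possible enhanced data then limits the exceptions to a finite set, which we absorb into the "all but finitely many" of the statement.
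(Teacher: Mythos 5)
Your proposal reaches for a different triangulation of the three equivalences: rather than the paper's route (cite (1)\,$\Leftrightarrow$\,(3), show (2)\,$\Rightarrow$\,(1) is easy, then case-check (1)\,$\Rightarrow$\,(2)), you aim directly for (2)\,$\Leftrightarrow$\,(3) by translating intersection-theoretic codimensions into the invariants $(r,d,t,g,n)$ via \Cref{explaindimbound} and matching against Filip's trichotomy. That translation is sound \emph{provided} the weakly special closures $S[\mathcal{N}]$, $M[\mathcal{N}]$ coincide with the ``expected'' loci $R[\mathcal{N}]$, $T[\mathcal{N}]$, and you correctly identify this as the main obstacle.

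The gap is that your argument for limiting the exceptions does not close. You appeal to \Cref{isolation} and \Cref{remkzarcl} plus ``a Noetherian/boundedness argument,'' but neither the isolation property nor the Zariski-closure remark says anything about monodromy being maximal; they control geometric accumulation of orbit closures, not the size of their algebraic monodromy groups. What is actually needed --- and what the paper uses --- is concrete monodromy input: \Cref{lemmamonodromstrata} to pin down the monodromy of the ambient stratum, an explicit computation for the $g=2$ Hilbert modular surface and Weierstrass curve cases (Filip's Thm.~5.3.15), and the argument from Filip's \S5.4.6 that all but finitely many square-tiled surfaces have maximal monodromy in both absolute and relative cohomology. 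Without that last input there is no mechanism that prevents infinitely many torus covers from having strictly smaller monodromy (the ``Eierlegende Wollmilchsau'' phenomenon), so the ``all but finitely many'' clause is not secured by your proposal. If you restructure to prove (2)\,$\Rightarrow$\,(1) directly (easy: intersection-theoretic typicality forces the hull to be as large as possible) and then deduce (1)\,$\Rightarrow$\,(2) by running through Filip's three cases with the monodromy facts cited above, the proof goes through and matches the paper.
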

\begin{proof}
Filip \cite[\S 5.1]{filipnotes}, by a direct computation, proved that 1 is equivalent to 3. He also proves toward the end of the proof of \cite[Thm. 5.4.5]{filipnotes} that 1. is equivalent to $\delta(\mathcal{N}) = 0$, where $\delta(\mathcal{N})$ is the so-called \emph{degree of atypicality} defined in \cite[\S 5.1.13 (5.1.14)]{filipnotes} and \cite[Def. 5.1.3]{filipnotes}. By explicit computation $\delta(\mathcal{N}) = 0$ if and only if
\[ \codim_{E[\mathcal{M}]} \mathcal{N} = \codim_{E[\mathcal{N}]} (E[\mathcal{N}]) + \codim_{E[\mathcal{M}]} \mathcal{M} \]
where $E[\mathcal{M}] = \Omega \mathcal{M}_g(\kappa)$ and $E[\mathcal{N}]$ is equal to the variety $\mathcal{E} \Omega \mathcal{A}_{r,d,t}$ defined in \cite[\S5.1.10]{filipnotes}. This agrees with our definition of $E[\mathcal{N}]$ for the three cases listed precisely when the monodromy of the torus covering, Hilbert modular surface, or Weierstrass curve is as large as possible. 

That there are at most finitely many exceptions with smaller monodromy follows from the arguments in \cite[\S 5.4.6]{filipnotes} (it is stated only for absolute cohomology, but same argument works for relative cohomology as well). 
\end{proof}
\begin{rem}
The ``Eierlegende Wollmilchsau'' \cite{zbMATH05245768} is a well known example of torus covering $\mathcal{N}$ with \emph{smaller monodromy group than expected}, i.e. an orbit closure such that $M[\mathcal{N}]$ is strictly contained in $T[\mathcal{N}]$.
\end{rem}

\begin{lem}\label{lemmamonodromstrata}
The monodromy group $\mathbf{H}_{\Omega \mathcal{M}_g (\kappa)}$ of the $\Z$VMHS $\V$ on $\Omega \mathcal{M}_g (\kappa)$ is $\Sp_{2g}\ltimes \mathbb{G}_{\operatorname{a}}^{2g\cdot n} $. (In particular it is equal to the derived subgroup of the generic Mumford--Tate group $\mathbf{G}$ of $\V$).
\end{lem}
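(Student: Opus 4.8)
Write $\mathbf{H} := \mathbf{H}_{\Omega\mathcal{M}_g(\kappa)}$, let $s=(X,\omega)$ be the base point, and set $H^1_s=H^1(X;\Q)$, $W_{0,s}=\widetilde{H}^0(Z(\omega);\Q)$ (of dimension $n$), and $V:=\operatorname{Hom}(H^1_s,W_{0,s})$. By \eqref{eqmonodromy} and the paragraph following it, the monodromy of $\mathbb{V}=H^1_{rel}$ factors through $\operatorname{Aut}(H^1_{rel,s})$, whose identity component is $\Sp(H^1_s)\ltimes V\cong\Sp_{2g}\ltimes\Ga^{2g\cdot n}$, with $\Sp(H^1_s)$ acting standardly on $H^1_s$ and trivially on $W_{0,s}$; hence $\mathbf{H}\subseteq\Sp(H^1_s)\ltimes V$ and it suffices to prove equality. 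The plan is to show: (a) $\mathbf{H}$ surjects onto $\Sp(H^1_s)$; (b) consequently $U:=\mathbf{H}\cap V$, being a normal subgroup of $\mathbf{H}$ on which $\mathbf{H}$ acts through all of $\Sp(H^1_s)$, is an $\Sp(H^1_s)$-submodule of $V$, so — since $H^1_s$ is the standard, absolutely irreducible, self-dual symplectic module and $V\cong (H^1_s)^{\oplus n}$ — Schur's lemma forces $U=\operatorname{Hom}(H^1_s,U_0)$ for some subspace $U_0\subseteq W_{0,s}$; and (c) $U_0=W_{0,s}$.

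For (a): the composite $\mathbf{H}\hookrightarrow\Sp(H^1_s)\ltimes V\twoheadrightarrow\Sp(H^1_s)$ is the algebraic monodromy group of the weight-one variation $\operatorname{Gr}^W_1\mathbb{V}=p^*R^1f_*\Z$ of \eqref{mainvhsfororbits}, and the fact that its Zariski closure is the full symplectic group on every connected component of every stratum is exactly the Rauzy--Veech monodromy statement recalled with references in \cite[\S5.4.6]{filipnotes}, together with the direct computation \cite[Thm. 5.3.15]{filipnotes} for the remaining low-genus cases. (Alternatively, one can note that the generic principally polarized abelian variety parametrized by any stratum component has no extra endomorphisms, so has generic Mumford--Tate group $\operatorname{GSp}_{2g}$, and then \Cref{monodromytheorem} together with non-triviality of the monodromy forces it to be $\Sp_{2g}$; either way this is a quoted input.) For (c): this is the relative-cohomology strengthening of (a), and it is precisely the point flagged in the proof of \Cref{propcomparison} as following from ``the same argument'' of \cite[\S5.4.6]{filipnotes} applied to relative cohomology. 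Heuristically $U_0=W_{0,s}$ because, for each zero $z_i$ and each loop $\gamma$ on $X$, one realizes by a loop in the stratum the unipotent monodromy $v\mapsto v+\xi_{i,\gamma}(p(v))$, where $p\colon H^1_{rel}\to H^1$ is the projection and $\xi_{i,\gamma}\in V$ is the rank-one homomorphism $w\mapsto\langle w,[\gamma]\rangle\,e_i$ (with $\langle\,,\,\rangle$ the canonical pairing $H^1_s\times H_1(X;\Q)\to\Q$ and $e_1,\dots,e_n$ a basis of $W_{0,s}$); letting $[\gamma]$ range over $H_1(X;\Q)\cong(H^1_s)^{*}$ and $i$ over $\{1,\dots,n\}$, these span $V$.

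Once (a)--(c) are in place, $\mathbf{H}=\Sp(H^1_s)\ltimes V=\Sp_{2g}\ltimes\Ga^{2g\cdot n}$, as claimed. The parenthetical then follows: by \Cref{mixedrem} the generic Mumford--Tate group $\mathbf{G}$ of $\mathbb{V}$ has unipotent radical $\mathbf{W}_{-1}=\Ga^{2g\cdot n}$ and reductive quotient contained in $\operatorname{GSp}_{2g}$, so $\mathbf{G}_{\mathrm{der}}\subseteq\Sp_{2g}\ltimes\Ga^{2g\cdot n}$; since \Cref{monodromytheorem} gives $\mathbf{H}\trianglelefteq\mathbf{G}_{\mathrm{der}}$, the equality just proved forces $\mathbf{G}_{\mathrm{der}}=\Sp_{2g}\ltimes\Ga^{2g\cdot n}=\mathbf{H}$. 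The only non-formal ingredient in this argument is the Zariski-density of the stratum monodromy inside $\operatorname{Aut}(H^1_{rel,s})$ — steps (a) and (c), and especially the relative refinement (c) — which I would simply quote from the Teichm\"{u}ller literature; the rest is representation-theoretic bookkeeping, and so (c) is the step I expect to require the most care to cite correctly.
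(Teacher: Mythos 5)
Your overall strategy — splitting into (a) surjectivity onto the Levi factor $\Sp(H^1_s)$, (b) Schur's lemma to constrain the unipotent part $U$ to the form $\operatorname{Hom}(H^1_s,U_0)$, and (c) $U_0 = W_{0,s}$ — is sound as a skeleton and is genuinely different from the paper's route. Step (b) is correct and self-contained. However, your citations for the two non-formal inputs (a) and (c) are misdirected. You borrow \cite[\S5.4.6]{filipnotes} and \cite[Thm.~5.3.15]{filipnotes} from the proof of \Cref{propcomparison}, but in that proof those references are invoked for the \emph{suborbit closures} $\mathcal{N}$ (square-tiled surfaces, Hilbert modular surfaces, Weierstrass curves), not for the ambient stratum $\mathcal{M}$ — the stratum's monodromy being maximal is precisely what \Cref{propcomparison} delegates to the present lemma, so citing those passages here is circular in spirit. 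The input the paper actually uses is the theorem of Calderon--Salter \cite[Thm.~A]{2020arXiv200202472C}: the natural map $\pi_1(\Omega\mathcal{M}_g(\kappa)) \to \operatorname{Mod}(X,Z)$ has finite-index image. Combined with the classical surjectivity of \eqref{eqmonodromy}, this finishes the lemma in one stroke: the monodromy is finite index in $\Sp_{2g}(\Z)\ltimes\Z^{2g\cdot n}$, hence Zariski dense in $\Sp_{2g}\ltimes\mathbb{G}_a^{2g\cdot n}$. In particular, once the Calderon--Salter input is in place your Schur argument becomes unnecessary, and your ``heuristic'' in (c) — realizing point-pushing unipotents by loops in the stratum — is exactly the sort of statement that their theorem makes precise, so it is the citation you were looking for but did not name. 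Your alternative route in (a) (generic Mumford--Tate group $\operatorname{GSp}_{2g}$ plus André--Deligne) also has a gap: it requires knowing that the image of the stratum in $\mathcal{A}_g$ is not contained in a proper special subvariety, which is not free. The paper's approach avoids all of this by staying entirely inside mapping class group theory.
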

\begin{proof}
It is well known that \eqref{eqmonodromy} is surjective. The result then follows from the fact that the image of
\begin{displaymath}
\pi_1(\Omega \mathcal{M}_g (\kappa))\to \operatorname{Mod} (X,Z)
\end{displaymath}
has finite index, as established in \cite[Thm. A]{2020arXiv200202472C}. See also \cite[\S 3.1.14 and 3.1.16]{filipnotes}, for related discussions on connected components of $\Omega \mathcal{M}_g (\kappa)$ (here we just care about the monodromy, not the image of the representations).
\end{proof}

\subsubsection{Comparison with Filip's notation}\label{filipfollowing}

This section gives more details about the case $  \mathcal{M}= \Omega \mathcal{M}_g(\kappa ) $. In Filip's notation, the following is the key diagram to see orbit closures as intersections.

\begin{center}
\begin{tikzcd}
  \mathcal{N} \arrow[r] \arrow[d, hookrightarrow]
  &    \mathcal{E}\Omega \mathcal{A}_{r,d,t}\arrow[r,  two heads] \arrow[d, hookrightarrow]
   &      \mathcal{E} \mathcal{A}_{r,d,t} \arrow[r,  two heads] \arrow[d, hookrightarrow]
    & \mathcal{E} \mathcal{A}_{r,d} \arrow[d, hookrightarrow] \\
    
  \Omega \mathcal{M}_g(\kappa ) \arrow[r]
&\Omega \mathcal{A_{g,n}} \arrow[r,  two heads]
& \mathcal{A}_{g,n} \arrow[r,  two heads]
&\mathcal{A}_{g}
 \end{tikzcd}.
\end{center}
In the notation of the previous section, $\mathcal{E} \mathcal{A}_{r,d} = R[\mathcal{N}]$, $\mathcal{E} \mathcal{A}_{r,d,t} = M[\mathcal{N}]$, and $\mathcal{E} \Omega \mathcal{A}_{r,d,t}$ is nothing more than the eigenform bundle above $\mathcal{E} \mathcal{A}_{r,d,t}$ corresponding to the third condition in \autoref{mainthmalgfilip}. In the case where $\mathcal{M} = \Omega \mathcal{M}_g (\kappa)$, an orbit closure $\mathcal{N} \subset \mathcal{M}$ is (intersection theoretically) atypical iff
\begin{equation}\label{eq123}
\codim_{\Omega \mathcal{A}_{g,n}} \mathcal{N} < \codim_{\Omega \mathcal{A}_{g,n}} (\mathcal{E} \Omega \mathcal{A}_{r,d,t}= E[\mathcal{N}]) +  \codim_{\Omega \mathcal{A}_{g,n}} \Omega \mathcal{M}_g (\kappa) .
\end{equation}
To get back the dimensions appearing in \cite[5.1.3]{filipnotes} one just needs to add on the RHS $0=\dim \Omega \mathcal{M}_g(\kappa)-\dim \Omega \mathcal{M}_g(\kappa)$ (in this way $\codim_{\Omega \mathcal{M}_g(\kappa)}\mathcal{N}$ will appear).

\subsection{The Eskin-Filip-Wright finiteness from the intersection-theoretic viewpoint}
We are now ready to prove our main result. That is to give a new proof, in our unifying setting, of the finiteness parts of \Cref{thmEFWoriginal} (and equivalently \Cref{thmEFWnew}), as described in \Cref{intro:teich}. More precisely, with the vocabulary of \Cref{defatyfili}:

\begin{thm}
\label{orbitclosurefinthm}
Let $\mathcal{M} \subset \Omega \mathcal{M}_g$ be an orbit closure (possibly equal to an irreducible component of some stratum $\Omega \mathcal{M}_g (\kappa)$). Then $\mathcal{M}$ contains at most finitely many (strict) orbit closures $\mathcal{N}$ that are:
\begin{enumerate}
\item Maximal, i.e. the only suborbit closures of $\mathcal{M}$ containing $\mathcal{N}$ are $\mathcal{N}$ and  $\mathcal{M}$;
\item Atypical (relative to $\mathcal{M}$) in the sense of \Cref{defatyfili}.
\end{enumerate}
In particular in each (connected component of each) stratum $\Omega \mathcal{M}_g (\kappa)$, all but finitely many orbit closures have rank 1 and degree at most 2.
\end{thm}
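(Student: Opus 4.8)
The strategy is to package the atypical suborbit closures $\mathcal{N} \subset \mathcal{M}$ as atypical intersections of a suitable family of subvarieties of the period torsor $P$ attached to the mixed $\Z$VHS $H^1_{\mathrm{rel}}$ on $\mathcal{M}$ (or on the ambient stratum), and then run the \emph{Ax-Schanuel in families} machinery of \S\ref{axschanfamsec}. Concretely, one first observes via \Cref{propcomparison} and \Cref{mainthmalgfilip} that each atypical $\mathcal{N}$ arises as a component of $(\Omega\varphi)^{-1}(E[\mathcal{N}])$, and that the varieties $E[\mathcal{N}]$ range over an algebraic family: the ``Shimura'', ``mixed-Shimura'', and ``eigenform'' conditions are each cut out by finitely many discrete Hodge-theoretic invariants (real multiplication type, torsion subgroup, eigenspace), which by \Cref{CDK}-type finiteness (or rather by the boundedness of weakly special data, \Cref{rigidprop}) can be over-parametrized by countably many algebraic families $\{h_i : C_i \to B_i\}$ of weakly special subvarieties of $\mathcal{A}_{g,n}$, refined by the eigenform bundle condition. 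The first step is thus to translate the intersection-theoretic atypicality inequality \eqref{eq123} (or its relative version in \Cref{defatyfili}) into the codimension inequality of \Cref{cor:asloc} inside $P$, using that the map $\pi\times\nu : P \to S\times\check{D}^0$ of \S\ref{constructionBT} is surjective and hence preserves atypicality, and that the $\Z$VMHS here is of Kuga type so $H = \Sp_{2g}\ltimes \Ga^{2g\cdot n}$ is sparse (\Cref{lemmasparse}, \Cref{lemmamonodromstrata}).

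The second step is to apply \Cref{protogeoZP}: having over-parametrized all the $E[\mathcal{N}]$ by a family $f : \mathcal{Z}\to\mathcal{Y}$ of subvarieties of $P$ (with a dimension bound on fibres guaranteeing the atypicality inequality holds pointwise), and having the countable collection of weakly special families $\{h_i\}$ from \Cref{rigidprop}, one concludes that the atypical locus $\mathcal{Y}(f,e)$ is contained in finitely many $\mathcal{Y}(f,e,h_i)$. Projecting down, this means every atypical $\mathcal{N}$ projects into the fibre of one of finitely many families of weakly special subvarieties of $\mathcal{M}$. The third step — step IV in the paradigm of \S\ref{sec:unfi} — is to upgrade ``lies in a weakly special'' to the actual finiteness of \emph{maximal atypical} $\mathcal{N}$. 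Here one uses the rigidity/maximality built into \Cref{mainthmalgfilip}: an orbit closure is determined by the data $(\mathcal{F},\mathcal{S},K)$ together with the dimension bound $\dim\mathcal{N} = 2r+t$ (\Cref{explaindimbound}). Since the finitely many weakly special families produced carry only finitely many possible values of the discrete invariants $(r,d,t)$ on a given fibre, and since a maximal atypical $\mathcal{N}$ inside $\mathcal{M}$ cannot be a proper suborbit closure of another weakly special in the list (else it would not be maximal), only finitely many $\mathcal{N}$ survive. Combined with \Cref{remkzarcl} (the Zariski closure of a collection of orbit closures is a finite union of orbit closures), this yields the finiteness; and the last sentence of the theorem follows since a \emph{typical} orbit closure with $\dim\mathcal{M}_g(\kappa) - \codim E[\mathcal{N}] \geq 0$ forces $r=1$, $d\leq 2$ by the dimension bookkeeping of \Cref{explaindimbound} (cf. \eqref{eq123}), so everything of rank $\geq 2$ or degree $\geq 3$ is atypical, hence among the finite list.

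The main obstacle I anticipate is the over-parametrization step (step II), specifically ensuring that the varieties $E[\mathcal{N}]$ — which involve not just weakly special data in $\mathcal{A}_{g,n}$ but the non-Hodge-theoretic \emph{eigenform} condition (a condition on the periods of $\omega$, living in $P$ but not in any period domain $\check D$) — genuinely fit into an algebraic family of subvarieties of $P$ with the required uniform dimension bound on fibres. This is exactly the point flagged in \S\ref{sec:unfi}: the atypicality is detectable only in $P$, not in $S\times\check D^0$, and one must invoke the fixed part theorem to guarantee that the relevant $K$-eigenspace conditions vary algebraically and are finitely cut out, so that the twisted-torsion and eigenform loci assemble into finitely many algebraic families. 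A secondary technical point is verifying that the dimension bound hypothesis $\dim(\text{fibre of }f) < \dim H + e$ in \Cref{protogeoZP} is implied precisely by the atypicality inequality \Cref{defatyfili}, which requires carefully matching the formal codimensions in $E[\mathcal{M}]$ against codimensions in $P$ — this is where the bookkeeping of \Cref{explaindimbound} and the identification $\dim\mathcal{N} = 2r+t$ will do the work.
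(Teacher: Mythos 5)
Your outline tracks the paper through steps I--III: rephrasing the intersection-theoretic atypicality inequality of \Cref{defatyfili} inside the torsor $P$ via the surjective equivariant map $r\times\nu: P \to (E_{\mathcal{M}}\setminus\{0\})\times\ch{D}^0$ (this is exactly \Cref{ineqinsidebundle}), over-parametrizing the targets $Z_{\mathcal{N}} = E_{\mathcal{N}}\times\ch{D}^0_{\mathcal{N}}$ by an algebraic family $f:\mathcal{Z}\to\mathcal{Y}$ (Grassmannians for the linear eigenspace condition, plus \Cref{gaolemma} for weakly special subdomains), and then feeding $f$ and the countable collection of weakly special families from \Cref{rigidprop} into \Cref{protogeoZP}. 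Incidentally, the ``main obstacle'' you flag --- whether the eigenform locus fits into an algebraic family --- is handled cleanly in the paper precisely because the eigenform condition becomes a linear-subspace condition in the fibre $\mathcal{H}_{\mathrm{abs},s_0}$, parameterized by a union of Grassmannians (\Cref{orbitoverparamlem}); this is not where the subtlety lies.

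The genuine gap is in your step IV. After \Cref{protogeoZP} one has finitely many families $\{h_i: C_i\to B_i\}$ of strict weakly special subvarieties with $C_i\to\mathcal{M}$ quasi-finite, such that every maximal atypical $\mathcal{N}$ maps into some fibre. But the bases $B_i$ can be positive-dimensional, so a priori there are continuously many candidate fibres, and nothing in ``only finitely many possible values of the discrete invariants $(r,d,t)$ on a given fibre'' or in the maximality of $\mathcal{N}$ among suborbit closures rules out infinitely many distinct fibres $h_i^{-1}(b)$ each containing an orbit closure: the fibres of $h_i$ are weakly special subvarieties, not orbit closures, and a maximal $\mathcal{N}$ may perfectly well be a \emph{proper} subvariety of one of them. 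The paper closes this gap with \Cref{orbitclnofamlem}, whose proof is not Hodge-theoretic at all --- it invokes the Eskin--Mirzakhani--Mohammadi isolation property (\Cref{isolation}) twice, combined with quasi-finiteness of $C_i\to\mathcal{M}$, to force infinitely many such orbit closures into a single fibre and derive a contradiction. This dynamical input is unavoidable in the present argument and is entirely absent from your proposal; without it, the reduction to ``projects into finitely many weakly special families'' does not yield finiteness of the maximal atypical orbit closures. (Your appeal to \Cref{remkzarcl} at the outset is correct and matches the paper's first reduction, but it does not substitute for \Cref{orbitclnofamlem} in the endgame.)
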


The above gives the first part of \Cref{efffinteich} and the proof provided below is the starting point for the \emph{effective finiteness} we will prove in the final sections. After discussing some preliminaries, in \S\ref{secitonproofefw} we give our proof.

\vspace{0.5em}

For the rest of the section, let $\mathcal{M} \subset \Omega \mathcal{M}_g(\kappa)$ be some fixed orbit closure. We fix a base point $s_{0} := (X, \omega )\in\mathcal{M}$. We write $(\mathbf{H},D^{0} \subset \ch{D}^{0})=(\mathbf{H}_{\mathcal{M}},D^{0}_{\mathcal{M}}\subset \ch{D}^{0}_{\mathcal{M}})$ for the associated weak Shimura (or Hodge) datum associated to $\mathcal{M}$. For example $\mathbf{H} = \operatorname{Sp}_{2g} \ltimes \mathbb{G}_{\operatorname{a}}^{2g\cdot n}$ if $\mathcal{M}=\Omega \mathcal{M}_g(\kappa)$, where $n = |\kappa| - 1$. 

\subsubsection{Rephrasing the atypical condition in the period torsor}
\label{rephsec}

We now let $(\mathcal{V}, \nabla)$ be the algebraic vector bundle with regular singular connection associated to $\mathbb{V} = H^{1}_{\textrm{rel}}$, and consider the bundle $P$ and map $\nu$ constructed in \S\ref{constructionBT}. We denote by $\mathcal{V}_{\textrm{abs}}$ the graded quotient of $\mathcal{V}$ which corresponds to $H^{1}_{\textrm{abs}}$. Then $\omega$ is an algebraic section of $\mathcal{V}_{\textrm{abs}}$, and we then obtain an algebraic map $r : P \to \mathcal{V}_{abs, s_{0}}$ given by 
\[ [\eta \in \Hom(\mathcal{V}_{s}, \mathcal{V}_{s_{0}})] \mapsto \eta(\omega_{s}) . \]

\begin{lem}
\label{mapfromPsurj}
The map $r$ lands inside a unique eigenspace $E_{\mathcal{M}} = E_{\mathcal{M},s_{0}} \subset \mathcal{V}_{\textrm{abs},s_{0}}$ for the field of real multiplication $K_{\mathcal{M}}$ associated to $\mathcal{M}$. Consider the subvariety
\[ Z_{\mathcal{M}} := \{ (v, F^{\bullet}) \in (E_{\mathcal{M}} \setminus \{ 0 \}) \times \ch{D}^{0} : v \in F^{1}_{\textrm{abs}} \} \subset (E_{\mathcal{M}} \setminus \{ 0 \}) \times \ch{D}^{0} . \]
Then the complex algebraic variety $Z_{\mathcal{M}}$ consists of a single $\mathbf{H}(\mathbb{C})$-orbit which factors $r \times \nu$, and the map $r \times \nu : P \to Z_{\mathcal{M}}$ is $\mathbf{H}(\mathbb{C})$-equivariant and surjective.
\end{lem}

\begin{proof}
Let $\mathcal{L} \subset P$ be a leaf coming from the rational structure of the underlying local system $\mathbb{V}$. Then from the third part of \autoref{mainthmalgfilip}, the image $r(\mathcal{L})$ lies in a $K_{\mathcal{M}}$-eigenspace $E_{\mathcal{M}} = E_{\mathcal{M},s_{0}}$. Because $\mathcal{L}$ is Zariski dense in $P$ as a consequence of \autoref{galequalsmono}, we have $r^{-1}(\overline{r(\mathcal{L})}^{\textrm{Zar}}) = P$ and hence $r(P) \subset E_{\mathcal{M}}$. 

It is clear by construction that the map $r \times \nu$ is $\mathbf{H}(\mathbb{C})$-invariant, so surjectivity will follow if we can show that in fact $Z_{\mathcal{M}}$ is an orbit of $\mathbf{H}(\mathbb{C})$. That $\ch{D}^{0}$ is an orbit of $\mathbf{H}(\mathbb{C})$ is by construction, and the fibre of $Z_{\mathcal{M}} \to \ch{D}^{0}$ over a flag $F^{\bullet}$ is just $(F^{1}_{\textrm{abs}} \cap E_{\mathcal{M}}) \setminus \{ 0 \}$. It therefore suffices to fix a flag $F^{\bullet} \in \ch{D}^{0}$ and show that the stabilizer of $F^{\bullet}$ in $\mathbf{H}(\mathbb{C})$ acts transitively on $(F^{1}_{\textrm{abs}} \cap E_{\mathcal{M}}) \setminus \{ 0 \}$. Without loss of generality, we take $F^{\bullet} = F^{\bullet}_{s_{0}}$ to be the Hodge flag at $s_{0}$. 

Now $\mathbf{H}(\mathbb{C})$ acts on the absolute cohomology $\mathcal{V}^{1}_{\textrm{abs}, s_{0}}$ through its reductive quotient $\mathbf{H}_{\textrm{abs}}$. Note that the stabilizer of $F^{\bullet}_{s_{0}}$ in $\mathbf{H}(\mathbb{C})$ surjects onto the stabilizer of $F^{\bullet}_{\textrm{abs},s_{0}}$ in $\mathbf{H}_{\textrm{abs}}(\mathbb{C})$: it suffices to check the corresponding statement on the level of Lie algebras, where the corresponding map is identified with $F^{0} \mathfrak{h} \to F^{0} (\mathfrak{h}/W_{-1})$ (c.f. the proof following \cite[Lem. A.1]{2021arXiv210110938G}), with $\mathfrak{h}$ the Lie algebra of $\mathbf{H}$. But that this map is surjective is immediate from the definition of the mixed Hodge structure on $\mathfrak{h}$, since the Hodge filtration on $(\mathfrak{h}/W_{-1})$ is induced by the one on $\mathfrak{h}$. To determine this latter stabilizer subgroup of $\mathbf{H}_{\textrm{abs}}$, we consider the structure of the local system $\mathbb{V}_{\textrm{abs}}$ obtained as the pure quotient of $\mathbb{V}$. Applying \cite[Thm. 1.5]{zbMATH06323273} we have a direct sum decomposition 
\[ \mathbb{V}_{\textrm{abs}} = \underbrace{\left(\bigoplus_{\iota : K_{\mathcal{M}} \hookrightarrow \mathbb{C}} \mathbb{U}_{\iota}\right)}_{\mathbb{U}} \oplus \mathbb{W} \]
of variations of pure Hodge structures \cite[Thm. 8.2]{zbMATH06641855}, where $\mathbb{U}$ and $\mathbb{W}$ are $\mathbb{Q}$-summands and the $\mathbb{U}_{i}$ are $\mathbb{R}$-summands. Moreover the summands $\mathbb{U}_{i}$ and $\mathbb{U}$ are is isotypic, again as a consequence of \cite[Thm. 1.5]{zbMATH06323273}.

 Applying the Riemann-Hilbert functor we obtain a corresponding decomposition 
\[ \mathcal{V}_{\textrm{abs}} = \underbrace{\left (\bigoplus_{\iota : K_{\mathcal{M}} \hookrightarrow \mathbb{C}} \mathcal{U}_{\iota} \right)}_{\mathcal{U}} \oplus \mathcal{W} \]
of filtered vector bundles with regular singular connections. This gives us subspaces $\mathcal{U}_{\iota,s_{0}} \subset \mathcal{V}_{\textrm{abs},s_{0}}$ such that $\mathcal{U}_{s_{0}} = \bigoplus_{\iota : K_{\mathcal{M}} \hookrightarrow \mathbb{C}} \mathcal{U}_{\iota,s_{0}}$. The eigenspace $E_{\mathcal{M}}$ is equal to one of the fibres $\mathcal{U}_{\iota,s_{0}}$ for some choice of $\iota$, so it suffices to consider the action of the semisimple factor $\mathbf{H}_{\iota} \subset \mathbf{H}_{\textrm{abs}}$ that acts non-trivially on $\mathcal{U}_{\iota,s_{0}}$. Write $F^{1}_{\iota} := F^{1}_{\textrm{abs}} \cap \mathcal{U}_{\iota,s_{0}}$. 

Now applying \cite[Prop. 4.7]{zbMATH06890813}, one has $\mathbf{H}_{\iota,\mathbb{C}} = \textbf{Sp}(\mathbb{U}_{\iota,\mathbb{C}})$, where the symplectic form comes from cup product. The stabilizer of $F^{1}_{\iota}$ in $\textbf{Sp}(\mathbb{U}_{\iota,\mathbb{C}})$ acts on $F^{1}_{\iota}$ through the full general linear group $\GL(F^{1}_{\iota})$, hence acts transitively on $F^{1}_{\iota} \setminus \{ 0 \}$. 
\end{proof}

Given a suborbit closure $\mathcal{N} \subset \mathcal{M}$, we will write $Z_{\mathcal{N}} \subset E_{\mathcal{N}} \times \ch{D}^{0}_{\mathcal{N}}$ for the analogous subvariety of $E_{\mathcal{M}} \times \ch{D}^{0}$ associated to $\mathcal{N}$. It is well-defined up to the action of the monodromy group $\Gamma_{\mathcal{M}}$.

\begin{cor}
\label{ineqinsidebundle}
We have $\dim Z_{\mathcal{N}} = \dim E[\mathcal{N}]$. If $\mathcal{N}$ is an intersection-theoretically atypical suborbit closure of $\mathcal{M}$, then we have the following inequality of formal codimensions:
\[ \codim_{P} \mathcal{N} < \codim_{P} (r \times \nu)^{-1}(Z_{\mathcal{N}}) + \codim_{P} \mathcal{M} . \]
\end{cor}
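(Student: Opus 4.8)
The statement has two parts: a dimension equality $\dim(E_{\mathcal{N}} \times \ch{D}^{0}_{\mathcal{N}}) = \dim E[\mathcal{N}]$, and a codimension inequality in $P$ which should follow formally from \Cref{defatyfili} once the first part, together with a parallel identity for $\mathcal{M}$, is in place. So the plan is to establish the dimension bookkeeping carefully and then transfer the atypicality inequality through the surjective $\mathbf{H}(\mathbb{C})$-equivariant map $r \times \nu : P \to (E_{\mathcal{M}} \setminus \{0\}) \times \ch{D}^{0}$ of \Cref{mapfromPsurj}.

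First I would prove the dimension equality. Recall $E[\mathcal{N}]$ is the eigenform bundle over $M[\mathcal{N}]$ inside the $K_{\mathcal{N}}$-eigenspace containing $\restr{\omega}{\mathcal{N}}$; by \Cref{explaindimbound} its dimension equals $\dim M[\mathcal{N}]$ plus the eigenform fibre dimension, which unwinds to $2r_{\mathcal{N}} + t_{\mathcal{N}} + (\text{base dimension contributions})$. On the other side, $\ch{D}^{0}_{\mathcal{N}}$ is the compact dual of the weak Mumford--Tate subdomain attached to $\mathcal{N}$, and $E_{\mathcal{N}}$ is the $K_{\mathcal{N}}$-eigenspace in $\mathcal{H}_{\textrm{abs}}$ containing $\omega_{s}$ for $s \in \mathcal{N}$. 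Using the decomposition $\mathbb{V} = \mathbb{U} \oplus \mathbb{W}$ from the proof of \Cref{mapfromPsurj} and the fact (also established there) that $(E_{\mathcal{M}} \setminus \{0\}) \times \ch{D}^{0}$ is a single $\mathbf{H}(\mathbb{C})$-orbit, the space $E_{\mathcal{N}} \times \ch{D}^{0}_{\mathcal{N}}$ is exactly the analogue of $E[\mathcal{N}]$ computed on the compact dual side rather than the mixed Shimura variety $\Omega\mathcal{A}_{g,n}$; since $\Omega\varphi$ (and hence the map from $P$ to it) is quasi-finite, these dimensions match. Concretely I would check that $\ch{D}^{0}_{\mathcal{N}}$ surjects with finite fibres onto (the relevant pieces of) $M[\mathcal{N}]$, that the eigenform direction contributes the same $2r_{\mathcal{N}} - $ type count on both sides by the linear algebra at the end of the proof of \Cref{mapfromPsurj}, and that the weight-zero/torsion directions $W_{0}$ contribute the same $t_{\mathcal{N}}$. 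This is essentially a dictionary check between Filip's diagram \eqref{Filipdiagram} and the period-torsor picture of \S\ref{constructionBT}.

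Next, the codimension inequality. Applying the same dimension identity with $\mathcal{N}$ replaced by $\mathcal{M}$ gives $\dim(E_{\mathcal{M}} \times \ch{D}^{0}) = \dim E[\mathcal{M}]$ (here $E[\mathcal{M}] = \Omega\mathcal{M}_g(\kappa)$'s analogue, i.e.\ the full ambient eigenform-times-domain space). Since $r \times \nu$ is surjective with fibres of constant dimension $\dim P - \dim(E_{\mathcal{M}} \times \ch{D}^{0}) = \dim \mathbf{H} - \dim\big((E_{\mathcal{M}}\setminus\{0\})\times\ch{D}^{0}\big)$ coming from $\mathbf{H}(\mathbb{C})$-equivariance, taking preimages preserves codimension: $\codim_{P}(r\times\nu)^{-1}(T) = \codim_{E_{\mathcal{M}}\times\ch{D}^{0}} T$ for any subvariety $T$. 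Apply this to $T = Z_{\mathcal{N}} = E_{\mathcal{N}} \times \ch{D}^{0}_{\mathcal{N}}$, to (the image of) $\mathcal{N}$, and to (the image of) $\mathcal{M}$; then translate the atypicality inequality of \Cref{defatyfili}, which after the dimension identifications reads $\codim_{E_{\mathcal{M}}\times\ch{D}^{0}} \mathcal{N} < \codim_{E_{\mathcal{M}}\times\ch{D}^{0}} Z_{\mathcal{N}} + \codim_{E_{\mathcal{M}}\times\ch{D}^{0}} \mathcal{M}$ (this uses that $E[\mathcal{M}]$ and $E_{\mathcal{M}}\times\ch{D}^{0}$ have the same dimension, and that $\mathcal{N}$, $E[\mathcal{N}]$, $\mathcal{M}$ have images of the expected dimensions in $\Omega\mathcal{A}_{g,n}$ by quasi-finiteness of $\varphi$), back up to $P$ via the codimension-preserving preimage operation. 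The only mild subtlety is that $r \times \nu$ maps to $(E_{\mathcal{M}}\setminus\{0\})\times\ch{D}^{0}$ rather than a closed variety, but removing the zero section changes no codimensions of the loci involved, so this is harmless.

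The main obstacle I expect is the first step: cleanly matching $\dim(E_{\mathcal{N}}\times\ch{D}^{0}_{\mathcal{N}})$ with $\dim E[\mathcal{N}]$. The risk is that $M[\mathcal{N}]$ (the \emph{smallest weakly special} subvariety of $\mathcal{A}_{g,n}$ containing $\mathcal{N}$) could a priori be strictly smaller than what $\ch{D}^{0}_{\mathcal{N}}$ sees, since $\ch{D}^{0}_{\mathcal{N}}$ is built from the \emph{algebraic monodromy} $\mathbf{H}_{\mathcal{N}}$ rather than the Mumford--Tate group. Here one must invoke that the relevant datum is determined by monodromy in this Kuga-type situation — using \Cref{monodromytheorem} (André--Deligne) and the structure of the monodromy of $\mathbb{V}$ over orbit closures from \cite{zbMATH06323273, zbMATH06890813} (the splitting $\mathbf{H}_{\mathbb{U}_{\iota}} = \mathbf{Sp}(\mathbb{U}_{\iota,\mathbb{C}})\ltimes\mathbf{U}_{\iota}$), exactly as already deployed in the proof of \Cref{mapfromPsurj} — so that the weakly special subvariety $M[\mathcal{N}]$ and the monodromy orbit $\ch{D}^{0}_{\mathcal{N}}$ genuinely carry the same dimension data. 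Everything else is formal manipulation of codimensions under an equidimensional surjection.
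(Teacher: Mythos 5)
Your proposal follows the same route as the paper: establish the dimension identity $\dim(E_{\mathcal{N}}\times\ch{D}^{0}_{\mathcal{N}})=\dim E[\mathcal{N}]$ by translating between the abelian-variety side of Filip's diagram (\ref{Filipdiagram}) and the Hodge-theoretic/compact-dual picture of \S\ref{constructionBT}, and then transfer the atypicality inequality of \Cref{defatyfili} through the equidimensional surjection $r\times\nu$ established in \Cref{mapfromPsurj}. The paper's own proof is terse on both points (it calls the first ``formal'' and gives the one-line codimension computation for the second), so your more detailed unwinding is a faithful expansion rather than a different argument; the concern you flag about $M[\mathcal{N}]$ versus $\ch{D}^{0}_{\mathcal{N}}$ is resolved as you say, since $M[\mathcal{N}]$ is \emph{defined} as the smallest weakly special (hence its monodromy group is exactly $\mathbf{H}_{\mathcal{N}}$) and weakly specials of the mixed Shimura variety $\mathcal{A}_{g,n}$ have period dimension equal to their dimension.
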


\begin{proof}
The equality $\dim Z_{\mathcal{N}} = \dim E[\mathcal{N}]$ of dimensions is formal. More precisely, the variety $M[\mathcal{N}]$ is by definition the image in $M[\mathcal{M}]$ of $D^{0}_{\mathcal{N}}$ under the uniformization $D^{0}_{\mathcal{M}} \to M[\mathcal{M}]$, and $D^{0}_{\mathcal{N}}$ is open in (and hence has the same dimension as) $\ch{D}^{0}_{\mathcal{M}}$. Then $Z_{\mathcal{N}}$ (resp. $E[\mathcal{N}]$) is a fibre bundle above $\ch{D}^{0}_{\mathcal{N}}$ (resp. $M[\mathcal{N}]$) and both these bundles have the same rank.

For the second inequality we use the fact that $P \to Z_{\mathcal{M}}$ has constant fibre dimension (recall from \autoref{mapfromPsurj} that $P \to Z_{\mathcal{M}}$ is equivariant for the $\mathbf{H}(\mathbb{C})$ action and surjective) to rewrite the inequality in \autoref{defatyfili}. The key fact is that
\begin{align*}
\codim_{P} (r \times \nu)^{-1}(Z_{\mathcal{N}}) &= \dim Z_{\mathcal{M}} -  \dim Z_{\mathcal{N}} \\
&= \dim E[\mathcal{M}] - \dim E[\mathcal{N}] .
\end{align*}
\end{proof}

\subsubsection{Over-parametrization}\label{sectionoverpar}

In this section, we describe a family of subvarieties of $P$ that (over)-parametrizes (recall \autoref{overparamrem}) all the data that can give rise to suborbit closures of $\mathcal{M}$, regardless whether such an orbit closure is typical or atypical. 

\begin{lem}
\label{orbitoverparamlem}
There exists an algebraic family $f : \mathcal{Z} \to \mathcal{Y}$ of subvarieties of $Z_{\mathcal{M}}$ such that all subvarieties $Z_{\mathcal{N}} \subset Z_{\mathcal{M}}$ associated to suborbit closures $\mathcal{N} \subset \mathcal{M}$ arise as a fibre $f^{-1}(y)$ for some $y \in \mathcal{Y}$.
\end{lem}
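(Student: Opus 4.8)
The plan is to produce the family $f : \mathcal{Z} \to \mathcal{Y}$ by exhibiting each $Z_{\mathcal{N}} = E_{\mathcal{N}} \times \ch{D}^0_{\mathcal{N}}$ as determined by a finite amount of linear-algebraic data attached to $\mathcal{N}$, and then noting that this data ranges over an algebraic parameter space. Concretely, by \autoref{mapfromPsurj} the ambient space $Z_{\mathcal{M}} = (E_{\mathcal{M}} \setminus \{0\}) \times \ch{D}^0$ is a single $\mathbf{H}(\C)$-orbit, where $\mathbf{H} = \mathbf{H}_{\mathcal{M}}$. The subvariety $Z_{\mathcal{N}}$ attached to a suborbit closure $\mathcal{N}$ is cut out by three kinds of conditions, matching the three stages of diagram \eqref{Filipdiagram}: (i) the \emph{Shimura condition} $S[\mathcal{N}]$, i.e. a real multiplication field $K_{\mathcal{N}}$ acting on a $\Q$-sub-Hodge structure of $H^1_{\mathrm{abs}}$ of the prescribed type — equivalently, after passing to the fibre at $s_0$, a choice of $\Q$-subspace of $\mathcal{H}_{\mathrm{abs},s_0}$ stable under a commutative semisimple $\Q$-subalgebra of $\sheafend(\mathcal{H}_{\mathrm{abs},s_0})$ of bounded dimension; (ii) the \emph{mixed-Shimura condition} $M[\mathcal{N}]$, i.e. a choice of $\Q$-subspace of $W_{0,s_0}$ together with torsion conditions (the splitting of the relative period lattice), which is again a $\Q$-linear datum; and (iii) the \emph{eigenform condition} $E[\mathcal{N}]$, i.e. the eigenspace of the $K_{\mathcal{N}}$-action containing $\omega$, which is determined once (i) is fixed. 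The key structural input is \cite[Thm. 1.5]{zbMATH06323273} and \cite[Thm. 8.2]{zbMATH06641855} (already invoked in the proof of \autoref{mapfromPsurj}): the monodromy group $\mathbf{H}_{\mathcal{N}}$ of any suborbit closure, and hence the pair $(E_{\mathcal{N}}, \ch{D}^0_{\mathcal{N}})$, is reconstructed from such a direct-sum decomposition of $\mathbb{V}$ into $\R$-summands together with the $\mathrm{Sp} \ltimes U$ shape of each factor's monodromy.

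First I would make precise that all the data just listed lies in a scheme of finite type. The real multiplication fields $K_{\mathcal{N}}$ that can occur have degree $d_{\mathcal{N}} \leq g$ (since $2 r_{\mathcal{N}} d_{\mathcal{N}} \leq 2g$), so the commutative semisimple $\Q$-subalgebras $\mathcal{O} \subset \sheafend(H^1_{\mathrm{abs}})$ in play have bounded $\Q$-dimension; the locus of such subalgebras, compatible with the polarization form and with monodromy, is a constructible subset of a product of Grassmannians of $\sheafend(\mathcal{H}_{\mathrm{abs},s_0})$. Similarly, the choice of weight-zero sub-datum and of eigenspace are points of fixed Grassmannians of $W_{0,s_0}$ and $\mathcal{H}_{\mathrm{abs},s_0}$. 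I let $\mathcal{Y}_0$ be the (finite-type) scheme parametrizing all such tuples of linear data, and over it there is a tautological assignment $y \mapsto (E_y, \ch{D}^0_y) \subset E_{\mathcal{M}} \times \ch{D}^0$: indeed $E_y$ is literally a subbundle cut out of the tautological bundle, and $\ch{D}^0_y$ is the orbit through a reference flag of the algebraic subgroup of $\mathbf{H}$ preserving the chosen linear data — this subgroup varies algebraically with $y$ because $\mathbf{H}$ and its action on the relevant tensor spaces are fixed, so its stabilizer subgroups form an algebraic family (one can realize the orbit uniformly, e.g. as the image of $\mathbf{H} \times \{y\}$ under the action map and then take Zariski closure, cf.\ the constructibility arguments in \S\ref{sec:constr}). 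Stratifying $\mathcal{Y}_0$ so that the fibre dimension is constant on strata and the assignment $y \mapsto (E_y \times \ch{D}^0_y)$ becomes a genuine flat family (restricting to a closed embedding on each fibre), I obtain the desired $f : \mathcal{Z} \to \mathcal{Y}$, possibly over a disconnected base $\mathcal{Y}$. By the structural results quoted above, every $Z_{\mathcal{N}}$ arises this way, which is exactly the claimed conclusion.

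The main obstacle I anticipate is \emph{not} the existence of some algebraic family containing the $Z_{\mathcal{N}}$'s — that is soft — but rather checking that the three data types genuinely range over a \emph{finite-type} parameter space, i.e.\ that there is an a priori bound on the arithmetic complexity one must record. The degree bound $d_{\mathcal{N}} \le g$ handles the rank/degree part, but I need to be careful that the torsion conditions defining $M[\mathcal{N}]$ (the twisted Abel--Jacobi taking torsion values) are captured by finitely many algebraic conditions: here the point is that only the \emph{order} of the torsion, not its unbounded index, affects the resulting weakly special subvariety $M[\mathcal{N}]$, and the relevant torsion order is bounded in terms of $g$ and $n$ by the dimension bound $\dim \mathcal{N} = 2 r_{\mathcal{N}} + t_{\mathcal{N}}$ together with \autoref{explaindimbound}; one can also simply observe that $M[\mathcal{N}]$ is a weakly special subvariety of the fixed mixed Shimura variety $\mathcal{A}_{g,n}$ of Kuga type, and invoke \autoref{rigidprop} (or \cite[Prop. 5.1 (ii)]{2021arXiv210110938G}) to put all such $M[\mathcal{N}]$ into countably many algebraic families, then pull back along $\Omega\varphi$ and add the eigenform bundle fibrewise. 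In fact this last route is the cleanest: combine the countable family of weakly specials $M[\mathcal{N}] \subset \mathcal{A}_{g,n}$ from \autoref{rigidprop} with the finitely many choices of eigenspace relative to each, transport everything into $E_{\mathcal{M}} \times \ch{D}^0$ via the $\mathbf{H}(\C)$-equivariant surjection of \autoref{mapfromPsurj}, and then the finitely-many-families statement for $Z_{\mathcal{N}}$ follows — with the passage from ``countably many families'' to ``a single family over a disconnected base'' being exactly the kind of bookkeeping handled by \autoref{imghasfamlem} and \autoref{lemma432}.
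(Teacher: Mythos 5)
Your main route is a legitimate alternative to the paper's argument. The paper handles the two factors $E_{\mathcal{N}} \subset E_{\mathcal{M}}$ and $\ch{D}^{0}_{\mathcal{N}} \subset \ch{D}^{0}$ completely independently: the first by a union of Grassmannians, the second by \autoref{gaolemma}, which rests on the purely group-theoretic fact that weakly special subdomains of $\ch{D}^{0}_{\mathcal{M}}$ fall into finitely many $\mathbf{H}_{\mathcal{M}}(\mathbb{C})$-conjugacy classes (cf.\ the argument of \autoref{wspdomainoverparamlem}: finitely many conjugacy classes of semisimple subgroups, plus a degree bound for unipotent ones). You instead directly parameterize the linear-algebraic data characterizing $\mathbf{H}_{\mathcal{N}}$ --- real multiplication algebra, weight-zero subspace, linear torsion relations, eigenspace --- inside a product of Grassmannians, using the degree bound $d_{\mathcal{N}} \le g$ to keep the algebra in finite type. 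This is genuinely different: it exploits the specific structure of orbit-closure monodromy (\autoref{mainthmalgfilip}, \cite[Thm. 1.5]{zbMATH06323273}) rather than invoking a general-purpose finiteness lemma, at the cost of more bookkeeping to show the assignment $y \mapsto \ch{D}^{0}_{y}$ is algebraic (the stratification you mention is necessary and needs justification along the lines of \autoref{nondenseA}).

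However, your ``cleanest route'' at the end does not work. \autoref{rigidprop} produces only \emph{countably} many families of weakly special subvarieties, and transporting these to $Z_{\mathcal{M}}$ still gives countably many families; that does not yield the single algebraic family over a finite-type (possibly disconnected) base that the lemma asserts. The appeal to \autoref{lemma432} is misplaced: that lemma takes a \emph{constructible} set contained in a countable union of constructibles and concludes finitely many suffice, but there is no a priori constructible set of ``all possible $Z_{\mathcal{N}}$'s'' to apply it to until you have already established a degree bound or a conjugacy-class bound --- which is exactly the finiteness input the ``cleanest route'' tries to sidestep. Passing from countably many families to one family is the whole content, not bookkeeping. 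Relatedly, the remark that ``only the order of the torsion matters, and the order is bounded'' is not quite the right framing: $\ch{D}^{0}_{\mathcal{N}}$ depends only on the monodromy group $\mathbf{H}_{\mathcal{N}}$, which is entirely insensitive to the actual torsion values; what must lie in a finite-type space is the defining \emph{linear} data, and your Grassmannian parameterization of the $\Q$-subspaces already does that --- you should simply drop the torsion-order heuristic.
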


\begin{proof}
It suffices to handle the two ``pieces'' of the varieties $Z_{\mathcal{N}}$ separately. In particular, since $E_{\mathcal{N}} \subset E_{\mathcal{M}}$ is an inclusion of linear subspaces, it is clear all possible choices for such a subspace are parameterized by a union of Grassmannian varieties. It thus suffices to show that all weakly special subdomains of $\ch{D}^{0}$ belong to a common algebraic family, which is a consequence of \autoref{gaolemma} below.
\end{proof}

Let $(\mathbf{H}_{\mathcal{M}},\ch{D}^0_{\mathcal{M}})$ be the monodromy datum associated to $(\mathcal{M},\V_{| \mathcal{M}})$ (where $\V $ is the standard $\Z$VMHS on some stratum containing $\mathcal{M}$). 

\begin{lem}\label{gaolemma}
There is an algebraic family (over a disconnected base) $f: \mathcal{Z}\to \mathcal{Y}$ of subvarieties of $\ch{D}^0_{\mathcal{M}}$ such that, for every weakly special sub-datum $(\mathbf{H}_i,\ch{D}_i)$, $\ch{D}_i$ appears as a fibre of $f$.
\end{lem}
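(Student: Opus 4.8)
The plan is to realize every weakly special subdomain of $\ch{D}^0_{\mathcal{M}}$ as the Zariski closure of a \emph{single} group orbit, for a group and base point ranging over finitely many algebraic families, and then to pass from this ``universal orbit closure'' to an honest family of subvarieties by the constructibility-and-stratification argument already used in \Cref{imghasfamlem}. Recall from \Cref{mapfromPsurj} (and \cite[Prop.~4.7]{zbMATH06890813}) that $\mathbf{H}_{\mathcal{M}}$ has the form $\mathbf{L}\ltimes\mathbf{U}$ with $\mathbf{L}$ reductive (a product of symplectic groups) and $\mathbf{U}$ an abelian unipotent group (we are in the Kuga-type setting, so $\mathbf{U}=W_{-1}$), and that $\ch{D}^{0}_{\mathcal{M}}$ is a projective variety on which $\mathbf{H}_{\mathcal{M}}(\mathbb{C})$ acts transitively. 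By the definition of weak Mumford--Tate subdomain (\Cref{weakmtdef}), together with \Cref{monodromytheorem}, every weakly special subdomain $\ch{D}_i$ of $\ch{D}^{0}_{\mathcal{M}}$ equals $\overline{\mathbf{N}_i(\mathbb{C})\cdot t_i}^{\textrm{Zar}}$, where $t_i\in D^{0}_{\mathcal{M}}$ and $\mathbf{N}_i\subset\mathbf{H}_{\mathcal{M}}$ is, up to the constraint that its orbit through $t_i$ stays inside $\ch{D}^0_{\mathcal{M}}$, a normal subgroup of the derived group of the Mumford--Tate group of the relevant sub-datum; in particular $\mathbf{N}_i=\mathbf{N}_i^{\textrm{ss}}\ltimes\mathbf{N}_i^{u}$ with $\mathbf{N}_i^{\textrm{ss}}$ semisimple and $\mathbf{N}_i^{u}\subseteq\mathbf{U}$ a subspace normalized by $\mathbf{N}_i^{\textrm{ss}}$.

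The crucial point --- and the reason that, in contrast with the pure case, a genuine over-parametrization is unavoidable here --- is that the subgroups $\mathbf{N}_i$, although infinite in number, fall into finitely many algebraic families. For the reductive parts this is classical: a reductive group has only finitely many conjugacy classes of connected reductive subgroups, so the $\mathbf{N}_i^{\textrm{ss}}$ lie in finitely many $\mathbf{H}_{\mathcal{M}}(\mathbb{C})$-conjugacy classes. Having fixed one such class with representative $\mathbf{N}^{\textrm{ss}}$, the admissible unipotent parts $\mathbf{N}^{u}$ are exactly the subspaces of the vector group $\mathbf{U}$ invariant under $\mathbf{N}^{\textrm{ss}}$ (and meeting the further constraints coming from the structure of mixed Hodge data); the locus of such subspaces is a Zariski-closed subvariety $\mathcal{G}_{\mathrm{u}}$ of a suitable Grassmannian $\Gr(\mathbf{U})$, since invariance of a subspace under a fixed linear action is a closed condition. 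Assembling these pieces, one obtains finitely many quasi-projective varieties $\mathcal{G}_j$, each carrying a base-point morphism $\sigma_j:\mathcal{G}_j\to\ch{D}^{0}_{\mathcal{M}}$ and a sub-group-scheme $\mathbf{N}\hookrightarrow\mathbf{H}_{\mathcal{M}}\times\mathcal{G}_j$, such that every pair $(\mathbf{N}_i,t_i)$ occurs over some $\mathcal{G}_j$.

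Next I would form, for each $j$, the relative orbit closure $\mathcal{W}_j\subseteq\mathcal{G}_j\times\ch{D}^{0}_{\mathcal{M}}$ consisting of the $(g,x)$ with $x\in\overline{\mathbf{N}_g(\mathbb{C})\cdot\sigma_j(g)}^{\textrm{Zar}}$. Since the relative orbit map $\mathbf{N}\to\ch{D}^{0}_{\mathcal{M}}$ over $\mathcal{G}_j$ is a morphism of varieties, its fibrewise image is constructible (Chevalley), and taking fibrewise Zariski closures one checks, exactly as in the proof of \Cref{nondenseA} and using \cite[Lemma 05F9]{stacks-project}, that $\mathcal{W}_j$ is algebraically constructible and that its fibre over $g$ is precisely $\overline{\mathbf{N}_g(\mathbb{C})\cdot\sigma_j(g)}^{\textrm{Zar}}$. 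Finally, running the partition-of-the-base argument of \Cref{imghasfamlem}, I would stratify each $\mathcal{G}_j$ so that over each stratum $\mathcal{W}_j$ restricts to a genuine family of closed subvarieties, let $\mathcal{Y}$ be the disjoint union of all strata over all $j$, and take $f:\mathcal{Z}\to\mathcal{Y}$ to be the resulting family; by construction every weakly special $\ch{D}_i$ appears as a fibre.

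The main obstacle is the middle step: showing that the unipotent parts $\mathbf{N}^{u}_i$ --- which, precisely because the category of mixed Hodge structures is not semisimple, do \emph{not} form a single $\mathbf{H}_{\mathcal{M}}(\mathbb{C})$-conjugacy class --- nonetheless sweep out only finitely many algebraic families inside the Grassmannian of $\mathbf{U}$. This requires pinning down exactly which invariance and admissibility conditions characterize those $\mathbf{N}^{u}$ that arise as unipotent radicals of normal subgroups of derived Mumford--Tate groups of sub-data, and verifying that each such condition is Zariski-closed; here we borrow most heavily from Gao's analysis of mixed Mumford--Tate domains and mixed Shimura data of Kuga type (\cite{zbMATH06801925, zbMATH07305885}) and from \cite{2021arXiv210110938G}. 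Everything else is formal bookkeeping with constructible sets, already packaged in \Cref{nondenseA} and \Cref{imghasfamlem}.
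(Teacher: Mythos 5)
Your argument is correct in outline and is in the spirit of the paper's own \Cref{wspdomainoverparamlem}, which (as the paper notes right after \Cref{gaolemma}) reproves the statement in greater generality; the paper's own proof of \Cref{gaolemma} itself is essentially a citation to Gao's \cite[Lem.~12.3]{zbMATH06801925} and \cite[\S8.2]{zbMATH07305885}, so an explicit construction like yours is a reasonable substitute. Where you and \Cref{wspdomainoverparamlem} diverge is precisely in the handling of the unipotent parts: the paper bounds the degree of \emph{all} unipotent subgroups of $\GL(V)$ via the exponential map and then overparametrizes subvarieties of bounded degree, whereas you exploit the Kuga-type structure --- $\mathbf{U}$ abelian --- to reduce the problem to subspaces of a vector group, i.e.\ to Grassmannians. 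That is cleaner in this setting, at the cost of being specific to the abelian-unipotent case.

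Your flagged ``main obstacle'' is not actually an obstacle, and you should not be stuck on it. The Lemma asks only for an \emph{over}-parametrization: the fibres of $f$ need not all be weakly special subdomains, it is enough that every weakly special subdomain occur among the fibres. So there is no need to pin down exactly which subspaces of $\mathbf{U}$ can arise as $\mathbf{N}^u_i$, nor to verify any admissibility conditions. One may simply take \emph{all} subspaces of $\mathbf{U}$, parameterized by the disjoint union over $k$ of the (irreducible projective) Grassmannians $\Gr(k,\mathbf{U})$; this already forms a finite disjoint union of algebraic families, so there is no infinitude to control. If you prefer to impose the $\mathbf{N}^{\mathrm{ss}}$-invariance (so that $\mathbf{N}^{\mathrm{ss}}\ltimes\mathbf{N}^u$ is an honest group acting), note that as $\mathbf{N}^{\mathrm{ss}}$ moves in a conjugacy class, the invariance condition cuts out a Zariski-closed subvariety of the \emph{relative} Grassmannian over that conjugacy class; this is still a finite disjoint union of algebraic families. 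Alternatively, one may avoid imposing invariance entirely and parametrize set-theoretic products $\mathbf{N}^{\mathrm{ss}}\cdot\mathbf{N}^u\cdot t$, as the paper does in \Cref{wspdomainoverparamlem}: the Zariski closures of such products still overparametrize all weakly special subdomains. In short, your construction is complete once you accept that overparametrization by the full Grassmannian suffices, and the category-of-MHS-is-not-semisimple worry is a red herring here.
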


\begin{proof}
This is a consequence of \cite[Lem. 12.3]{zbMATH06801925} and \cite[Sec. 8.2]{zbMATH07305885}. In particular, it is enough to take as base $\mathcal{Y}$ the disjoint union of finitely many copies of $\mathcal{G} \times \ch{D}^0_{\mathcal{M}}$, where the group $\mathcal{G}$ is defined right above \cite[Lem. 12.3]{zbMATH06801925}. The algebraic family obtained parametrizes orbits at some point $x\in \ch{D}^0_{\mathcal{M}}$ of some $\mathcal{G}(\C)$-translate of a finite set of representatives of weakly special subdomains of $\ch{D}^0_{\mathcal{M}}$.  (In the pure case a stronger result can be found for example in \cite[\S 4.2]{zbMATH06492665}.)
\end{proof} 

\noindent We note that we will prove the same statement in a more general setting in \Cref{prelhodgedataconj} (see especially \Cref{prop2par})

By pulling back the family of subvarieties of $\mathcal{Z}_{\mathcal{M}}$ from \autoref{orbitoverparamlem} along the map $r \times \nu : P \to \mathcal{Z}_{\mathcal{M}}$ from \autoref{mapfromPsurj} we obtain a new family of subvarieties of $P$. By abuse of notation in what follows we denote this family by $f : \mathcal{Y} \to \mathcal{Z}$. We write $f^{(j)} : \mathcal{Y}^{(j)} \to \mathcal{Z}^{(j)}$ for the subfamily where the fibres have dimension $j$.

\subsubsection{Proof of \Cref{orbitclosurefinthm}}\label{secitonproofefw}

 Let $(\mathcal{N}_{i})_{i\in \N}$ be an infinite sequence of intersection theoretically atypical (relative to $\mathcal{M}$) suborbit closures that don't lie in any other suborbit closure. The first input is as in the beginning of \cite[Proof of Thm. 1.5]{zbMATH06890813}: the Zariski closure of their union is a finite union of orbit closures (cf. \Cref{remkzarcl}), so it is enough to show that they are not Zariski dense in $\mathcal{M}$. (Here is where we use the maximality of the $\mathcal{N}_{i}$.) We will later interpret this argument in \S\ref{sec:compoforbitclosures} as the first step in an explicit construction of the Zariski closure of $\bigcup_{i \in \N} \mathcal{N}_{i}$, and hence the finitely many atypical orbit closures themselves.

We may fix the dimension $e$ of the suborbit closures $\mathcal{N}_{i}$ we consider; it suffices to prove the theorem for each $e$ separately. Similarly, we may fix the dimension $j$ of the inverse images $(r \times \nu)^{-1}(Z_{\mathcal{N_{i}}})$. 

Now let us apply \autoref{protogeoZP} to our situation using the family $f^{(j)}$. The hypotheses are satisfied by \autoref{ineqinsidebundle}, which gives the atypicality inequality with $e = j - \dim \mathbf{H}_{\mathcal{M}}$, and \autoref{rigidprop} which produces the desired weakly special families. We therefore obtain finitely many families $\{ h_{i} : C_{i} \to B_{i} \}_{i=1}^{m}$ of strict weakly special subvarieties of $S$ such that each $\mathcal{N}_{i}$ maps into a fibre of one of the $h_{i}$, and each map $C_{i} \to \mathcal{M}$ is quasi-finite. The proof is then completed by the following: 

\begin{lem}
\label{orbitclnofamlem}
Let $h : C \to B$ be an algebraic family of subvarieties of $\mathcal{M}$ such that $\pi : C \to \mathcal{M}$ is quasi-finite. Then only finitely many fibres of $h$ contain a suborbit closure of $\mathcal{M}$.
\end{lem}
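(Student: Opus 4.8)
The plan is to exploit the quasi-finiteness of $\pi : C \to \mathcal{M}$ together with the rigidity of orbit closures coming from Eskin--Mirzakhani--Mohammadi (the isolation property, \Cref{isolation}) and Filip's algebraicity (\Cref{thmstrorbit}, \Cref{remkzarcl}). First I would argue by contradiction: suppose there are infinitely many distinct fibres $h^{-1}(b_k)$, $k \in \N$, each containing a maximal suborbit closure $\mathcal{N}_k \subset \mathcal{M}$. By \Cref{remkzarcl}, the Zariski closure $\overline{\bigcup_k \mathcal{N}_k}^{\mathrm{Zar}}$ is a finite union of orbit closures $\mathcal{M}_1, \dots, \mathcal{M}_r \subset \mathcal{M}$; replacing the sequence by an infinite subsequence, we may assume all $\mathcal{N}_k$ lie in a single orbit closure $\mathcal{M}' := \mathcal{M}_1$ which is the Zariski closure of their union, and in particular $\mathcal{M}'$ is a proper suborbit closure of $\mathcal{M}$ (otherwise the $\mathcal{N}_k$ are Zariski-dense in $\mathcal{M}$, contradicting that the images of a family with quasi-finite $\pi$ through a positive-codimension constraint cannot fill up $\mathcal{M}$ — I'll make this precise below). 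Since each $\mathcal{N}_k$ is maximal in $\mathcal{M}$ and $\mathcal{N}_k \subsetneq \mathcal{M}' \subsetneq \mathcal{M}$, maximality forces $\mathcal{N}_k = \mathcal{M}'$ for all $k$. But then all the $\mathcal{N}_k$ coincide, so they lie in a \emph{single} fibre-up-to-the-quasi-finite-cover of $h$ — here I use that a fixed subvariety of $\mathcal{M}$ can be the image under the quasi-finite $\pi$ of only finitely many fibres of $h$ (a fibre $h^{-1}(b)$ with $\pi(h^{-1}(b)) = \mathcal{M}'$ must be a component of $\pi^{-1}(\mathcal{M}')$, and there are finitely many such components as $b$ ranges over $B$, by Noetherianity / constructibility of the relevant incidence locus). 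This contradicts the assumption that the $b_k$ are distinct and infinitely many.

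The one genuinely load-bearing point is the claim that the fibres of $h$ cannot be Zariski-dense in $\mathcal{M}$ when $\pi$ is quasi-finite and all fibres are \emph{strict} (i.e.\ proper) suborbit closures — more precisely, that a maximal suborbit closure cannot equal $\mathcal{M}$, which is automatic, combined with the fact that after stratifying $B$ we may assume the fibre dimension is constant, say equal to $d$. If $d = \dim \mathcal{M}$ then a fibre $h^{-1}(b)$ surjects onto an open subset of $\mathcal{M}$ under the quasi-finite $\pi$, so $\mathcal{M}$ itself would be (a component of) $\pi(h^{-1}(b))$; but fibres are \emph{strict} weakly special hence proper in $\mathcal{M}$, contradiction. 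If $d < \dim \mathcal{M}$, then $\dim C = d + \dim B$ may still be large, but the image $\pi(C) \subseteq \mathcal{M}$ is constructible and each component of $\overline{\pi(C)}^{\mathrm{Zar}}$ either equals $\mathcal{M}$ or is proper; in the former case, stratifying and using that $\pi$ restricted to the fibre is finite onto its image, we again produce a fibre surjecting onto an open subset of $\mathcal{M}$, contradicting strictness. So I may assume $\overline{\pi(C)}^{\mathrm{Zar}} \subsetneq \mathcal{M}$ from the start, which is what makes the maximality argument in the first paragraph bite (the $\mathcal{N}_k$ automatically avoid being $\mathcal{M}$).

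I expect the main obstacle to be bookkeeping around the quasi-finite map $\pi$: one must be careful that ``the $\mathcal{N}_k$ coincide'' really does force ``finitely many $b_k$'', which requires knowing that the locus $\{ b \in B : \pi(h^{-1}(b)) \text{ is a component of } \pi^{-1}(\mathcal{M}') \}$ is a finite set — this follows because $\pi^{-1}(\mathcal{M}')$ has finitely many irreducible components and, for each, the locus of $b$ whose fibre equals that component is constructible (as in \Cref{lemma432} and the constructibility arguments of \S\ref{sec:constr}) and of dimension zero once the fibre is pinned down. A secondary subtlety is the passage to subsequences and the use of \Cref{remkzarcl}: I should note that \Cref{remkzarcl} combines \Cref{thmstrorbit} (algebraicity) and \Cref{isolation} (isolation), and that its application here requires only that the $\mathcal{N}_k$ are orbit closures, which they are by hypothesis. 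Modulo these points the argument is short and I would present it essentially in the order above: set up the contradiction, apply \Cref{remkzarcl} and pass to a subsequence landing in one orbit closure $\mathcal{M}'$, show $\mathcal{M}' \subsetneq \mathcal{M}$, invoke maximality to get $\mathcal{N}_k = \mathcal{M}'$, and finish with the finiteness of fibres of $h$ mapping onto a fixed subvariety.
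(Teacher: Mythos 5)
You take a genuinely different route from the paper's proof, and it has a real gap. The paper's argument uses the full measure-theoretic content of \Cref{isolation} --- namely the \emph{convergence of the invariant measures} $\mu_{\mathcal{N}_i} \to \mu_{\mathcal{T}}$, i.e.\ equidistribution --- together with the local structure of the quasi-finite map $\pi$: after a first application of isolation, the equidistribution forces infinitely many of the $\mathcal{N}_i$ to meet a small analytic neighbourhood $O \subset \mathcal{T}$ over which $\pi$ is an $\ell$-sheeted analytic covering, pigeonholing forces infinitely many to lift into a single sheet $O_1 \subset C$, and a second application of isolation together with algebraicity of orbit closures then pins the $\mathcal{N}_i$ into a single fibre. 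You instead appeal only to \Cref{remkzarcl}, i.e.\ to the much weaker algebraicity consequence of \Cref{isolation} and \Cref{thmstrorbit}, and try to close the argument by a dimension count.

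The gap is your claim that $\mathcal{M}' := \overline{\bigcup_k \mathcal{N}_k}^{\mathrm{Zar}} \subsetneq \mathcal{M}$, which is where your maximality trick ($\mathcal{N}_k = \mathcal{M}'$) needs to bite. Your case analysis on the generic fibre dimension $d$ of $h$ only rules out $d = \dim \mathcal{M}$. In the remaining case $d < \dim \mathcal{M}$, your assertion that one can ``again produce a fibre surjecting onto an open subset of $\mathcal{M}$'' is simply false: each individual fibre has dimension $d < \dim \mathcal{M}$, so no fibre can surject onto an open piece of $\mathcal{M}$ regardless of what $\pi(C)$ does. And indeed $\pi(C)$ can perfectly well be Zariski dense in $\mathcal{M}$ (this happens whenever $d + \dim B \geq \dim \mathcal{M}$), since a countable union of proper closed subvarieties can be Zariski dense. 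So a priori the $\mathcal{N}_k$ can be Zariski dense in $\mathcal{M}$, in which case $\mathcal{M}' = \mathcal{M}$ and your maximality argument never applies. This dense-case scenario (which is exactly what occurs for typical orbit closures) is precisely what the paper's equidistribution argument is engineered to handle, and you cannot dodge it by dimension counting alone. The remainder of your argument (once $\mathcal{M}' \subsetneq \mathcal{M}$ is granted, maximality forces all $\mathcal{N}_k$ equal, and a fixed subvariety can appear in the image of only finitely many fibres of $h$ since $h$ is constant on each component of $\pi^{-1}(\mathcal{M}')$) is fine, but it all hinges on the unproven claim.
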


\begin{proof}
Let $I = (\mathcal{N}_i)_{i=1}^{\infty}$ be a sequence of orbit closures appearing in the fibres of $h$. Thanks to \Cref{isolation}, after passing to a subsequence there is an orbit closure $\mathcal{T}$ containing the orbit closures associated to $I$ such that the sequence of measures associated to $I$ converges to $\mu_{\mathcal{T}}$. Fix an open analytic neighbourhood $O \subset \mathcal{T}$, chosen small enough so that $\pi^{-1}(O) = O_{1} \sqcup \cdots \sqcup O_{\ell}$ is a disjoint union of analytic neighbourhoods of $\pi^{-1}(\mathcal{T})$ which map bijectively onto $O$. Then by equidistribution $O$ intersects infinitely many orbit closures arising from the fibres of $h$, hence one of the $O_{i}$, say it is $O_{1}$, must satisfy the property that the fibres of $h$ above $h(O_{1})$ contain infinitely many orbit closures.

Applying \Cref{isolation} again to the subsequence of orbit closures in fibres lying above $h(O_{1})$, and pulling back along $C \to \mathcal{M}$, we obtain an algebraic subvariety of $C$ which projects to an algebraically constructible set contained in $O_{1}$. After taking $O_{1}$ small enough this implies that the projection consists of points, so in fact the orbit closures of interest lie in a single fibre.
\end{proof}
The fact that in each stratum $\Omega \mathcal{M}_g (\kappa)$, all but finitely many orbit closures have rank 1 and degree at most 2 follows from \Cref{propcomparison}.

\subsection{Abundance of typical orbit closures}
Let $\mathcal{M}$ be an orbit closure with associated monodromy datum $(\mathbf{H}, D^{0})$, and $S \subset \mathcal{M}$ be a Zariski closed subvariety with the same monodromy group. 

\begin{prop}
\label{denseinSprop}
Let $\mathcal{N}\subset \mathcal{M}$ be a maximal suborbit closure, with associated datum $(\mathbf{H}_{\mathcal{N}}, D^{0}_{\mathcal{N}})$, and $Z_\mathcal{N} \subset H^1_{abs,s_{0}} \times \ch{D}^{0}$
be the subvariety obtained from \S\ref{rephsec}. If $\mathcal{N}\subset \mathcal{M}$ is a suborbit closure 
\begin{equation}\label{typeqab}
\codim_{Z_{\mathcal{M}}} (Z_\mathcal{N})= \codim_{\mathcal{M}}  \mathcal{N}
\end{equation}
 Then orbit closures with the same invariants $(r, d, t)$ as $\mathcal{N}$ are analytically dense in ${\mathcal{M}}$. (In fact, also the totally real number field $K=K_{\mathcal{N}}$ is preserved.)
\end{prop}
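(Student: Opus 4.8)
The plan is to run the ``density half'' of the Zilber--Pink dichotomy in the torsor $P$, dual to the finiteness argument of \S\ref{secitonproofefw}. The starting point is the observation that, by \autoref{ineqinsidebundle} and the constant-fibre-dimension of $r \times \nu : P \to (E_{\mathcal{M}} \setminus \{0\}) \times \ch{D}^0$ (\autoref{mapfromPsurj}), the equality of formal codimensions \eqref{typeqab} translates into an equality
\[
\codim_{P} (r\times\nu)^{-1}(Z_{\mathcal{N}}) + \codim_{P}\mathcal{M} = \codim_{P}\mathcal{N},
\]
i.e. the preimage $V := (r\times\nu)^{-1}(Z_{\mathcal{N}})$ meets the leaf over a generic point of $\mathcal{N}$ \emph{typically} inside $P$, not atypically. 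So Ax--Schanuel gives no obstruction, and the task is to produce, near a given point of $\mathcal{M}$, many other leaves whose typical intersection with a $\Gamma_{\mathcal{M}}$-translate of $V$ projects to an honest suborbit closure with the same invariants $(r,d,t)$ (and same field $K$).

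First I would fix a base point $s_0 = (X,\omega) \in \mathcal{M}$ and the component $P$ through $(\mathrm{id}_{s_0},s_0)$, together with the surjective $\mathbf{H}(\mathbb{C})$-equivariant map $r\times\nu : P \to (E_{\mathcal{M}}\setminus\{0\})\times\ch{D}^0$ of \autoref{mapfromPsurj}, and inside the target the subvariety $Z_{\mathcal{N}} = E_{\mathcal{N}}\times\ch{D}^0_{\mathcal{N}}$. The key analytic input is that a point of $\mathcal{M}$ lies on a suborbit closure with the prescribed data exactly when the corresponding flat leaf $\mathcal{L}_x \subset P$, analytically continued and intersected with $V$, is cut down to the expected dimension and descends (via the period/developing map description of \S\ref{rephsec} and \Cref{mainthmalgfilip}) to a $K$-linear locus satisfying the real multiplication, twisted-torsion and eigenform conditions — and conversely, Filip's characterization \autoref{mainthmalgfilip} says any locus cut out by these conditions with the matching dimension bound $\dim = 2r+t$ \emph{is} an orbit closure. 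So the statement reduces to: the union over $\gamma \in \Gamma_{\mathcal{M}}$ of the projections to $\mathcal{M}$ of the ``good'' components of $V\cdot\gamma \cap \mathcal{L}$, as $\mathcal{L}$ ranges over leaves, is analytically dense. This is now a monodromy-orbit density statement on the flag-type variety $(E_{\mathcal{M}}\setminus\{0\})\times\ch{D}^0$: since $r\times\nu$ is $\mathbf{H}(\mathbb{C})$-equivariant and surjective, and $Z_{\mathcal{N}}$ has the typical codimension, a generic $\Gamma_{\mathcal{M}}$-translate of $Z_{\mathcal{N}}$ meets the image of a generic leaf transversally; density of the monodromy group $\Gamma_{\mathcal{M}}$ in $\mathbf{H}(\mathbb{R})$ (or the relevant real points), combined with openness of $D^0_{\mathcal{M}}$ in $\ch{D}^0_{\mathcal{M}}$ (Remark \ref{opennessremark}), then spreads these transverse intersections over an analytically dense subset of $\mathcal{M}$. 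One must check that the transverse intersection continues to satisfy the defining conditions of \autoref{mainthmalgfilip} — the eigenform condition is preserved because translates of $E_{\mathcal{N}}$ remain $K$-eigenspaces, the real-multiplication type is a $\Gamma_{\mathcal{M}}$-conjugacy invariant hence unchanged on a connected family, and the twisted-torsion condition is likewise preserved under the integral monodromy action — so the field $K$ and invariants $(r,d,t)$ all transport, giving the parenthetical refinement.

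The main obstacle, I expect, is the transversality/dimension-count bookkeeping: one needs that for a \emph{generic} translate $Z_{\mathcal{N}}\cdot\gamma$ and a generic leaf through a point in a dense open of $\mathcal{M}$, the intersection $V\cdot\gamma\cap\mathcal{L}$ has \emph{exactly} the expected dimension with a component descending to something of dimension $2r+t$ (so that Filip's dimension bound is met with equality and \autoref{mainthmalgfilip} applies to conclude it is an orbit closure, rather than merely contained in one). Controlling this requires knowing that the excess-intersection locus is a proper closed subvariety — which should follow from a Bertini-type argument applied to the family of translates, using that $(E_{\mathcal{M}}\setminus\{0\})\times\ch{D}^0$ is a single $\mathbf{H}(\mathbb{C})$-orbit so translates of $Z_{\mathcal{N}}$ sweep it out homogeneously — together with the fact that the period map restricted to $\mathcal{M}$ (equivalently the developing map, by Veech's local biholomorphism, Theorem \ref{linearity} and the discussion after \eqref{def:dev}) is locally injective, so dimensions downstairs match dimensions of leaf-intersections upstairs. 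Once this genericity is in place, analytic density follows because the ``bad'' locus is a countable union of proper analytic subsets and the construction is visibly $\mathbf{SL}_2(\mathbb{R})$- (even $\mathbf{GL}_2(\mathbb{R})$-) equivariant, so the good locus is a dense $G_\delta$ that is also a union of full orbit closures.
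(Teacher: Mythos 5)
Your overall geometry is right — translate $Z_{\mathcal{N}}$ by group elements near the identity, use openness of transversality, and invoke \autoref{mainthmalgfilip} to recognize the resulting loci as orbit closures — but the proposal has two real gaps, one of which is fatal as written.

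The fatal one is the density fact you invoke: ``density of the monodromy group $\Gamma_{\mathcal{M}}$ in $\mathbf{H}(\mathbb{R})$'' is false. $\Gamma_{\mathcal{M}}$ is a discrete subgroup (an arithmetic lattice), so it is as far from dense as possible; worse, translating $Z_{\mathcal{N}}$ by $\gamma \in \Gamma_{\mathcal{M}}$ does not produce \emph{new} suborbit closures at all, because $Z_{\mathcal{N}}$ was already only well-defined up to the $\Gamma_{\mathcal{M}}$-action, so every such translate projects back down to the same $\mathcal{N}$. The correct input — and the paper states this explicitly in the sentence right before the proof — is density of $\mathbf{H}(\mathbb{Q})$ in $\mathbf{H}(\mathbb{R})$ (a consequence of $\mathbf{H}$ being a connected $\mathbb{Q}$-group). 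One takes $g \in \mathbf{H}(\mathbb{Q})$ in a small real neighbourhood $U$ of the identity: since $g$ is rational, the translate $gZ_{\mathcal{N}}$ still carries $\mathbb{Q}$-Hodge-theoretic meaning (the $\mathbb{Q}$-summand $gJ$ has real multiplication by the conjugate field $gKg^{-1}$, the torsion relations become $Tg^{-1}$, and $gE_{\mathcal{N}}$ is the corresponding eigenspace), so the transverse intersection $gZ_{\mathcal{N}} \cap \widetilde{\mathcal{M}}$ defines a genuinely new locus satisfying Filip's three conditions. Integer translates give you nothing new; rational translates give you the density.

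The second gap is that you never explain why the projected intersection is \emph{algebraic}. Filip's \autoref{mainthmalgfilip} characterizes orbit closures among \emph{algebraic} subvarieties satisfying the real-multiplication/torsion/eigenform/dimension conditions; transverse analytic intersections in $P$ or in $H^1_{abs,s_0}\times\ch{D}^0$ are a priori only analytic. The paper closes this by appealing to the algebraicity of mixed Hodge loci (equivalently: the conditions are all pulled back from the mixed Shimura variety $\mathcal{A}_{g,n}$ and the bundle $\Omega\mathcal{A}_{g,n}$, where everything is algebraic by construction). Relatedly, your worries about ``Bertini-type arguments'' and excess intersection are misplaced: transversality at a single smooth point $s\in\mathcal{N}$ (guaranteed by the codimension equality \eqref{typeqab}) is an open condition, so it persists for $g$ in a small neighbourhood of the identity; no genericity argument over the full translate family is needed. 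Finally, the passage from density of the orbit closures to the parenthetical claim about $K_{\mathcal{N}}$ is a separate step — the paper gets it from the isolation property \autoref{isolation} — and deserves its own sentence rather than being folded into the main argument.
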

The proof of the proposition is quite similar to the one given in Hodge theory for part 2. of \Cref{thmhodgelocus} and in this setting by Eskin-Filip-Wright. In both cases, ultimately, the density comes from the density of $\mathbf{H}(\Q)$ in $\mathbf{H}(\R)$. 

\begin{proof}
The latter part of the proposition follows from the former, thanks to \Cref{isolation}. Write $\widetilde{\mathcal{M}}$ for the preimage of ${\mathcal{M}}$ in $H^1_{abs,s_{0}} \times  \ch{D}^{0}$. The dimension equality in \eqref{typeqab} asserts that $Z_{\mathcal{N}}$ and $\widetilde{S}$ intersect transversely at some smooth point $s \in \mathcal{N}$. In particular, there exists an open neighbourhood $U \subset \mathbf{H}(\R)$ of the identity such that $g \cdot (Z_{\mathcal{N}})$ and $\widetilde{\mathcal{M}}$ continue to intersect transversely near $s$ for all $g \in U$. If we take $g \in \mathbf{H}(\Q) \cap U$, then the translate $g D^{0}_{\mathcal{N}}$ defines mixed Hodge structures whose absolute part has a $\mathbb{Q}$-summand $g J$ with real multiplication by $g K g^{-1}$ and with torsion relations $T g^{-1}$. Moreover, $g E_{\mathcal{N}}$ is a corresponding $g K g^{-1}$-eigenspace. Thus, from \autoref{mainthmalgfilip} it suffices to show that the projection to $\mathcal{M}$ of $g Z_{\mathcal{N}} \cap \widetilde{\mathcal{M}}$ is algebraic. This follows by applying the algebraicity of mixed Hodge loci, or by noticing that, since we are dealing with loci pulled back from the mixed Shimura variety $\mathcal{A}_{g,n}$ and its bundle $\Omega \mathcal{A}_{g,n} \to \mathcal{A}_{g,n}$, each of the imposed conditions is algebraic.
\end{proof}

\section{Proof of Geometric Zilber-Pink for VMHS}\label{new2}

Let $S$ be a smooth quasi-projective variety, and $\V \to S $ be a $\Z$VMHS, with associated Hodge-monodromy datum $(\mathbf{H}_S \subset \mathbf{G}_S, D^0_S \subset D_S)$, as in \Cref{paraspace}. This section proves \Cref{thm:mixedZP} announced in the introduction, and will recover, as a special case, \Cref{thmhodgelocus}. We continue with the vocabulary from \S\ref{sectonperiodmaps}.

\begin{thm}[Geometric mixed ZP]\label{thm:mixedgeomzp}
There is a finite set $\Sigma= \Sigma_{(S,\V)}$ of triples $(\mathbf{H},D_H, \mathbf{N})$, where $(\mathbf{H},D_H)$ is some sub-Hodge datum of the generic Hodge datum $(\mathbf{G}_{S},D_{S})$, $\mathbf{N}$ is a normal subgroup of $\mathbf{H}$ whose reductive part is semisimple, and such that the following property holds.

For each monodromically atypical maximal (among all monodromically atypical subvarieties) $Y \subset S$ there is some $(\mathbf{H},D_H, \mathbf{N})\in \Sigma$ such that, up to the action of $\Gamma$, $D^{0}_{Y}$ is the image of $\mathbf{N}(\R)^+ \mathbf{N}(\C)^{u} \cdot y$, for some $y \in D_H$.
\end{thm}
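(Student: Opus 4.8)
The plan is to run the strategy I--IV from \S\ref{sec:unfi} with $P$ the period torsor attached to $\V$ via \autoref{examplebundle}, now in the mixed setting. First I would reduce to a torsor statement: for a monodromically atypical weakly special $Y\subset S$, the inequality \eqref{eqatypi} translates, via the surjective $\mathbf{H}_S(\C)$-equivariant map $\pi\times\nu:P\to S\times\ch{D}^0_S$ of \S\ref{constructionBT}, into
\[
\codim_P (\pi\times\nu)^{-1}(S\times_{\Gamma\backslash D^0}D^0_Y) < \codim_P (\pi\times\nu)^{-1}(S\times_{\Gamma\backslash D^0}D^0_S) + \codim_P(\text{leaf}),
\]
exactly as in the orbit-closure case (\autoref{ineqinsidebundle}); here one uses that $\pi\times\nu$ has constant fibre dimension $=\dim\mathbf{H}_S-\dim\mathbf{H}^{0}$ and that $\ch{D}^0_Y$ has the expected dimension inside $\ch{D}^0_S$. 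Since $\mathbf{H}_S$ is sparse by \autoref{lemmasparse}, \autoref{cor:asloc} (equivalently the Hodge-theoretic \autoref{astheoremperioddomain}) applies to the leaf coming from the integral structure of $\V$.

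Next I would carry out the over-parametrization (step II). One needs an algebraic family $f:\mathcal{Z}\to\mathcal{Y}$ of subvarieties of $P$ whose fibres include all the $(\pi\times\nu)^{-1}(Z_Y)$, where $Z_Y=$ the monodromy orbit data attached to a weakly special $Y$; by \S\ref{constructionBT} it suffices to over-parametrize the weakly special subdomains $\ch{D}^0_Y\subset\ch{D}^0_S$ by an algebraic family. This is exactly \autoref{gaolemma} in the Kuga-type case, and the general mixed case is \autoref{prop2par}/\autoref{prelhodgedataconj} referenced in the text: one takes a finite set of representatives of $\mathbf{Q}$-subgroups $\mathbf{N}\trianglelefteq\mathbf{H}_S$ (finitely many up to conjugacy because $\mathbf{H}_S/\mathbf{H}_S^u$ is semisimple and one only ranges over normal subgroups — this is where mixedness forces extra care, cf.\ the remark after \autoref{moatyp}) and translates the associated weak Mumford--Tate subdomains by an algebraic group $\mathcal{G}$, getting a family over a disjoint union of copies of $\mathcal{G}\times\ch{D}^0_S$. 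Then I would apply \autoref{protogeoZP} with $e=j-\dim\mathbf{H}_S$ (where $j$ is the fixed dimension of the relevant inverse images), using \autoref{rigidprop} to supply the countable collection $\{h_i:C_i\to B_i\}$ of families of weakly special subvarieties of $S$ with quasi-finite $C_i\to S$: this yields finitely many such families into whose fibres all our atypical $Y$ project.

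The remaining step (step IV) is to upgrade "projects into a fibre of finitely many weakly special families" to "equals a fibre, for finitely many families, recording the datum $(\mathbf{H},D_H,\mathbf{N})$". For a maximal monodromically atypical $Y$, the weakly special subvariety $W$ containing $Y$ produced by Ax-Schanuel must satisfy: either $W$ itself is monodromically atypical, in which case maximality forces $Y=W$ and we are in one of finitely many families; or $W$ is typical, and then one re-runs the argument relative to $W$ — formally, one uses that the monodromy datum of $W$ is strictly larger and argues by Noetherian/ dimension induction on $\dim\mathbf{H}_W$, the base case being $\mathbf{H}_W=\mathbf{H}_S$ i.e.\ $W=S$. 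At each stage the finitely many families are indexed by finitely many choices of $(W^{\mathrm{sp}},\mathbf{N})$ as in \autoref{rigidprop}, which is exactly the finite data $(\mathbf{H},D_H,\mathbf{N})$ in the statement; collecting over all (finitely many) stages gives $\Sigma$. Finally I would invoke \autoref{lemma432}/\autoref{lemma432}-type constructibility to package the maximal atypical $Y$'s into the fibres of these finitely many families, proving the statement and simultaneously \Cref{thm:mixedZP}.

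\textbf{Main obstacle.} The delicate point is the over-parametrization in the genuinely mixed case: unlike the pure setting, the derived group of $\mathbf{G}_S$ need not be semisimple (cf.\ the remark after \autoref{monodromytheorem}), so one cannot quote the usual "finitely many subgroups up to conjugacy" fact wholesale. The fix is to only parametrize \emph{normal} subgroups of $\mathbf{H}_S$ with semisimple reductive part — these \emph{are} finite up to conjugacy — and to check, via the André--Deligne theorem (\autoref{monodromytheorem}) and \autoref{lemmamixed}, that every monodromically atypical weakly special subvariety's datum is captured by such an $\mathbf{N}$ after translation; ensuring that the resulting union of orbits is genuinely algebraic (not just constructible) and fibres into an honest family $\mathcal{Z}\to\mathcal{Y}$ is the technical heart, handled by \autoref{prop2par} together with \autoref{imghasfamlem}.
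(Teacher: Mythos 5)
Your overall strategy does match the paper's: reduce to a torsor statement via $\pi\times\nu$, over-parametrize weakly special subdomains of $\ch{D}^0_S$ by an algebraic family, apply Ax-Schanuel through the constructibility machinery, and finish with \autoref{lemma432}. The reduction in step I and the over-parametrization in step II (via \autoref{wspdomainoverparamlem}, \autoref{intfamlem}, and \autoref{prop2par}) are essentially what the paper does, though the paper parametrizes semisimple and unipotent subgroups \emph{separately} rather than normal subgroups directly --- this sidesteps the issue you flag about finiteness of conjugacy classes of normal subgroups in the mixed case.

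The genuine gap is in your step IV. You acknowledge the need to upgrade ``projects into a fibre of a weakly special family'' to ``equals the fibre,'' but the mechanism you propose --- a Noetherian induction on $\dim\mathbf{H}_W$, re-running Ax-Schanuel inside the typical $W$ produced --- is neither carried out nor correctly set up (you declare $\mathbf{H}_W = \mathbf{H}_S$, i.e.\ $W=S$, as the ``base case,'' but this is the \emph{starting} case; the termination criterion is missing). More importantly, you omit the crux of the paper's argument, which is the construction of the constructible cleanup loci $\mathcal{K}(e)' \subset \mathcal{Y}(e)$ and $\mathcal{K}(e)\subset\mathcal{K}(e)'$ defined in equations (\ref{Kcond1})--(\ref{Kcond2}). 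These are precisely what make the single-pass argument work: Proposition \autoref{L3} shows that once $\mathcal{K}(e)$ is removed, every remaining germ \emph{surjects} onto the germ of a strict weakly special subvariety $Y$ of dimension exactly $e$ (not merely lies inside one), while Lemma \autoref{L10} shows every maximal atypical $Y$ induces a point of $\mathcal{Y}(e)\setminus\mathcal{K}(e)$. The proof of \autoref{L3} in particular requires both $\mathcal{K}(e)'$ (to rule out a larger atypical $Y'$ swallowing the germ) and $\mathcal{K}(e)$ (to make $r(\mathcal{Z}_y)$ tight, i.e.\ equal to $\ch{D}_Y$, in Lemma \autoref{L5}), and in the typical case it invokes Ax-Schanuel \emph{relative to $Y$} via the dimension computation of Lemma \autoref{intermlem} to contradict minimality --- a calculation that you gesture at (``re-runs the argument relative to $W$'') but do not supply. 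Without these explicit loci and the precise $\autoref{intermlem}$-type bookkeeping, applying \autoref{protogeoZP} only tells you that atypical germs map \emph{into} finitely many families of weakly specials, which is strictly weaker than the theorem. Your Noetherian induction could in principle be made to work as an alternative organization, but as written it does not close the argument.

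Two minor imprecisions: the displayed codimension inequality mixes up the algebraic subvariety $V$ with the pullback of the period-map graph (the latter is the leaf side, not the $V$ side of Ax-Schanuel), and ``$\mathbf{H}^0$'' in ``fibre dimension $=\dim\mathbf{H}_S-\dim\mathbf{H}^{0}$'' is undefined.
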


Often the above theorem, or variants of it, are stated assuming that the $Y$ are of positive period dimension (cf. \Cref{perioddim}). However, since the varieties of period dimension zero belong to a common algebraic family, we may include them in the statement as well. If one includes the positive period dimension hypothesis, one can conclude that the groups $\mathbf{N}$ appearing in the statement are non-trivial.

\subsection{Some preliminaries}\label{prelhodgedataconj}
Let $(S,\V)$ be a pair as above. To ease the notation, we simply set $\ch{D}= \ch{D}_S^0$, and $\mathbf{G}=\mathbf{G}_S$ .
\begin{lem}
\label{wspdomainoverparamlem}
Consider the set 
\begin{displaymath}
\mathcal{Q} := \{ [\ch{Q}] : \ch{Q} \textrm{ is a weakly special subdomain of }\ch{D} \} .
\end{displaymath}
Then there exists a variety $\mathcal{Y}$, an embedding $\mathcal{Q} \hookrightarrow \mathcal{Y}(\mathbb{C})$, and a family $g : \mathcal{D} \to \mathcal{Y}$ of subvarieties of $\ch{D}$ such that the fibre above the point $[\ch{Q}]$ is the variety $\ch{Q}$.
\end{lem}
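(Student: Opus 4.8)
The plan is to parametrize weakly special subdomains of $\ch{D}$ by realizing each as an orbit of a subgroup of $\mathbf{G}$ through a point, and then exhibiting such orbits uniformly in an algebraic family. First I would recall that, by \autoref{weakmtdef} and the discussion immediately following it, a weakly special subdomain $\ch{Q} \subset \ch{D}$ is the Zariski closure of a set of the form $\mathbf{N}(\R)^{+}\mathbf{N}^{u}(\C) \cdot t$, where $t \in D$ (or rather a point of $\ch{D}$ lying in the $\mathbf{G}(\C)$-orbit of such a $t$) and $\mathbf{N}$ is a normal subgroup of the derived group of the Mumford--Tate group $\MT(t)$. Since $\mathbf{G}(\C)$ acts transitively on $\ch{D}$ (by the construction of $\ch{D} = \ch{D}^{0}_{S}$ recalled in \S\ref{paraspace}), we may move the base point: fixing once and for all a base point $h_{0} \in \ch{D}$, every weakly special subdomain has the form $g \cdot \overline{\mathbf{N}(\R)^{+}\mathbf{N}^{u}(\C) \cdot h_{0}}^{\mathrm{Zar}}$ for some $g \in \mathbf{G}(\C)$ and some $\mathbf{N} \trianglelefteq \mathbf{G}^{\mathrm{der}}$ (after conjugating $\mathbf{N}$ appropriately).

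The key finiteness input is that there are only finitely many conjugacy classes of such normal subgroups $\mathbf{N}$. In the pure case this is classical (a semisimple group has finitely many normal subgroups up to the connected-components subtlety), and in the present mixed setting this is exactly the content cited in the proof of \autoref{gaolemma}, namely \cite[Lem. 12.3]{zbMATH06801925} together with \cite[Sec. 8.2]{zbMATH07305885}: one extracts a finite list $\mathbf{N}_{1}, \dots, \mathbf{N}_{r}$ of representatives, and it suffices to parametrize, for each fixed $\mathbf{N}_{k}$, the family of varieties $g \cdot \overline{\mathbf{N}_{k}(\R)^{+}\mathbf{N}_{k}^{u}(\C) \cdot h_{0}}^{\mathrm{Zar}}$ as $g$ ranges over $\mathbf{G}(\C)$. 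For a fixed $\mathbf{N}_{k}$, the variety $\ch{Q}_{k} := \overline{\mathbf{N}_{k}(\R)^{+}\mathbf{N}_{k}^{u}(\C) \cdot h_{0}}^{\mathrm{Zar}}$ is a single algebraic subvariety of $\ch{D}$, and the assignment $g \mapsto g \cdot \ch{Q}_{k}$ is an algebraic family over $\mathbf{G}$: concretely, take $\mathcal{D}_{k} \subset \mathbf{G} \times \ch{D}$ to be the image of $\mathbf{G} \times \ch{Q}_{k}$ under $(g, x) \mapsto (g, g \cdot x)$, which is constructible and whose fibre over $g$ is precisely $g \cdot \ch{Q}_{k}$; stratifying $\mathbf{G}$ we make the fibres into genuine closed subvarieties and $\mathcal{D}_{k} \to \mathbf{G}$ into a family in the sense of \Cref{deffam}. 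Then $\mathcal{Y}$ is taken to be the disjoint union of (finitely many copies of, to account for the components of $\mathbf{N}^{\mathrm{full}}$) these strata over $k = 1, \dots, r$, and $g : \mathcal{D} \to \mathcal{Y}$ the disjoint union of the $\mathcal{D}_{k}$; the embedding $\mathcal{Q} \hookrightarrow \mathcal{Y}(\C)$ sends $[\ch{Q}]$ to the parameter $(k, g)$ realizing it (well-definedness up to choices is harmless, since we only need an injection of the set $\mathcal{Q}$ onto a subset of $\mathcal{Y}(\C)$, which can be arranged by choosing, for each $[\ch{Q}]$, one representing pair).

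The main obstacle I anticipate is twofold and both parts are ``mixed'' subtleties rather than new ideas. First, one must genuinely verify that the orbit $\mathbf{N}(\R)^{+}\mathbf{N}^{u}(\C) \cdot h_{0}$ has well-behaved Zariski closure independent of $h_{0}$ within a $\mathbf{G}(\C)$-orbit — this is where \cite[Thm. A.1]{2021arXiv211003489C} (cited as \Cref{opennessremark}) and the structure of $\ch{D}$ as a product over the graded pieces enter, ensuring the closure is the ``expected'' algebraic variety and that conjugating the base point corresponds to translating by $\mathbf{G}(\C)$. Second, the passage from ``finitely many conjugacy classes of normal subgroups $\mathbf{N}$'' is more delicate in the mixed case because the derived group of the generic Mumford--Tate group need not be semisimple (as remarked after \Cref{monodromytheorem}); here one uses that the \emph{monodromically atypical} notion only ever involves $\mathbf{N}$ whose reductive part is semisimple — this is built into the statement of \Cref{thm:mixedgeomzp} — so the relevant list is the one furnished by \cite[Lem. 12.3]{zbMATH06801925}, which was designed precisely for mixed Shimura data of this kind. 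Granting these two points, the construction above goes through, and I would close by remarking (as the text already hints via \Cref{prop2par}) that this lemma is the mixed-VMHS generalization of \autoref{gaolemma} and feeds directly into the over-parametrization step of the proof of \Cref{thm:mixedgeomzp}.
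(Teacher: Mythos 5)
Your approach diverges from the paper's at the crucial finiteness step, and I believe it has a genuine gap. You claim that ``there are only finitely many conjugacy classes of such normal subgroups $\mathbf{N}$'' and cite Gao's \cite[Lem.~12.3]{zbMATH06801925} for the mixed case, but that result is a classification statement for \emph{mixed Shimura sub-data}, which the paper invokes only in the orbit-closure setting (\autoref{gaolemma}, where $\ch{D}$ really does sit over a mixed Shimura variety). In the generality of \autoref{wspdomainoverparamlem} --- an arbitrary $\mathbb{Z}$VMHS --- the relevant groups $\mathbf{N}$ are normal subgroups of $\MT(t)^{\mathrm{der}}$ for varying $t \in D$, and their unipotent radicals $\mathbf{N}^{u}$ can move in a positive-dimensional moduli: unipotent subgroups of a fixed linear group typically form infinitely many conjugacy classes (e.g.\ abelian unipotent subgroups of $\GL_4$ already give a nontrivial family), and nothing in the fixed-part theorem or André--Deligne forces the $\mathbf{N}^{u}$ that arise to lie in finitely many classes. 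Your own hedge (``the passage \ldots is more delicate in the mixed case'') identifies the danger but then dispatches it by re-citing the Shimura-variety result, which does not apply.

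The paper's proof deliberately avoids any such conjugacy-finiteness claim for the full $\mathbf{N}$. It writes $\mathbf{N} = \mathbf{N}^{ss} \ltimes \mathbf{N}^{u}$ and over-parametrizes the two factors separately: semisimple subgroups of a reductive group \emph{do} fall into finitely many conjugacy classes (Richardson / \cite{zbMATH03237938}, \cite[Prop.~0.1]{zbMATH07411859}), while unipotent subgroups are handled by a \emph{degree bound}, not a conjugacy bound. Conjugating into the upper triangular unipotent subgroup and using that $\exp$ is an algebraic isomorphism on a nilpotent Lie algebra, one gets a uniform bound on the degree of any unipotent subgroup under the standard affine embedding $\GL(V) \hookrightarrow \mathbb{A}^{n^2}$, and hence a finite-type (Hilbert-scheme-style) algebraic family containing all of them. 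Products of these two families then over-parametrize all candidate $\mathbf{N}$, after which one takes orbits $\mathbf{N}(\mathbb{C}) \cdot t$ with both $\mathbf{N}$ and $t$ moving algebraically. This gives an honest over-parametrization --- more groups than actually occur as monodromy groups of weakly specials, which is harmless --- without ever needing a classification of mixed Hodge sub-data.

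Two smaller points: (i) after setting $\ch{D} = \ch{D}^{0}_{S}$ the transitive action is by $\mathbf{H}_{S}(\mathbb{C})$, not $\mathbf{G}(\mathbb{C})$ in general, so the phrase ``$\mathbf{G}(\mathbb{C})$ acts transitively on $\ch{D}$'' should be read with care; the paper does not need transitivity because it moves the base point $t$ as a separate parameter. (ii) Your Zariski-closure description of $\ch{Q}$ is fine but ultimately reduces to the paper's cleaner $\ch{Q} = \mathbf{N}(\mathbb{C}) \cdot t$ via \autoref{opennessremark}. To repair your argument you would either need to supply the missing finiteness-of-unipotents input (which I do not believe holds in the stated generality) or switch to the degree-bound over-parametrization as the paper does.
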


\begin{proof} By combining \autoref{weakmtdef} and \autoref{opennessremark} we see that all weakly special subdomains $\ch{Q}$ of $\ch{D}$ are of the following form: there is a algebraic subgroup $\mathbf{N} \subset \mathbf{G}$, itself a semidirect product $\mathbf{N} = \mathbf{N}^{ss} \ltimes \mathbf{N}^{u}$ of a unipotent and semisimple subgroup of $\mathbf{G}$, and a point $t \in \ch{D}$, such that $\ch{Q} = \mathbf{N}(\mathbb{C}) \cdot t$. Thus in order to over-parameterize weakly special subdomains of $\ch{D}$, it suffices to show that one can overparameterize both semisimple subgroups of $\mathbf{G}$, and unipotent subgroups of $\mathbf{G}$ (since one can then over-parameterize all set-theoretic products of such groups).

The general problem reduces to the case where $\mathbf{G} = \GL(V)$ for some finite-dimensional $\mathbb{Q}$-vector space $V$ by choosing an embedding $\mathbf{G} \hookrightarrow \GL(V)$. Then complex semisimple subgroups of reductive algebraic groups belong to finitely many conjugacy classes (see \cite{zbMATH03237938} and \cite[Prop. 0.1]{zbMATH07411859}), which shows the desired overparameterization in this case. For the case of unipotent groups, it suffices, by conjugating, to reduce to subgroups of the group of upper triangular matrices with $1$'s on the diagonal. Since the exponential map is algebraic upon restriction to a nilpotent Lie algebra, this implies that the degrees (with respect to a natural affine embedding $\GL(V) \hookrightarrow \mathbb{A}^{n^2}$ with $n = \dim V$) of unipotent subgroups of $\GL(V)$ can be universally bounded. This implies the desired fact for unipotent groups, since one can overparameterize all subvarieties of $\mathbb{A}^{n^2}$ of bounded degree by an algebraic family (over a possibly disconnected base).
\end{proof}

Using \Cref{wspdomainoverparamlem}, we wish to construct a family of subvarieties of $P$ that contains all possible intersections of group orbits that can arise while doing mixed Hodge theory. To this end, given a set $T$ of irreducible subvarieties of a fixed variety $D$, we write $\mathcal{I}(T)$ for the set of all irreducible subvarieties of $D$ obtained by taking irreducible components of arbitrary intersections of elements of $T$.

\begin{lem}
\label{intfamlem}
Suppose that $g : Z \to Y$ is a family of irreducible subvarieties of some projective variety $D$. Let $T$ be the set of its fibres. Then there exists a family $\mathcal{I}(g)$ of subvarieties of $D$ whose fibres are exactly the elements of $\mathcal{I}(T)$.
\end{lem}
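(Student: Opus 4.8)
\textbf{Proof plan for \autoref{intfamlem}.}

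The plan is to iterate a ``pairwise intersection'' construction and show it stabilizes after finitely many steps by a Noetherianity/bounded-degree argument. First I would construct, from the family $g : Z \to Y$, the family $g^{(2)}$ whose base is (an open dense part of, or a stratification of) $Y \times Y$ and whose fibre over $(y_1, y_2)$ is the union of the irreducible components of $Z_{y_1} \cap Z_{y_2}$; using a stratification of $Y \times Y$ by the dimension of the fibres of this intersection together with the relative Hilbert scheme of $D$, one sees this is again an algebraic family of irreducible subvarieties of $D$ (one stratifies further so that each stratum picks out a single component). Concretely, embed $D$ in projective space, fix an ample line bundle; since $Z_{y_1}\cap Z_{y_2}$ is a closed subscheme of $D$ of bounded degree (bounded by $\deg Z_{y_1}\cdot\deg Z_{y_2}$, which is itself bounded as $g$ has finite-type base), its components live in a bounded piece of the Hilbert scheme, and the locus of $(y_1,y_2,[W])$ with $W$ a component of $Z_{y_1}\cap Z_{y_2}$ is constructible — this is the same kind of argument used in the proof of \autoref{imghasfamlem} and reduces to \cite[\href{https://stacks.math.columbia.edu/tag/05F9}{Lemma 05F9}]{stacks-project}. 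Partitioning this constructible locus into locally closed pieces and taking the disjoint union gives the family $g^{(2)}$.

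Next I would iterate: set $g_1 := g$, and having built $g_k$ (a family of irreducible subvarieties of $D$ capturing all components of intersections of at most, say, $2^{k-1}$ fibres of $g$), form $g_{k+1}$ by taking the disjoint union of $g_k$ with the pairwise-intersection family $(g_k)^{(2)}$ built as above, after discarding from $(g_k)^{(2)}$ those fibres that already appear as fibres of $g_k$. The key finiteness claim is that this process stabilizes: there is an $N$ such that $g_N$ and $g_{N+1}$ have the same set of fibres, and then the set of fibres of $g_N$ is exactly $\mathcal{I}(T)$. For one inclusion, every fibre of every $g_k$ is by construction a component of an intersection of fibres of $g$, hence lies in $\mathcal{I}(T)$. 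For the reverse inclusion, any element of $\mathcal{I}(T)$ is a component of $Z_{y_1}\cap\cdots\cap Z_{y_r}$ for some finite $r$, and a straightforward induction (grouping the $Z_{y_i}$ in pairs) shows that such a component appears among the fibres of $g_{\lceil \log_2 r\rceil}$, hence among the fibres of $g_N$ once $N$ is large enough — but a priori $r$ is unbounded, which is exactly where the stabilization is needed.

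The main obstacle is establishing that the iteration stabilizes, i.e. that only finitely many distinct irreducible subvarieties of $D$ arise as components of intersections of fibres of $g$. I would handle this via a dimension-and-degree descent: all fibres of all $g_k$ are subvarieties of the fixed projective $D$ of degree bounded by a constant depending only on $\deg$ of the generic fibre of $g$ and on $\dim D$ (each pairwise intersection step multiplies degrees, but a component of an intersection $W_1\cap W_2$ has degree at most $\deg W_1\cdot\deg W_2$ and dimension strictly smaller than $\min(\dim W_1,\dim W_2)$ unless $W_1\subseteq W_2$ or vice versa — and in the latter case no new variety is produced). Thus along any chain of strict intersections the dimension strictly drops, so chains have length at most $\dim D$; combined with the bounded-degree bound this caps the total number of distinct fibres, so $(g_k)^{(2)}$ eventually contributes nothing new and $g_N$ stabilizes. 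Once stabilized, relabel $\mathcal{I}(g) := g_N$; by construction its fibres are exactly $\mathcal{I}(T)$, completing the proof. (One should also record that the construction is effective — each step is a Hilbert scheme computation plus a constructibility computation — which will matter for the algorithmic statements later, though I would only remark on this rather than dwell on it.)
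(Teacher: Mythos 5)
Your pairwise-iteration construction does not compute exactly $\mathcal{I}(T)$: it over-captures. The forward inclusion you assert --- ``every fibre of every $g_k$ is by construction a component of an intersection of fibres of $g$'' --- is false, because a component of $W_1 \cap W_2$ with $W_1, W_2$ themselves components of intersections need not be a component of the corresponding full intersection; it can sit strictly inside a larger component. For example, take $D = \mathbb{P}^3$ and $T = \{ V_1, V_2 \}$ with $V_1$ the smooth quadric $V(xw - yz)$ and $V_2$ the plane $V(z)$. Then $V_1 \cap V_2 = \ell_1 \cup \ell_2$ with $\ell_1 = V(x,z)$ and $\ell_2 = V(w,z)$, two lines meeting at the point $p := [0:1:0:0]$, so $\mathcal{I}(T) = \{ V_1, V_2, \ell_1, \ell_2 \}$. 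But your $g_2$ has $\ell_1, \ell_2$ among its fibres, so $g_3$ produces $p = \ell_1 \cap \ell_2$, which is not a component of any intersection of elements of $T$ and hence not in $\mathcal{I}(T)$. Separately, your stabilization argument rests on the claim that ``only finitely many distinct irreducible subvarieties of $D$ arise as components of intersections of fibres of $g$'', i.e.\ that $\mathcal{I}(T)$ is finite --- that is also false in general (take $T$ to be the family of all planes in $\mathbb{P}^3$: then $\mathcal{I}(T)$ contains every line). The correct and useful consequence of the dimension-drop observation is that every element of $\mathcal{I}(T)$ is already a component of an intersection of boundedly many (at most $\dim D + 1$) elements of $T$; this bounds the depth of iteration you need, not the cardinality of $\mathcal{I}(T)$.

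The paper avoids both problems by not taking components iteratively. It forms directly the families $g^{(j)}$ whose fibre over $(y_1, \dots, y_j) \in Y^j$ is the raw intersection $Z_{y_1} \cap \cdots \cap Z_{y_j}$ (no components extracted), uses the bounded-depth remark to deduce that all elements of $\mathcal{I}(T)$ have uniformly bounded degree, builds from the Hilbert scheme a family $h : Z' \to Y'$ whose fibres include all bounded-degree irreducible subvarieties, and then carves out of $Y'$ the constructible locus of those fibres of $h$ which occur as a component of some $g^{(j)}$ for $j \le \dim D$. Components are extracted only once, against genuine intersections of elements of $T$, so there is no over-capture; and it is the degree bound (not any false finiteness of $\mathcal{I}(T)$) that makes the Hilbert-scheme step available.
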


\begin{proof}
Fix an ample line bundle on $D$ so that each subvariety of $D$ has a degree. By taking a flattening stratification of $g$ and using the fact that there are only finitely many components of the Hilbert scheme of $D$ parameterizing varieties of bounded degree, one observes that the elements of $T$ have uniformly bounded degree. Thus, this is also true for the elements of $\mathcal{I}(T)$. We can then use the Hilbert scheme to find a family $h : Z' \to Y'$ of irreducible varieties such that all elements of $\mathcal{I}(T)$ are among the fibres of $h$.

Now from $g$, one can construct families $g^{(2)}, g^{(3)}$, etc., where the fibres of $g^{(j)}$ are $j$-fold intersections of fibres of $g$. The desired set of fibres is then those fibres of $h$ which are components of some $g^{(j)}$ for $j \leq \dim D$. The condition on $Y'$ that the fibre lies in some component of some $g^{(j)}$ is constructible, so we may construct $\mathcal{I}(g)$ from $h$ after partitioning the constructible locus in $Y'$ corresponding to $\mathcal{I}(T)$ appropriately.
\end{proof}

We now construct the ``over-parametrization'' that will be used in the proof of geometric Zilber-Pink. Let $P$ be the period torsor associated to $(S,\V)$ as in \Cref{constructionBT}, with its algebraic evaluation map $r: P \to \ch{D}$, and $\mathcal{W}$ be the set of all varieties obtained as fibres of the family $g$ constructed in \autoref{wspdomainoverparamlem}.

\begin{prop}\label{prop2par}
There exists a family $f : \mathcal{Z} \to \mathcal{Y}$ of subvarieties of the period torsor $P \to S$ such that:
\begin{itemize}
\item[(1)] the fibres of $f$ are exactly the varieties in $r^{-1}(\mathcal{I}(\mathcal{W}))$; and
\item[(2)] all weakly special subvarieties $Y$ arise as follows: there is a point $y \in \mathcal{Y}(\C)$ such that $Y$ is a component of the projection to $S$ of $\mathcal{Z}_{y} \cap \mathcal{L}$ for some leaf $\mathcal{L}$ above $S$.
\end{itemize}
\end{prop}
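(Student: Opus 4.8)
The plan is to assemble $f$ from two ingredients already at hand: the over-parametrization of weakly special subdomains of $\ch{D}$ given by \autoref{wspdomainoverparamlem}, and the closure-under-intersection construction of \autoref{intfamlem}. First I would apply \autoref{intfamlem} to the family $g : \mathcal{D} \to \mathcal{Y}_{0}$ of \autoref{wspdomainoverparamlem} --- whose fibres constitute $\mathcal{W}$, which are irreducible (after replacing $g$ by the family of irreducible components of its fibres, if necessary) and lie in the projective variety $\ch{D} = \ch{D}^{0}_{S}$ --- obtaining a family $g' : \mathcal{Z}' \to \mathcal{Y}$ of subvarieties of $\ch{D}$, with structure map $\pi' : \mathcal{Z}' \to \ch{D}$, whose set of fibres is exactly $\mathcal{I}(\mathcal{W})$. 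I would then define $f$ by pulling $g'$ back to $P$ along $r$: set $\mathcal{Z} := \mathcal{Z}' \times_{\ch{D}} P$ (fibre product via $\pi'$ and $r$), let $f : \mathcal{Z} \to \mathcal{Y}$ be the composite $\mathcal{Z} \to \mathcal{Z}' \to \mathcal{Y}$, and take the structure map $\mathcal{Z} \to P$ to be the second projection. Since base change preserves closed immersions, for each $y$ the fibre of $f$ over $y$ maps to $P$ as a closed subvariety, namely $r^{-1}$ of the corresponding member of $\mathcal{I}(\mathcal{W})$; as $y$ varies these are exactly the members of $r^{-1}(\mathcal{I}(\mathcal{W}))$. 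After shrinking $\mathcal{Y}$ to (a stratification of) the locus over which $f$ is surjective --- harmless, since $\mathcal{I}(\mathcal{W})$ is unaffected --- this is a family of subvarieties of $P$ in the sense of \autoref{deffam}. That gives (1).

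For (2), let $Y \subset S$ be weakly special. By \cite[Cor. 2.9]{2022arXiv220805182B} (recalled in \S\ref{wspsubsinHdgthy}), $Y$ is an irreducible component of the $\Psi$-preimage of the image in $\Gamma \backslash D$ of some weakly special subdomain of $D$. I would then translate this through the dictionary of \S\ref{constructionBT}: $r$ here plays the role of the evaluation map $\nu : P \to \ch{D}^{0}$ there, and the leaf $\mathcal{L} \subset P$ obtained by parallel translating $\mathrm{id}_{s_{0}}$ has image under $(\pi, r)$ equal to the graph $S \times_{\Gamma \backslash D^{0}} D^{0}$ of the period map, with $\pi|_{\mathcal{L}}$ a covering onto $S$. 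From this one extracts a weakly special subdomain $\ch{Q}$ of $\ch{D}$ such that $Y$ is a component of $\pi\big( r|_{\mathcal{L}}^{-1}(\ch{Q}) \big)$. Since $\ch{Q} \in \mathcal{W} \subseteq \mathcal{I}(\mathcal{W})$, there is a point $y \in \mathcal{Y}(\mathbb{C})$ with $\mathcal{Z}_{y} = f^{-1}(y) = r^{-1}(\ch{Q})$, so that $r|_{\mathcal{L}}^{-1}(\ch{Q}) = \mathcal{Z}_{y} \cap \mathcal{L}$ and $Y$ is a component of the projection to $S$ of $\mathcal{Z}_{y} \cap \mathcal{L}$, as wanted.

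The fibre-product bookkeeping for (1) and the verification that $f$ is a family in the sense of \autoref{deffam} are routine. The step requiring genuine care --- and the main obstacle --- is (2): matching the Hodge-theoretic description of $Y$ as a component of a $\Psi$-preimage with an intersection $\mathcal{Z}_{y} \cap \mathcal{L}$ inside the torsor, i.e. choosing the correct $\Gamma$-translate $\ch{Q}$ of the ambient weakly special subdomain (the one whose preimage in the chosen leaf dominates $Y$) together with the correct component of $\mathcal{Z}_{y} \cap \mathcal{L}$. This is precisely the correspondence between \autoref{astheoremperioddomain} and \autoref{cor:asloc} worked out in \S\ref{constructionBT}, so it should go through; one must also keep track of the fact that $\mathcal{L}$ may be a nontrivial covering of $S$, which is why the statement only asserts that $Y$ is a \emph{component} of the projection rather than equal to it.
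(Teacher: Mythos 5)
Your proposal matches the paper's proof: the paper also constructs $f$ by applying \Cref{intfamlem} to the family from \Cref{wspdomainoverparamlem} and pulling back along $r$, and then observes that (2) follows from (1). You simply unwind the ``(2) follows from (1)'' step explicitly via the dictionary of \S\ref{constructionBT}, which is exactly the intended content.
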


\begin{proof}
Note that (2) follows from (1). This is then  achieved by applying \autoref{intfamlem} to the family $g$ constructed in \Cref{wspdomainoverparamlem} and pulling back along $r$ (the space $\ch{D}$ is a smooth projective variety, as explained in \Cref{paraspace}).
\end{proof}

\subsection{Proof of \Cref{thm:mixedgeomzp}}
\label{geozpproofsec}

Thanks to the next lemma, in the course of the proof of the geometric Zilber-Pink, we will freely translate the group theoretic data $(\mathbf{H},D_H, \mathbf{N})$ into families of weakly special subvarieties which are in turn related to the setting of \autoref{protogeoZP2}.
\begin{lem}\label{lemmapre}
\Cref{thm:mixedgeomzp} is equivalent to proving that monodromically atypical weakly special subvarieties of $S$ belong to finitely many algebraic families, where each fibre of each family is a monodromically atypical weakly special variety.
\end{lem}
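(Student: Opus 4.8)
\emph{Plan.} The statement is an equivalence, and I would prove the two implications separately. The only real content is a dictionary: a triple $(\mathbf{H},D_H,\mathbf{N})$ (a weak Hodge subdatum of $(\mathbf{G}_S,D_S)$) is exactly the group-theoretic shadow of a weakly special subvariety, and to such a triple one can attach an algebraic \emph{family} of weakly special subvarieties containing all weakly specials with that datum; conversely, over an algebraic family of weakly special subvarieties the Hodge–monodromy datum of the fibres is constructible, hence takes finitely many values. Combining this with the observation that ``monodromically atypical'' is a constructible condition on the base of a family gives both directions.

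\emph{From $\Sigma$ to families.} Assume \Cref{thm:mixedgeomzp}, so the set $\Sigma$ is finite. Fix $\sigma=(\mathbf{H},D_H,\mathbf{N})\in\Sigma$ and let $S_\sigma\subset S$ be a special subvariety with Hodge datum $(\mathbf{H},D_H)$. Exactly as in the proof of \Cref{rigidprop}, after a finite cover $e:\widetilde{S_\sigma}\to S_\sigma$ one forms the quotient period map attached to $(\mathbf{H},D_H)/\mathbf{N}$, whose fibre components are weakly special subvarieties of $S$, and by \cite{bakker2020quasiprojectivity} together with Stein factorization these fit into an algebraic family $g_\sigma:\widetilde{S_\sigma}\to T_\sigma$; pushing forward along $e$ via \Cref{imghasfamlem} we obtain a family $\widehat g_\sigma$ of weakly special subvarieties of $S$. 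The locus $T_\sigma^{\textrm{atyp}}\subset T_\sigma$ of parameters $t$ with $\widehat g_\sigma^{-1}(t)$ monodromically atypical is constructible, because the quantities $\dim\Psi(\widehat g_\sigma^{-1}(t)^{\an})$ and $\dim\ch{D}^0_{\widehat g_\sigma^{-1}(t)}$ appearing in \eqref{eqatypi} are constructible functions of $t$; partitioning $T_\sigma^{\textrm{atyp}}$ into locally closed strata and replacing $T_\sigma$ by their disjoint union yields finitely many families whose fibres are monodromically atypical weakly special subvarieties. Taking the (finite) union over $\sigma\in\Sigma$, the conclusion of \Cref{thm:mixedgeomzp} — that $D^0_Y$ is, up to $\Gamma$, the image of $\mathbf{N}(\R)^+\mathbf{N}(\C)^{u}\cdot y$ for some $\sigma\in\Sigma$ — guarantees that every maximal monodromically atypical weakly special $Y$ occurs as a fibre.

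\emph{From families to $\Sigma$.} Conversely, suppose we are given finitely many families $h_i:C_i\to B_i$, each fibre monodromically atypical weakly special, such that every maximal monodromically atypical weakly special subvariety occurs as a fibre. To $b\in B_i$ attach the Hodge–monodromy datum of $h_i^{-1}(b)$, and record the triple $\big((\mathbf{G}_{h_i^{-1}(b)^{\textrm{sp}}},D_{h_i^{-1}(b)^{\textrm{sp}}}),\,\mathbf{H}_{h_i^{-1}(b)}\big)$ (special closure plus algebraic monodromy). This datum is, up to conjugation by $\Gamma$, locally constant on a finite stratification of $B_i$: the generic Mumford–Tate group and the algebraic monodromy group vary constructibly in algebraic families — one may detect them via \Cref{nondenseA} applied to families of period torsors, or argue by specialization along the normalization of $B_i$, using \Cref{galequalsmono} and \Cref{monodromytheorem}. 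Let $\Sigma$ be the finite collection of triples so obtained. Since every maximal monodromically atypical $Y$ is some $h_i^{-1}(b)$, its datum lies in $\Sigma$, and $D^0_Y$ is by definition the image of $\mathbf{N}(\R)^+\mathbf{N}(\C)^{u}\cdot y$ for the corresponding $(\mathbf{H},D_H,\mathbf{N})\in\Sigma$; this is precisely \Cref{thm:mixedgeomzp}.

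\emph{Main obstacle.} The delicate point will be the constructibility claims: that monodromic atypicality cuts out a constructible sublocus of the base, and — more seriously — that only finitely many Hodge–monodromy data occur over an algebraic family of weakly specials. In the pure setting the latter is usually extracted from semisimplicity of monodromy, which fails in the general mixed case (as flagged in \S\ref{sectonperiodmaps}); here it is circumvented by working with the explicit families produced by quotient period maps in \Cref{rigidprop} and \Cref{prop2par}, where the group-theoretic data is controlled a priori, together with the constructibility machinery of \S\ref{sec:constr}.
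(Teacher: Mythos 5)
Your proposal has the right overall structure and is essentially the same approach as the paper's: both directions establish a dictionary between the triples $(\mathbf{H},D_H,\mathbf{N})$ and algebraic families of weakly specials, with the "triples to families" direction running through the $\Cref{rigidprop}$-style quotient period map construction exactly as you do. The real delta is in the "families to triples" direction, where you need to show that the algebraic monodromy group of the fibres $h_i^{-1}(b)$ takes only finitely many values over the base. The paper does this cleanly with $\Cref{loctopfiblem}$ (stated and proven immediately after the lemma): over a Zariski-dense open of the base the map $C\to B$ is a topological fibration, so the fibres are homotopy equivalent compatibly with their inclusions into $S$, hence have literally the same image of $\pi_1$ and therefore the same algebraic monodromy group, and Noetherian induction handles the rest. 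After extracting $\mathbf{N}$ this way, the paper passes to $\restr{\mathbb{V}}{C}$, takes the generic Hodge datum $(\mathbf{H},D_H)$ of $C$, and invokes $\Cref{monodromytheorem}$ to ensure $\mathbf{N}$ is normal so that the triple has the required shape. You instead offer two sketches — constructibility via $\Cref{nondenseA}$ applied to families of period torsors, or specialization along the normalization of $B_i$ — which are plausible but not spelled out, and you (correctly and honestly) flag in your obstacle paragraph that the semisimplicity shortcut available in the pure case is unavailable here. The topological fibration route sidesteps that issue entirely: it gives equality of monodromy groups, not just finiteness up to conjugacy, and requires no semisimplicity. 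You also add an explicit stratification of the base by the atypicality condition; the paper leaves this implicit since atypicality is determined by the triple itself, but making it explicit as you do is a reasonable clarification.
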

In particular, the above shows that \autoref{thm:mixedgeomzp} is equivalent to \autoref{thm:mixedZP}. For simplicity, from now on, we drop the adjective ``monodromically'' in front of words typical and atypical (\Cref{defatyphodge} plays no role in this section, and we are always concerned with \Cref{moatyp}).

\begin{proof}
We consider a family $h : C \to B$ of weakly special subvarieties of $S$, and show that we can find finitely many triples $(\mathbf{H}, D_{H}, \mathbf{N})$ which describe the weakly special closures of the fibres of $h$. We may assume that both $B$ and $C$ are irreducible. Then applying \autoref{loctopfiblem} below there are Zariski open subsets $U \subset C$ and $V \subset C$ such that the monodromy groups of the fibres of $U \to V$ are constant, and $U \cap C_{b}$ is Zariski open in $C_{b}$. Since algebraic monodromy groups are determined on any Zariski open, this implies that the algebraic monodromy $\mathbf{N}$ group of the fibres of the family $h^{-1}(V) \to V$ is constant, and arguing by Noetherian induction, we may assume that $C = V$.

Now consider pullback $\restr{\mathbb{V}}{C}$ of $\mathbb{V}$ to a $\mathbb{Z}\textrm{VMHS}$ on $C$. Then $C_{b}$ is associated to the same (weak) Hodge data as a subvariety of $C$ as it is as a subvariety of $S$, so it suffices to prove the claim for this new variation. Now let $(\mathbf{H}, D_{H})$ be the generic Hodge datum of $C$ (which agrees with that of the the Zariski closure of its image in $S$), and observe that this is also the generic Hodge datum of $C_{b}$ for $b \in B$ generic. Then by \autoref{monodromytheorem}, $\mathbf{N}$ is normal in the derived subgroup of $\mathbf{H}$, which gives a triple $(\mathbf{H}, D_{H}, \mathbf{N})$ of the desired form. One easily checks that the family of weakly special subvarieties defined by $(\mathbf{H}, D_{H}, \mathbf{N})$ agrees with $h$ from the fact that $(\mathbf{H}, D_{H}, \mathbf{N})$ is the weakly special datum associated to a generic fibre.

\vspace{0.25em}

For the reverse direction, we may argue as in the proof of \autoref{rigidprop} to produce from the data $(\mathbf{H}, D_{H}, \mathbf{N})$ an family $h : C \to B$ of weakly special subvarieties of $S$.
\end{proof}

\begin{lem}
\label{loctopfiblem}
Suppose that $g : U \to V$ is a dominant morphism of irreducible complex algebraic varieties. Then after passing to Zariski open subsets $V' \subset V$ and $U' \subset g^{-1}(V')$, the induced morphism $U' \to V'$ is a fibration in the topological category.
\end{lem}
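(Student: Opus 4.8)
\textbf{Proof plan for \Cref{loctopfiblem}.} The statement is the classical "generic local triviality" of algebraic morphisms, and I would deduce it from Thom's first isotopy lemma (Thom--Mather stratification theory). First I would reduce to a convenient geometric setup: replacing $U$ and $V$ by smooth Zariski open subsets (using that a variety is generically smooth in characteristic zero) and shrinking $V$ so that $g$ is flat, I may assume both $U$ and $V$ are smooth and $g$ is a flat dominant morphism of smooth irreducible varieties. Next I would compactify: choose a projective variety $\overline{U}$ containing $U$ as a dense open, together with a proper morphism $\overline{g} : \overline{U} \to V$ extending $g$ (possible after further shrinking $V$, e.g. by taking the closure of the graph of $g$ in $\overline{U}_0 \times V$ for some projective compactification $\overline{U}_0$ of $U$ and applying Chow's lemma / resolution if one wants $\overline{U}$ smooth, though smoothness of $\overline{U}$ is not needed).

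The heart of the argument is then to apply a Whitney stratification: by the existence of Whitney stratifications compatible with a given closed subset, there is a Whitney stratification of $\overline{U}$ for which the ``boundary'' $\overline{U} \setminus U$ is a union of strata, and such that (after shrinking $V$) each stratum maps submersively onto $V$ and $\overline{g}$ restricted to the closure of each stratum is proper over $V$. This is exactly the hypothesis of Thom's first isotopy lemma applied to the proper map $\overline{g}$ with its stratified source: the lemma yields that $\overline{g}$ is, over a Zariski (in fact it suffices to have an analytically open, but one can shrink to Zariski-open by constructibility of the ``good locus'') open $V' \subset V$, a \emph{stratified locally trivial fibration}, and moreover the local trivializations can be taken compatible with the stratification, hence in particular to preserve the open stratum $U \cap \overline{g}^{-1}(V')$. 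Restricting the trivialization to this open stratum gives that $U' := g^{-1}(V') \to V'$ is a topological fibration (indeed a locally trivial fibre bundle, fibrewise a smooth manifold). A cleaner alternative, which avoids compactifying, is to invoke Verdier's result \cite{} on the existence of $(w)$-regular (Whitney) stratifications together with the fact that a $w$-regular stratified morphism is locally trivial over a generic subset of the base --- this is precisely \emph{Verdier's generic local triviality theorem} for morphisms of complex algebraic varieties, which directly gives the conclusion after intersecting with $U$.

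The main obstacle is a bookkeeping one rather than a conceptual one: ensuring that the stratification can be chosen so that the non-compact part $U$ of $\overline{U}$ is a \emph{union of strata} and that, after shrinking the base, \emph{every} stratum submerses onto $V'$ with $\overline{g}$ proper on stratum closures --- without this the isotopy lemma does not apply and the local trivializations need not respect $U$. Handling this requires: (i) generic smoothness of $g$ restricted to each stratum, which is automatic in characteristic zero (Sard/generic smoothness), giving a Zariski-dense open of the base over which the restriction to each stratum is a submersion; (ii) finiteness of the stratification, so that intersecting these finitely many dense opens still leaves a dense (hence, by constructibility, Zariski) open $V'$; and (iii) properness of $\overline{g}$ on each stratum closure, which follows from properness of $\overline{g}$ itself. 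Once these are in place the topological fibration statement is immediate. (We remark that this lemma is used in the proof of \Cref{lemmapre} only to make algebraic monodromy groups of the fibres of $h$ locally constant, and for that purpose one may alternatively bypass topological fibration entirely and argue directly that $\pi_1$ of a generic fibre surjects onto a subgroup of finite index independent of the fibre; but the fibration formulation is cleanest and is what we use.)
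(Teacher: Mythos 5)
Your proposal is correct and follows essentially the same route as the paper, which handles the proper case via generic smoothness plus Ehresmann and the general case by citing Verdier's generic local triviality theorem for Whitney-stratified morphisms (the very result you invoke, via compactification and Thom's first isotopy lemma). The paper simply defers to the references where you spell out the reduction steps, but the underlying argument is the same.
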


\begin{proof}[Proof of \Cref{loctopfiblem}]
If $g$ is proper this follows from generic smoothness combined with Ehresmann's fibration theorem. The general case is proven using Whitney stratifications; see \cite[Cor. 5.1]{MR0481096} and also \cite[Proof of Thm. 3.1.10]{zbMATH06126017}.
\end{proof}

\begin{proof}[Proof of \Cref{thm:mixedgeomzp}]\label{proofimplication}
It suffices to prove the theorem after replacing $S$ with a finite \'etale covering. Then standard techniques in Hodge theory can be used to reduce to the case where the period map $\Psi$ is quasi-finite; see indeed \cite{zbMATH03337393, zbMATH03615066}, as well as \cite{bakker2020quasiprojectivity} and the references therein. We will prove that, for each $e$, that the $e$-dimensional atypical weakly special subvarieties of $S$ belong to finitely many algebraic families as in \Cref{lemmapre}.

We let $f$ be as in \autoref{prop2par}, and, for any integer $j$, write $f^{(< j)}$ for the subfamily defined by the property that the varieties $r(\mathcal{Z}_{y})$ have dimension $< j$. (This is a priori a constructible condition on $\mathcal{Y}$, but one can make it algebraic by replacing $\mathcal{Y}$ with a finite union of strata.) We say a pair of integers $(e,j)$ is in the \emph{atypical range} if $j < \rho(e) := e + \dim \ch{D}^{0}_{S} - \dim S$. For each $e$ we consider  
\begin{equation}\label{eqYdatyp}
\mathcal{Y}(e):=\mathcal{Y}(f^{(<\rho(e))},e)=\{(x,y)\in P \times \mathcal{Y}: \dim_{x} (\mathcal{L}_{x} \cap \mathcal{Z}_{y}) \geq e, \dim r( \mathcal{Z}_{y}) < \rho(e)\}.
\end{equation}
This is the locus in $P \times \mathcal{Y}$ where $e$-dimensional atypical intersections associated to atypical weakly specials can appear, indeed, since the family $f$ comes as pull-back from $\ch{D}^0_S$, the condition $\dim_{x} (\mathcal{L}_{x} \cap \mathcal{Z}_{y}) \geq e$ is equivalent to $\dim_{r(x)} r(\mathcal{L}_{x} \cap \mathcal{Z}_{y}) \geq e$. Thanks to \Cref{nondenseA} and \Cref{prop2par} this is a constructible set. In fact, we will be interested in two subsets of $\mathcal{Y}(e)$:
\begin{align}
\label{Kcond1}
\mathcal{K}(e)' &= \left\{ (x,y) \in \mathcal{Y}(e) : \begin{array}{c} \textrm{ there exists } (x,y') \in \mathcal{Y}(e') \\ \textrm{ with }e' > e \textrm{ and } \mathcal{Z}_y \subsetneq \mathcal{Z}_{y'} \end{array} \right\}  \\
\label{Kcond2}
\mathcal{K}(e) &= \left\{ (x,y) \in \mathcal{K}(e)' : \begin{array}{c} \textrm{ there exists } (x,y') \in \mathcal{Y}(e) \\ \textrm{ with }\mathcal{Z}_{y'} \subsetneq \mathcal{Z}_y \end{array} \right\}
\end{align}

\noindent These are once more constructible subsets, thanks to \Cref{nondenseA}.

The following proposition is the key step to prove the geometric Zilber-Pink and crucially uses the Ax-Schanuel theorem.
\begin{prop}
\label{L3}
For each $(x,y)\in \mathcal{Y}(e) \setminus \mathcal{K}(e)$, let $U$ be a component at $x$ of $ \mathcal{Z}_{y} \cap \mathcal{L}_{x}$ of dimension $\geq e$. Then the projection of $U$ to $S$ surjects onto the germ of a strict weakly special subvariety $Y$ of $S$ (of dimension $e$). 
\end{prop}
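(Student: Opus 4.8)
The strategy is to apply the Ax--Schanuel corollary \Cref{cor:asloc} to conclude that the projection of $U$ to $S$ lies in a weakly special subvariety $Y$, and then to use the avoidance of the ``bad locus'' $\mathcal{K}(e)$ to upgrade ``lies in'' to ``surjects onto the germ of'' and to pin down $\dim Y = e$. First I would verify the hypotheses of \Cref{cor:asloc}: by \Cref{lemmasparse} the monodromy group $\mathbf{H}_S$ is sparse, so the corollary applies to the bundle $(P,\nabla)$ of \Cref{examplebundle}. The point $(x,y)\in\mathcal{Y}(e)$ carries the data of a fibre $V:=\mathcal{Z}_y\subset P$, a leaf $\mathcal{L}=\mathcal{L}_x$, and a component $U$ of $V\cap\mathcal{L}$ through $x$ with $\dim U\geq e$. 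Since $(x,y)\in\mathcal{Y}(e)$ forces $\dim r(\mathcal{Z}_y)<\rho(e)=e+\dim\ch{D}^0_S-\dim S$, and since $f$ is pulled back from $\ch{D}^0_S$ along $r$, the fibre $\mathcal{Z}_y$ is $r^{-1}$ of a subvariety of $\ch{D}^0_S$ of dimension $<\rho(e)$; hence $\dim\mathcal{Z}_y=\dim r(\mathcal{Z}_y)+(\dim P-\dim\ch{D}^0_S)<e+\dim P-\dim S=e+\dim\mathbf{H}_S$ (using $\dim P=\dim S+\dim\mathbf{H}_S$ and $\dim\ch{D}^0_S\le\dim\mathbf{H}_S$ — more precisely the codimension count in $P$ matches the one downstairs in $\ch{D}^0_S$ because $r$ has constant fibre dimension, c.f. the computation in \Cref{ineqinsidebundle}). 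This is exactly the atypicality inequality $\dim V<\dim U+\dim\mathbf{H}_S$ of \Cref{cor:asloc} (using $\dim U\ge e$), so the projection $\pi(U)\subset S$ is contained in a strict weakly special subvariety $Y$.

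Next I would show that $\pi(U)$ in fact \emph{surjects onto the germ of} such a $Y$ at the image point $\pi(x)$, and that $\dim Y = e$. Here is where $(x,y)\notin\mathcal{K}(e)$ enters. Let $Y$ be chosen minimal among weakly special subvarieties containing (the germ of) $\pi(U)$; equivalently, $Y$ is the weakly special closure of $\pi(U)$. Recall every weakly special $Y$ is a component of the $S$-projection of $\mathcal{Z}_{y'}\cap\mathcal{L}$ for some $(x,y')$ with $\mathcal{Z}_{y'}$ the preimage under $r$ of the corresponding weakly special subdomain $\ch{D}^0_Y\subset\ch{D}^0_S$ (this is \Cref{prop2par}(2) together with the torsor description of \S\ref{constructionBT}); and $\mathcal{I}(\mathcal{W})$ is closed under components of intersections, so $\mathcal{Z}_y\cap\mathcal{Z}_{y'}$, if a proper subset of $\mathcal{Z}_y$, is again a fibre $\mathcal{Z}_{y''}$ of $f$. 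If $\dim Y > e$, i.e. $\pi(U)$ is \emph{not} open in $Y$, then the germ of $U$ is contained in the germ of a strictly larger intersection $\mathcal{Z}_{y'}\cap\mathcal{L}_x$ of dimension $>e$, with $\mathcal{Z}_y\subsetneq\mathcal{Z}_{y'}$ — and one checks $(x,y')\in\mathcal{Y}(e')$ for the appropriate $e'>e$, since the defining weakly special subdomain still has dimension in the atypical range (the atypicality inequality only improves when the ambient intersection grows while the leaf-intersection dimension grows no faster). This exhibits $(x,y)\in\mathcal{K}(e)'$. Then the reverse inclusion: if also $\pi(U)$ properly contains the germ of a smaller weakly special, i.e. some $\mathcal{Z}_{y''}\subsetneq\mathcal{Z}_y$ still cuts $\mathcal{L}_x$ in dimension $\ge e$ at $x$, we would land in $\mathcal{K}(e)$. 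Ruling out membership in $\mathcal{K}(e)$ therefore forces: either $\pi(U)$ is open in $Y$ with $\dim Y=e$, or $\pi(U)$ is minimal among such intersections. Combining, for $(x,y)\notin\mathcal{K}(e)$ the component $U$ realizes an \emph{atypical} weakly special $Y$ with $\dim Y=e$ and $\pi(U)$ Zariski-dense in (an analytic neighborhood of $\pi(x)$ in) $Y$.

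The main obstacle I anticipate is the bookkeeping in the previous paragraph: carefully relating the combinatorial conditions defining $\mathcal{K}(e)'$ and $\mathcal{K}(e)$ (equations \eqref{Kcond1}--\eqref{Kcond2}) to the geometric statements ``$\pi(U)$ is not open in its weakly special closure'' and ``$\pi(U)$ is not minimal'', and in particular checking that when one enlarges $\mathcal{Z}_y$ to $\mathcal{Z}_{y'}$ to detect non-openness, the enlarged pair $(x,y')$ genuinely lands in some $\mathcal{Y}(e')$ with $e'>e$ — that is, that the atypicality (dimension) inequality is preserved under passing to the larger fibre. This requires knowing that the weakly special subdomain of $\ch{D}^0_S$ defining $\mathcal{Z}_{y'}$ has dimension strictly less than $\rho(e')$, which should follow because $\mathcal{Z}_{y'}$ still meets the leaf in an atypically large set (of dimension $>e$) and because enlarging a fibre only decreases $\mathrm{codim}_P\mathcal{Z}_{y'}$ while the leaf-intersection dimension increases; a short codimension computation, analogous to \Cref{ineqinsidebundle}, closes this. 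Once this is in place, the rest is formal. I would also note that the strictness of $Y$ (i.e. $Y\neq S$) is immediate from \Cref{cor:asloc}, and that $Y\neq\{\mathrm{pt}\}$ holds when we are in the positive-period-dimension case, but since here we allow $e=0$ as well there is nothing extra to check.
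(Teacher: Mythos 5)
Your proposal starts out on the same track as the paper (apply Ax--Schanuel via the atypicality inequality, then use the avoidance of $\mathcal{K}(e)$), but there is a genuine gap in the second half, and it is precisely the obstacle you flag at the end.

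After Ax--Schanuel gives you the minimal weakly special $Y$ with $\pi(U)\subset Y$, you want to show that if $r(\mathcal{Z}_y)\subsetneq\ch{D}^{0}_{Y}$ then $(x,y)\in\mathcal{K}(e)'$, i.e.\ the enlarged fibre $\mathcal{Z}_{y'}=r^{-1}(\ch{D}^{0}_{Y})$ produces a point $(x,y')\in\mathcal{Y}(e')$ with $e'=\dim Y>e$. But membership in $\mathcal{Y}(e')$ requires \emph{two} things: the leaf-intersection dimension at $x$ is $\geq e'$ (fine), \emph{and} $\dim r(\mathcal{Z}_{y'})=\dim\ch{D}^{0}_{Y}<\rho(e')=e'+\dim\ch{D}^{0}_{S}-\dim S$. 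Unwinding, this last condition is exactly $\codim_{S}Y<\codim_{\ch{D}^{0}_{S}}\ch{D}^{0}_{Y}$, i.e.\ the requirement that $Y$ itself be (monodromically) atypical. Your parenthetical ``the atypicality inequality only improves when the ambient intersection grows while the leaf-intersection dimension grows no faster'' is not correct here: passing from $(e,j)$ to $(e',j')$, the defect $\rho(e')-j'$ can equal zero even if $\rho(e)-j>0$, because $j'-j$ can be as large as $e'-e$. So your argument silently assumes $Y$ is atypical, and the ``short codimension computation'' you propose to close the gap does not, in fact, close it: it only shows the intersection is atypical \emph{as an intersection inside $Y\times\ch{D}^{0}_{Y}$}, not that $Y$ itself is an atypical subvariety of $S$.

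The paper resolves this with a case split that you are missing. If $Y$ is atypical, your argument goes through and $(x,y)\in\mathcal{K}(e)'$. If $Y$ is \emph{typical} ($\codim_{S}Y=\codim_{\ch{D}^{0}_{S}}\ch{D}^{0}_{Y}$), one instead applies Ax--Schanuel a \emph{second} time, now inside the pair $(Y,\restr{\mathbb{V}}{Y})$, using the intersection $Y\times r(\mathcal{Z}_{y})\cap Y\times_{\Gamma_{Y}\backslash D_{Y}}D_{Y}$. The point (this is \Cref{intermlem}) is that the typicality of $Y$ converts the atypicality bound in $S\times\ch{D}^{0}_{S}$ into an atypicality bound in $Y\times\ch{D}^{0}_{Y}$: subtracting $\codim_{S}Y=\codim_{\ch{D}^{0}_{S}}\ch{D}^{0}_{Y}$ from both sides of $\dim S-e<\dim\ch{D}^{0}_{S}-j$ gives $\dim Y-e<\dim\ch{D}^{0}_{Y}-j$. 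The resulting strict weakly special $Y'\subsetneq Y$ contradicts the minimality of $Y$. You should also note that the paper first establishes $r(\mathcal{Z}_{y})\subset\ch{D}^{0}_{Y}$ (Lemma L5) using condition \eqref{Kcond2} and the closure of the family $\mathcal{I}(\mathcal{W})$ under intersections, before running the atypical/typical dichotomy; your sketch blurs together which of the two $\mathcal{K}$-conditions is being used where.
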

\begin{rem}
The argument described below is inspired by \cite[Lem. 6.15]{MR3867286}, (the corrected version\footnote{At first sight, the proof appearing in \cite[Prop. 6.6]{2021arXiv210708838B} looks significantly simpler. However some care is required to tackle the induction outlined in \emph{op. cit.}. See also \cite[Sec. 3.3]{2025arXiv250203071B} for an o-minimal proof along the lines we follow here.} of) \cite[Prop. 6.6]{2021arXiv210708838B}, as well as \cite[Prop. 14]{binyamini2021effective}.
\end{rem}

Because all the varieties in our family $f$ arise by pulling back subvarieties of $\ch{D}^{0}_{S}$, it will be convenient for the following arguments to refer to the Ax-Schanuel theorem for intersections with the graph of a period map in $S \times \ch{D}^{0}_{S}$ (namely \Cref{astheoremperioddomain}), although of course all the arguments could instead be rephrased in terms of intersections with leaves of $P$. We have a natural map $(\pi \times r) : P \to S \times \ch{D}^{0}_{S}$, where $\pi : P \to S$ is the structure morphism. 

\begin{proof}
Let $(x,y)\in \mathcal{Y}(e) \setminus \mathcal{K}(e)$ and a component $U \subset (\mathcal{Z}_{y} \cap \mathcal{L}_x)$ with $\dim_x U \geq e$. Since $\mathcal{Z}_{y}$ is a fibre of the family $f^{(<\rho(e))}$, the definition of $\rho(e)$ implies that we are in the atypical range, i.e.
\begin{equation}
\label{atypineqL3}
\codim_{S \times \ch{D}^{0}_{S}} (\pi \times r)(U) < \codim_{S \times \ch{D}^{0}_{S}} (\pi \times r)(\mathcal{L}_x) + \codim_{S \times \ch{D}^{0}_{S}} r(\mathcal{Z}_{y}).
\end{equation}
Hence, setting $s=\pi(x)$, if $(Z, s) \subset (S,s)$ is a germ obtained as the image of $U$, then by the Ax-Schanuel theorem \autoref{astheoremperioddomain} the germ $(Z, s)$ lies in a strict weakly special subvariety of $Y \subset S$ of $S$. Let $Y$ be the smallest weakly special subvariety of $S$ containing $(Z, s)$, and let $(\mathbf{H}_Y,D^{0}_Y \subset \ch{D}^{0}_Y)$ be its monodromy datum. We fix a Hodge-theoretic embedding of $(\mathbf{H}_Y,D^{0}_Y\subset \ch{D}^{0}_Y)$ into $(\mathbf{H}_S, D^{0}_S\subset \ch {D}^{0}_S)$ and after replacing $x$ and $\mathcal{Z}_{y}$ by an $\mathbf{H}_{S}(\mathbb{C})$-translate we may assume that the orbit $\ch{D}' := \mathbf{H}_{Y}(\mathbb{C}) \cdot r(x)$ agrees with $\ch{D}^{0}_{Y}$. We want to show that $r(\mathcal{Z}_y)=\ch{D}'$.

\begin{lem}
\label{L5}
The image $r(\mathcal{Z}_{y}) \subset \ch{D}^{0}_{S}$ lies in $\ch{D}'$.
\end{lem}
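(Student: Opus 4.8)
The goal is to show that the over-parametrized family fibre $r(\mathcal{Z}_y)$ is contained in the orbit $\ch{D}' = \mathbf{H}_Y(\mathbb{C})\cdot r(x) = \ch{D}^{0}_Y$. The first point to exploit is that $\mathcal{Z}_y$ is, by construction (see \autoref{prop2par}), the preimage under $r$ of an element of $\mathcal{I}(\mathcal{W})$, i.e.\ $r(\mathcal{Z}_y)$ is itself an irreducible component of an intersection of weakly special subdomains $\ch{Q}$ of $\ch{D}^{0}_S$; in particular $r(\mathcal{Z}_y)$ is itself a weakly special subdomain, say with associated group $\mathbf{N}'\subset \mathbf{G}_S$ and $r(\mathcal{Z}_y) = \mathbf{N}'(\mathbb{C})\cdot r(x)$. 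So the lemma amounts to the group-theoretic assertion $\mathbf{N}'(\mathbb{C})\cdot r(x) \subseteq \mathbf{H}_Y(\mathbb{C})\cdot r(x)$, and it suffices to show this after passing to the germ at $r(x)$, since both sides are homogeneous under their respective groups and the orbit is determined by any nonempty open piece.

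Next I would transfer the containment of germs that is already in hand. By hypothesis the germ $(Z,s)$ is the projection of the component $U$ of $\mathcal{Z}_y \cap \mathcal{L}_x$, and $U$ has $\dim_x U \ge e = \dim Y$; moreover $\pi|_{\mathcal{L}_x}$ is a local biholomorphism onto its image, so the germ of $U$ at $x$ maps isomorphically onto the germ of $Z$ at $s$, hence $\dim_x U = \dim_s Z \le \dim Y = e$ and therefore $U$ has dimension exactly $e$ and projects onto the germ of $Y$ at $s$. Under the local biholomorphism $r|_{\mathcal{L}_x}$ (again $\mathcal{L}_x$ maps locally isomorphically onto the graph of the period map, so onto a leaf of $\ch{D}^{0}_S$ that is an open piece of $\ch{D}^{0}_Y$ after our normalization), the germ of $U$ at $x$ maps onto the germ at $r(x)$ of $r(\mathcal{L}_x \cap \mathcal{Z}_y)$, which lies inside the germ of $\ch{D}^{0}_Y \cap r(\mathcal{Z}_y)$ at $r(x)$. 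Since $U$ has dimension $e = \dim \ch{D}^{0}_Y$ (note $Y$ being weakly special of dimension $e$ with quasi-finite period map forces $\dim \ch{D}^{0}_Y = e$), the germ of $r(\mathcal{Z}_y)\cap \ch{D}^{0}_Y$ at $r(x)$ has dimension $\ge e = \dim \ch{D}^{0}_Y$, so it is all of the germ of $\ch{D}^{0}_Y$. Thus $\ch{D}^{0}_Y \subseteq r(\mathcal{Z}_y)$ near $r(x)$, giving one inclusion for free.

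For the reverse inclusion — the actual content of the lemma — I would argue that the strict containment $\ch{D}^{0}_Y \subsetneq r(\mathcal{Z}_y)$ would contradict either the minimality of $Y$ among weakly specials containing $(Z,s)$, or the assumption $(x,y)\notin \mathcal{K}(e)$. Concretely: suppose $\ch{D}^{0}_Y \subsetneq r(\mathcal{Z}_y)$, so $\mathbf{H}_Y \subsetneq \mathbf{N}'$. Then one can find an intermediate weakly special subdomain — or directly work with $\ch{D}^{0}_{Y'}$ for $Y'$ a weakly special subvariety whose datum lies strictly between that of $Y$ and the generic one — realized again as a fibre $\mathcal{Z}_{y'}$ of $f$ with $\mathcal{Z}_{y'} \subsetneq \mathcal{Z}_y$, and such that the component $U$ still lies in $\mathcal{Z}_{y'} \cap \mathcal{L}_x$ with dimension $\ge e$; since $U$ maps onto the germ of $Y$ which is contained in $Y'$, we still have $\dim_{r(x)} r(\mathcal{Z}_{y'}) \le \dim \ch{D}^{0}_{Y'} < \dim r(\mathcal{Z}_y)$, so $(x,y')$ lies in $\mathcal{Y}(e)$ with $\mathcal{Z}_{y'}\subsetneq \mathcal{Z}_y$. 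If in addition $(x,y) \in \mathcal{K}(e)'$ (which one checks using the bigger intersection containing $\mathcal{Z}_y$), this places $(x,y)$ in $\mathcal{K}(e)$, contradicting the hypothesis; if $(x,y)\notin \mathcal{K}(e)'$ one instead uses minimality of $Y$ directly to rule out the intermediate datum. The \textbf{main obstacle} I anticipate is organizing this last dichotomy cleanly: one must match up the abstract weakly special subdomain $r(\mathcal{Z}_y)$ (coming from $\mathcal{I}(\mathcal{W})$) with an honest weakly special subvariety of $S$ whose datum can be compared with $Y$, and verify that the family $f$ really does contain the relevant intermediate fibre — this is exactly the kind of bookkeeping that the conditions \eqref{Kcond1}–\eqref{Kcond2} were set up to handle, but getting the inclusions and dimension counts to line up (especially tracking that $U$ survives inside the smaller fibre) is the delicate part. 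I expect the proof to conclude by noting that the only way to avoid both contradictions is $\mathbf{N}' = \mathbf{H}_Y$, i.e.\ $r(\mathcal{Z}_y) = \ch{D}'$, which is even stronger than the stated $r(\mathcal{Z}_y)\subseteq \ch{D}'$.
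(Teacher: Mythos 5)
The paper's proof of \autoref{L5} is short and hinges on a single precise move: intersect $r(\mathcal{Z}_{y})$ with $\ch{D}'$ itself. Because $\ch{D}'$ is a weakly special subdomain (so an element of $\mathcal{W}$) and the family $f$ was built from $\mathcal{I}(\mathcal{W})$, which is closed under taking components of intersections (\autoref{intfamlem}, \autoref{prop2par}), the component $C'$ of $r(\mathcal{Z}_{y}) \cap \ch{D}'$ through $r(x)$ is again a fibre $r(\mathcal{Z}_{y'})$, and it contains the image of $U$. If $r(\mathcal{Z}_{y}) \not\subseteq \ch{D}'$ this gives a strictly smaller $\mathcal{Z}_{y'} \subsetneq \mathcal{Z}_{y}$ with $(x,y')\in\mathcal{Y}(e)$, which the paper uses to conclude via the defining condition \eqref{Kcond2}. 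Your proposal does not make this move. Where the paper intersects with $\ch{D}'$ to land \emph{inside} the minimal weakly special, you propose to find ``an intermediate weakly special subdomain $\ch{D}^{0}_{Y'}$ for $Y'$ ... strictly between that of $Y$ and the generic one'', i.e.\ something \emph{bigger} than $\ch{D}'$. Such a $Y'$ is the wrong object: it neither obviously yields $\mathcal{Z}_{y'}\subsetneq\mathcal{Z}_{y}$, nor need it exist (there is no reason for a weakly special strictly between $Y$ and $S$ to exist, let alone one whose orbit sits inside $r(\mathcal{Z}_{y})$). This is a genuine gap --- the proposal points at the right parameter-space machinery (\eqref{Kcond1}--\eqref{Kcond2}) but does not identify the intersection $r(\mathcal{Z}_{y})\cap\ch{D}'$ as the fibre that does the work, and your hedged dichotomy (``if $(x,y)\notin\mathcal{K}(e)'$ ... use minimality of $Y$'') is not an argument.

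Two smaller points. First, the paragraph deriving $\ch{D}^{0}_{Y}\subseteq r(\mathcal{Z}_{y})$ is extraneous (\autoref{L5} asserts only the other containment; the equality $r(\mathcal{Z}_{y})=\ch{D}'$ is what the surrounding proof of \autoref{L3} establishes afterwards), and in it the claim ``$\dim\ch{D}^{0}_{Y}=e$'' is false in general: quasi-finiteness gives $\dim Y = \dim\Psi(Y^{\an})\le\dim\ch{D}^{0}_{Y}$ but the inequality can be strict, and at this stage of the argument $Y$ has not yet been shown to be typical. Second, the assertion that $r(\mathcal{Z}_{y})$ is a single $\mathbf{N}'(\mathbb{C})$-orbit is unjustified: $r(\mathcal{Z}_{y})$ is by construction a component of an intersection of orbits (\autoref{intfamlem}), which need not itself be an orbit; fortunately the paper's argument never needs this, since it only uses that the \emph{intersection} $r(\mathcal{Z}_{y})\cap\ch{D}'$ is again a fibre.
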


\begin{proof}
Consider an irreducible component $C'$ of $r(\mathcal{Z}_{y}) \cap \ch{D}'$ containing the image of $U$, which must have dimension $\leq j := \dim r(\mathcal{Z}_{y})$. To prove the lemma, it is enough to show that we must have $\dim C'= j$. Note that by construction (cf. \Cref{intfamlem}), $C'$ is again the image of a fibre of $f$, so we can write $C'=r( \mathcal{Z}_{y'})$, for some $y' \in \mathcal{Y}$. Since $\mathcal{Z}_{y'}$ contains the projection of $U$ particular $(x,y')\in \mathcal{Y}(e)$ and $\mathcal{Z}_{y'} \subseteq \mathcal{Z}_{y}$. The last inclusion is not an equality precisely when $\dim C'<j$. But points where this happens are all contained in $\mathcal{K}(e)$ as a consequence of the second condition (\ref{Kcond2}) defining $\mathcal{K}(e)$. \end{proof}

\noindent Suppose now instead that we have a strict inclusion $r(\mathcal{Z}_{y})\subsetneq \ch{D}'$, which is equivalent to the fact that $ \dim  \ch{D}'-j >0$. A priori $Y$ is either an atypical or a typical weakly special subvariety of $(S,\V)$. We argue each case separately and in both cases we will derive the desired contradiction: for the former we contradict (\ref{Kcond1}), and for the latter the minimality of $Y$.

If $Y$ is atypical, we must have $\codim_S Y < \codim_{\ch{D}_S^0} \ch{D}'$, and so $(\dim Y, \dim \ch{D}')$ are in the atypical range. Since we are still assuming that $r(\mathcal{Z}_{y})\subsetneq \ch{D}'$, this implies that $e':= \dim Y > e$. By construction $Y$ induces a point $(x,y')\in \mathcal{Y}(e')$ such that $\mathcal{Z}_y \subsetneq \mathcal{Z}_{y'}$. But this is not possible
because such a pair $(x,y')$ was removed in (\ref{Kcond1}).

We might therefore assume that $Y$ is typical. This means that $\textrm{codim}_{S} Y = \textrm{codim}_{\ch{D}^{0}_{S}} \ch{D}'$. We consider the same situation for the pair $(Y, \restr{\mathbb{V}}{Y})$ and the associated period map\footnote{Strictly speaking one should replace $Y$ with a finite covering of a smooth resolution to make sense of this definition, but as this does not change the dimension of $Y$ and can only increase the dimension of $U$ the reasoning is unchanged by such a replacement. } $\Psi: Y \to \Gamma_Y \backslash D^{0}_Y$. Thanks to \Cref{L5}, we can consider the subvariety $Y\times r (\mathcal{Z}_{y})\subset Y \times \ch {D}_Y$. Then the projection of $U$ to $Y \times \ch{D}^{0}_{Y}$ is necessarily equal to some analytic component of the intersection
\begin{displaymath}
Y \times r (\mathcal{Z}_{y}) \cap Y \times_{\Gamma_Y \backslash D_Y} D_Y.
\end{displaymath}
Since again $U$ is an atypical intersection of $(Y,\V_Y)$, as shown in \Cref{intermlem} below, we can apply \Cref{astheoremperioddomain} to find a strict weakly special subvariety $Y'$ whose germ contains the projection of $U$ to $S$. This contradicts the minimality of $Y$, and produces the contradiction we were aiming for.

\begin{lem}\label{intermlem}
We have
\begin{equation}
\codim_{Y\times \ch{D}^{0}_Y} (\pi \times r)(U) <  \codim_{Y\times \ch{D}^0_Y}Y\times r (\mathcal{Z}_{y}) + \codim_{Y\times \ch{D}^0_Y}( Y \times_{\Gamma_Y \backslash D_Y} D_Y).
\end{equation}
Or equivalently, $\dim Y - e < \dim \ch{D}^{0}_{Y} - j$.

\end{lem}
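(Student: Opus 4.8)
The plan is to reduce the displayed codimension inequality to the numerical inequality $\dim Y - e < \dim \ch{D}^{0}_{Y} - j$ and then to deduce the latter directly from two facts already in hand: the typicality of $Y$ and the defining dimension constraint on $\mathcal{Y}(e)$ from \eqref{eqYdatyp}. Here, as in the proof of \Cref{L5}, we set $j := \dim r(\mathcal{Z}_{y})$.

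For the reduction, I would compute the three terms of the displayed inequality inside the ambient product $Y \times \ch{D}^{0}_{Y}$, of dimension $\dim Y + \dim \ch{D}^{0}_{Y}$. By \Cref{L5} we have $r(\mathcal{Z}_{y}) \subset \ch{D}^{0}_{Y}$, so $Y \times r(\mathcal{Z}_{y})$ genuinely sits inside $Y \times \ch{D}^{0}_{Y}$ with codimension $\dim \ch{D}^{0}_{Y} - j$. The fibre product $Y \times_{\Gamma_Y \backslash D_Y} D_Y$ is the image of the graph of the period map of $(Y, \restr{\mathbb{V}}{Y})$, hence has dimension $\dim Y$ and codimension $\dim \ch{D}^{0}_{Y}$. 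Finally, since the horizontal distribution maps isomorphically onto $TS$, the map $\pi \times r$ restricts to a local biholomorphism on each leaf, so $\dim (\pi \times r)(U) = \dim U \geq e$ and $(\pi \times r)(U)$ has codimension at most $\dim Y + \dim \ch{D}^{0}_{Y} - e$. Substituting, the displayed inequality is implied by
\[ \dim Y + \dim \ch{D}^{0}_{Y} - e < \bigl(\dim \ch{D}^{0}_{Y} - j\bigr) + \dim \ch{D}^{0}_{Y}, \]
which simplifies exactly to $\dim Y - e < \dim \ch{D}^{0}_{Y} - j$, the asserted equivalent form.

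It remains to prove $\dim Y - e < \dim \ch{D}^{0}_{Y} - j$. We are in the case that $Y$ is typical and, after the $\mathbf{H}_{S}(\mathbb{C})$-translation performed earlier, $\ch{D}' = \ch{D}^{0}_{Y}$; thus $\codim_{S} Y = \codim_{\ch{D}^{0}_{S}} \ch{D}^{0}_{Y}$, i.e. $\dim Y - \dim \ch{D}^{0}_{Y} = \dim S - \dim \ch{D}^{0}_{S}$. On the other hand, since $(x,y) \in \mathcal{Y}(e)$, the variety $\mathcal{Z}_{y}$ is a fibre of $f^{(<\rho(e))}$, so $j = \dim r(\mathcal{Z}_{y}) < \rho(e) = e + \dim \ch{D}^{0}_{S} - \dim S$. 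Combining,
\[ \dim Y - e = \bigl(\dim S - \dim \ch{D}^{0}_{S}\bigr) + \dim \ch{D}^{0}_{Y} - e < \dim \ch{D}^{0}_{Y} - j , \]
using $j < e + \dim \ch{D}^{0}_{S} - \dim S$. This finishes the proof. The only delicate points — and neither is serious — are the bookkeeping ensuring that the preceding normalization really makes $\ch{D}' = \ch{D}^{0}_{Y}$ on the nose (so that the typicality identity is the displayed one), and the dimension count $\dim (\pi \times r)(U) = \dim U \geq e$, which rests on $\pi \times r$ being étale along leaves together with the hypothesis $\dim_{x} U \geq e$.
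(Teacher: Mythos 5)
Your proof is correct and follows essentially the same approach as the paper: the paper also reduces to the numerical form $\dim Y - e < \dim \ch{D}^{0}_{Y} - j$ and deduces it from the same two inputs — the atypical-range condition $j < \rho(e)$ (i.e.\ $\dim S - e < \dim \ch{D}^{0}_{S} - j$) and the typicality identity $\codim_S Y = \codim_{\ch{D}^{0}_{S}} \ch{D}^{0}_{Y}$ — just organized by adding $-\codim_S Y$ to both sides of the atypical-range inequality rather than rewriting $\dim Y - e$ first. Your extra bookkeeping (unwinding the three codimensions in $Y \times \ch{D}^{0}_{Y}$, and observing that $\pi\times r$ is locally biholomorphic on leaves so $\dim(\pi\times r)(U)=\dim U\ge e$) is the content behind the paper's unexplained "or equivalently," and is a reasonable addition.
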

\begin{proof}
Recall that we started with $e,j$ in the atypical intersection range, i.e. $\dim S - e < \dim \ch{D}^{0}_S-j$. We add on both sides $- \codim_S Y$, to obtain
\begin{displaymath}
\dim Y - e < \dim \ch{D}^{0}_S -j -\codim_S Y.
\end{displaymath}
Since, by the typicality assumption on $Y$, we know that $\codim_{S} Y = \codim_{\ch{D}^{0}_{S}} \ch{D}^{0}_{Y}$, the RHS becomes $\dim \ch{D}^{0}_{Y} - j$, as desired.
\end{proof}

We have shown that $r(\mathcal{Z}_y)=\ch{D}'$, which implies that $U$ is the germ of the weakly special subvariety $Y$, concluding the proof \Cref{L3}.
\end{proof}

Thus, by \autoref{L3}, each germ $U$ arising in the definition $\mathcal{Y}(e) \setminus \mathcal{K}(e)$ surjects onto the germ of a weakly special subvariety of $S$ of dimension $e$. The next lemma shows that, in fact, the locus $\mathcal{Y}(e) \setminus \mathcal{K}(e)$ contains all germs of maximal atypical subvarieties.

\begin{lem}
\label{L10}
Each maximal (among atypical) atypical weakly special $Y$ of dimension $e$ induces a point of $\mathcal{Y}(e) \setminus \mathcal{K}(e)$.
\end{lem}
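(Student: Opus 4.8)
The plan is to show that a maximal atypical weakly special subvariety $Y$ of dimension $e$, which exists and is weakly special, gives rise to a point $(x,y) \in \mathcal{Y}(e)$ that does \emph{not} lie in the removed locus $\mathcal{K}(e)$. First I would produce the point: by \autoref{prop2par}(2), since $Y$ is weakly special, its associated domain $\ch{D}^{0}_{Y}$ lies in $\mathcal{I}(\mathcal{W})$, so there is a point $y \in \mathcal{Y}(\C)$ with $r(\mathcal{Z}_{y}) = \ch{D}^{0}_{Y}$ (after translating by $\mathbf{H}_{S}(\C)$ so that the orbit $\mathbf{H}_{Y}(\C)\cdot r(x)$ is exactly $\ch{D}^{0}_{Y}$), and there is a leaf $\mathcal{L}_{x}$ through a point $x$ above a smooth point $s\in Y$ such that a component $U$ of $\mathcal{Z}_{y}\cap\mathcal{L}_{x}$ projects onto the germ of $Y$ at $s$. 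Since $Y$ is weakly special, the germ of $\Psi(Y^{\an})$ at $\Psi(s)$ has dimension $e$ (a weakly special subvariety maps to a weakly special subdomain with the monodromy group of $Y$), so $\dim_{x} U = \dim_{x}(\mathcal{L}_{x}\cap\mathcal{Z}_{y}) = e$, and the atypicality of $Y$ reads $\dim S - e < \dim\ch{D}^{0}_{S} - \dim\ch{D}^{0}_{Y} = \dim\ch{D}^{0}_{S}-j$, i.e. $j<\rho(e)$. Hence $(x,y)\in\mathcal{Y}(e)$ as defined in \eqref{eqYdatyp}.

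Next I would rule out membership in $\mathcal{K}(e)$. Recall $\mathcal{K}(e)\subset\mathcal{K}(e)'$, so it suffices to show $(x,y)\notin\mathcal{K}(e)'$, i.e. there is no $(x,y')\in\mathcal{Y}(e')$ with $e'>e$ and $\mathcal{Z}_{y}\subsetneq\mathcal{Z}_{y'}$. Suppose such a $y'$ existed. Since all fibres of $f$ are pulled back from $\ch{D}^{0}_{S}$, the inclusion $\mathcal{Z}_{y}\subsetneq\mathcal{Z}_{y'}$ forces $\ch{D}^{0}_{Y}=r(\mathcal{Z}_{y})\subseteq r(\mathcal{Z}_{y'})$, and $(x,y')\in\mathcal{Y}(e')$ means there is a component $U'$ of $\mathcal{Z}_{y'}\cap\mathcal{L}_{x}$ through $x$ with $\dim_{x}U'\ge e'>e$, lying in the atypical range since $\dim r(\mathcal{Z}_{y'})<\rho(e')$. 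Here I would like to invoke \autoref{L3}: if $(x,y')\notin\mathcal{K}(e')$, then the projection of $U'$ to $S$ surjects onto the germ of a strict weakly special subvariety $Y'$ of dimension $e'$, and since $U'$ contains $x$ which lies above $s\in Y$, and $\mathcal{Z}_{y'}\supseteq\mathcal{Z}_{y}$, the projection of $U'$ contains the germ of $Y$; thus $Y\subsetneq Y'$ (strict because $\dim Y' = e' > e = \dim Y$), and $Y'$ is atypical because $(x,y')$ is in the atypical range — contradicting the maximality of $Y$ among atypical weakly specials.

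The remaining subtlety — and the step I expect to be the main obstacle — is the possibility that $(x,y')\in\mathcal{K}(e')$, so that \autoref{L3} does not directly apply to $y'$. To handle this I would run a descending induction on $e'$: among all pairs $(x,y')$ with $e'$ maximal subject to $e'>e$, $(x,y')\in\mathcal{Y}(e')$ and $\mathcal{Z}_{y}\subsetneq\mathcal{Z}_{y'}$, the defining condition \eqref{Kcond1} of $\mathcal{K}(e')'$ cannot be met (there is no strictly larger $e''$), so one should be able to arrange $(x,y')\notin\mathcal{K}(e')'$, or else pass to the sub-case $(x,y')\in\mathcal{K}(e')'\setminus\mathcal{K}(e')$; in the latter case \autoref{L3} still applies at level $e'$, giving a strict weakly special $Y'\supsetneq Y$ which is atypical, again contradicting maximality of $Y$. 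The bookkeeping of which of the nested conditions \eqref{Kcond1}--\eqref{Kcond2} is violated, and ensuring the induction on dimension terminates, is where I would have to be careful, but the constructibility of all the loci involved (via \autoref{nondenseA}) and the finite-dimensionality of $\ch{D}^{0}_{S}$ make this a finite descent. Once \autoref{L10} is established, combining it with \autoref{L3} and \autoref{lemma432} (applied to the constructible cover of $\mathcal{Y}(e)\setminus\mathcal{K}(e)$ by the loci $\mathcal{Y}(f,e,h_{i})$ coming from the families of weakly specials in \autoref{rigidprop}) yields that the maximal atypical weakly specials of each dimension $e$ lie in finitely many families, which by \autoref{lemmapre} completes the proof of \autoref{thm:mixedgeomzp}.
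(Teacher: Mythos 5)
Your proposal takes a genuinely different route from the paper, though both land on the same conclusion. The paper's proof also starts by producing $(x,y)\in\mathcal{Y}(e)$ with $r(\mathcal{Z}_y)=\ch{D}^0_Y$, and then rules out membership in $\mathcal{K}(e)$ by a two-bullet case analysis: the first bullet shows condition~\eqref{Kcond1} forces $Y$ to be non-maximal, the second shows~\eqref{Kcond2} forces $r(\mathcal{Z}_{y'})=\ch{D}^0_Y$. To handle the first bullet the paper does \emph{not} invoke \autoref{L3}: it picks $\mathcal{Z}_{y'}$ \emph{minimal} among fibres of $f^{(<\rho(e'))}$ containing $\ch{D}^0_Y$, applies Ax-Schanuel (\autoref{astheoremperioddomain}) directly to get a strict weakly special $Y_W$ containing $Y$, and then re-runs a short case analysis (equality vs.\ strict inclusion of $r(\mathcal{Z}_{y'})$ in $\ch{D}_W$, using \autoref{intermlem} in the typical sub-case) to see that $Y_W$ is atypical. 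Your approach instead chooses $e'$ \emph{maximal}, which cleanly forces $(x,y')\notin\mathcal{K}(e')'\supset\mathcal{K}(e')$ (any witness to \eqref{Kcond1} at level $e'$ would produce a larger-dimensional atypical intersection over the same $x$ still containing $\mathcal{Z}_y$, contradicting maximality of $e'$), so that \autoref{L3} applies as a black box. This is a nice modularization: you avoid duplicating the \autoref{intermlem} case analysis because it already lives inside the proof of \autoref{L3}. You also correctly observe that ruling out \eqref{Kcond1} alone suffices since $\mathcal{K}(e)\subset\mathcal{K}(e)'$; the paper's second bullet is logically redundant for this lemma.

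One step you should tighten: you assert that the projection of $U'$ contains the germ of $Y$ merely because $x\in U'$ and $\mathcal{Z}_y\subsetneq\mathcal{Z}_{y'}$. What this inclusion actually gives is that $U$ lies inside \emph{some} analytic component $U''$ of $\mathcal{L}_x\cap\mathcal{Z}_{y'}$ at $x$, but nothing a priori guarantees $U''$ coincides with (or has the dimension of) the $\geq e'$-dimensional component $U'$ that witnesses $(x,y')\in\mathcal{Y}(e')$. If $\dim U''<e'$ the atypicality inequality at level $e'$ does not automatically transfer to $U''$, so the application of \autoref{L3} (or Ax-Schanuel) to the component actually containing $Y$ needs justification. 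To be fair, the paper's argument has the same implicit step when it asserts ``$W$ lies in some strict weakly special subvariety $Y_W$'' for $W$ the full projection; but since you phrased this as the crux of the deduction $Y\subsetneq Y'$, you should state explicitly that you work with the component $U''$ containing $U$ and supply the dimension/atypicality bookkeeping for it (or, as the paper does, build the minimality of $\mathcal{Z}_{y'}$ into the choice so that the intersection is controlled).
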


\begin{proof}
As observed after \eqref{eqYdatyp}, it is true by construction that each smooth germ of a $d$-dimensional atypical subvariety $Y$ induces a point $(x,y)=(x_Y,y_Y) \in \mathcal{Y}(e)$, where $r(\mathcal{Z}_y)=\mathbf{H}_Y(\C)\cdot r(x)=:\ch{D}'$ and $\dim_{x} (\mathcal{L}_{x} \cap \mathcal{Z}_{y}) = e$.

We need to check that not all such points are contained in $\mathcal{K}(e)$:
\begin{itemize}
\item If there exists $(x,y') \in \mathcal{Y}(e') $ with $e' > e$ and $ \ch{D}_Y\subsetneq r( \mathcal{Z}_{y'} )$, we would like to conclude that $Y$ is not maximal among atypical. We can assume that $y$ is chosen in such a way that $ \mathcal{Z}_{y'} $ is minimal among the fibres of $f^{(<\rho (e'))}$ containing $\ch{D}_Y$. Consider $W$ the Zariski closure in $S$ of the projection of $\mathcal{L}_x \cap \mathcal{Z}_{y'} $. Since we are still in the atypical range for $e'$, by the Ax-Schanuel theorem, $W$ lies in some strict weakly special subvariety $Y_W$, and we must have $Y\subset W \subset Y_W$. We claim that the smallest such $Y_W$ has to by atypical. Let ($\mathbf{H}_W,\ch{D}_W)$ be its monodromy datum. Consider $r(\mathcal{Z}_{y'})\cap  \ch{D}_W$, by the minimality assumption of $\mathcal{Z}_{y'}$ and the stability under intersections of the family, we must have $  r(\mathcal{Z}_{y'}) \subset \ch{D}_W$. If we have equality, we conclude that $Y$ is atypical. Assume therefore that $  r(\mathcal{Z}_{y'}) \subsetneq \ch{D}_W$. If $Y_W$ is typical (i.e. $\codim_S Y_W= \codim_{\ch{D}_S^0}\ch{D}_W$), then $Y\times r(\mathcal{Z}_{y'}) \subset Y_W \times \ch{D}_W$ gives rise to an atypical intersection (\Cref{intermlem}), and this contradicts the minimality of $Y_W$.
\item If there exists $ (x,y') \in \mathcal{Y}(e) $ with $r(\mathcal{Z}_{y'}) \subset \ch{D}' $, then $\dim_{x} (\mathcal{L}_{x} \cap \mathcal{Z}_{y'}) = e$ and therefore the projection of $\mathcal{L}_{x} \cap \mathcal{Z}_{y'}$ to $S$ is equal to $Y$. This implies that $\mathcal{Z}_{y'}$ is invariant under the monodromy of $Y$, and therefore that $r(\mathcal{Z}_{y'}) = \ch{D}' $.
\end{itemize}
\end{proof}

Now we consider a countable collection $\{ h_{i} : C_{i} \to B_{i} \}^{\infty}_{i=1}$ of families of weakly special subvarieties, each of whose fibres are atypical and $e$-dimensional. One can construct this by refining the families in \autoref{rigidprop} so that all the fibres have dimension $e$, and monodromy data small enough to be in the atypical range. What we proved so far gives $\mathcal{Y}( e) \setminus \mathcal{K}(e) = \bigcup_{i=1}^{\infty} \mathcal{Y}(f^{(< \rho(e))}, e, h_{i})$, and thanks to \autoref{L10} all weakly special subvarieties induce points of this set. Applying either \S\ref{axschanfamsec}, or directly the constructibly lemma \autoref{lemma432}, one sees that in fact finitely many $h_{i}$ suffice, which gives the result.
\end{proof}

\section{Digression on differentially-defined intersections}
\label{yfdapproxsec}

In this section we give some further details about the loci $\mathcal{Z}(f,e)$ appearing in the proof of \autoref{nondenseA}, which will be important for our applications to computing orbit closures. We view $\mathcal{Z}$, and hence $\mathcal{Z}(f,e)$, inside the product $P \times \mathcal{Y}$, and therefore denote points of $\mathcal{Z}(f,e)$ by pairs $(x, y) \in P \times \mathcal{Y}$. Note that we have a natural identification $\mathcal{L}_{(x,y)} \cap f^{-1}(y) = \mathcal{L}_{x} \cap f^{-1}(y)$ of complex analytic germs, where ``$\mathcal{L}_{(x,y)}$'' denotes the leaf in $Q$ at $(x,y)$ and ``$\mathcal{L}_{x}$'' analogously denotes the leaf in $P$ at $x$. 

Throughout $P$ is the torsor of \autoref{examplebundle} associated to an integral variation of mixed Hodge structure $\mathbb{V}$. 

\begin{prop}
\label{intbecometypprop}
Fix a component $R \subset \mathcal{Z}(f,e)$, and let $T$ be the Zariski closure of its image in $S$. Fix an embedding of $\mathbf{H}_{T}$ into $\GL(\mathcal{V}_{s_{0}})$ by parallel translating using the flat structure $\mathbb{V}$. Then there exists a normal algebraic subgroup $\mathbf{N}_{R} \subset \mathbf{H}_{T}$, defined over $\mathbb{Q}$ with respect to the underlying $\mathbb{Q}$-local system, with the following properties: 
\begin{itemize}
\item[(i)] every irreducible analytic germ $U = \mathcal{L}_{x} \cap f^{-1}(y)$ of dimension $e$ corresponding to $(x,y) \in R$ lies inside an $\mathbf{N}_{R}$ sub-torsor $P_{(x,y)} \subset \restr{P}{T}$ lying over a weakly special subvariety $Y_{(x,y)} \subset T$ with algebraic monodromy group contained in $\mathbf{N}_{R}$; and
\item[(ii)] for $x,y, P_{(x,y)}$ as in (i) one has 
\begin{equation}
\label{thingsbecometypineq}
\dim_{x} (f^{-1}(y) \cap P_{(x,y)}) \geq \dim P_{(x,y)} - \dim_{x} (\mathcal{L}_{x} \cap P_{(x,y)}) + e .
\end{equation}
\end{itemize}
Moreover the varieties $Y_{(x,y)}$ can be put into a common family $g : C \to B$ of subvarieties of $T$, with $B$ possibly disconnected.
\end{prop}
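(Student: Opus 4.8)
The statement packages together the differential‑algebraic input (Ax--Schanuel, i.e.\ \Cref{cor:asloc} and \Cref{astheoremperioddomain}), the constructibility of the loci $\mathcal{Z}(f,e)$ from \Cref{nondenseA}, and the over‑parametrization of weakly special subdomains from \Cref{wspdomainoverparamlem}--\Cref{prop2par}. The strategy is to argue essentially as in the proof of \Cref{L3}, but now \emph{uniformly} along a fixed irreducible component $R \subset \mathcal{Z}(f,e)$, so that the group $\mathbf{N}_R$ can be extracted from a generic point and then shown to govern \emph{all} germs parametrized by $R$.

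First I would pass to the Zariski closure $T = \overline{\pi(R)}^{\mathrm{Zar}}$ in $S$, restrict everything to $T$, and fix the embedding $\mathbf{H}_T \hookrightarrow \GL(\mathcal{H}_{s_0})$ by parallel transport. For a \emph{generic} point $(x_0, y_0) \in R$, apply \Cref{cor:asloc}: since $(x_0,y_0)$ lies in $\mathcal{Z}(f,e)$ the germ $U_0 = \mathcal{L}_{x_0} \cap f^{-1}(y_0)$ has dimension $\geq e$, and --- working over $T$ rather than $S$, and using that $R$ projects \emph{dominantly} onto $T$ so that the intersection is atypical relative to $T$ (this is where genericity of $(x_0,y_0)$ is used: the fibre of $f$ is not forced to contain all of $\restr{P}{T}$) --- the projection of $U_0$ lies in a strict weakly special subvariety $Y_0 \subsetneq T$. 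Let $\mathbf{N}_R := \mathbf{H}_{Y_0}$; by the André--Deligne monodromy theorem (\Cref{monodromytheorem}) applied to the variation restricted to $T$, $\mathbf{N}_R$ is normal in the derived subgroup of $\mathbf{H}_T$, hence normal in $\mathbf{H}_T$, and it is defined over $\mathbb{Q}$ with respect to the underlying $\mathbb{Q}$-local system. The group‑theoretic description of weakly special subdomains (\Cref{weakmtdef}, \Cref{opennessremark}) lets one write the corresponding $\ch{D}'_0 = \mathbf{N}_R(\mathbb{C}) \cdot r(x_0)$, and one checks (exactly as in \Cref{L5} and the typical/atypical case analysis following it, using the defining conditions of $\mathcal{Z}(f,e)$ and minimality of $Y_0$) that $r(\mathcal{Z}_{y_0})$ is contained in a single $\mathbf{N}_R$-orbit. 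The $\mathbf{N}_R$-sub-torsor $P_{(x_0,y_0)} := r^{-1}(\ch{D}'_0) \cap \restr{P}{T}$ then lies over $Y_0$, giving (i) at the generic point; inequality (ii), i.e.\ \eqref{thingsbecometypineq}, is the reformulation of ``$\dim_{x_0}(\mathcal{L}_{x_0} \cap f^{-1}(y_0)) \geq e$'' once one intersects with the torsor $P_{(x_0,y_0)}$, since outside $P_{(x_0,y_0)}$ there is nothing in the germ (the germ of $\mathcal{L}_{x_0} \cap f^{-1}(y_0)$ agrees with that of $\mathcal{L}_{x_0} \cap f^{-1}(y_0) \cap P_{(x_0,y_0)}$), so the dimension count transfers to $P_{(x_0,y_0)}$ where it reads as a typicality-type inequality.

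To upgrade from the generic point to \emph{every} $(x,y) \in R$, I would use a constructibility/Noetherian-induction argument in the spirit of \Cref{nondenseA}: the conditions ``$U$ lies in an $\mathbf{N}_R$-sub-torsor over a weakly special $Y_{(x,y)}$ with $\mathbf{H}_{Y_{(x,y)}} \subseteq \mathbf{N}_R$'' and inequality \eqref{thingsbecometypineq} cut out a constructible (indeed, by arguments as in \Cref{nondenseA}, closed-in-$R$) subset of $R$ containing a Zariski-dense open, so by irreducibility of $R$ they hold on all of $R$ --- possibly after stratifying and reapplying Ax--Schanuel on the (finitely many) lower-dimensional strata where the generic conclusion could a priori fail, which terminates by Noetherian induction on $\dim R$. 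Finally, to assemble the $Y_{(x,y)}$ into a family $g : C \to B$: each $Y_{(x,y)}$ arises as a component of the projection to $T$ of $\mathcal{L}_x \cap \mathcal{Z}_y$ where $\mathcal{Z}_y$ ranges over fibres of the over-parametrizing family $f$ of \Cref{prop2par} restricted to $T$; since all these $Y_{(x,y)}$ are weakly special with monodromy contained in $\mathbf{N}_R$, they have uniformly bounded degree (finitely many conjugacy classes of subgroups of $\mathbf{N}_R$, each giving a bounded family by \Cref{wspdomainoverparamlem}), so they can be cut out of a Hilbert scheme of a compactification of $T$ exactly as in \Cref{imghasfamlem} and \Cref{intfamlem}, with the relevant locus in the Hilbert scheme being constructible and hence partitionable into locally closed strata forming the (disconnected) base $B$.

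\textbf{Main obstacle.} The delicate point is the uniformity step: a priori the weakly special subvariety produced by Ax--Schanuel at a point $(x,y) \in R$ could jump as $(x,y)$ moves, and there is no a priori bound tying $\mathbf{H}_{Y_{(x,y)}}$ to $\mathbf{N}_R = \mathbf{H}_{Y_0}$ at non-generic points. Controlling this requires knowing that the minimal weakly special subvariety through the projected germ, and the corresponding orbit $r(\mathcal{Z}_y)$, behave constructibly in families --- which is exactly the content one has to extract from \Cref{nondenseA}, \Cref{intfamlem} and \Cref{prop2par} --- together with the case analysis (atypical vs.\ typical $Y_W$, as in \Cref{L10}) that forces $r(\mathcal{Z}_y)$ to equal the full $\mathbf{N}_R$-orbit rather than a proper suborbit. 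Making this genuinely uniform over $R$, rather than merely generically, is where the real work lies; everything else is a recombination of tools already developed in \S\ref{atypicalproofofB} and \S\ref{new2}.
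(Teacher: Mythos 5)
Your high-level strategy --- extract $\mathbf{N}_R$ by applying Ax--Schanuel at a generic point, use Andr\'e--Deligne for normality and rationality, then propagate over $R$ by constructibility --- is the right shape, and your final paragraph correctly diagnoses the uniformity step as the crux. But there are three genuine gaps, and the first two are not mere presentation issues.

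First, the claim that the projection of $U_0$ lies in a \emph{strict} weakly special subvariety $Y_0 \subsetneq T$ is wrong in general: there is no atypicality hypothesis relative to $T$ built into the definition of $\mathcal{Z}(f,e)$ or of $R$, so the weakly special closure of the projected germ can well equal $T$ itself, in which case $\mathbf{N}_R = \mathbf{H}_T$ (a valid output of the proposition, since normality is trivially satisfied). You invoked ``$R$ projects dominantly onto $T$, so the intersection is atypical relative to $T$'' --- dominance of the projection has nothing to do with atypicality of a single germ, and in fact the paper explicitly allows the fallback case $N = \mathbf{H}_T$ (so that $R \subset \bigcup_N \mathcal{Z}_N(f,e)$ trivially). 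Relatedly, the assertion that ``$r(\mathcal{Z}_{y_0})$ is contained in a single $\mathbf{N}_R$-orbit'' (and your later mention of forcing $r(\mathcal{Z}_y)$ to equal the full orbit, as in \Cref{L5} and \Cref{L10}) is importing machinery from the proof of geometric Zilber--Pink that is neither needed nor true here: item~(i) only asserts that the germ lies in an $\mathbf{N}_R$-torsor over a weakly special with monodromy \emph{contained} in $\mathbf{N}_R$, not that $\mathcal{Z}_y$ equals or sits inside that orbit.

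Second, inequality (ii) is not a ``reformulation'' of $\dim_x(\mathcal{L}_x \cap f^{-1}(y)) \geq e$; it is a genuine additional output of Ax--Schanuel applied \emph{inside} the torsor $P_{(x,y)}$. At the generic point of $R$ (i.e.\ on $R^{\mathrm{irr}}$ away from a countable union of proper loci), the germ does not map into a strict weakly special of $Y_{(x,y)}$, so the contrapositive of \Cref{cor:asloc} forces the \emph{typical} codimension inequality
\[
\codim_{x, P_{(x,y)}}(\mathcal{L}_x \cap f^{-1}(y)) \geq \codim_{x, P_{(x,y)}}(P_{(x,y)} \cap f^{-1}(y)) + \codim_{P_{(x,y)}}(\mathcal{L}_x \cap P_{(x,y)}),
\]
which, combined with $\dim_x(\mathcal{L}_x \cap f^{-1}(y)) \geq e$, gives (\ref{thingsbecometypineq}). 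Your sketch gestures at ``typicality-type'' but does not supply this step.

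Third, the uniformity argument. Your Noetherian-induction-and-restratify idea doesn't produce the closedness you need; what does is the explicit construction of, for each $\mathbb{Q}$-normal $N \subset \mathbf{H}_T$, a finite cover $T_N \to T$, a period map $\Xi_N : T_N \to I_N$ for the quotient Hodge datum (as in the proof of \Cref{rigidprop}), and the Zariski-closed locus $\mathcal{Z}_N(f,e)$ of points where the triple intersection $\mathcal{L}_z \cap f^{-1}(y) \cap (\Xi_N\circ\pi_N)^{-1}(\Xi_N\circ\pi_N(z))$ has dimension $\geq e$. Since each $\mathcal{Z}_N(f,e)$ is closed and $R$ is irreducible, ``generic $\Rightarrow$ everywhere'' follows immediately once $\mathbf{N}_R$ is identified as the smallest $N$ (in the partially ordered set of $\mathbb{Q}$-normal subgroups) with $R \subset \mathcal{Z}_N(f,e)$. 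Then (ii) propagates from the generic point to all of $R$ by a different mechanism than you propose: the right-hand side of (\ref{thingsbecometypineq}) is \emph{constant} (equal to $\dim \mathbf{N}_R + e$), and the left-hand side is lower semicontinuous, so the inequality need only be checked where the left-hand side is minimized, which is the generic point.
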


\begin{proof}
Let $N \subset \mathbf{H}_{T}$ be a $\mathbb{Q}$-normal subgroup. Then arguing as in the proof of \autoref{rigidprop} we may construct, from the quotient Hodge datum $(\mathbf{H}_{T}, D_{T})/N$, a finite cover $T_{N} \to T$ and a period map $\Xi_{N} : T_{N} \to I_{N}$ such that the irreducible components of the fibres of $\Xi_{T}$ map to weakly special subvarieties of $T$ with algebraic monodromy group contained in $N$.

Considering the pullback $(Q_{N}, \delta_{N})$ of $(Q, \delta)$ to $T_{N}$, with pullback map $q_{N} : Q_{N} \to Q$ and projection $\pi_{N} : Q_{N} \to T_{N}$, we obtain a Zariski closed locus $\mathcal{Z}(f \cap \Xi_{N}, e) \subset Q_{N}$: this is the locus of points $z \in Q_{N}$ such that 
\[ \dim_{z} [\mathcal{L}_{z} \cap f^{-1}(y) \cap (\Xi_{N} \circ \pi_{N})^{-1}(\Xi_{N} \circ \pi_{N}(z))] \geq e . \]
Then because $Q_{N} \to Q$ is finite, the image $\mathcal{Z}_{N}(f,e) := q_{N}(\mathcal{Z}(f \cap \Xi_{N}, e))$ is Zariski closed in $Q$.

Observe that $R \subset \bigcup_{N} \mathcal{Z}_{N}(f,e)$: indeed, the group $N = \mathbf{H}_{T}$ occurs, in which case one simply has $\mathcal{Z}_{N}(f,e) = \pi^{-1}(T) \cap \mathcal{Z}(f,e)$, where we write $\pi : Q \to S$ for the natural projection. Let $\mathcal{C}$ be the set of those $N \subset \mathbf{H}_{T}$ such that $R \subset \mathcal{Z}_{N}(f,e)$, and take $\mathcal{C}^{c}$ to be its complement in the set of all normal $\mathbb{Q}$-subgroups of $\mathbf{H}_{T}$. Let $A = \pi^{-1}(\HL(T, \restr{\V^\otimes}{T})) \cup (\bigcup_{N \in \mathcal{C}^{c}} \mathcal{Z}_{N}(f,e))$, which is a countable union of closed loci which intersect $R$ properly. Let $R^{\textrm{irr}} \subset R$ be the open locus of $(x,y)$ where the intersection $\mathcal{L}_{(x,y)} \cap f^{-1}(y)$ is irreducible. Then if we consider $(x_{0}, y_{0}) \in R^{\textrm{irr}} \setminus A$ the projection of $\mathcal{L}_{(x_{0}, y_{0})} \cap f^{-1}(y_{0})$ to $T$ has weakly special closure with algebraic monodromy group $N \in \mathcal{C}$. Moreover by the irreducibility of $R$ (and hence $R^{\textrm{irr}}$) this $N$ is the same group that occurs for any point $(x_{0}, y_{0}) \in R^{\textrm{irr}} \setminus A$: if we had some other point $(x'_{0}, y'_{0}) \in R \setminus A$ for which $\mathcal{L}_{(x'_{0}, y'_{0})} \cap f^{-1}(y)$ mapped into a weakly special subvariety with smaller algebraic monodromy group $N'$ with $\dim N' < \dim N$, then since $N' \in \mathcal{C}$ we would find that $(x_{0}, y_{0}) \in \mathcal{Z}_{N'}(f,e)$ as well. For this $N$ we set $\mathbf{N}_{R} = N$.

Property (i) holds by construction. More specifically the definition of $\mathcal{Z}_{N}(f,e)$ dictates that the intersection $U = \mathcal{L}_{x} \cap f^{-1}(y)$ lies, after being pulled back to an analytic germ $U' \subset P_{N} = \restr{P}{T}$ above $T_{N}$, inside a fibre of $P_{N} \to I_{N}$. As the irreducible components $Y$ of fibres of $T_{N} \to I_{N}$ have algebraic monodromy group contained in $\mathbf{N}_{R}$, leaves above such components lie inside an $\mathbf{N}_{R}$-torsor in $P_{N}$ above $Y$. 

To verify (ii) we start by observing that the right-hand side of (\ref{thingsbecometypineq}) does not vary with $(x,y)$. Indeed, if the torsor $P_{(x,y)}$ is an $N$-torsor above $Y_{(x,y)}$, then $\mathcal{L}_{x} \cap P_{(x,y)}$ is just a leaf of $Y_{(x,y)}$, hence we have 
\[ \dim P_{(x,y)} - \dim_{x}(\mathcal{L}_{x} \cap P_{(x,y)}) = (\dim N + \dim Y_{(x,y)}) - \dim Y_{(x,y)} = \dim N . \]
It therefore suffices to show $\dim_{x} (f^{-1}(y) \cap P_{(x,y)}) \geq \dim N + e$ when the left-hand side is minimized, which occurs generically on the irreducible component $R$ by semicontinuity of dimension. 

We therefore verify the inequality at a point $(x,y) \in R_{\textrm{irr}} \setminus A$. In this case $Y_{(x,y)}$ has algebraic monodromy group exactly $\mathbf{N}_{R}$, and $\mathcal{L}_{x} \cap f^{-1}(y)$ does not map into to a further weakly special subvariety of $Y_{(x,y)}$. Then \autoref{cor:asloc} applied to $Y_{(x,y)}$ and $P_{(x,y)}$ gives
\[ \codim_{x, P_{(x,y)}} (\mathcal{L}_{x} \cap f^{-1}(y)) \geq \codim_{x, P_{(x,y)}} (P_{(x,y)} \cap f^{-1}(y)) + \codim_{P_{(x,y)}} (\mathcal{L}_{x} \cap P_{(x,y)}) . \]
Applying the fact that $\dim_{x} (\mathcal{L}_{x} \cap f^{-1}(y)) \geq e$ this simplifies to
\begin{align*}
\dim P_{(x,y)} - e &\geq \dim P_{(x,y)} - \dim_{x} (P_{(x,y)} \cap f^{-1}(y)) + \dim N \\
\dim_{x} (P_{(x,y)} \cap f^{-1}(y)) &\geq \dim \mathbf{N}_{R} + e .
\end{align*}
\end{proof}

\section{Algorithmic setup}\label{finalsection:eff}

In this section we explain how the determination of the maximal atypical orbit closures (\Cref{efffinteich}) and maximal totally geodesic subvarieties of non-arithmetic ball quotients (\Cref{cordm}) can be made effective. In what follows by the term \emph{algorithm} we always understand \emph{terminating algorithm}.

\subsection{Computational models}
\label{compmodelsec}

First we say a word about the computational models we use, following \cite[\S2]{urbanik2021sets}. We will assume that all fields we work over have computable field operations; for applications it will even suffice to assume all fields are number fields. An affine variety $V$ over a field $K$ is then a finitely-presented $K$-algebra, which we represent computationally through explicit polynomials $f_{1}, \hdots, f_{j}$ in a polynomial ring $K[x_{1}, \hdots, x_{N}]$ such that the ideal $(f_{1}, \hdots, f_{j})$ defines the variety of interest. For modelling a more general variety $V$, we use finite collection $\{ V_{i} \}_{i=1}^{n}$ of affine varieties with $V_{i} = \Spec R_{i} \subset \Spec K[x_{i1}, \hdots, x_{in_{i}}] =: \mathbb{A}^{n_{i}}$, and additionally assume that for each pair $(i_{1}, i_{2})$ that we have polynomials $g_{i_{1} i_{2}}$ in the coordinate ring of $\mathbb{A}^{n_{i_{1}}} \times \mathbb{A}^{n_{i_{2}}}$ which define the intersection of the diagonal $\Delta \subset V \times V$ inside $V_{i_{1}} \times V_{i_{2}}$. Note that this gives us a natural finite presentation of the intersections $V_{i_{1}} \cap V_{i_{2}} = (V_{i_{1}} \times V_{i_{2}}) \cap \Delta$.

We will model $\mathbb{P}^{N} = \textrm{Proj}\, [x_{0}, \hdots, x_{N}]$ in the usual way with affine opens $\{ \{ x_{i} \neq 0 \} \}_{i=0}^{n}$. A morphism of varieties $f : V \to W$ is then represented computationally using:
\begin{itemize}
\item[(i)] covers $\{ V_{i} \}_{i=1}^{n}$ and $\{ W_{j} \}_{j=1}^{m}$ such that for each $j$, we have $f^{-1}(W_{j}) = \bigcup_{i \in I_{j}} V_{i}$ for some $I_{j} \subset \{ 1, \hdots, n \}$; and
\item[(ii)] morphisms $f_{ij} : V_{i} \to W_{j}$ for each $i \in I_{j}$ with $I_{j}$ as above.
\end{itemize}
To compose morphisms one in general has to refine the representing covers.

A coherent sheaf $\mathcal{F}$ on $V$ is given by module data compatible with a fixed cover $\{ V_{i} \}_{i=1}^{n}$. In particular, it consists of:
\begin{itemize}
\item a finitely-presented $R_{i}$-module $M_{i}$ for each $V_{i} = \Spec R_{i}$ in the cover;
\item for each intersection $V_{i} \cap V_{j} = \Spec R_{ij}$, a finitely presented $R_{ij}$-module $N_{ij}$, together with a restriction morphism $\res_{ij} : M_{i} \to N_{ij}$ of $R_{i}$-modules;
\item for all $1 \leq i, j \leq n$ a map $\xi_{ij} : N_{ij} \xrightarrow{\sim} N_{ij}$ of $R_{ij}$-modules such that the collection $\{ \xi_{ij} \}_{1 \leq i, j \leq n}$ defines a gluing datum.
\end{itemize}
All the morphisms above are encoded using the images of the generators coming from the fixed presentations. In the case where $\mathcal{F}$ is a vector bundle of rank $r$ we will always assume that the modules $M_{i}$ and $N_{ij}$ are free of rank $r$ and come with an explicit basis. A morphism of coherent sheaves can then be represented via morphisms of the underlying modules with the obvious compatibilities. 

Given a model for $V$ we have a standard model for $\Omega^{1}_{V}$ whose constituent modules $M_{i}$ are quotients of the standard differential module $\Omega^{1}_{\mathbb{A}^{n_{i}}}$ by $df_{1}, \hdots, df_{j}$, where $(f_{1}, \hdots, f_{j})$ is the ideal defining $V_{i}$. Given two coherent sheaves $\mathcal{F}, \mathcal{V}$ modelled as above, we also obtain models for the tensor and hom-sheaf constructions associated to these sheaves; see \cite[Prop. 2.1]{urbanik2021sets}.

Given a locally free sheaf $\mathcal{F}$ over $V$, there is a natural algebraic vector bundle $\mathbb{B}(\mathcal{F}) \to V$ whose sheaf of sections recovers $\mathcal{F}$ up to isomorphism. We will wish to use both perspectives in a constructive way, for which we require the following simple fact.

\begin{lem}
\label{constrcatequiv}
The functor $\mathbb{B} : \textrm{LocFree}(V) \to \textrm{VecBun}(V)$ from the category of locally free sheaves on $V$ to the category of vector bundles on $V$ is computable. More precisely, after fixing a computational model $( \{ \Spec R_{i} \}_{i=1}^{n}, \{ g_{ij } \}_{i,j=1}^{n} )$ for $V$ as above, there exists
\begin{itemize}
\item[(i)] an algorithm which, given a computational model $(\{ M_{i} \}_{i=1}^{n}, \{ N_{ij} \}_{i,j=1}^{n}, \{ \xi_{ij} \}_{i,j=1}^{n} )$ for $\mathcal{F}$ compatible with the one for $V$, outputs a model $(\{ F_{i} \}_{i=1}^{n}, \{ h_{ij} \}_{i=1}^{n})$ for $\mathbb{B}(\mathcal{F})$, a model $\{ \pi_{i} : F_{i} \to V_{i} \}_{i=1}^{n}$ for $\pi : \mathbb{B}(\mathcal{F}) \to V$, and models for the maps $(+),(-),(\cdot)$ of bundles corresponding to addition, inversion and scalar multiplication; and 
\item[(ii)] an algorithm which, given in addition a computational model 
\begin{displaymath}
(\{ M'_{i} \}_{i=1}^{n}, \{ N'_{ij} \}_{i,j=1}^{n}, \{ \xi'_{ij} \}_{i,j=1}^{n} )
\end{displaymath}
 for $\mathcal{F}'$ and a model $( \{ s_{i} : M_{i} \to M'_{i} \}_{i=1}^{n}, \{ t_{ij} : N_{ij} \to N'_{ij} \}_{i,j=1}^{n} )$ for a morphism $\mathcal{F} \to \mathcal{F}'$, outputs a model for the map $\mathbb{B}(\mathcal{F}) \to \mathbb{B}(\mathcal{F}')$ compatible with the models produced by (i).
\end{itemize}
\end{lem}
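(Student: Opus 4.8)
The plan is to prove the two statements of \Cref{constrcatequiv} by unwinding the standard fppf-descent construction of the vector bundle $\mathbb{B}(\mathcal{F})$ associated to a locally free sheaf $\mathcal{F}$, and checking at each step that all the ingredients are finitely presented and the relevant operations are effective in our computational model. First I would recall the local picture: if $\mathcal{F}$ is free of rank $r$ on $V_i = \Spec R_i$ with chosen basis, then $\mathbb{B}(\mathcal{F})|_{V_i}$ is simply the relative affine space $\mathbb{A}^r_{V_i} = \Spec R_i[y_{i1},\dots,y_{ir}]$, and $\pi_i : F_i \to V_i$ is the structure morphism forgetting the $y$-coordinates; this is visibly presentable in the model of \S\ref{compmodelsec} once we know the presentation of $R_i$. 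The gluing isomorphisms $h_{ij}$ over the intersections $V_i \cap V_j = \Spec R_{ij}$ are obtained by dualizing: the transition map $\xi_{ij} : N_{ij} \xrightarrow{\sim} N_{ij}$ of free $R_{ij}$-modules, written in the chosen bases, is an invertible matrix $A_{ij} \in \GL_r(R_{ij})$, and $h_{ij}$ is the linear-over-$R_{ij}$ automorphism of $\mathbb{A}^r_{R_{ij}}$ given by $(y) \mapsto (A_{ij}^{-1})^{T} (y)$ (or $A_{ij}^T$, depending on the variance convention one fixes for $\mathbb{B}$; the point is that inverting a matrix over $R_{ij}$ and transposing are computable operations). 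The cocycle condition on the $\xi_{ij}$ translates directly into the cocycle condition for the $h_{ij}$, so the output is a valid model of a vector bundle; the maps $(+), (-), (\cdot)$ are the fibrewise $R_i$-linear structure maps on $\mathbb{A}^r_{V_i}$, which are given by explicit polynomial formulas in the $y$-coordinates (and hence compatible across the cover because the $h_{ij}$ are $R_{ij}$-linear). This establishes (i).

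For (ii), given the morphism $\mathcal{F} \to \mathcal{F}'$ presented by $R_i$-linear maps $s_i : M_i \to M_i'$ (matrices $B_i$ in the chosen bases) compatible with the $t_{ij}$ on overlaps, the induced map $\mathbb{B}(\mathcal{F}) \to \mathbb{B}(\mathcal{F}')$ is locally $\mathbb{A}^r_{V_i} \to \mathbb{A}^{r'}_{V_i}$, $(y) \mapsto B_i (y)$ — again a morphism of affine $V_i$-schemes presented by explicit polynomials — and the compatibility condition $t_{ij} \circ \xi_{ij} = \xi'_{ij} \circ t_{ij}$ (suitably transposed) is exactly what is needed for these local maps to glue to a global morphism compatible with the $h_{ij}, h'_{ij}$ produced in (i). One should double-check the index bookkeeping: the problem statement fixes a common cover $\{\Spec R_i\}$ for $V$, $\mathcal{F}$, $\mathcal{F}'$ and the morphism, so no refinement of covers is needed here, which simplifies matters; if one did not assume this one would first run the (computable) common-refinement procedure from \cite[\S2]{urbanik2021sets}. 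Finally, to claim $\mathbb{B}$ is a functor whose output recovers $\mathcal{F}$ up to isomorphism one records that the sheaf of sections of $\mathbb{A}^r_{V_i} \to V_i$ is $R_i^r \cong M_i$ and that the gluing data match; this is a routine verification and I would state it as such rather than belabor it.

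I do not expect a genuine conceptual obstacle here — the whole content is that the classical construction is "polynomial in, polynomial out." The main place that requires a little care, and which I would flag as the delicate step, is the treatment of the transition automorphisms: one must be careful that $\xi_{ij}$ in the data for $\mathcal{F}$ is an isomorphism of the \emph{restricted} modules $N_{ij}$ over $R_{ij}$, that its matrix lies in $\GL_r(R_{ij})$ (so its inverse is again an $R_{ij}$-matrix, computable by Cramer's rule or Gaussian elimination over the computable ring $R_{ij}$), and that the variance is chosen consistently so that the $h_{ij}$ genuinely satisfy the cocycle identity rather than its opposite. A secondary bookkeeping point is ensuring that the restriction morphisms $\res_{ij}$ and the presentations of the $R_{ij}$ (as quotients of polynomial rings cut out by the $g_{ij}$ from the model of $V$) are threaded through correctly, so that "free of rank $r$ with explicit basis" is preserved under restriction to overlaps — this is guaranteed by the standing assumption in \S\ref{compmodelsec} that vector-bundle models come with explicit bases on both the $M_i$ and the $N_{ij}$. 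Once these points are nailed down, both algorithms are immediate, and the proof is a short verification with no hard estimates.
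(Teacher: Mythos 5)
Your proof is correct and takes essentially the same approach as the paper: both unwind the $\underline{\Spec}\,\textrm{Sym}$ construction locally as $\mathbb{A}^r_{V_i}$ (using the fixed bases to realize $\textrm{Sym}\,M_i$ as a polynomial ring over $R_i$), express the gluing in terms of the transition matrices from the $\xi_{ij}$ and the restriction maps (the paper writes this directly as the generators $m_{ik}-\res_{ji}^{-1}(\xi_{ij}(\res_{ij}(m_{ik})))$ of the diagonal ideal, which is exactly the graph of your $h_{ij}$), note that the only nontrivial computation is inverting a matrix over $R_{ij}$ via Cramer's rule, and observe that $(+),(-),(\cdot)$ and the morphisms in (ii) are given on generators by the evident polynomial formulas. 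Your flagged concern about the variance convention is legitimate but a bookkeeping choice, and your observation that no cover refinement is needed because the statement fixes a common cover matches the paper's setup.
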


\begin{proof}
~ \begin{itemize}
\item[(i)] The standard construction of $\mathbb{B}(\mathcal{F})$ as in \cite[\href{https://stacks.math.columbia.edu/tag/01M1}{Section 01M1}]{stacks-project} associates to the affine open $\Spec R_{i}$ the symmetric algebra $\textrm{Sym}\, M_{i}$; because $\mathcal{F}$ is locally free this can be presented as a polynomial ring $P_{i}$ ring over $R_{i}$ whose generators are given by some basis $m_{i1}, \hdots, m_{ir}$ for $M_{i}$. The diagonal ideal in $\Spec P_{i} \times \Spec P_{j}$ is then generated by the elements $\{ g_{ij} \}^{n}_{i,j=1}$ and the elements $m_{ik} - \res_{ji}^{-1}(\xi_{ij}(\res_{ij}(m_{ik}))$ for $1 \leq k \leq r$, so it suffices to check that we can invert the map $\textrm{res}_{ji}$. This is just a matter of inverting a matrix over $R_{ji}$, which can be done using Cramer's rule.

That one has a model for the projection is immediate as we may send the fixed generators for $R_{i}$ coming from the presentation to the same elements of $P_{i}$.

The maps $(+) : \mathbb{B}(\mathcal{F}) \times_{S} \mathbb{B}(\mathcal{F}) \to \mathbb{B}(\mathcal{F})$, $(-) : \mathbb{B}(\mathcal{F}) \to \mathbb{B}(\mathcal{F})$ and $(\cdot) : \mathbb{B}(\mathcal{O}_{S}) \times_{S} \mathbb{B}(\mathcal{F}) \to \mathbb{B}(\mathcal{F})$ are constructed in the natural way; we give just the argument for $(+)$. In this case the construction above associates to $M_{i} \oplus M_{i}$ symmetric algebra generated by $\{ m^{(1)}_{ik} \}_{k=1}^{r} \sqcup \{ m^{(2)}_{ik} \}_{k = 1}^{r}$ to the affine set $V_{i}$, with an analogous construction for the diagonal ideals. Then the morphism is just defined on generators by fixing the generators of $R_{i}$ and the usual map $m_{ik} \mapsto m^{(1)}_{ik} + m^{(2)}_{ik}$.

\item[(ii)] The morphisms of modules $s_{i}$ induce natural maps of the associated symmetric algebras and given that our presentation of the modules $\Spec P_{i}$ (resp. $\Spec P'_{i}$) is relative to a basis for $M_{i}$ (resp. $M'_{i}$) it is easily seen that this gives a morphism $\mathbb{B}(\mathcal{F}) \to \mathbb{B}(\mathcal{F}')$ by, for each $i$, fixing the generators of $R_{i}$ and acting with the maps $s_{i}$ on the basis elements.
\end{itemize}
\end{proof}

\begin{rem}
It is less obvious how one computes the inverse of this functor; that is, how given an algebraic map $\pi : \mathcal{B} \to V$ and maps $(+), (-)$ and $(\cdot)$ one computes the associated trivializations. As we will want to use both perspectives in our computations, it will always be important for us that we start with objects on the locally free sheaf side before passing to the bundle picture.
\end{rem}

Finally let us mention that we can represent constructible subsets of an algebraic variety $V$ as a union $\bigcup_{i=1}^{n} C_{i} \cap D_{i}$ of finitely many intersections between an open and closed subvariety; more details are given in \cite[\S2]{urbanik2021sets}. Then preimages and images of constructible sets are also computable, as detailed in \cite[Prop. 2.1]{urbanik2021sets}.

\subsection{Computing algebraic de Rham data}

In this section we explain that the locally free sheaf and connection data that we will need to implement our algorithms can be computed. The first result is the following, proven in \cite{urbanik2021sets} (see also \cite{2022arXiv220611389U, 2021arXiv210609342U} for related works).

\begin{prop}
\label{compconnecprop}
Suppose we are given $f : X \to S$ a smooth projective morphism of relative dimension $m$ together with a fixed closed embedding $\iota: X \hookrightarrow \mathbb{P}^{N}_{S}$ over $S$. Then for any $k$ there is an algorithm which takes as input models for the tuple $(S, X, f, \iota)$, compatible with the standard model for $\mathbb{P}^{n}$, and outputs:
\begin{itemize}
\item[(i)] a model for $S$ as a union of affine opens $S_{i} = \Spec R_{i}$, refining the one given as input; and
\item[(ii)] for each $i$, models for the triple $(\mathcal{V}_{i}, F^{\bullet}_{i}, \nabla_{i})$ of locally free sheaf data on $S_{i}$, where
\begin{align*}
\mathcal{V}_{i} &= R^{k} f_{*} \Omega^{\bullet}_{X_{S_{i}}/S_{i}} \\
F^{j}_{i} &= R^{k} f_{*} [0 \to \cdots \to \Omega^{j}_{X_{S_{i}}/S_{i}} \to \Omega^{j+1}_{X_{S_{i}}/S_{i}} \to \cdots \to \Omega^{m}_{X_{S_{i}}/S_{i}} ] ,
\end{align*}
and where $\nabla_{i} : \mathcal{V}_{i} \to \mathcal{V}_{i} \otimes \Omega^{1}_{S_{i}}$ is the Gauss-Manin connection, as defined in \cite{katzoda}. 
\end{itemize}
\end{prop}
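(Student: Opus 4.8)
\textbf{Proof proposal for \Cref{compconnecprop}.}

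The plan is to reduce the statement to a sequence of standard algorithmic operations on coherent sheaves and algebraic de Rham complexes, using the computational models for varieties, morphisms, and coherent sheaves set up in \S\ref{compmodelsec} together with the constructive manipulations of \Cref{constrcatequiv}. First I would replace $S$ by a suitable affine open cover refining the given one; since everything is Zariski-local on $S$, it suffices to produce the algorithm over each affine piece $S_i = \Spec R_i$, the refinement being dictated by the loci over which the relevant pushforward sheaves are free (which one detects, and uses to refine the cover, by the standard ``free locus'' computations on finitely presented modules). Over such an affine base, $X_{S_i} \hookrightarrow \mathbb{P}^N_{S_i}$ is a closed subscheme cut out by explicit homogeneous polynomials with coefficients in $R_i$, so the relative differential sheaves $\Omega^j_{X_{S_i}/S_i}$ and the relative de Rham complex $\Omega^\bullet_{X_{S_i}/S_i}$ admit explicit finite presentations obtained from the Jacobian of the defining equations; these are the standard models for differential modules described at the end of \S\ref{compmodelsec}, applied relatively over $R_i$.

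The core of the argument is then the effective computation of higher direct images along $f: X_{S_i} \to S_i$ of a bounded complex of coherent sheaves, together with a chosen subcomplex (the truncation giving $F^j$). This is classical effective commutative algebra: since $f$ factors through $\mathbb{P}^N_{S_i}$, one computes $R^k f_* \mathcal{G}$ for a coherent sheaf $\mathcal{G}$ on $X_{S_i}$ by pushing forward along $\mathbb{P}^N_{S_i} \to S_i$, which after a Veronese/regularity estimate reduces to \v{C}ech cohomology on the standard affine cover of projective space — an explicit finite linear-algebra computation over $R_i$ once a Castelnuovo--Mumford regularity bound (computable from the input degrees, e.g.\ via Gr\"obner bases) is known. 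Taking the hypercohomology of the complex $\Omega^\bullet_{X_{S_i}/S_i}$ then proceeds via the standard spectral-sequence/total-complex computation, all of whose differentials are explicit, yielding finitely presented $R_i$-modules $\mathcal{H}_i$ and $F^j_i$; after a further refinement of the cover one arranges these to be free with explicit bases, as required by the conventions of \S\ref{compmodelsec}.

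Finally, the Gauss--Manin connection $\nabla_i: \mathcal{H}_i \to \mathcal{H}_i \otimes \Omega^1_{S_i}$ is produced by implementing its construction via the Katz--Oda filtration on the absolute de Rham complex $\Omega^\bullet_{X_{S_i}}$: the short exact sequence $0 \to f^*\Omega^1_{S_i} \otimes \Omega^{\bullet-1}_{X_{S_i}/S_i} \to \Omega^\bullet_{X_{S_i}}/\hspace{-0.3em}F^2 \to \Omega^\bullet_{X_{S_i}/S_i} \to 0$ gives, upon applying $R^k f_*$, a connecting homomorphism which is exactly $\nabla_i$; all the maps in this sequence are explicit in our presentations, and the connecting map in a long exact sequence of (hyper)cohomology is computed by the usual diagram-chase, which is effective once all the modules and differentials are given concretely. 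I expect the main obstacle to be bookkeeping: proving that each of the above homological constructions can be carried out on the level of \emph{explicitly presented} modules over $R_i$ while preserving the compatibilities required by the computational model of \S\ref{compmodelsec} (in particular that restriction maps and gluing data $\xi_{ij}$ are tracked correctly through pushforward and through hypercohomology), and controlling the cover refinements so that finitely many suffice. None of this is new in principle — it is the content of \cite{urbanik2021sets} and the related works \cite{2022arXiv220611389U, 2021arXiv210609342U} — so the proof is largely a citation of, and light adaptation of, \emph{op.\ cit.}, which is how I would present it.
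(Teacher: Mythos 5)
Your proposal is correct and takes essentially the same route as the paper: the paper's proof of \Cref{compconnecprop} consists of a direct citation to \cite[Thm 2.4]{urbanik2021sets} plus the one-line remark that the cover refinement in (i) is clear from that proof, and your sketch — effective \v{C}ech/hypercohomology over $\mathbb{P}^N_{S_i}$ with regularity bounds, Katz--Oda connecting map for $\nabla$, cover refinement for freeness — is precisely a reconstruction of the argument in \emph{op.\ cit.}, which you yourself flag at the end.
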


\begin{proof}
This is \cite[Thm 2.4]{urbanik2021sets}. That the model in (i) refines the one given as input is clear from the proof.
\end{proof}

\noindent We wish to explain how the ideas used in the proof of \cite[Thm 2.4]{urbanik2021sets} generalize to the mixed variation that underlies the orbit closure setting; in fact, because one does not have to deal resolving de Rham complexes associated to higher-dimensional projective varieties, the computations will be simpler in this case. We therefore instead assume that $f : X \to S$ is a family of relative curves, $\iota : X \hookrightarrow \mathbb{P}^{n}_{S}$ is a fixed projective embedding, and that we have in addition a relative divisor $\zeta: Z \hookrightarrow X$ over $S$. We also assume that we have a fixed section $\omega$ of $\Omega^{1}_{X/S}$ such that $Z_{s}$ is the divisor of zeros of $\omega_{s}$.

To describe the relative de Rham cohomology bundle $\mathcal{V}$ whose fibres above $s$ correspond to the de Rham cohomology of the pair $(X_{s},Z_{s})$, we begin by recalling the construction of the cone complexes associated to the maps $\Omega^{\bullet}_{X} \to \zeta_{*} \Omega^{\bullet}_{Z}$ and $\Omega^{\bullet}_{X/S} \to \zeta_{*} \Omega^{\bullet}_{Z/S}$, following \cite[Def. A.7]{zbMATH05233837}. They are given by
\begin{align*}
\textrm{Cone}^{q}(\zeta) &= \Omega^{q+1}_{X} \oplus \Omega^{q}_{Z} \\
\textrm{Cone}^{q}(\zeta/S) &= \Omega^{q+1}_{X/S} \oplus \Omega^{q}_{Z/S} \\
d(x, z) &= (-dx, -(\iota^{*} x) + dz) .
\end{align*}
As in \cite[Thm. 3.22]{zbMATH05233837} this complex is naturally equipped with the structure of a (for us, relative) mixed Hodge complex, giving rise to a long exact sequence
\[ \cdots \to R^{0} f_{*} \Omega^{\bullet}_{Z/S} \to \underbrace{R^{0} f_{*} \textrm{Cone}^{\bullet}(\zeta)}_{= \mathcal{V}} \to R^{1} f_{*} \Omega^{\bullet}_{X/S} \to \cdots \]
of algebraic vector bundles which equipped with the Hodge and weight filtrations $F^{\bullet}$ and $W$ and the isomorphism with
\[ \cdots \to R^{0} f_{*} \mathbb{Z}_{Z} \to R^{0} f_{*} \textrm{Cone}^{\bullet}(\mathbb{Z}_{X} \to \zeta_{*} \mathbb{Z}_{Z}) \to R^{1} f_{*} \mathbb{Z} \to \cdots \]
gives an exact sequence of mixed variations of Hodge structures.

We wish to prove:

\begin{prop}
\label{computeHodgebundle}
Given the data $(Z, X, S)$ and $(f, \iota, \zeta)$, there exists an algorithm to compute a model for $(\mathcal{V}, F^{\bullet}, \nabla)$, where $\nabla : \mathcal{V} \to \mathcal{V} \otimes \Omega^{1}_{S}$ is the Gauss-Manin connection.
\end{prop}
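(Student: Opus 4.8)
The plan is to reduce the whole computation to linear algebra over the affine charts of $S$, by making the cone-complex description of $\mathcal{H}$ and the Katz--Oda presentation of its Gauss--Manin connection completely explicit, reusing the hypercohomology machinery of \cite{urbanik2021sets} that already computes the bundles $R^{k}f_{*}\Omega^{\bullet}_{X/S}$. First I would deal with the two graded pieces of the weight filtration \eqref{bundextseq}. Applying \Cref{compconnecprop} to $(S,X,f,\iota)$ with $k=1$ produces, over a refined affine cover $S_{i}=\Spec R_{i}$, a model for $(R^{1}f_{*}\Omega^{\bullet}_{X/S},F^{\bullet},\nabla)$, which is $\Gr^{W}_{1}\mathcal{H}$ with its induced data. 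Since $\zeta\colon Z\hookrightarrow X$ followed by $f$ is finite flat, $f_{*}\mathcal{O}_{X}$ and $f_{*}\mathcal{O}_{Z}$ are locally free and computable directly from the defining equations; the restriction map $f_{*}\mathcal{O}_{X}\to f_{*}\mathcal{O}_{Z}$ is computable, and its cokernel is a model for $\Gr^{W}_{0}\mathcal{H}$, the weight-zero sheaf underlying $\widetilde{H}^{0}(Z)$.

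Next I would assemble $\mathcal{H}$, $W_{\bullet}$ and $F^{\bullet}$. Over each $S_{i}$, refining the chosen affine cover of $X_{S_{i}}$ if necessary, the hypercohomology $\mathbb{R}^{0}f_{*}\textrm{Cone}^{\bullet}(\zeta/S)$ is presented as the $0$-th cohomology of an explicit bounded complex of free $R_{i}$-modules: one forms the \v{C}ech bicomplex of $\textrm{Cone}^{\bullet}(\zeta/S)$ with respect to this cover, passes to the total complex, and reduces it by Gr\"obner-basis linear algebra over $R_{i}$. This is exactly the strategy of \cite[Thm.~2.4]{urbanik2021sets}, and it is lighter in the present setting because the de Rham complex of a relative curve and that of a relative $0$-dimensional scheme have length at most two. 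Carrying the subcomplex $\zeta_{*}\Omega^{\bullet-1}_{Z/S}\subset\textrm{Cone}^{\bullet}(\zeta/S)$ through the same reduction gives $W_{0}\mathcal{H}$ as a submodule, and carrying through the stupid truncations $F^{p}\textrm{Cone}^{\bullet}(\zeta/S)$ (with $F^{1}\textrm{Cone}^{\bullet}(\zeta/S)$ equal to $\Omega^{1}_{X/S}$ placed in degree $0$) gives the Hodge pieces, so that concretely $F^{0}\mathcal{H}=\mathcal{H}$, $F^{1}\mathcal{H}\cong f_{*}\Omega^{1}_{X/S}$ and $F^{2}\mathcal{H}=0$. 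The gluing data for $\mathcal{H}$ and its filtrations descend from the gluing data of the constituent sheaves.

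Finally I would compute $\nabla$ using the absolute cone complex $\textrm{Cone}^{\bullet}(\zeta)$, equipped with the Katz--Oda filtration $\textrm{Fil}^{p}=\operatorname{im}\!\big(\pi^{*}\Omega^{p}_{S}\otimes\textrm{Cone}^{\bullet-p}(\zeta)\to\textrm{Cone}^{\bullet}(\zeta)\big)$, so that $\Gr^{0}=\textrm{Cone}^{\bullet}(\zeta/S)$ and $\Gr^{1}$ is $\Omega^{1}_{S}\otimes\textrm{Cone}^{\bullet}(\zeta/S)$ placed one degree higher, whence $R^{1}f_{*}\Gr^{1}\cong\Omega^{1}_{S}\otimes\mathcal{H}$. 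The short exact sequence $0\to\Gr^{1}\to\textrm{Fil}^{0}/\textrm{Fil}^{2}\to\Gr^{0}\to0$ yields, upon applying $Rf_{*}$, a connecting map $R^{0}f_{*}\Gr^{0}\to R^{1}f_{*}\Gr^{1}$ which is the Gauss--Manin connection: this is \cite{katzoda} transported to the cone, compatibly with the mixed Hodge complex structure of \cite[Thm.~3.22]{zbMATH05233837}. To make it explicit one lifts a relative class $\bar{c}$ to a section $c$ of $\textrm{Cone}^{\bullet}(\zeta)$ --- possible since $\textrm{Cone}^{\bullet}(\zeta)\twoheadrightarrow\textrm{Cone}^{\bullet}(\zeta/S)$ is surjective in the relevant degrees, and such lifts exist over each $S_{i}$ after a further coherence-driven refinement --- applies the absolute differential to $c$, and projects the image into $\Gr^{1}$; pushing the result through the \v{C}ech-to-module reduction of the previous paragraph expresses $\nabla\bar{c}$ in the chosen basis. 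Every operation here is a module computation over $R_{i}$, hence algorithmic.

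The step I expect to be the main obstacle is not conceptual but organizational: producing, uniformly over each $S_{i}$, a single bounded complex of free modules computing $\mathbb{R}^{\bullet}f_{*}\textrm{Cone}^{\bullet}(\zeta/S)$ that is simultaneously compatible with the map $\Omega^{\bullet}_{X/S}\to\zeta_{*}\Omega^{\bullet}_{Z/S}$, with the Hodge and weight filtrations, and with the Katz--Oda lifting needed for $\nabla$. This is precisely the delicate part of \cite{urbanik2021sets}, and the cone construction adds one further layer of indexing to track; but no input beyond \Cref{compconnecprop} and the explicitness of \v{C}ech-to-module reductions over a computable base is needed, so I expect the argument to be a (somewhat lengthy) adaptation rather than a genuinely new computation.
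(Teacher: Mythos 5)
Your overall shape matches the paper's proof — \v{C}ech hypercohomology of the relative cone complex for $\mathcal{H}$ and $F^{\bullet}$, and the Katz--Oda filtration on the absolute cone complex (what the paper calls $L^\bullet$) to realize $\nabla$ as a connecting homomorphism — but there is a genuine gap at the computational heart of the argument. You write that the \v{C}ech bicomplex of $\textrm{Cone}^{\bullet}(\zeta/S)$ with respect to a relative affine cover of $X$ is ``an explicit bounded complex of free $R_{i}$-modules'' which can then be reduced by Gr\"obner-basis linear algebra. That is false: the relative affine opens of $X$ (complements of relative hyperplane sections) are \emph{not} proper over $S$, so the terms of the \v{C}ech bicomplex are infinitely generated over $R_{i}$ (sections may have arbitrarily high pole order along the removed divisor). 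Gr\"obner methods cannot be run on such modules, and without a finiteness mechanism the whole reduction step has no meaning. This is precisely the ``essential difficulty'' the paper flags explicitly, following \cite[Lem.~2.6]{urbanik2021sets}: one must replace the \v{C}ech bicomplex by an increasing exhaustion of finite-rank subcomplexes indexed by pole order, and one needs a provable \emph{termination criterion} for when the truncation already computes the true hypercohomology. The paper supplies exactly such a criterion: one knows in advance that $\mathcal{H}$ has rank $2g+n$ (since $\textrm{Gr}^{W}_{1}\mathcal{H}$ has rank $2g$ and $\textrm{Gr}^{W}_{0}\mathcal{H}$ has rank $n$), so one computes successively higher pole-order approximations and stops once the resulting coherent subsheaf becomes locally free of rank $2g+n$. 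Your first paragraph, which computes the graded pieces and hence their ranks, has the ingredient to formulate this criterion — but you never connect it to the \v{C}ech computation, and as written the algorithm in your second paragraph does not terminate in any provable sense.

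A secondary and smaller point: you assert without argument that the connecting map attached to the Katz--Oda filtration on $\textrm{Cone}^{\bullet}(\zeta)$ is the Gauss--Manin connection, i.e.\ that its flat sections are exactly $\mathbb{V}_{\mathbb{C}}$. This is not quite an off-the-shelf consequence of \cite{katzoda} because one is working with a cone complex rather than the relative de~Rham complex of a smooth morphism; the paper closes this by an explicit diagram chase through the analytic comparison diagram relating the connecting maps of the three rows to the Betti sheaves $R^{0}(f\circ\zeta)_{*}\mathbb{C}$, $R^{1}f_{*}\mathbb{C}$, and $\mathbb{V}_{\mathbb{C}}$. You would want to either include such a verification or cite a precise result that covers the cone case.
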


\begin{proof}
The proof is similar to \cite[Thm. 2.4]{urbanik2021sets}, although is in various respects simpler. As in \cite[Thm. 2.4]{urbanik2021sets}, we begin by computing a relative affine cover for $X$ by complements of relative hyperplane sections; because $X$ is a relative curve, one only needs two such relative affines $U_{1}$ and $U_{2}$, whose complements are families of points over $S$. We may assume that $Z \subset U_{1}$ and $Z \subset U_{2}$ by choosing the hyperplane sections sufficiently generic via analogous reasoning as in \cite[Lem. 2.5]{urbanik2021sets}. We have an exact sequence
\[ 0 \to \underbrace{f^{*} \Omega^{\bullet+1}_{S} \oplus (f \circ \zeta)^{*} \Omega^{\bullet}_{S}}_{\mathcal{I}} \xrightarrow{\iota} \textrm{Cone}^{\bullet}(\zeta) \to \textrm{Cone}^{\bullet}(\zeta/S) \to 0 , \]
which because we are in relative dimension $1$ has only one interesting term, given by
\begin{equation}
\label{reldRconstr}
0 \to f^{*} \Omega^{1}_{S} \oplus (f \circ \zeta)^{*} \mathcal{O}_{S} \to \Omega^{1}_{X} \oplus \Omega^{0}_{Z} \to \Omega^{1}_{X/S} \oplus \mathcal{O}_{Z/S} \to 0 . 
\end{equation}

The complex $\textrm{Cone}^{\bullet}(\zeta)$ comes with a natural product structure $\wedge$ which is simply the direct sum of the same operation on the constituent complexes. This allows us to define a decreasing filtration $L^{\bullet}$ of $\textrm{Cone}^{\bullet}(\zeta)$ by requiring the $L^{i} \textrm{Cone}^{k-1}(\zeta)$ be spanned locally by elements of the form $\iota(x_{1}) \wedge \cdots \wedge \iota(x_{i}) \wedge y_{i+1} \wedge \cdots \wedge y_{k}$ with $x_{1}, \hdots, x_{i} \in \mathcal{I}$. Because we are in the setting of a relative curve, only the $k = 1$ term is interesting, and in this case it is given by
\begin{align*}
L^{0} \textrm{Cone}^{0}(\zeta) &= \Omega^{1}_{X} \oplus \mathcal{O}_{Z} \\
L^{1} \textrm{Cone}^{0}(\zeta) &= f^{*} \Omega^{1}_{S} \oplus (f \circ \zeta)^{*} \mathcal{O}_{S} .
\end{align*}
In particular $L^{2} = 0$ in our case, which together with the exactness of (\ref{reldRconstr}) implies that the rows in the diagram 
\begin{equation}
\label{bigconnconstrdiag}
\begin{tikzcd}
& 0 \arrow[d] & 0 \arrow[d] & 0 \arrow[d] & \\
0 \arrow[r] & (f \circ \zeta)^{*} \Omega^{\bullet-1}_{S} \otimes \Omega^{\bullet-1}_{Z} \arrow[r, "\wedge"]  \arrow[d] & \Omega^{\bullet}_{Z} \arrow[r] \arrow[d] & \Omega^{\bullet}_{Z/S} \arrow[r]  \arrow[d] & 0   \\
0 \arrow[r]   & (f^{*} \Omega^{\bullet}_{S} \oplus (f \circ \zeta)^{*} \Omega^{\bullet-1}_{S}) \otimes \textrm{Cone}^{\bullet-1}(\zeta/S) \arrow[r, "\wedge"] \arrow[d] & \textrm{Cone}^{\bullet}(\zeta) \arrow[r]  \arrow[d] & \textrm{Cone}^{\bullet}(\zeta/S) \arrow[r]  \arrow[d] & 0   , \\
0 \arrow[r] & f^{*} \Omega^{\bullet}_{S} \otimes \Omega^{\bullet}_{X} \arrow[r, "\wedge"] \arrow[d] & \Omega^{\bullet+1}_{X} \arrow[r] \arrow[d] & \Omega^{\bullet+1}_{X/S} \arrow[r] \arrow[d] & 0 \\
& 0  & 0  & 0 &
\end{tikzcd}
\end{equation}
are exact. (In the non relative curve setting one has to additionally quotient by $L^{2}$ in the middle column.) We claim the Gauss-Manin connection is the connecting homomorphism $\delta : \mathcal{V} \to \Omega^{1}_{S} \otimes \mathcal{V}$ associated to the middle row; here by ``Gauss-Manin'' connection we simply mean the connection whose associated bundle of flat sections is the complexification $\mathbb{V}_{\mathbb{C}}$ of the variation of mixed Hodge structure $\mathbb{V}$ on $S$ with fibres $H^{1}(X_{s}, Z_{s}; \mathbb{Z})$. The above diagram together with the natural mixed Hodge module structures on the associated complexes induces a diagram of sheaves on $S^{\textrm{an}}$
\begin{center}
\begin{tikzcd}
& 0 \arrow[d] & 0 \arrow[d] & 0 \arrow[d] \\
0 \arrow[r] & R^{0} f_{*} \mathbb{C} \arrow[r] \arrow[d] & R^{0} f_{*} \Omega^{\bullet}_{X^{\textrm{an}}/S^{\textrm{an}}} \arrow[r, "\delta_{X/S}"] \arrow[d] & \Omega^1_{S} \otimes R^{0} f_{*} \Omega^{\bullet}_{X^{\textrm{an}}/S^{\textrm{an}}} \arrow[d] \\
0 \arrow[r] & R^{0} (f \circ \zeta)_{*} \mathbb{C} \arrow[r] \arrow[d] & R^{0} (f \circ \zeta)_{*} \Omega^{\bullet}_{Z^{\textrm{an}}/S^{\textrm{an}}} \arrow[r, "\delta_{Z/S}"] \arrow[d] & \Omega^{1}_{S} \otimes R^{0} (f \circ \zeta)_{*} \Omega^{\bullet}_{Z^{\textrm{an}}/S^{\textrm{an}}} \arrow[d]  \\
0 \arrow[r] & \mathbb{V}_{\mathbb{C}} \arrow[r] \arrow[d] & \mathcal{V}^{\textrm{an}} \arrow[r, "\delta"] \arrow[d] & \Omega^{1}_{S} \otimes \mathcal{V}^{\textrm{an}} \arrow[d]   \\
0 \arrow[r] & R^{1} f_{*} \mathbb{C} \arrow[r] \arrow[d] & R^{1} f_{*} \Omega^{\bullet}_{X^{\textrm{an}}/S^{\textrm{an}}} \arrow[r, "\delta_{X/S}"] \arrow[d] & \Omega^{1}_{S} \otimes R^{1} f_{*} \Omega^{\bullet}_{X^{\textrm{an}}/S^{\textrm{an}}} \arrow[d] \\
& 0  & 0  & 0 .
\end{tikzcd}
\end{center}

The arrows between the first and second columns are the Betti-analytic de Rham comparisons, and the second set of arrows between the second and third columns are the connecting homomorphisms from the diagram (\ref{bigconnconstrdiag}). Every row and column is exact except for possibly the row involving $\delta$; for the rows involving $\delta_{Z/S}$ and $\delta_{X/S}$ this is due to the theorem of \cite{katzoda}. It then follows from a diagram chase that the row involving $\delta$ is also exact. 

More precisely, one begins with a local section $h \in \mathcal{V}^{\textrm{an}}$ in the kernel of $\delta$. Its image in $R^{1} f_{*} \Omega^{\bullet}_{X^{\textrm{an}}/S^{\textrm{an}}}$ then comes, via exactness, from a section of $R^{1} f_{*} \mathbb{C}$ which lifts to a section $h'$ of $\mathbb{V}_{\mathbb{C}}$. The difference $h - h'$ then has a unique lift to the relative cohomology sheaf obtained as the quotient of the first two non-zero entries in the middle column, which from commutativity of the diagram necessarily comes from a section of the relative cohomology sheaf $R^{0} (f \circ \zeta)_{*} \mathbb{C} / R^{0} f_{*} \mathbb{C}$. It then follows that the difference $h - h'$ arises from an element of $\mathbb{V}_{\mathbb{C}}$, and as we already know this for $h'$, we obtain the conclusion for $h$.

\vspace{0.5em}

What remains to be shown is that we can compute the vector bundle $\mathcal{V}$, its filtered subbundle $F^{1} \mathcal{V}$, and the map $\delta$. Note that as a consequence of \cite[Prop. 2.1]{urbanik2021sets}, direct sums, tensor products, quotients, and pullbacks of finite rank locally free sheaves on algebraic varieties are all computable, in the sense that one can compute explicit models of the resulting locally free sheaves from models of the input data; see \cite[\S2.1]{urbanik2021sets}. Thus it is possible to compute all the objects in the diagram (\ref{bigconnconstrdiag}) of vector bundles on $X$. This allows us to construct, as in \cite[Thm. 2.4]{urbanik2021sets}, a \v{C}ech double complex $C^{\bullet,\bullet}$ computing the cohomology of the complex $\textrm{Cone}^{\bullet}(\zeta)$.

However the terms of the double complex $C^{\bullet,\bullet}$ have infinite rank over $\mathcal{O}_{S}$, and so to show that we may actually use it to compute $\mathcal{V}, F^{1} \mathcal{V}$ and $\delta$ the essential difficulty, as in \cite[Lem. 2.6]{urbanik2021sets}, is to produce a finite rank subcomplex whose cohomology agree with the cohomology of these sheaves. Here the task is even easier than in \cite[Lem. 2.6]{urbanik2021sets}, since we already know what the correct rank of these cohomology bundles are: the reduced cohomology in degree $0$ has rank $n = \textrm{length}(\kappa) - 1$, the pure cohomology in degree $1$ rank $2g$, and therefore $\mathcal{V}$ has rank $2g + n$. As in the proof of \cite[Thm. 2.4]{urbanik2021sets}, we may iteratively approximate the cohomology by \v{C}ech complexes indexed by the order of the pole along a union of relative hyperplane sections. Since each such approximation gives a subcomplex of the true \v{C}ech cohomology complex, we obtain an increasing sequence of coherent subsheaves of $\mathcal{V}$. Computing successively higher order approximates, we terminate when we reach a locally free bundle of rank $2g + n$.
\end{proof}

\section{Algorithms}\label{secappliactionseff}

We now explain how our methods can be used to give algorithms to compute atypical special loci in cases of interest. Our algorithms will work with computational models --- in the sense described by the previous section --- for the data $(\mathcal{V}, W_{\bullet}, F^{\bullet}, \nabla, P)$ over a smooth algebraic variety $S$, with everything defined over a fixed number field $E$. Here $(\mathcal{V}, W_{\bullet}, F^{\bullet}, \nabla)$ are the vector bundles, filtrations and the connection associated to a fixed mixed integral variation of Hodge structure $\mathbb{V}$, and $P$ is the associated torsor constructed as in \autoref{examplebundle}. In the notation of \autoref{constrcatequiv}, $P$ is a subbundle of $\mathbb{B}(\sheafhom(\mathcal{V}, \mathcal{O}_{S} \otimes_{E} \mathcal{V}_{s_{0}}))$ for some point $s_{0} \in S(E)$. Note that one can compute the connection associated to $P$ from the data $(\mathcal{V}, \nabla)$. Note as well that we regard the algebraic de Rham incarnation of the group $\mathbf{H}_{S}$ as included in the data of the $\mathbf{H}_{S}$-torsor $P$.

A basic observation that we will use is the following:

\begin{lem}
\label{computeZfdlem}
There exists an algorithm to compute the locus $\mathcal{Z}(f,e)$ appearing in the proof of \autoref{nondenseA}.
\end{lem}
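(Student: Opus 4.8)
The statement to prove is \autoref{computeZfdlem}: that the locus $\mathcal{Z}(f,e)$ appearing in the proof of \autoref{nondenseA} is algorithmically computable. The key point is that the proof of \autoref{nondenseA} is already constructive in nature --- it describes $\mathcal{Z}(f,e)$ as the output of a Noetherian induction, each step of which involves only operations that we have shown (in \S\ref{compmodelsec} and \S\ref{finalsection:eff}) to be effective. So the strategy is simply to trace through that proof and verify that each step can be carried out by an algorithm, given the computational model for the data $(\mathcal{H}, W_\bullet, F^\bullet, \nabla, P)$ over $S$, all defined over a number field $E$.

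\textbf{Step 1: the base of the induction.} Recall from the proof of \autoref{nondenseA} that one first reduces to the case $\mathcal{Y}$ smooth (by stratifying, which is effective: taking the singular locus and iterating is a finite sequence of standard scheme-theoretic operations), and then forms the pulled-back bundle $(Q,\delta)$ on $S\times\mathcal{Y}$ together with the embedding $\mathcal{Z}\subset Q$. Since $\nabla$ is presented algebraically as a section of \eqref{firstdefver}, the tangent distribution $\bigcup_{\mathcal{L}\subset P} T\mathcal{L}\subset TP$ --- and hence its pullback to $Q$ --- is an algebraic subbundle whose defining equations are explicitly computable from the model of $\nabla$; concretely, at a point $x$ the subspace of flat directions is the image of $v\mapsto \nabla(v,x)$, and writing this out gives polynomial equations. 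Then $\mathcal{Z}(f,e)^1 = \{(x,y)\in\mathcal{Z}: \dim(T_x\mathcal{L}_x\cap T_x f^{-1}(y)) = e\}$ is cut out by rank conditions on an explicit matrix of regular functions (the generators of the two tangent subbundles), hence is algebraically constructible and computable by linear-algebra-over-function-fields plus the constructible-set operations recorded after \autoref{constrcatequiv}.

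\textbf{Step 2: iterating and terminating.} Following the proof verbatim: for $e\le e_f$ one has $\mathcal{Z}(f,e)=\mathcal{Z}$, and $e_f$ itself (the minimal fibre dimension of $\mathcal{L}\cap f^{-1}(y)$) is computable since it equals the generic value of $\dim(T_x\mathcal{L}_x\cap T_xf^{-1}(y))$ on $\mathcal{Z}$, readable off the constructible stratification produced in Step 1. For $e>e_f$ one replaces $\mathcal{Z}$ by the Zariski closure $\mathcal{Z}'$ of $\mathcal{Z}(f,e_f+1)^1$ (closure is computable: saturate the defining ideal), passes to the induced family $f':\mathcal{Z}'\to\mathcal{Y}$, and recurses. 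The recursion terminates because each step strictly drops the dimension (or, at fixed dimension, strictly drops the number of components --- which one can bound and track using the degree estimates of \S\ref{aprioridegbounds}, via the lemma on degrees of images of polynomial maps); this is exactly the Noetherian induction in the proof, now made effective because at each stage we possess explicit equations for $\mathcal{Z}'$. Taking the (finite) union over the recursion branches and unwinding to the original $e$ gives a computational model for $\mathcal{Z}(f,e)$, and its image $\mathcal{Y}(f,e)$ under the projection is then also computable by the constructible-image algorithm of \cite[Prop. 2.1]{urbanik2021sets}.

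\textbf{Main obstacle.} The routine parts --- linear algebra over the coordinate rings, taking closures, forming constructible unions and images --- are all covered by \S\ref{compmodelsec}. The one place requiring genuine care is \emph{guaranteeing termination of the Noetherian induction with an a priori stopping criterion}: one must be sure the algorithm knows \emph{when} it has exhausted the recursion. This is where the degree bounds of \S\ref{aprioridegbounds} are essential: they furnish an a priori bound on the degree (hence on the number of irreducible components) of every $\mathcal{Z}'$ that can appear, so the algorithm can count components at each fixed-dimension stage and certify that it has reached the fixed point. I would make this explicit by carrying the degree bound as an invariant through the recursion, exactly as sketched in \S\ref{aprioridegbounds}, and observe that once a locally-stable dimension-and-component-count is reached the output locus is correct by the (non-effective) proof of \autoref{nondenseA}.
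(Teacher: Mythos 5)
Your approach is the same as the paper's: the proof of \autoref{nondenseA} is already a recursive construction, and one just checks each step is computable in the model of \S\ref{compmodelsec}. The paper states exactly this in one sentence, and your Steps 1--2 are a faithful unpacking of it.

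Your ``Main obstacle'' paragraph, however, raises a non-issue and resolves it with unnecessary machinery. You claim the degree bounds of \S\ref{aprioridegbounds} are \emph{essential} for the algorithm to ``know when it has exhausted the recursion.'' This is not so. The recursion in \autoref{nondenseA} replaces $\mathcal{Z}$ by the closure $\mathcal{Z}'$ of $\mathcal{Z}(f,e_f+1)^1$, which the proof shows is a \emph{strictly} smaller closed subvariety, and it recurses on $(\mathcal{Z}',f')$. The algorithm's stopping criterion is simply reached at the base case --- when the current variety becomes empty, or when the newly computed $e_{f'}$ satisfies $e\le e_{f'}$ --- and both conditions are decidable at each step from the computed model. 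Termination is then automatic from the descending chain condition on closed subvarieties of a Noetherian scheme: the algorithm does not need to know in advance how many steps it will take, nor certify a fixed point via component counts or degree bounds. Carrying a degree bound through the recursion is certainly possible and is the content of \S\ref{aprioridegbounds}, but that section is about producing \emph{a priori quantitative output bounds}, not about making the algorithm terminate. Presenting it as a logical prerequisite for Lemma~\ref{computeZfdlem} misattributes where the difficulty lies; the lemma needs no such bound.
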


\begin{proof}
The proof of \autoref{nondenseA} describes a recursive construction of $\mathcal{Z}(f,e)$ using the algebraic data $(P, f, \nabla)$, and each step in this construction admits a computational interpretation in our fixed computational model.
\end{proof}

\subsection{Computing weakly specials}

\begin{defn}
Let $Y \subset S$ be an irreducible subvariety of $S$ for $\mathbb{V}$. Then the \emph{type} of $Y$ is the $\mathbf{H}_{S}(\mathbb{C})$-equivalence class of the sub-flag variety $\ch{D}^{0}_{Y} \subset \ch{D}^{0}_{S}$. 
\end{defn}
(The notion of type in the pure setting already appears in \cite[\S4.3]{urbanik2021sets}.)

One reason we interest ourselves with the \emph{type} of $Y$ rather than the variety $\ch{D}^{0}_{Y}$ itself, is because the presence of the $\mathbf{H}_{S}(\mathbb{C})$-action makes types amenable to computation, whereas computing $\ch{D}^{0}_{Y}$ exactly (not just up to $\mathbf{H}_{S}(\mathbb{C})$-equivalence) requires computing periods. Computationally we will represent a type as any representative of its equivalence class. These representatives will be viewed inside the realization of $\ch{D}^{0}_{S}$ on the algebraic de Rham cohomology $\mathcal{V}_{s_{0}}$ above the fixed point $s_{0}$. Note we can construct the algebraic de Rham incarnation of $\ch{D}^{0}_{S}$ from the algebraic incarnations of $W_{\bullet}, F^{\bullet}$ and $P$ by taking the $\mathbf{H}_{S,s_{0}}$ orbit of $F^{\bullet}_{s_{0}}$ in the variety of Hodge flags on $\mathcal{V}_{s_{0}}$ compatible with $W_{\bullet, s_{0}}$. 

Given types $\mathcal{C}_{1}, \mathcal{C}_{2}$ represented by $\ch{D}_{1}$ and $\ch{D}_{2}$ we say $\mathcal{C}_{1} \leq \mathcal{C}_{2}$ if there exists $g \in \mathbf{H}_{S}(\mathbb{C})$ such that $\ch{D}_{1} \subset g \ch{D}_{2}$. The following is immediate from the description of weakly specials given in \S\ref{wspsubsinHdgthy}.

\begin{lem}
An irreducible subvariety $Y \subset S$ is weakly special if and only if it is maximal for its type. If $W \subset Y$ and the dimension of the algebraic monodromy of $Y$ drops upon restricting to $W$, so does the dimension of the type. \qed
\end{lem}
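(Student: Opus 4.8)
The plan is to run everything through the Hodge-theoretic dictionary recalled in \S\ref{wspsubsinHdgthy}: the weakly special subvarieties of $(S,\V)$ are the irreducible components of preimages under the period map of weakly special subdomains of $\Gamma\backslash D$, and the subdomain attached to a weakly special $Y$ lifts inside $\ch{D}^{0}_{S}$ to the Zariski closure of a monodromy orbit $\mathbf{H}_{Y}(\C)\cdot h$ through a period point $h$ --- that is, to the variety $\ch{D}^{0}_{Y}$ of \S\ref{paraspace} whose $\mathbf{H}_{S}(\C)$-class is, by definition, the type of $Y$ (recall from \Cref{opennessremark} that $\ch{D}^{0}_{Y}$ is the Zariski closure of the weakly special subdomain, so no information is lost by passing to it). The same construction associates a variety $\ch{D}^{0}_{W}$ and a weak Hodge subdatum $(\mathbf{H}_{W},D^{0}_{W})$ to any irreducible $W\subset S$, using that $\mathbf{H}_{W}$ is normal in the derived group of $\mathbf{G}_{W}$ by \Cref{monodromytheorem}.

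First I would record three purely formal facts: (i) for $W\subseteq Y$, parallel transport to the fibre over a base point of $W$ gives $\mathbf{H}_{W}\subseteq\mathbf{H}_{Y}$ and, computing $\ch{D}^{0}_{Y}$ as the monodromy orbit closure through a period point of $W$, also $\ch{D}^{0}_{W}\subseteq\ch{D}^{0}_{Y}$; (ii) a connected algebraic subgroup of a connected algebraic group of equal dimension is the whole group, so for $W\subseteq Y$ one has $\mathbf{H}_{W}=\mathbf{H}_{Y}$ iff $\dim\mathbf{H}_{W}=\dim\mathbf{H}_{Y}$; (iii) a closed irreducible subvariety of an irreducible variety of equal dimension is the whole variety, which applied to $\ch{D}^{0}_{W}\subseteq\ch{D}^{0}_{Y}$ says $\dim\ch{D}^{0}_{W}=\dim\ch{D}^{0}_{Y}$ forces $\ch{D}^{0}_{W}=\ch{D}^{0}_{Y}$. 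The one substantive ingredient I would invoke is the rigidity of weak Hodge subdata: the subvariety $\ch{D}^{0}_{Y}\subset\ch{D}^{0}_{S}$, with its homogeneity, determines $\mathbf{H}_{Y}$ up to $\mathbf{H}_{S}(\C)$-conjugacy; this is part of the standard theory of (mixed) Mumford--Tate domains underlying \Cref{lemmawaklysp} and \cite[Cor.~2.9]{2022arXiv220805182B}, and in the cases I need it one has $\ch{D}^{0}_{W}\subseteq\ch{D}^{0}_{Y}$, so what is actually used is just that equality of the two nested orbit closures forces equality of the two groups.

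With these in hand, the first assertion is a short chain. If $Y$ is weakly special and $Y\subsetneq Y'$, then $\mathbf{H}_{Y}\subsetneq\mathbf{H}_{Y'}$ by \Cref{def:weaklysp}, so $\dim\mathbf{H}_{Y}<\dim\mathbf{H}_{Y'}$ by (ii); if $Y'$ had the type of $Y$ we would get $\ch{D}^{0}_{Y'}=g\,\ch{D}^{0}_{Y}$ for some $g\in\mathbf{H}_{S}(\C)$, hence $\ch{D}^{0}_{Y}\subseteq\ch{D}^{0}_{Y'}=g\,\ch{D}^{0}_{Y}$, hence $\ch{D}^{0}_{Y}=\ch{D}^{0}_{Y'}$ by (iii), hence $\mathbf{H}_{Y}=\mathbf{H}_{Y'}$ by rigidity --- contradicting the dimension inequality; thus $Y$ is maximal for its type. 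Conversely, if $Y$ is maximal for its type and $Y\subsetneq Y'$, then $\ch{D}^{0}_{Y}$ and $\ch{D}^{0}_{Y'}$ are not $\mathbf{H}_{S}(\C)$-equivalent, and since $\ch{D}^{0}_{Y}\subseteq\ch{D}^{0}_{Y'}$ by (i), fact (iii) forces $\ch{D}^{0}_{Y}\subsetneq\ch{D}^{0}_{Y'}$ and therefore $\mathbf{H}_{Y}\subsetneq\mathbf{H}_{Y'}$ (equality of groups would give equality of the orbit closures through a common period point); as this holds for every $Y'\supsetneq Y$, $Y$ is weakly special. The second assertion is the same mechanism: for $W\subseteq Y$ with $\dim\mathbf{H}_{W}<\dim\mathbf{H}_{Y}$, equality $\ch{D}^{0}_{W}=\ch{D}^{0}_{Y}$ would force $\mathbf{H}_{W}=\mathbf{H}_{Y}$ by rigidity, so (iii) gives $\dim\ch{D}^{0}_{W}<\dim\ch{D}^{0}_{Y}$, and $\dim\ch{D}^{0}_{W}$, $\dim\ch{D}^{0}_{Y}$ are exactly the dimensions of the types of $W$ and $Y$. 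The only genuine obstacle is pinning down the rigidity statement in the mixed (rather than pure) setting with the precision used here; everything else is bookkeeping with the dictionary of \S\ref{wspsubsinHdgthy}.
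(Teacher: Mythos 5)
Your argument is correct and is precisely what the paper means by giving this lemma a bare \qed\ after remarking that it is immediate from the characterization of weakly specials in \S\ref{wspsubsinHdgthy}: you have spelled out the same dictionary-based argument. The rigidity step you single out as the one ``genuine obstacle'' --- that for nested $W\subset Y$ the equality $\ch{D}^{0}_{W}=\ch{D}^{0}_{Y}$ forces $\mathbf{H}_{W}=\mathbf{H}_{Y}$ --- is in fact already within the reach of the tools you cite: replace $W$ and $Y$ by their weakly special closures $W^{\mathrm{ws}}\subset Y^{\mathrm{ws}}$, which have the same monodromy groups and the same orbit closures; by \cite[Cor.~2.9]{2022arXiv220805182B} each is an irreducible component of $\Psi^{-1}$ of the image of the (now common) weak Mumford--Tate subdomain, and two nested irreducible components of the same analytic set coincide, so $W^{\mathrm{ws}}=Y^{\mathrm{ws}}$ and hence $\mathbf{H}_{W}=\mathbf{H}_{Y}$. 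That argument is insensitive to pure versus mixed, so the reservation in your final sentence can be dropped.
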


\begin{lem}
\label{typecompalg}
There exists an algorithm which, upon input an irreducible subvariety $Y \subset S$ defined over a number field, outputs the type of $Y$.
\end{lem}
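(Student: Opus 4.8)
\textbf{Proof strategy for \autoref{typecompalg}.} The plan is to reduce the computation of the type of $Y$ --- which is the $\mathbf{H}_S(\mathbb{C})$-equivalence class of the sub-flag variety $\ch{D}^0_Y \subset \ch{D}^0_S$ --- to a finite sequence of constructible-set manipulations on the period torsor $P$, which we can carry out by \autoref{computeZfdlem} and the algorithms of \S\ref{finalsection:eff}. The key point is that $\ch{D}^0_Y$ is only defined up to $\mathbf{H}_S(\mathbb{C})$-action, so we never need to compute periods of $Y$; instead we compute a single representative inside the algebraic de Rham realization of $\ch{D}^0_S$ on $\mathcal{H}_{s_0}$.

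First I would restrict the torsor to $Y$: form $\restr{P}{Y^{\mathrm{nor}}} \to Y^{\mathrm{nor}}$ after replacing $Y$ by a smooth resolution (harmless, since algebraic monodromy and the type are insensitive to this, as noted after \autoref{monodromydef}). By \autoref{galequalsmono} the Galois group of $\restr{P}{Y}$ agrees with the algebraic monodromy group $\mathbf{H}_Y$, and a minimal $\nabla$-invariant subvariety $V \subset \restr{P}{Y}$ is the Zariski closure of any leaf; such a $V$ is computable because it is precisely $\mathcal{Z}(f,e)$ for the trivial family $f = \mathrm{id}_{\restr{P}{Y}}$ and $e = \dim \mathbf{H}_Y$ (equivalently, $V$ is obtained by the Noetherian-induction construction in the proof of \autoref{nondenseA} applied to the foliation on $\restr{P}{Y}$). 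From the equations of $V$ one reads off the algebraic de Rham incarnation of $\mathbf{H}_Y$ as the stabilizer subgroup $\{ g : V\cdot g = V\}$ inside $\mathbf{H}_S$, which is a computable (linear-algebraic) operation once $V$ is presented explicitly. Then, picking a point $x_0 \in \restr{P}{Y}$ lying over a point of $Y$ near $s_0$ (or, after an $\mathbf{H}_S(\mathbb{C})$-translate, exactly over $s_0$), the desired representative of the type is the orbit $\ch{D}' := \mathbf{H}_Y(\mathbb{C}) \cdot \nu(x_0) \subset \ch{D}^0_S$, where $\nu : P \to \ch{D}^0_S$ is the algebraic period-realization map of \S\ref{constructionBT}; computing the image of a point under $\nu$ and then a group orbit are again standard constructible-image computations in the sense of \cite[Prop. 2.1]{urbanik2021sets}. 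Output $\ch{D}'$ (or, equivalently, the pair $(\mathbf{H}_Y, \nu(x_0))$) as the type.

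The one subtlety requiring care is the step ``pick a point $x_0$ over $s_0$''. If $s_0 \notin Y$ one cannot literally do this; the fix is to note that the type is determined by the weak Hodge sub-datum $(\mathbf{H}_Y, \ch{D}^0_Y)$ up to $\mathbf{H}_S(\mathbb{C})$-conjugacy, so one first computes $\mathbf{H}_Y$ as above (this uses only the restricted torsor and is independent of basepoint), then computes $\nu(x)$ for \emph{any} computable point $x \in \restr{P}{Y}$ --- such a point exists over any $E'$-point of $Y$ for a computable field extension $E'/E$, and evaluating $\nu$ at it is effective by \autoref{computeHodgebundle} (which gives the algebraic $W_\bullet, F^\bullet$ data entering the definition of $\nu$) together with \autoref{constrcatequiv} for passing between locally free sheaves and bundles --- and finally transports the orbit $\mathbf{H}_Y(\mathbb{C})\cdot\nu(x)$ back to a representative sitting over $s_0$ by the transitive $\mathbf{H}_S(\mathbb{C})$-action on $\ch{D}^0_S$, which again is a linear-algebraic computation since $\ch{D}^0_S$ was constructed explicitly as an $\mathbf{H}_{S,s_0}$-orbit of $F^\bullet_{s_0}$.

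I expect the main obstacle to be purely bookkeeping rather than conceptual: namely verifying that the Noetherian-induction construction of the minimal $\nabla$-invariant $V$ from the proof of \autoref{nondenseA} terminates with a \emph{correct} output in this degenerate case (trivial family $f$), i.e.\ that the closure of the loci $\mathcal{Z}(f',e')^1$ stabilizes exactly at the Zariski closure of a single leaf, and that from this closed subvariety one can \emph{effectively} extract the stabilizer group $\mathbf{H}_Y$ --- this requires knowing that $V$, as produced, is irreducible and is genuinely the Zariski closure of one leaf, which in turn relies on the regular-singularity hypothesis via \autoref{galequalsmono}. Once that is in place, everything else is an assembly of the already-established effectivity statements (\autoref{computeZfdlem}, \autoref{computeHodgebundle}, \autoref{constrcatequiv}, and constructible image/preimage computations).
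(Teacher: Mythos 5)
There is a genuine gap in your proposal, and it is not bookkeeping but the central step. You propose to compute $V$, the Zariski closure of a leaf inside $\restr{P}{Y}$, and then read off $\mathbf{H}_Y$ as its stabilizer. To compute $V$ you say it is ``precisely $\mathcal{Z}(f,e)$ for the trivial family $f = \mathrm{id}_{\restr{P}{Y}}$ and $e = \dim \mathbf{H}_Y$.'' This does not work. For the identity family the fibres $f^{-1}(y)$ are single points, so $\mathcal{L}_z \cap f^{-1}(y)$ is at most a point and the locus is empty for $e \geq 1$; for the constant family $\restr{P}{Y} \to \mathrm{pt}$ the fibres are all of $\restr{P}{Y}$, so $\mathcal{L}_z \cap f^{-1}(y) = \mathcal{L}_z$ has dimension $\dim Y$ everywhere, and $\mathcal{Z}(f,e)$ is all of $\restr{P}{Y}$ or empty depending on whether $e \leq \dim Y$. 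Neither produces $V$. The Noetherian induction in \autoref{nondenseA} computes \emph{dimension-jump loci} for intersections of leaves with fibres of a given family; it does not and cannot compute the Zariski closure of a single leaf, which is essentially the differential Galois group of $\nabla_{| Y}$, and there is no general algorithm offered in the paper for that. There is also a circularity: your choice $e = \dim \mathbf{H}_Y$ presupposes knowledge of the group you are trying to compute.

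The paper sidesteps this entirely. Rather than computing $V$ or $\mathbf{H}_Y$ directly, it exploits the \emph{over-parametrization} from \autoref{wspdomainoverparamlem}: one has in hand an explicit algebraic family $g : \mathcal{D} \to \mathcal{Y}$ of subvarieties of $\ch{D}^0_S$ containing all $\mathbf{H}_S(\mathbb{C})$-translates of weakly special domains. Pulling back along $\nu$ and intersecting with $\restr{P}{Y}$ gives a family $f_Y$, and for $e = \dim Y$ the locus $\mathcal{Z}(f_Y, e)$ detects precisely those fibres of $g$ that \emph{contain} the $\nu$-image of a full leaf over $Y$ (since a leaf of $\restr{P}{Y}$ has dimension $\dim Y$, so $\dim_x(\mathcal{L}_x \cap f_Y^{-1}(y)) \geq \dim Y$ forces $\mathcal{L}_x \subset f_Y^{-1}(y)$). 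Taking the image in $\mathcal{Y}$ and then selecting the stratum of \emph{smallest fibre dimension} pins down the type without ever computing $\mathbf{H}_Y$. Your instinct that the type is just the pair $(\mathbf{H}_Y, \nu(x_0))$ is right conceptually, and the basepoint-independence remarks are fine, but the algorithmic content has to come from searching a known family of candidate domains, not from solving for the leaf closure.
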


\begin{proof}
Recall that in \autoref{wspdomainoverparamlem} we constructed a family $g : \mathcal{D} \to \mathcal{Y}$ of subvarieties of $\ch{D}^{0}_{S}$ containing all weakly special domains. By enlarging the family (or inspecting its actual construction), one may assume that it contains all $\mathbf{H}_{S}(\mathbb{C})$-translates of weakly special domains. Pulling back along $\nu : P \to \ch{D}^{0}_{S}$ this becomes a family $f : \mathcal{Z} \to \mathcal{Y}$ of subvarieties of $P$. Let $f_{Y}$ be the family whose fibres are $f^{-1} \cap \restr{P}{Y}$. 

Then computing the locus $\mathcal{Z}(f,e) \subset \mathcal{Z}$ with $e = \dim Y$ and taking its image $\mathcal{I} \subset \mathcal{Y}$ we obtain the locus of all subvarieties of $\ch{D}^{0}_{S}$ belonging to $g$ which contain the image of some leaf above $Y$. The variety $\mathcal{Y}$ is naturally partitioned as $\mathcal{Y} = \bigsqcup_{m \geq 0} \mathcal{Y}_{m}$, where the fibres of $f$ above $\mathcal{Y}_{e}$ have dimension $m$. If we then consider the smallest $m$ such that $\mathcal{I} \cap \mathcal{Y}_{m}$ is non-empty, we obtain exactly the moduli of all representatives of the type of $Y$. 
\end{proof}

\noindent \autoref{typecompalg} generalizes easily to the case where we have a family $g : C \to B$ of subvarieties of $S$.

\begin{prop}
\label{typestratprop}
There exists an algorithm which, given as input a family $g : C \to B$ of irreducible subvarieties of $S$ defined over a number field, outputs a constructible partition $B = \bigsqcup_{i=1}^{k} B_{i}$ such that above each $B_{i}$ the fibres of $g$ have the same type.
\end{prop}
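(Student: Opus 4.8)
The plan is to reduce \autoref{typestratprop} to a repeated application of \autoref{typecompalg} combined with the constructibility machinery already developed in \S\ref{sec:constr}. First I would observe that the algorithm of \autoref{typecompalg} was, in its proof, already phrased in terms of computing loci $\mathcal{Z}(f,e)$ and their images $\mathcal{I} \subset \mathcal{Y}$ inside the over-parametrizing family $g : \mathcal{D} \to \mathcal{Y}$ of (translates of) weakly special domains; the only place it used that the input was a \emph{single} variety $Y$ rather than a family was in forming the restricted family $f_{Y}$ whose fibres are $f^{-1} \cap \restr{P}{Y}$. So the first step is to carry out exactly the same construction relative to the base $B$: pull back $P$ along $\pi : S \times B \to S$ to get $(Q,\delta)$, form $\mathcal{Z} \subset Q$ as before, and let $f_{g}$ be the family of subvarieties of $Q$ whose fibre over $(y, b)$ is $f^{-1}(y) \cap \restr{Q}{g^{-1}(b)}$. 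Then \autoref{nondenseA} (and \autoref{computeZfdlem}) applied to $f_{g}$, with $e = \dim g^{-1}(b)$ varying, produces an algebraically constructible locus inside $Q \times \mathcal{Y}$ recording, for each $b \in B$, which elements of the family $g$ contain the image of a leaf above $g^{-1}(b)$.

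The second step is to extract the type fibrewise. As in the proof of \autoref{typecompalg}, the parameter space $\mathcal{Y}$ is partitioned as $\mathcal{Y} = \bigsqcup_{m \geq 0} \mathcal{Y}_{m}$ according to the dimension of the corresponding domain, and for fixed $b$ the type of $g^{-1}(b)$ is read off from the smallest $m$ for which the computed locus meets $\mathcal{Y}_{m}$ over $b$. Projecting the constructible locus in $Q \times \mathcal{Y}$ down to $B \times \mathcal{Y}$ and intersecting with each $B \times \mathcal{Y}_{m}$, one obtains for each $m$ a constructible subset $B^{(m)} \subset B$ consisting of those $b$ for which some domain of dimension $m$ contains a leaf over $g^{-1}(b)$. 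The locus where the \emph{minimal} such $m$ equals a fixed value $m_{0}$ is then $B^{(m_{0})} \setminus \bigcup_{m < m_{0}} B^{(m)}$, which is again constructible; only finitely many values of $m$ occur since $m$ is bounded by $\dim \ch{D}^{0}_{S}$. Using the fact that constructible sets can be partitioned into finitely many locally closed pieces (as recalled in \S\ref{compmodelsec}, following \cite[\S2]{urbanik2021sets}), we refine further so that over each stratum not only $m$ but a single $\mathbf{H}_{S}(\mathbb{C})$-orbit representative is selected — e.g. by fixing a point in the relevant component of $\mathcal{Y}_{m_{0}}$ along each connected locally closed piece. This yields the desired partition $B = \bigsqcup_{i=1}^{k} B_{i}$ with constant type on each $B_{i}$.

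The main obstacle I anticipate is bookkeeping rather than mathematics: one must be careful that the "type", being an $\mathbf{H}_{S}(\mathbb{C})$-equivalence class, is genuinely locally constant on the strata and that the algorithm outputs a well-defined representative on each. Concretely, after fixing $m_{0}$ there may still be several $\mathbf{H}_{S}(\mathbb{C})$-orbits of domains of dimension $m_{0}$ arising over different points of a given constructible piece, so one genuinely needs the extra partitioning step, and one must check that the locus in $B$ where a \emph{fixed} representative occurs is itself constructible. This last point follows from the constructibility of the incidence relation between $B$ and $\mathcal{Y}$ established in the first step together with \autoref{lemma432}-style finiteness (or just finiteness of the number of relevant components of $\mathcal{Y}$), so no new input beyond \autoref{nondenseA}, \autoref{wspdomainoverparamlem}, and \autoref{computeZfdlem} is required. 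The remaining verifications — that pullback of $(P,\nabla)$ to $S \times B$, restriction to $g^{-1}(b)$, and the image/intersection operations are all computable in the model of \S\ref{compmodelsec} — are routine given \cite[Prop. 2.1]{urbanik2021sets} and \autoref{constrcatequiv}.
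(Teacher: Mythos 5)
Your approach differs from the paper's in a substantive way, and unfortunately the difference opens a gap at the crucial final step. The paper does \emph{not} attempt to stratify $B$ directly from a relative incidence locus; instead it base-changes all of the data $S, P, \mathcal{H}, \nabla$ along $E \hookrightarrow Q(B)$, runs the single-variety algorithm \autoref{typecompalg} on the generic fibre $C_{Q(B)} \subset S_{Q(B)}$, and then observes that the finitely many elements $d_1, \hdots, d_j \in Q(B)$ used in that computation are all defined over a Zariski open $B' \subset B$, so the computation spreads out and the type is genuinely constant on $B'$. Noetherian induction on $B \setminus B'$ then finishes. Your plan instead builds a constructible incidence locus $\mathcal{I} \subset B \times \mathcal{Y}$ and hopes to read off a partition of $B$ by ``fixing a point in the relevant component of $\mathcal{Y}_{m_0}$ along each connected locally closed piece.'' The first half of this is fine and is indeed a family version of the construction in \autoref{typecompalg}. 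The problem is the last step.

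The gap is that constructibility of the incidence relation does \emph{not} by itself imply that the type, as a function on $B(\mathbb{C})$, takes only finitely many values, nor that each value is attained on a constructible stratum you can compute. The type is an $\mathbf{H}_{S}(\mathbb{C})$-equivalence class, and a connected component of $\mathcal{Y}_{m_0}$ can contain domains of infinitely many distinct types: the over-parametrization in \autoref{wspdomainoverparamlem} uses Hilbert schemes for the unipotent part, and a one-parameter family of (say) abelian unipotent subgroups of fixed dimension generically consists of mutually non-conjugate groups. So ``fixing a point in the relevant component'' selects only one type among many, and nothing in your argument shows that the fibre of $\mathcal{I}$ over a connected locally closed piece of $B$ stays inside one $\mathbf{H}_{S}(\mathbb{C})$-orbit. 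Your appeal to \autoref{lemma432} is circular here: that lemma extracts a finite subcover from a \emph{given} countable covering by constructible sets, but you have no covering to feed it --- producing the finitely many types that actually occur is exactly the thing to be proven, and a constructible equivalence relation on a Noetherian variety can have uncountably many classes (consider the diagonal). What is missing is precisely the openness statement that the paper obtains from the function-field/spreading-out argument (or, equivalently, from the constancy of algebraic monodromy groups on a dense open subset of $B$, as in \autoref{loctopfiblem} and \autoref{lemmapre}): the type of the generic fibre must extend over a dense open $B' \subset B$, and only then does Noetherian induction give a finite constructible partition. I would keep your incidence-locus computation as the fibrewise engine, but replace the final partitioning step by the base change to $Q(B)$ and spreading out exactly as in the paper.
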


\begin{proof}
We may assume $B$ is irreducible. The basic idea is that we can carry out the previous algorithm over the function field $Q(B)$ of $B$ to compute the type over a Zariski open subset $B' \subset B$, and then applying the argument to the family over $B \setminus B'$ we obtain the result by Noetherian induction. Indeed, after base-changing all the objects $S, P, \mathcal{V}, \nabla$, etc., along $E \hookrightarrow Q(B)$ we may apply the same computational procedure appearing in \autoref{typecompalg} to $Y = C_{Q(B)}$ viewed as a subvariety of $S_{Q(B)}$. Applying the algorithm in \autoref{typecompalg} for this $Y$ requires finitely many elements $d_{1}, \hdots, d_{j} \in Q(B)$, so we may then interpret the computation as taking place over an open locus $B' \subset B$ where $d_{1}, \hdots, d_{j}$ are defined, and then this computation computes the type of the fibres of $g$ above $B'$. Computing recursively gives the result. 
\end{proof}

\begin{cor}
\label{infsecfamiliestype}
Work under the same assumptions as \autoref{typecompalg}. Then there exists a non-terminating algorithm that outputs an infinite sequence $\{ g_{i} : C_{i} \to B_{i} \}_{i=1}^{\infty}$ of families of closed irreducible subvarieties of $S$ with the following two properties:
\begin{itemize}
\item[(1)] every irreducible subvariety of $S$ appears exactly once at a fibre of some $g_{i}$; and
\item[(2)] for each $i$, the fibres of $g_{i}$ all have the same type.
\end{itemize}
\end{cor}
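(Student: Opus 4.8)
The plan is to reduce \autoref{infsecfamiliestype} to two ingredients that are already available: \textbf{(a)} an effective enumeration of \emph{all} irreducible subvarieties of $S$ by countably many finite-type families, in which each subvariety occurs as a fibre exactly once; and \textbf{(b)} the type-stratification algorithm \autoref{typestratprop}. Given these, the non-terminating algorithm is simply to produce the families from (a) one at a time, feed each into (b) to split it into finitely many subfamilies each having constant type along its fibres, and emit the results; looping forever over (a) yields the infinite sequence $\{ g_i : C_i \to B_i \}_{i=1}^{\infty}$. Property (2) then holds by construction of (b), and property (1) follows because both the enumeration in (a) and the refinement in (b) are \emph{partitions} of the set of subvarieties, so nothing is created or duplicated.

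For ingredient (a), I would fix a projective compactification $S \subset \overline{S}$ (available from the computational model of \S\ref{compmodelsec}) and a computable enumeration $P_{1}, P_{2}, \dots$ of Hilbert polynomials. For each $m$ one computes the Hilbert scheme $\mathrm{Hilb}^{P_{m}}(\overline{S})$ together with its universal family, and then carves out the constructible locus $H_{m}$ of those points $[\overline{Y}]$ whose corresponding closed subscheme $\overline{Y}$ is geometrically integral and meets $S$ --- both of these are computable constructible conditions. Restricting the universal family over $H_{m}$ and forming its fibre product with $S$ over $\overline{S}$ gives, after decomposing $H_{m}$ into its locally closed strata to obtain honest (possibly disconnected) bases, a family in the sense of \autoref{deffam} whose fibre over $[\overline{Y}]$ is $\overline{Y} \cap S$; since $S$ is open in $\overline{S}$ this intersection is automatically integral, so the fibres are exactly the irreducible subvarieties $Y \subset S$ with $\overline{Y}$ of Hilbert polynomial $P_{m}$. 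Because $Y \mapsto \overline{Y}$ is injective (one recovers $Y = \overline{Y} \cap S$) and the Hilbert polynomial of $\overline{Y}$ partitions the irreducible subvarieties of $S$, each such $Y$ occurs exactly once among the fibres of all the families produced as $m$ ranges over $\mathbb{N}$. Every operation here --- computing Hilbert scheme components, cutting out the integral-and-meeting-$S$ locus, forming fibre products, stratifying constructible sets --- is effective in the computational model, using in particular \cite[Prop. 2.1]{urbanik2021sets}.

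Once (a) is in hand, I would apply \autoref{typestratprop} to each of these families to split its base into finitely many locally closed strata with constant fibre type, which gives property (2); since this is again a partition of the base, property (1) is unaffected. Finally I would interleave the resulting countably many families into one sequence --- e.g., at stage $m$ output all families arising from the Hilbert polynomials $P_{1}, \dots, P_{m}$ that have not yet been emitted --- and the loop over $m = 1, 2, \dots$ is the desired non-terminating algorithm. The only genuinely substantive point, and the place where I expect the real work to sit, is the effectivity in step (a): that one can actually compute the relevant components of $\mathrm{Hilb}^{P_{m}}(\overline{S})$ and the locus of geometrically integral fibres, and produce explicit models for the induced families over the strata. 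This is standard effective algebraic geometry rather than a new obstacle; granting it, the remainder --- in particular the ``exactly once'' bookkeeping --- is formal, precisely because every refinement used along the way (the Hilbert-polynomial decomposition, the locally closed stratifications, and the type stratification of \autoref{typestratprop}) is a partition.
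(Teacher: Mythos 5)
Your proposal is correct and takes essentially the same route as the paper: enumerate all irreducible subvarieties of $S$ via (effective) Hilbert schemes of a projective compactification, then refine each resulting family by the type stratification of \autoref{typestratprop}. The paper simply delegates the first step to \cite{lella2012computable} and \cite[\S5.3]{urbanik2021sets} rather than spelling out the Hilbert-polynomial bookkeeping as you do, but the argument is the same.
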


\begin{proof}
By the theory developed in \cite{lella2012computable} it is possible to compute components of Hilbert schemes of projective algebraic varieties together with the universal family over them. This can be used to compute a sequence $\{ g_{i} : C_{i} \to B_{i} \}_{i=1}^{\infty}$ of families of subvarieties of a fixed quasi-projective variety $S$ such that every irreducible subvariety of $S$ occurs as a fibre; cf. \cite[\S5.3]{urbanik2021sets}. One then applies \autoref{typestratprop} to refine this sequence by stratifying the bases $B_{i}$ by the type of the fibre. 
\end{proof}

\begin{prop}
\label{terribletorsorsearch}
Let $R$ and $T$ be as in \autoref{intbecometypprop}. Then there exists an algorithm that computes:
\begin{itemize}
\item[(A)] an integer $n \geq 0$, and a flat locally free subsystem $\mathcal{F}_{N} \subset \restr{\left( \bigoplus_{0 \leq a, b \leq n} \mathcal{V}^{\otimes a} \otimes (\mathcal{V}^{*})^{\otimes b} \right)}{T}$;
\item[(B)] an algebraic group $N$; and
\item[(C)] a family $g : C \to B$ of irreducible subvarieties of $T$, with $B$ possibly disconnected;
\end{itemize}
such that
\begin{itemize}
\item[(i)] for each fibre $C_{b}$ of $g$ the sections of $\restr{\mathcal{F}_{N}}{C_{b}}$ are flat;
\item[(ii)] at each point $s \in C_{b}$, the group stabilizing all lines in $\mathcal{F}_{N,s}$ is abstractly isomorphic to $N$; and
\item[(iii)] each germ $U$ as in \autoref{intbecometypprop} lies inside an $N$-subtorsor $P'$ of some $\restr{P}{C_{b}}$ which is preserved by the connection $\nabla$, and such that the inequality in \autoref{intbecometypprop}(ii) holds if $P_{(x,y)}$ is replaced by $P'$.
\end{itemize}
\end{prop}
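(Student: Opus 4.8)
The plan is to reduce \Cref{terribletorsorsearch} to \Cref{intbecometypprop} by making the group $N = \mathbf{N}_R$, the system $\mathcal{F}_N$, and the family $g : C \to B$ of \Cref{intbecometypprop} effective. Recall that \Cref{intbecometypprop} already produces a $\mathbb{Q}$-normal subgroup $\mathbf{N}_R \subset \mathbf{H}_T$ together with the family of weakly special subvarieties $Y_{(x,y)}$; so what remains is to (a) identify which $\mathbb{Q}$-normal subgroup of $\mathbf{H}_T$ is the correct one algorithmically, (b) realize the "monodromy orbit data" cutting out $\mathbf{N}_R$ as a flat subsystem $\mathcal{F}_N$ of a tensor bundle using Chevalley's \Cref{Chevalley}, and (c) assemble the $Y_{(x,y)}$ into the family $g$ effectively. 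I would carry this out as follows.

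First I would make precise the search over $\mathbb{Q}$-normal subgroups of $\mathbf{H}_T$. Since $T$ is the Zariski closure of the image of a component $R \subset \mathcal{Z}(f,e)$, and $\mathcal{Z}(f,e)$ is computable by \Cref{computeZfdlem} (equivalently \autoref{nondenseA}), we may compute $T$ and, applying the algorithm behind \Cref{typecompalg} (or directly working over $Q(T)$ as in \Cref{typestratprop}), compute $\mathbf{H}_T$ as a subgroup of $\GL(\mathcal{H}_{s_0})$ via parallel transport. The normal $\mathbb{Q}$-subgroups of $\mathbf{H}_T$ form a computable finite set: for each such $N$ one constructs, exactly as in the proof of \Cref{intbecometypprop} (which mirrors \Cref{rigidprop}), the finite cover $T_N \to T$, the period map $\Xi_N : T_N \to I_N$ associated to the quotient datum $(\mathbf{H}_T, D_T)/N$, and hence the locus $\mathcal{Z}_N(f,e) \subset Q$ — all of which are computable by \Cref{compconnecprop}, \Cref{computeHodgebundle}, \Cref{typestratprop}, and \Cref{computeZfdlem}. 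One then tests membership $R \subset \mathcal{Z}_N(f,e)$ (a computable containment of constructible sets) to build the set $\mathcal{C}$ of \Cref{intbecometypprop}, and selects $\mathbf{N}_R = N$ to be the minimal-dimensional element of $\mathcal{C}$ for which the generic point of $R$ lands in a weakly special of monodromy exactly $N$; minimality of dimension singles it out, and this is checked by running \autoref{typecompalg} on a generic fibre of the family $g$ constructed from $\mathcal{Z}_N(f,e)$. This yields items (B) and (C), with the inequality in (iii) being exactly \Cref{intbecometypprop}(ii).

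Next I would produce $\mathcal{F}_N$ in (A). Apply Chevalley's \Cref{Chevalley} to the inclusion $\mathbf{N}_R \subset \GL(\mathcal{H}_{s_0})$: since $\mathbf{N}_R$ need not be reductive we obtain a \emph{line} $L_N$ in some standard tensor construction $S(\mathcal{H}_{s_0}) = \bigoplus_{a,b \le n} \mathcal{H}_{s_0}^{\otimes a} \otimes (\mathcal{H}_{s_0}^*)^{\otimes b}$ (for an explicit $n$ coming from the proof of Chevalley, which is effective — it is linear-algebraic once $\mathbf{N}_R$ is given by equations) such that $\mathbf{N}_R$ is the stabilizer of $L_N$. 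Because $\mathbf{N}_R$ is $\mathbb{Q}$-normal in $\mathbf{H}_T$, the $\mathbf{H}_T$-orbit of $L_N$ is finite, and parallel transporting $L_N$ around $T$ — equivalently, taking the sub-local-system of $\bigoplus_{a,b\le n}\mathbb{V}^{a,b}$ generated by the monodromy-orbit of $L_N$ — gives a flat locally free subsystem $\mathcal{F}_N \subset \restr{(\bigoplus_{a,b\le n}\mathcal{H}^{\otimes a}\otimes(\mathcal{H}^*)^{\otimes b})}{T}$, which is computable from $(\mathcal{H},\nabla)$ and $\mathbf{H}_T$ using the tensor-bundle constructions of \cite[Prop. 2.1]{urbanik2021sets} and \Cref{constrcatequiv}. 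Its restriction to any fibre $C_b$ (whose algebraic monodromy is contained in $\mathbf{N}_R$) is flat, giving (i); and by construction the pointwise stabilizer of all lines in $\mathcal{F}_{N,s}$ is $\mathbf{N}_R \cong N$, giving (ii). Finally, (iii) is precisely \Cref{intbecometypprop}(i)–(ii) transported into the tensor picture: each germ $U$ lies in the $\mathbf{N}_R$-torsor $P_{(x,y)}$ above $Y_{(x,y)} = C_b$, this torsor is $\nabla$-invariant since $Y_{(x,y)}$ is weakly special with monodromy in $\mathbf{N}_R$, and the inequality carries over verbatim with $P' = P_{(x,y)}$.

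The main obstacle I anticipate is step (a): pinning down $\mathbf{N}_R$ algorithmically rather than just asserting its existence. The proof of \Cref{intbecometypprop} defines $\mathbf{N}_R$ via a generic point $(x_0,y_0) \in R^{\mathrm{irr}} \setminus A$, where $A$ is a countable union of closed loci (the Hodge locus of $T$ together with the $\mathcal{Z}_N(f,e)$ for $N \notin \mathcal{C}$); one cannot literally enumerate a point avoiding a countable union. The resolution is that $\mathbf{N}_R$ is the \emph{smallest} $N$ (in dimension, then refining ties by the partial order on types via \autoref{typecompalg}) with $R \subset \mathcal{Z}_N(f,e)$: the argument in \Cref{intbecometypprop} shows that if $N' \in \mathcal{C}$ had $\dim N' < \dim N$ then $(x_0,y_0) \in \mathcal{Z}_{N'}(f,e)$, contradicting genericity — so minimality in $\mathcal{C}$ characterizes $\mathbf{N}_R$ without reference to a generic point, and $\mathcal{C}$ is a finite computable set. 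The only remaining care is that the containments $R \subset \mathcal{Z}_N(f,e)$ are tested on the correct component $R$ and that ties in the partial order are broken consistently; both are handled by working over $Q(T)$ and invoking \Cref{typestratprop} as in \autoref{typecompalg}. Everything else is routine bookkeeping with the computational models of \S\ref{finalsection:eff}.
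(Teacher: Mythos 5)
Your plan tries to compute $\mathbf{N}_R$ \emph{directly} by first computing $\mathbf{H}_T$ as a $\mathbb{Q}$-subgroup of $\GL(\mathcal{H}_{s_0})$, enumerating its normal $\mathbb{Q}$-subgroups, testing membership $R\subset\mathcal{Z}_N(f,e)$ for each, and selecting the minimal one. The paper instead runs a ``search-and-test'' procedure: it enumerates candidate triples $(g, \mathcal{F}_g, N_g)$ from a countable list (families $g$ of constant type via \autoref{infsecfamiliestype}, plus flat submodules $\mathcal{F}_g$ of tensor powers), checks whether each candidate satisfies (i)--(iii) via \autoref{computeZfdlem}, and terminates once one works, with termination guaranteed purely by the \emph{existence} assertion of \autoref{intbecometypprop}. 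These are genuinely different structures of argument, and yours has two gaps that the paper's route is specifically designed to avoid.

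First, the claim that ``the normal $\mathbb{Q}$-subgroups of $\mathbf{H}_T$ form a computable finite set'' is false in the mixed setting. The paper explicitly flags this: ``semisimplicity of monodromy is often used (usually to say that a group, up to conjugation, has only finitely many subgroups). This is not true in the general mixed case.'' Here $\mathbf{H}_T$ has a nontrivial unipotent radical (e.g.\ a subgroup of $\Sp_{2g}\ltimes\mathbb{G}_a^{2gn}$ in the orbit-closure case), and its normal $\mathbb{Q}$-subgroups can form a positive-dimensional moduli, so a finite enumeration followed by a dimension-minimization argument is not available. Second, and more fundamentally, you assume one can compute $\mathbf{H}_T$ as an explicit $\mathbb{Q}$-subgroup via parallel transport. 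The paper's machinery (\autoref{typecompalg}, \autoref{typestratprop}) only computes the \emph{type} of $T$, i.e.\ the $\mathbf{H}_S(\mathbb{C})$-equivalence class of $\ch{D}^0_T$; this determines $\mathbf{H}_T(\mathbb{C})$ up to conjugacy but not its $\mathbb{Q}$-form, which encodes arithmetic (periods). Nowhere does the paper claim to compute $\mathbf{H}_T$ directly over $\mathbb{Q}$, and it is precisely to circumvent this that the proof searches over data $(g,\mathcal{F}_g,N_g)$ and reads $N_g$ off from the candidate flat submodule $\mathcal{F}_g$ (as the stabilizer of its fibres), rather than starting from the group. Relatedly, the paper's $\mathcal{F}_N$ is characterized as a maximal $\mathbf{N}_R$-invariant flat submodule, not as the $\mathbf{H}_T$-span of a Chevalley line; your Chevalley-line construction does not obviously produce a system whose \emph{pointwise line stabilizer} equals $\mathbf{N}_R$ as required by (ii), since the $\mathbf{H}_T$-span could have a strictly larger stabilizer.

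What is correct in your proposal is the overall reduction: items (B), (C) and the inequality in (iii) are indeed inherited from \autoref{intbecometypprop}, and you correctly identify that the sticking point is making $\mathbf{N}_R$ and $\mathcal{F}_N$ effectively computable. But your resolution of that sticking point relies on a finiteness that fails and a computation ($\mathbf{H}_T$ over $\mathbb{Q}$) that is not available; the paper's brute-force ``search for the triple, test each candidate, terminate by existence'' strategy is the device that replaces both.
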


\begin{proof}
The basic idea is that the statement of \autoref{intbecometypprop} guarantees data as in the statement exists (there one has $N = \mathbf{N}_{R}$). It is possible to computationally enumerate a countable set of possibilities for this data (i.e., to search for it), and test when one has found data satisfies the hypotheses in the statement. Therefore an algorithm exists, which consists simply of searching for data satisfying the required properties and and terminating when it has been found.

Let us give some additional details. Because $\mathcal{Z}(f,e)$ is defined over a number field (because $f, P, \nabla$, etc., are), so are the varieties $R$ and $T$. Then for each tensor power $\restr{\mathcal{V}^{\otimes a} \otimes (\mathcal{V}^{*})^{\otimes b}}{T}$, maximal locally free submodules $\mathcal{F}_{N}$ composed of $\mathbf{N}_{R}$-invariants are also defined over a number field: such submodules are characterized by the fact that they restrict to modules of flat sections upon restricting a generic fibre of the family $g$ appearing in \autoref{intbecometypprop}, but this condition can be defined over a number field as a consequence of \autoref{restfamlem} below. Since $\mathbf{N}_{R}$ is determined by its invariant subspaces, computing such a submodule also lets one compute the group $\mathbf{N}_{R}$.

Now by computationally enumerating families $g : C \to B$ of constant type with $C \to T$ dominant using \autoref{infsecfamiliestype} (applying \autoref{infsecfamiliestype} with $S = T$), and submodules $\mathcal{F}_{g}$ of tensor powers of $\restr{\mathcal{V}}{T}$ which restrict to modules of flat sections over generic points of $g$, one obtains a computational enumeration of pairs $(g, \mathcal{F}_{g}, N_{g})$, with $N_{g}$ the abstract group stabilizing fibres of $\mathcal{F}_{g}$, which potentially satisfy (i), (ii) and (iii) above. To check whether this is in fact the case, one uses $\mathcal{F}_{g}$ to construct a family $f_{g}$ from $f$ obtained by intersecting the fibres of $f$ with the $N_{g}$-torsors above the fibres of $g$, checks that $\mathcal{Z}(f_{g}, e)$ surjects onto $R$. One then constructs the locus in $\mathcal{Z}(f_{g}, e)$ where the inequality in \autoref{intbecometypprop}(ii) holds, and checks that it maps to a dense subset of $R$.
\end{proof}

\begin{rem}
\label{maketerriblesearcheffrem}
As our goal here is merely to show the existence of algorithms for computing atypical loci defined by differential conditions (e.g., orbit closures), and not implementing them, we have not concerned ourselves with the efficiency of the algorithms we propose. However in practice one should be able to improve the ``search procedure'' in \autoref{terribletorsorsearch} in two ways:
\begin{itemize}
\item[(i)] The intersections parameterized by $R$ can be used directly to find flat submodules $\mathcal{F}_{\mathbf{N}_{R}}$ associated to $\mathbf{N}_{R}$: the basic idea is to adopt a similar approach to the proof of the Ax-Schanuel theorem \autoref{thm:newAS} appearing in {\cite[Thm. 3.6]{2021arXiv210203384B}}, which is, as noticed for instance by Dogra \cite[Thm. 4]{2022arXiv220604304D}, effective. Ultimately the proof of the Ax-Schanuel theorem constructs a Lie subalgebra of the Lie algebra of $\mathbf{H}_{S}$ whose associated algebraic subgroup gives a subtorsor where the atypical intersection of \autoref{thm:newAS} occurs. Carrying this out in families should simplify the process of finding $\mathbf{N}_{R}$, $\mathcal{F}_{\mathbf{N}_{R}}$, and its associated torsors. 
\item[(ii)] It should be possible to construct the family $g$, at least over a Zariski open subset of $C$ which has dense image in $T$, using $\mathcal{F}_{\mathbf{N}_{R}}$ directly. The point is that the variation of mixed Hodge structure from which $g$ is constructed in \autoref{intbecometypprop} is naturally associated to a quotient $\mathcal{Q}$ of some tensor power of $\mathcal{V}$ by a $\mathbf{N}_{R}$-invariant subbundle, and given this quotient $\mathcal{Q}$ one can understand the fibres of $g$ as maximal loci over which the Hodge flag on $\mathcal{Q}$ is preserved by the connection. 
\end{itemize}
\end{rem}

\begin{lem}
\label{restfamlem}
Up to replacing the base $B$ with a disjoint union of strata, the family $g$ appearing in \autoref{intbecometypprop} admits a model over a number field if all the data $S, P, \nabla, f, R, T$, etc. are defined over a number field.
\end{lem}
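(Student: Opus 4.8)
The plan is to trace through the construction of $g$ given in the proof of \autoref{intbecometypprop} (which itself rests on \autoref{rigidprop}) and to check that every geometric ingredient entering it descends to a number field. Since $g$, its total space $C$, its base $B$ and the maps $C\to T$, $C\to B$ are all of finite type, it suffices to produce a model over $\Qbar$: the defining data then involves only finitely many elements of $\Qbar$, which generate a number field. Several of the intermediate steps---the constructible stratifications used in the Hilbert-scheme packaging of \autoref{imghasfamlem} and in the stratification by type of \autoref{typestratprop}, and the passage from $E$ to a finite extension needed below---force us to replace $B$ by a disjoint union of strata, which is exactly the slack the statement allows.

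The only genuinely non-formal point is the descent of the group-theoretic data. First, the algebraic monodromy group $\mathbf{H}_{T}$, in the de Rham realization inside $\GL(\mathcal{H}_{s_{0}})$ used throughout \S\ref{sectionGbund}, coincides with the differential Galois group $\Gal(\nabla|_{T})$ by \autoref{galequalsmono} (applied to $T$); since $(\mathcal{H},\nabla)|_{T}$ is defined over $E$ and has regular singularities, Picard--Vessiot theory exhibits $\Gal(\nabla|_{T})$ as the base change to $\C$ of a group defined over a number field. Next, the normal subgroup $\mathbf{N}_{R}\trianglelefteq\mathbf{H}_{T}$ produced in \autoref{intbecometypprop} is the subgroup cut out by the atypical intersections parameterized by $R$; by the structure of the Bl\'azquez-Sanz--Casale--Freitag--Nagloo proof of \autoref{thm:newAS} (the construction of a Lie subalgebra of $\Lie \mathbf{H}_{S}$ and of its algebraic envelope, carried out over the field of definition of $\nabla$ and effective, compare \autoref{terribletorsorsearch}(i) and \cite[Thm. 4]{2022arXiv220604304D}) one gets that $\mathbf{N}_{R}$ is likewise defined over a number field. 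Consequently the flat subbundle $\mathcal{F}_{\mathbf{N}_{R}}=\big(\bigoplus_{a,b}\mathcal{H}^{a,b}\big)^{\mathbf{N}_{R}}|_{T}$ of $\mathbf{N}_{R}$-invariants is defined over a number field.

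Granting this, the remaining steps are formal. Using that weakly special subvarieties are exactly the $\nabla$-special ones (\autoref{galequalsmono}), the fibres of $g$ are the subvarieties of $T$ that are maximal for the property ``$\Gal(\nabla|_{\,\cdot\,})$ is conjugate into $\mathbf{N}_{R}$'', a property detected algebraically over a number field through $\mathcal{F}_{\mathbf{N}_{R}}$; in particular we never need the generic Mumford--Tate group of $T$. Concretely: the locus $\mathcal{Z}_{\mathbf{N}_{R}}(f,e)$ of \autoref{intbecometypprop} is built from $(P,\nabla,f)$ and $\mathcal{F}_{\mathbf{N}_{R}}$ by the differential-algebraic recipe of \autoref{nondenseA}, hence descends; the finite \'etale cover $T_{\mathbf{N}_{R}}\to T$ trivializing the relevant monodromy (canonical up to a further finite cover, which does not affect algebraic monodromy) is defined over a number field; the quotient period map is encoded as the maximal locus in $T_{\mathbf{N}_{R}}$ over which the induced flag on an $\mathbf{N}_{R}$-quotient bundle is $\nabla$-horizontal, again an algebraic condition over that field; the proper compactification and Stein factorization used in \autoref{rigidprop} are canonical operations on a proper morphism over a number field; and the final packaging into $g:C\to B$ via Hilbert schemes and a constructible stratification is performed over a number field, the stratification absorbing the finite ambiguity of choices and the Galois orbit of $\mathbf{N}_{R}$ inside its $\mathbf{H}_{S}(\C)$-conjugacy class.

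The main obstacle is the descent of $\mathbf{N}_{R}$, equivalently of the subbundle $\mathcal{F}_{\mathbf{N}_{R}}$ it cuts out. A priori $\mathcal{F}_{\mathbf{N}_{R}}$ is only a flat $\C$-subbundle, and a general flat subbundle of a connection defined over $E$ need not descend to $\Qbar$---already for a trivial local system one obtains a positive-dimensional moduli of flat subbundles. What rescues us is that $\mathbf{N}_{R}$ is not an arbitrary $\Q$-subgroup of the monodromy but one produced by the Ax--Schanuel mechanism, whose output lies over the field of definition of $\nabla$; the Hodge-theoretic counterpart---that weakly special subvarieties of $(S,\mathbb{V})$ are defined over a number field whenever the algebraic de Rham datum $(\mathcal{H},W_{\bullet},F^{\bullet},\nabla)$ is, because the sub-mixed-Hodge-structures of bounded tensor rank are rigid---would give the same conclusion, but is delicate to make precise in the mixed case where semisimplicity of monodromy is unavailable, so I would carry the argument through on the differential-algebraic side and invoke the effectivity of the Bl\'azquez-Sanz--Casale--Freitag--Nagloo construction directly.
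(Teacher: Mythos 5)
Your proposal takes a genuinely different route from the paper's proof, and the route has a gap at its central step. The paper never attempts to descend the group $\mathbf{N}_{R}$ directly: it fixes a larger family $\overline{g}\colon\overline{C}\to\overline{B}$ over a number field by spreading out (or via Hilbert schemes), invokes \autoref{typestratprop} to stratify $\overline{B}$ by \emph{type} over a number field, and then cuts out the fibres of $g$ as the maximal fibres over the strata whose type is contained in $\mathcal{C}_{R}$. The whole point of working with types --- $\mathbf{H}_{S}(\mathbb{C})$-equivalence classes of subvarieties of $\ch{D}^{0}_{T}$ --- rather than with a specific subgroup of $\GL(\mathcal{H}_{s_0})$, is precisely that the type condition is a discrete invariant that can be tested over the function field of the base of a family (that is the content of \autoref{typestratprop}), so one never needs to produce a $\Qbar$-rational form of $\mathbf{N}_{R}$ inside $\GL(\mathcal{H}_{s_0})$.

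Your argument instead pivots on the claim that the de Rham realization of $\mathbf{N}_{R}$ in $\GL(\mathcal{H}_{s_0})$, equivalently the flat subbundle $\mathcal{F}_{\mathbf{N}_{R}}$ of $\mathbf{N}_{R}$-invariant tensors, is defined over a number field. You correctly flag this as ``the main obstacle,'' but the resolution offered --- that the Bl\'azquez-Sanz--Casale--Freitag--Nagloo/Dogra construction in \autoref{thm:newAS} is ``carried out over the field of definition of $\nabla$'' --- is asserted, not argued. That construction, applied at a point $(x,y)\in R$, takes as input the transcendental leaf $\mathcal{L}_{x}$, and it is not immediate that the algebraic envelope it produces is $\Qbar$-rational. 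Your fallback, that weakly special subvarieties of a $\Qbar$-defined $\Z$VMHS are automatically $\Qbar$-rational, you yourself concede to be ``delicate to make precise in the mixed case,'' which is exactly the setting of the lemma. Absent a rigorous argument here the proof is circular: $\mathbf{N}_{R}$ descends because the fibres of $g$ do, but that is the thing being proved. To repair the argument you would essentially have to reprove \autoref{typestratprop} in group-theoretic disguise (a function-field argument showing the stabilizer/envelope computation yields a $\Qbar$-rational answer over a dense open of $R$), at which point you would be reproducing the paper's mechanism.
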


\begin{proof}
Fix some family $\overline{g} : \overline{C} \to \overline{B}$ of subvarieties of $T$, containing all fibres of $g$ among its fibres, such that $\overline{g}$ is defined over a number field: this can always be done, for instance, by spreading out. Then consider the constructible locus $B' \subset \overline{B}$ of points $\overline{b}$ where the fibre $\overline{g}^{-1}(\overline{b})$ has the type contained in the type $\mathcal{C}_{R}$ defined by a $\mathbf{N}_{R}(\mathbb{C})$-orbit in $\ch{D}^{0}_{T}$: we have seen in \autoref{typestratprop} that this is a condition defined over a number field. The fibres of $g$ are then exactly the fibres above $B'$ which are maximal (not contained in another fibre above $B'$), and this is again a condition defined over a number field.
\end{proof}

\subsection{Atypical orbit closures}
\label{sec:compoforbitclosures}

In this section we give our algorithm for computing atypical orbit closures. Throughout we fix an ambient orbit closure $\mathcal{M} = \Omega \mathcal{M}_{g}(\kappa)$, and write $Z_{\mathcal{M}}$ for the variety defined in \autoref{orbitoverparamlem}. We assume we have already computed the triple $(\mathcal{V}, W_{\bullet}, F^{\bullet}, \nabla)$ consisting of the Hodge bundle $\mathcal{V}$, the weight and Hodge filtrations $W_{\bullet}$ and $F^{\bullet}$, and the Gauss-Manin connection $\nabla$, and moreover that we have also computed the torsor $P$ with its $\mathbf{H}_{S}$-equivariant connection (also denoted $\nabla$). That we may assume this data is available is justified by \autoref{computeHodgebundle} and \S\ref{comporbitinputdatasec} below.

\begin{thm}
\label{mainorbitclosurealg}
For each stratum $S = \Omega \mathcal{M}_{g}(\kappa)$, there exists an algorithm which computes the set of all maximal atypical orbit closures $\mathcal{N} \subset S$.
\end{thm}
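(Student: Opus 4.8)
\textbf{Proof plan for \autoref{mainorbitclosurealg}.}

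The plan is to turn the proof of \autoref{orbitclosurefinthm} (given in \S\ref{secitonproofefw}) into an explicit construction, using the algorithmic tools of \S\ref{finalsection:eff}. First I would recall that an atypical suborbit closure $\mathcal{N} \subset S$ arises as a component of $(r \times \nu)^{-1}(Z_{\mathcal{N}}) \cap \mathcal{L}$ for an appropriate flat leaf $\mathcal{L}$ of $P$, where $Z_{\mathcal{N}} = E_{\mathcal{N}} \times \ch{D}^{0}_{\mathcal{N}}$ and the atypicality inequality of \autoref{ineqinsidebundle} holds. By \autoref{orbitoverparamlem} (whose proof is effective: the Grassmannian parameterizing the $E_{\mathcal{N}} \subset E_{\mathcal{M}}$ is explicit, and the family over-parameterizing weakly special subdomains of $\ch{D}^0$ is the one of \autoref{gaolemma}, which is built from the explicit data appearing in \cite[Lem. 12.3]{zbMATH06801925}), I can compute the family $f : \mathcal{Z} \to \mathcal{Y}$ of subvarieties of $P$, pulled back from $Z_{\mathcal{M}}$ along $r \times \nu$, together with the decomposition $f^{(j)}$ by fibre dimension. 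Fixing the dimension $e$ of $\mathcal{N}$ and the dimension $j$ of the inverse image torsor, the atypical locus of interest sits inside $\mathcal{Z}(f^{(j)}, e)$, which is computable by \autoref{computeZfdlem}.

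Next I would run the effective analysis of $\mathcal{Z}(f^{(j)},e)$ provided by \autoref{intbecometypprop} and its algorithmic refinement \autoref{terribletorsorsearch}: for each component $R$ of $\mathcal{Z}(f^{(j)},e)$ with Zariski closure of image $T \subset S$, the search procedure of \autoref{terribletorsorsearch} computes a normal subgroup $N$, a flat subsystem $\mathcal{F}_N$ of tensors witnessing it, and a family $g : C \to B$ of subvarieties of $T$ such that every germ arising from $R$ maps into a fibre of $g$. This realizes, effectively, the conclusion of \autoref{protogeoZP} applied in the orbit-closure setting via \autoref{rigidprop}: all atypical suborbit closures of $\mathcal{M}$ project into the fibres of finitely many computable families $\{h_i : C_i \to B_i\}$ of weakly special subvarieties of $\mathcal{M}$, each $C_i \to \mathcal{M}$ quasi-finite. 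Then \autoref{orbitclnofamlem} guarantees that only finitely many fibres of each $h_i$ contain a maximal suborbit closure; but to make this effective I would instead take Zariski closures: the Zariski closure of all maximal atypical orbit closures lying in fibres of a fixed $h_i$ is, by \autoref{remkzarcl} (combining \autoref{isolation} and \autoref{thmstrorbit}), a finite union of orbit closures, and this closure is contained in (and computable from) the union of those fibres of $h_i$ of positive relative dimension together with the ``accumulation'' locus cut out inside $C_i$ by the constructibility argument in the proof of \autoref{orbitclnofamlem}. Iterating this Zariski-closure step on the components that are themselves orbit closures (which terminates by Noetherian descending dimension), one extracts a finite list of candidate orbit closures; each candidate is then checked for atypicality using the intersection-theoretic criterion \eqref{eq123}, which is a computation of dimensions of explicitly given subvarieties of $\Omega \mathcal{A}_{g,n}$, and for maximality by comparing with the candidates of strictly larger dimension on the list.

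The outputs of the various steps must finally be assembled: for each $e$ and $j$ one obtains finitely many families $h_i$, and the maximal atypical orbit closures of dimension $e$ are precisely the maximal (under inclusion) members of the finite candidate list across all these families that pass the atypicality and maximality tests; since $\dim \mathcal{M}$ bounds $e$, only finitely many $(e,j)$ need be considered, so the whole procedure terminates. The main obstacle I expect is step two, i.e. \autoref{terribletorsorsearch}: this is where one must certify that a searched-for pair $(N, \mathcal{F}_N, g)$ actually satisfies the conclusion of \autoref{intbecometypprop}, and the verification requires recomputing a modified family $f_g$ (intersecting fibres of $f$ with $N$-subtorsors over fibres of $g$), recomputing $\mathcal{Z}(f_g, e)$, and checking that the ``things become typical'' inequality \eqref{thingsbecometypineq} holds on a dense subset of $R$ --- a chain of constructible-set computations whose correctness hinges delicately on the compatibility of the Riemann--Hilbert and de Rham pictures established in \S\ref{constructionBT} and on the number-field descent of \autoref{restfamlem}. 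A secondary subtlety is ensuring that the Zariski-closure extraction in step three genuinely recovers the \emph{maximal} atypical orbit closures and not merely their union; this is handled by the Noetherian recursion on dimension together with the maximality test against the already-computed higher-dimensional candidates, but it requires some care to state the recursion so that it provably terminates with the correct output.
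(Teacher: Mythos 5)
Your first two steps—computing the family $f$ via \autoref{orbitoverparamlem} and \autoref{gaolemma}, computing $\mathcal{Z}(f^{(j)},e)$ by \autoref{computeZfdlem}, and applying \autoref{terribletorsorsearch} to get families $g_i : C_i \to B_i$ of weakly specials containing all atypical orbit closures—match the paper's steps 1 and 2 (in \S\ref{sec:compoforbitclosures}). But the crucial third step of your plan does not work, and the paper handles this part of the argument in an entirely different way.

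You propose to ``take Zariski closures'' of the atypical orbit closures lying in each family, using \autoref{orbitclnofamlem} and \autoref{remkzarcl}, claiming that the closure is ``computable from ... the `accumulation' locus cut out inside $C_i$ by the constructibility argument in the proof of \autoref{orbitclnofamlem}.'' There is no such constructible accumulation locus. The proof of \autoref{orbitclnofamlem} is a proof by contradiction based on the isolation property \autoref{isolation}: one extracts a sequence of $T$-invariant measures, passes to a weakly convergent subsequence, chooses small enough analytic neighbourhoods, etc. Nothing in that argument produces an algebraic or even constructible locus inside $C_i$ that one could hand to a computer; the orbit closures whose existence/finiteness it certifies are never exhibited by the proof. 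The same applies to \autoref{remkzarcl} and \autoref{isolation} themselves: these are dynamical existence statements used in the paper only as \emph{a priori termination guarantees}, never as subroutines. Your sub-claim that you can ``iterate this Zariski-closure step on the components that are themselves orbit closures'' also presupposes a decision procedure for when a given subvariety is an orbit closure, which you never supply.

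The paper's actual algorithm replaces that entire step with something genuinely computable. Rather than trying to extract the atypical orbit closures directly from the families $g_i$ (which would require making \autoref{orbitclnofamlem} effective), the paper encodes the three algebraic conditions of \autoref{mainthmalgfilip} in algebraic de Rham terms and over-parameterizes orbit closures that way: step 3 computes the endomorphism algebra of the relative Jacobian over $T_i$ (\autoref{compendalglem}) and its isotypic decomposition; step 4 refines $g_i$ to the sub-family where $\omega$ is an eigenform for the appropriate $\mathbb{M}_{\ell}(K)$-factor (justified by \autoref{isotypicsummandfixes}); step 5 further intersects with the locus that is locally linear for the developing map, by running \autoref{computeZfdlem} once more on a family of linear subspaces of $\mathcal{H}_{s_0}$. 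The key structural input is then \autoref{foundthemall}, which proves (via the flat linear functionals of \cite[Rem. 1.5]{zbMATH06575346} and local injectivity of $\operatorname{Dev}$) that every maximal atypical orbit closure appears as an \emph{isolated} fibre of one of these refined families—so the candidate list is literally the set of $e$-dimensional isolated components, already a finite computable set, with no Zariski-closure extraction needed. Finally, \autoref{decideiforbclosure} gives the missing decision procedure (a day/night search over torsion data and numerical period approximations) to prune the candidates to actual orbit closures. The no-intermediate-typicality results \autoref{nointtypprop} and \autoref{nointtypprop2} are what ensure the monodromy groups of the family fibres actually match those of the orbit closures they contain, which is necessary for the argument of \autoref{foundthemall} to apply.

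So: correct start, but the plan then tries to effectivize a dynamical lemma whose proof is non-constructive, and omits the de Rham/eigenform/linearity machinery and \autoref{foundthemall} that the paper uses in its place, as well as the orbit-closure recognition algorithm \autoref{decideiforbclosure}.
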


\subsubsection{Computing the input data}
\label{comporbitinputdatasec}

To start with, we will explain how we may compute the input data to \autoref{mainorbitclosurealg}. The family $f : \mathcal{Z} \to \mathcal{Y}$ we take to be the same family appearing in \S\ref{sectionoverpar} in the proof of finiteness of maximal atypical orbit closures.

Now as was explained in (\ref{eqmonodromy}) and \autoref{lemmamonodromstrata}, in this case the group $\mathbf{H}_{S}$ is nothing more than the group stabilizing the global symplectic form $F$, which means that $P \subset \mathbb{B}(\sheafhom(\mathcal{V}, \mathcal{V}_{s_{0}} \otimes_{E} \mathcal{O}_{S}))$ may be defined by 
\[ P = \{ (s,\varphi) : \hspace{0.5em} F(\varphi(v), \varphi(w)) = F(v,w) \hspace{0.5em} \textrm{ for all } v,w \in \mathcal{V}_{s} \hspace{0.5em} \} . \]
(In the above $\varphi \in \Hom(\mathcal{V}_{s}, \mathcal{V}_{s_{0}})$.) In our setting the form $F$ is simply given by the cup product. In \v{C}ech cohomology this can be computed by an explicit formula, given for instance in \cite[\href{https://stacks.math.columbia.edu/tag/01FP}{Section 01FP}]{stacks-project}. Note that \cite[\href{https://stacks.math.columbia.edu/tag/01FP}{Section 01FP}]{stacks-project} also explains the relationship with the wedge product of differential forms.

Using this formula and our \v{C}ech computation of $\mathcal{V}$ by computing explicit generators, one can compute $F$ as an algebraic map $F : \mathcal{V} \otimes_{\mathcal{O}_{S}} \mathcal{V} \to \mathcal{O}_{S}$ of locally free sheaves and then construct $P$ as above. 

\vspace{0.5em}

To construct $f$ we start by observing we can compute the map $r \times \nu$ appearing in \autoref{mapfromPsurj}. Need, both factors are constructed by viewing an element $(s, \varphi) \in P$ as a map $\varphi : \mathcal{V}_{s} \to \mathcal{V}_{s_{0}}$ and evaluating $\varphi$ on $\omega_{s}$ (in the case of $r$) and $F^{\bullet}_{s}$ (in the case of $\nu$). That the section $\omega$ of $\mathcal{V}_{\textrm{abs}}$ can be computed is immediate from the description of the computation of the algebraic de Rham cohomology, since it is naturally a section of $\Omega^{1}_{S}$ and therefore appears in the \v{C}ech cohomology calculation. We also saw in the proof of \autoref{computeHodgebundle} that we may compute the filtration.

Given this, constructing $f$ reduces to computing the families appearing in \autoref{gaolemma} in the case where $\mathcal{M} = \Omega \mathcal{M}_{g}(\kappa)$. This can either be done by following the over-parametrization construction in \S\ref{prelhodgedataconj}, or through a more careful classification of the relevant mixed Shimura data, as for instance in \cite{zbMATH06801925} and \cite{zbMATH07305885}.

\subsubsection{No Intermediate Typicality}

In this section we establish an important fact about the differential conditions that define atypical orbit closures. This will say, roughly, that atypical orbit closures $\mathcal{N} \subset \mathcal{M}$ are always atypical intersections inside any intermediate subvariety $\mathcal{N} \subsetneq Y \subset \mathcal{M}$ which satisfies $\mathbf{H}_{\mathcal{N}} \subsetneq \mathbf{H}_{Y}$. This will be used to ensure that the differential conditions we impose on orbit closures define loci with the correct monodromy groups.

The remainder of this subsection is devoted to the proof of the following proposition.

\begin{prop}
\label{nointtypprop}
Let $Y \subset \mathcal{M}$ be an irreducible algebraic subvariety containing an atypical orbit closure $\mathcal{N}$. Suppose that $\mathbf{N} \subset \mathbf{H}_{S}$ is a $\mathbb{Q}$-algebraic group containing $\mathbf{H}_{Y}$, and let $P_{Y} \subset P$ be a fixed algebraic $\mathbf{N}$-torsor above $\mathcal{M}$ containing a leaf $\mathcal{L}_{Y}$ arising from the $\mathbb{Q}$-local system $\restr{\sheafhom(\mathbb{V}, \mathbb{V}_{s_{0}})}{Y}$. 

Recall from \S\ref{rephsec} that $\mathcal{N}$ arises as the image in $\mathcal{M}$ of an irreducible component $C$ of the locus $(r \times \nu)^{-1}(Z_{\mathcal{N}}) \cap \mathcal{L}_{Y}$. Fix a component $I \subset (r \times \nu)^{-1}(Z_{\mathcal{N}})$ containing $C$. Suppose that
\begin{equation}
\label{typcondonC}
\dim I \geq \dim P_{Y} - \dim \mathcal{L}_{Y} + \dim \mathcal{N} .
\end{equation}
Then $\mathbf{H}_{\mathcal{N}} = \mathbf{N}$.
\end{prop}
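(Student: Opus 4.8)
The plan is to argue by contradiction: suppose $\mathbf{H}_{\mathcal{N}} \subsetneq \mathbf{N}$, and derive a violation of the assumed typicality inequality \eqref{typcondonC} by applying Ax-Schanuel (in the form of \autoref{cor:asloc}) inside the torsor $P_Y$. First I would set up the relevant geometry carefully. Since $\mathcal{N}$ is the image of a component $C$ of $(r\times\nu)^{-1}(Z_{\mathcal{N}}) \cap \mathcal{L}_Y$, and $C \subset I$, the leaf $\mathcal{L}_Y$ intersected with $I$ contains $C$; pulling back, $C$ sits inside $P_Y$ as a component of the intersection of $I \cap P_Y$ with the leaf $\mathcal{L}_Y$ of $P_Y$. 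The key numerical input is that $\mathcal{N}$ is \emph{atypical} relative to $\mathcal{M}$, which by \autoref{ineqinsidebundle} translates to $\codim_P \mathcal{N} < \codim_P (r\times\nu)^{-1}(Z_{\mathcal{N}}) + \codim_P \mathcal{M}$, i.e. $\dim I > \dim P - \dim \mathbf{H}_{\mathcal{M}} + \dim\mathcal{N} - \codim_P\mathcal{M}$ after rearranging, which one massages into a statement about $\dim(I \cap P_Y)$.

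The heart of the argument is a dimension count showing that $C$, viewed as a component of $(I \cap P_Y) \cap \mathcal{L}_Y$ inside $P_Y$, is an \emph{atypical} intersection for the $\mathbf{N}$-bundle $P_Y \to \mathcal{M}$ in the sense of \autoref{cor:asloc}. Concretely: the fibre of $P_Y$ over $\mathcal{M}$ has dimension $\dim\mathbf{N}$, the leaf $\mathcal{L}_Y$ has $\codim_{P_Y}\mathcal{L}_Y = \dim\mathbf{N}$, and $\dim C = \dim\mathcal{N}$ (since $C \to \mathcal{N}$ is generically finite, as $\mathcal{L}_Y$ maps with discrete fibres to $\mathcal{M}$). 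The hypothesis \eqref{typcondonC}, $\dim I \geq \dim P_Y - \dim\mathcal{L}_Y + \dim\mathcal{N}$, rewritten using $\dim\mathcal{L}_Y = \dim\mathcal{M}$ and $\dim P_Y = \dim\mathbf{N} + \dim\mathcal{M}$, gives $\dim I \geq \dim\mathbf{N} + \dim\mathcal{N}$, hence $\dim(I\cap P_Y) \geq \dim\mathbf{N} + \dim\mathcal{N}$ would \emph{not} be atypical — so instead I would use that $\dim I$ is actually \emph{strictly} larger than this when $\mathcal{N}$ is atypical, combined with \eqref{typcondonC}, to squeeze out that $C$ must be an atypical component. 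Then \autoref{cor:asloc} forces the projection $\mathcal{N}$ of $C$ into a strict weakly special (equivalently $\nabla$-special) subvariety $W \subsetneq \mathcal{M}$ of $P_Y$, i.e. with $\mathbf{H}_W \subsetneq \mathbf{N}$. But $\mathcal{N}$ is an orbit closure, hence weakly special in $\mathcal{M}$ by Filip's algebraicity (\autoref{thmstrorbit}) together with the characterization in \autoref{mainthmalgfilip}, so $\mathbf{H}_{\mathcal{N}}$ is itself the monodromy group of a weakly special; combining with maximality of $\mathcal{N}$ inside $W$ (or replacing $W$ by its weakly special sub-datum defined by $\mathcal{N}$) we conclude $\mathbf{H}_{\mathcal{N}} \subseteq \mathbf{H}_W \subsetneq \mathbf{N}$ — but this only reproves the assumption. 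The actual contradiction comes from the \emph{reverse} direction: if $\mathbf{H}_{\mathcal{N}} \subsetneq \mathbf{N}$ then $\mathcal{N}$ already lies in a proper $\nabla$-special subvariety $W$ of $P_Y$ with $\dim\mathbf{H}_W < \dim\mathbf{N}$, and one can re-run the intersection count inside the smaller torsor $P_W$ (the $\mathbf{H}_W$-torsor over $W$ containing the relevant leaf): there $C$ is a \emph{typical} intersection (by \eqref{typcondonC} applied with $\mathbf{N}$ replaced by $\mathbf{H}_W$, which still holds since the conditions imposed are the same and only the ambient dimension shrank), and typicality inside $P_W$ together with the definition of atypicality of $\mathcal{N}$ inside $\mathcal{M}$ — which asserts $\mathcal{N}$ has \emph{smaller than expected} codimension globally — yields a numerical contradiction via the telescoping identity $\codim_P \mathcal{N} = \codim_{P_W}\mathcal{N} + \codim_P P_W$ along $\mathcal{N} \subset W \subset \mathcal{M}$.

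The main obstacle I anticipate is bookkeeping the three nested codimension computations (inside $P$, inside $P_Y$, inside $P_W$) and checking that the hypothesis \eqref{typcondonC} genuinely propagates to the sub-torsor $P_W$ — this requires knowing that passing from $\mathbf{N}$ to $\mathbf{H}_W$ does not destroy the inequality, which should follow because $\dim P_Y - \dim P_W = \dim\mathbf{N} - \dim\mathbf{H}_W$ is exactly matched by $\dim\mathcal{L}_Y - \dim\mathcal{L}_W$ (both leaves have dimension equal to the base dimension, and the base only shrinks by $\codim_{\mathcal{M}} W$ while... wait, no — $\mathcal{L}_W$ lies over $W$ not $\mathcal{M}$), so in fact a compensating term $\codim_{\mathcal{M}} W$ appears and must be absorbed using that $\mathcal{N} \subset W$ with $\dim\mathcal{N} < \dim W$ unless $W = $ weakly special closure of $\mathcal{N}$, in which case $\mathbf{H}_W = \mathbf{H}_{\mathcal{N}}$ and we are done. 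Resolving this dichotomy cleanly — either $W$ is the weakly special closure and $\mathbf{H}_W = \mathbf{H}_{\mathcal{N}}$, giving the conclusion directly, or $W$ is strictly larger and one recurses — is where the real content lies, and I would structure the proof as a short descending induction on $\dim\mathbf{N}$: the base case $\mathbf{N} = \mathbf{H}_{\mathcal{N}}$ is trivial, and the inductive step uses \autoref{cor:asloc} to drop to a strictly smaller $\mathbf{N}'$ while preserving a version of \eqref{typcondonC}.
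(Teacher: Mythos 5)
Your proposal has a fundamental logical gap: the hypothesis \eqref{typcondonC} is precisely the \emph{typicality} condition for the intersection $C$ inside the torsor $P_Y$, i.e.\ the \emph{negation} of the atypicality inequality required by Ax-Schanuel (\autoref{cor:asloc}). Rearranging \eqref{typcondonC} gives $\codim_{P_Y}C \geq \codim_{P_Y}I + \codim_{P_Y}\mathcal{L}_Y$, so \autoref{cor:asloc} simply does not apply, and there is no way to ``upgrade'' to strict inequality by invoking global atypicality of $\mathcal{N}$ in $\mathcal{M}$. Those are two distinct atypicality conditions computed in two different torsors ($P$ vs.\ $P_Y$), and the entire point of the proposition is to understand what happens in exactly the regime where the first holds and the second fails. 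You partially recognize this (``would \emph{not} be atypical''), but the patch you propose --- ``use that $\dim I$ is actually strictly larger'' --- is not justified and in fact false. Your descending induction on $\dim\mathbf{N}$ therefore never gets off the ground: Ax-Schanuel gives you nothing at the inductive step. A minor but non-negligible bookkeeping error compounds this: you set $\dim\mathcal{L}_Y = \dim\mathcal{M}$, but $\mathcal{L}_Y$ lies above $Y$ (it arises from the local system restricted to $Y$), so $\dim\mathcal{L}_Y = \dim Y$.

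The actual proof uses the typicality hypothesis in the opposite way. Rather than seeking a contradiction via Ax-Schanuel, one exploits the fact that \eqref{typcondonC}, combined with $I$ being a local complete intersection in the subtorsor $P_0$ (cf.\ \autoref{imofIY0}), forces the intersection $\mathcal{L}_0 \cap I$ to be of exactly expected codimension at the base point --- i.e.\ transverse. Transversality is then deformed: for $g \in \mathbf{N}(\mathbb{C})$ close to $1$, the translate $g \cdot Z_{\mathcal{N}}$ still intersects $\mathcal{L}_0$ in the expected dimension, via analytic openness arguments. Restricting to $g \in \mathbf{N}(\mathbb{Q})$, each translate defines an orbit closure (because the real multiplication, torsion, and eigenform conditions of \autoref{mainthmalgfilip} are conjugated by $g$), still atypical of dimension $e = \dim\mathcal{N}$. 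If $\mathbf{H}_{\mathcal{N}} \subsetneq \mathbf{N}$, running through $g \in \mathbf{N}(\mathbb{Q}) \setminus \mathbf{H}_{\mathcal{N}}(\mathbb{Q})$ produces infinitely many such atypical orbit closures, contradicting the finiteness already established in \autoref{orbitclosurefinthm}. So the proof's engine is a ``density of typical deformations'' argument of the flavour of \autoref{denseinSprop} combined with EFW finiteness --- not functional transcendence --- and both of these ingredients are missing from your proposal.
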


\begin{proof}
We start by fixing a point $x \in C$ which lies in the smooth locus of $(r \times \nu)^{-1}(Z_{\mathcal{N}}) \cap \mathcal{L}_{Y}$. Consider the $\mathbf{N}(\mathbb{C})$ orbit $O_{Y} = \mathbf{N}(\mathbb{C}) \cdot (r \times \nu)(x)$. Then $(r \times \nu)^{-1}(O_{Y})$ is a $\mathbf{N}(\mathbb{C})$-subtorsor of $P_{Y}$ which contains $C$. We fix a component $P_{0} \subset (r \times \nu)^{-1}(O_{Y})$ containing $I$ (and hence $C$), write $Y_{0} \subset Y$ for its image in $Y$, and write $\mathcal{L}_{0} \subset \mathcal{L}_{Y} \cap P_{0}$ for the component containing $C$. Then (\ref{typcondonC}) can be alternatively written as
\begin{equation}
\label{typcondonC2}
\dim I \geq \dim P_{0} - \dim \mathcal{L}_{0} + \dim \mathcal{N} .
\end{equation}
We will write $\tau : P_{0} \to O_{Y}$ for the restriction of $r \times \nu$. We note that by \autoref{imofIY0} below we have $\tau^{-1}(\mathcal{Z}_{\mathcal{N}}) = I$ and that $I$ is a local complete intersection in $P_{0}$ of codimension $\codim_{O_{Y}} Z_{\mathcal{N}}$.

Now write $O_{1} = Z_{\mathcal{N}} \subset O_{Y}$, and for $g \in \mathbf{N}(\mathbb{C})$, set $O_{g} = g \cdot O_{1}$. Let $I_{1} = I$ and set $I_{g} = g \cdot (r \times \nu)^{-1}(O_{g})$. Consider the equality
\begin{equation}
\label{typcondonC3}
\codim_{P_{0}, w} (\mathcal{L}_{0} \cap I_{g}) = \codim_{P_{0}, w} I_{g} + \codim_{P_{0}, w} \mathcal{L}_{0} ,
\end{equation}
where $w$ varies in $\mathcal{L}_{0}$. We claim the equation (\ref{typcondonC2}) implies that (\ref{typcondonC3}) holds at $(w,g) = (x,1)$: since $I_{g} \subset P_{0}$ and $\dim_{x} (\mathcal{L}_{0} \cap I_{q}) = \dim \mathcal{N}$, one obtains (\ref{typcondonC3}) with ``$=$'' replaced by ``$\geq$'' by rearranging and adding $\dim P_{0}$ to both sides. On the other hand because $I_{1}$ is a local complete intersection, one also has (\ref{typcondonC3}) with ``$=$'' replaced by ``$\leq$'', giving the equality. 

\hspace{0.5em}

We now explain why one can use the condition (\ref{typcondonC3}) at $(x,1)$ to produce infinitely many atypical orbit closures unless in fact $\mathbf{H}_{\mathcal{N}} = \mathbf{N}$. 

\hspace{0.5em}

The intersections $\mathcal{L}_{0} \cap I_{g}$ fit into an analytic family $\zeta : \mathcal{F} \to \mathbf{N}(\mathbb{C})$. We may regard $\mathcal{F}$ as a closed analytic subvariety of $\mathcal{L}_{0} \times \mathbf{N}(\mathbb{C})$ containing $(x,1)$. Then by upper semicontinuity of fibre dimension, (\ref{typcondonC3}) is open in a neighbourhood $\mathcal{U} \subset \mathcal{F}$ containing $(x,1)$. More precisely, the quantity $\dim_{z} \zeta^{-1}(\zeta(z))$ is constant on $\mathcal{U}$. 

Now applying \cite[VII, Prop. 3]{zbMATH03269181}, we find that, shrinking $\mathcal{U}$ if necessary, the image $\zeta(\mathcal{U})$ is a closed analytic set containing $1 \in \mathbf{N}(\mathbb{C})$ inside some open neighbourhood $\mathcal{B} \subset \mathbf{N}(\mathbb{C})$. We will argue that in fact $\zeta(\mathcal{U})$ is open, and hence equal to $\mathcal{B}$. 

We first observe that the projection of $\mathcal{U}$ to $\mathcal{L}_{0}$ contains an open neighbourhood. Indeed $I_{1}$ intersects every fibre of $P_{0} \to Y_{0}$, hence this is also true for the translates $I_{g} = \tau^{-1}(O_{g})$. Because $\mathbf{N}(\mathbb{C})$ acts transitively on the fibres of $P_{0} \to Y_{0}$, this means that we can find, for any $w \in \mathcal{L}_{0}$, some $g$ for which $\tau^{-1}(O_{g})$ contains $w$. Moreover $g$ can be taken to depend continuously on $w$ in some neighbourhood of $x$, which means that $\mathcal{U}$ maps onto a neighbourhood $\mathcal{X}$ of $x$. 

Now choose $w \in \mathcal{X}$ where $\mathcal{L}_{0}$ is smooth. Consider a point $(w,g_{0}) \in \mathcal{U}$, and let $y$ be the image of $w$ in $Y_{0}$. Then $\mathcal{L}_{0}$ being smooth at $w$ implies that $Y_{0}$ is smooth in a neighbourhood $\mathcal{Y}$ of $y$, and hence the map $\restr{P_{0}}{\mathcal{Y}} \to O_{Y}$ obtained by restricting $\tau$ is a submersion of smooth manifolds. In particular, the variety $\tau^{-1}(O_{g_{0}})$ is smooth at $w$, and the intersection $\mathcal{L}_{0} \cap \tau^{-1}(O_{g_{0}})$ is, locally at $w$, a transverse intersection of smooth manifolds.

Now applying \cite[Ch. 1, \S6, Ex. 11]{zbMATH03562121}, there exists a neighbourhood $\mathcal{B}_{0}$ of $g_{0}$ and $\mathcal{W}$ of $w$ such that the intersections $\mathcal{L}_{0} \cap \tau^{-1}(O_{g})$ are non-empty and contain a point of transverse intersection in $\mathcal{W}$ for $g \in \mathcal{B}_{0}$. In particular, $\zeta$ is a submersion at $(w, g_{0})$. It then follows that $\zeta(\mathcal{U})$ contains an open neighbourhood of $g_{0}$. But $\zeta(\mathcal{U})$ was closed analytic in $\mathcal{B}$, so in fact $\zeta(\mathcal{U}) = \mathcal{B}$. 

\vspace{0.5em}

All of the above has shown that we have a neighbourhood $\mathcal{B}$ of $1 \in \mathbf{N}(\mathbb{C})$ such that the intersection of $\tau^{-1}(O_{1})$ and $\mathcal{L}_{0}$ has a component $C_{g}$ of dimension $e = \dim \mathcal{N}$ for all $g \in \mathcal{B}$. If we now take $g \in \mathbf{N}(\mathbb{Q})$, the components $C_{g}$ satisfy all the conditions of \autoref{mainthmalgfilip}: as in the proof of \autoref{denseinSprop} the translated condition $g \cdot Z_{\mathcal{N}}$ defines a locus with $\mathbf{N}(\mathbb{Q})$-conjugate real multiplication, torsion, and eigenform conditions, and by construction one has $\dim \mathcal{N} = \dim C_{g}$. By varying $g \in \mathbf{N}(\mathbb{Q}) \setminus \mathbf{H}_{\mathcal{N}}(\mathbb{Q})$ one obtains infinitely many atypical orbit closures inside $Y_{0}$, unless in fact $\mathbf{H}_{\mathcal{N}} = \mathbf{N}$. 
\end{proof}

\begin{lem}
\label{imofIY0}
The image of $I$ in $Y$ is $Y_{0}$, and $I = (r \times \nu)^{-1}(Z_{\mathcal{N}}) \cap P_{0}$. Moreover $I$ is a local complete intersection in $P_{0}$.
\end{lem}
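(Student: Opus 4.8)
Here is my plan for proving \autoref{imofIY0}.

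\medskip

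\textbf{Setup and overall strategy.} Recall the situation: we have the $\mathbf{N}(\mathbb{C})$-subtorsor $(r \times \nu)^{-1}(O_{Y}) \subset P_{Y}$, where $O_{Y} = \mathbf{N}(\mathbb{C}) \cdot (r \times \nu)(x)$, and $P_{0}$ is the fixed connected component of $(r \times \nu)^{-1}(O_{Y})$ containing the chosen component $I$ of $(r \times \nu)^{-1}(Z_{\mathcal{N}})$. Write $\tau : P_{0} \to O_{Y}$ for the restriction of $r \times \nu$, which is the projection of a (connected component of a) $\mathbf{N}(\mathbb{C})$-torsor onto its base homogeneous space, hence a surjective submersion with connected fibres of dimension $\dim \mathbf{N}$. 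The plan is to exploit the torsor/homogeneity structure: everything in sight is built from $\mathbf{N}(\mathbb{C})$-equivariant data, so images and preimages behave well.

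\medskip

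\textbf{Step 1: $I = (r\times\nu)^{-1}(Z_{\mathcal{N}}) \cap P_{0}$.} By construction $I$ is \emph{a} component of $(r \times \nu)^{-1}(Z_{\mathcal{N}})$ that is contained in $P_{0}$; I must show it is \emph{all} of $(r \times \nu)^{-1}(Z_{\mathcal{N}}) \cap P_{0}$, i.e. that this intersection is irreducible. The key point is that $Z_{\mathcal{N}} \subset O_{Y}$ is itself the relevant homogeneous-type subvariety, and $\tau^{-1}(Z_{\mathcal{N}})$ inside $P_{0}$ is a fibre bundle over $Z_{\mathcal{N}} \cap O_{Y}$ with the same connected fibres as $\tau$; one then reduces irreducibility of $\tau^{-1}(Z_{\mathcal{N}})$ to irreducibility of $Z_{\mathcal{N}} \cap O_{Y}$ together with connectedness of the torsor fibres. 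Since $Z_{\mathcal{N}}= E_{\mathcal{N}} \times \ch{D}^{0}_{\mathcal{N}}$ is a product of (irreducible) homogeneous-type pieces sitting inside $O_{Y}$, and $P_0$ was chosen to be the connected component through $I$, one concludes $\tau^{-1}(Z_{\mathcal{N}})$ (equivalently $(r\times\nu)^{-1}(Z_{\mathcal{N}})\cap P_0$) is irreducible and equals $I$. I would phrase this cleanly via: a submersion with connected fibres onto an irreducible base has irreducible total space, applied to $\tau|_{\tau^{-1}(Z_{\mathcal{N}})}$.

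\medskip

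\textbf{Step 2: the image of $I$ in $Y$ is $Y_{0}$, and $I$ is a local complete intersection.} Here $Y_{0}$ is defined as the image of $P_{0}$ in $Y$. Since $\pi|_{P_{0}} : P_{0} \to Y_{0}$ is surjective and (étale-locally on $Y_0$) a product with $\mathbf{N}(\mathbb{C})$, and $\tau$ identifies $P_{0}$ as a torsor mapping onto $O_{Y}$ which itself fibres over the "absolute" part, one checks that $I = \tau^{-1}(Z_{\mathcal{N}})$ surjects onto $Y_{0}$: over every point $y \in Y_{0}$ the fibre $\tau^{-1}(Z_{\mathcal{N}}) \cap \pi^{-1}(y)$ is nonempty because the $\mathbf{N}(\mathbb{C})$-action is transitive on the fibres of $P_0 \to Y_0$ and $Z_{\mathcal{N}}$ is $\mathbf{N}(\mathbb{C})$-stable-enough (it meets every $\mathbf{N}(\mathbb{C})$-orbit direction needed) — more precisely, exactly as in the proof of \autoref{mapfromPsurj}, $\ch{D}^{0}_{\mathcal{N}}$ and $E_{\mathcal{N}}$ are orbits under the relevant subgroups, so $Z_{\mathcal{N}}$ meets $\tau(\pi^{-1}(y))$ for all $y$. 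For the local complete intersection claim: $Z_{\mathcal{N}} \subset O_{Y}$ is cut out in $O_{Y}$ (a smooth homogeneous space) by $\codim_{O_{Y}} Z_{\mathcal{N}}$ equations locally, because $Z_{\mathcal{N}}$ is smooth (being a product of a linear space and a smooth homogeneous space) and hence a local complete intersection in the smooth variety $O_{Y}$; pulling back along the smooth morphism $\tau$ preserves being a local complete intersection and preserves codimension, so $I = \tau^{-1}(Z_{\mathcal{N}})$ is a local complete intersection in $P_{0}$ of codimension $\codim_{O_{Y}} Z_{\mathcal{N}}$.

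\medskip

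\textbf{Expected main obstacle.} The routine parts are the smoothness/lci bookkeeping. The one place requiring genuine care is verifying that $\tau^{-1}(Z_{\mathcal{N}})$ surjects onto all of $Y_0$ (rather than some proper subvariety) and that it is irreducible — both hinge on correctly invoking the orbit description from \autoref{mapfromPsurj} (that $(E_{\mathcal{M}}\setminus\{0\})\times\ch{D}^0$ is a single $\mathbf{H}(\mathbb{C})$-orbit, and the analogous statements for the $\mathcal{N}$-data inside it) to guarantee $Z_{\mathcal{N}}$ meets every relevant $\mathbf{N}(\mathbb{C})$-translate direction over each point of $Y_0$. I would therefore isolate a short lemma: for a connected component $P_0$ of an $\mathbf{N}(\mathbb{C})$-torsor over $Y_0$ with equivariant map $\tau$ to a homogeneous space $O_Y$, and $Z \subset O_Y$ irreducible with $\mathbf{N}(\mathbb{C})\cdot Z = O_Y$, the preimage $\tau^{-1}(Z)$ is irreducible and surjects onto $Y_0$ — and then just check the hypothesis $\mathbf{N}(\mathbb{C})\cdot Z_{\mathcal{N}} = O_Y$ using the structure of $Z_{\mathcal{N}}$.
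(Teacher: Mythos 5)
Your proposal is correct and essentially matches the paper's proof. Both arguments rest on the observation that $\tau = (r\times\nu)|_{P_0} : P_0 \to O_Y$ is $\mathbf{N}(\mathbb{C})$-equivariant and surjective on every fibre of $\pi : P_0 \to Y_0$, from which surjectivity of $\tau^{-1}(Z_{\mathcal{N}})$ onto $Y_0$, its irreducibility (hence equality with its component $I$), and the l.c.i.\ property (as a pullback of the l.c.i.\ $Z_{\mathcal{N}}\subset O_Y$ along the smooth morphism $\tau$, so codimension equals the number of local defining equations) all follow. The one cosmetic difference is the fibration direction used for irreducibility: you view $\tau^{-1}(Z_{\mathcal{N}})$ as fibred over $Z_{\mathcal{N}}$ via $\tau$, which requires connectedness of the $\tau$-fibres (themselves fibred over $Y_0$ by a stabilizer in $\mathbf{N}(\mathbb{C})$), whereas the paper fibres via $\pi$ over $Y_0$, where the fibres are the preimages of $Z_{\mathcal{N}}$ under the surjective orbit map $\pi^{-1}(y)\cong\mathbf{N}(\mathbb{C})\to O_Y$; the paper's packaging has the mild advantage of yielding surjectivity onto $Y_0$ and irreducibility from a single observation, while you separate them into two steps, but the underlying inputs are the same. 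Finally, the hypothesis $\mathbf{N}(\mathbb{C})\cdot Z = O_Y$ in your isolated lemma is automatic since $O_Y$ is by construction a single $\mathbf{N}(\mathbb{C})$-orbit, so it adds nothing and can be dropped.
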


\begin{proof}
Consider the map $\tau : P_{0} \to O_{Y}$ obtained by restricting $r \times \nu$. This is a surjective map which is moreover surjective on each fibre of $P_{0} \to Y_{0}$. It follows that $\tau^{-1}(Z_{\mathcal{N}})$ surjects onto $Y_{0}$, and is a fibration over $Y_{0}$. In particular, $\tau^{-1}(Z_{\mathcal{N}})$ is irreducible containing $I$. But $I$ is a component of $\tau^{-1}(Z_{\mathcal{N}})$.

From this description we see that $I$ has codimension $\codim_{O_{Y}} Z_{\mathcal{N}}$. But it is also locally defined by $\codim_{O_{Y}} Z_{\mathcal{N}}$ functions, since it is the pullback of the local complete intersection $Z_{\mathcal{N}} \subset O_{Y}$.
\end{proof}

\begin{prop}
\label{nointtypprop2}
If in \autoref{nointtypprop} the group $\mathbf{N}$ is instead assumed to be a complex algebraic group (not necessarily defined over $\mathbb{Q}$), one obtains instead the conclusion $\mathbf{H}_{\mathcal{N}} = \mathbf{H}_{Y}$. 
\end{prop}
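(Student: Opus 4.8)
The plan is to reduce the complex-coefficient statement to the rational one by a standard Galois-descent argument combined with the Ax--Schanuel input, exploiting the fact that $\mathbf{H}_{\mathcal{N}}$ is always defined over $\mathbb{Q}$. First I would run the argument of \autoref{nointtypprop} verbatim with $\mathbf{N}$ replaced by the given complex group: up to the point where we produce, from the transversality condition (\ref{typcondonC3}) at $(x,1)$, a neighbourhood $\mathcal{B}$ of $1 \in \mathbf{N}(\mathbb{C})$ such that for all $g \in \mathcal{B}$ the intersection $\tau^{-1}(O_{1}) \cap \mathcal{L}_{0}$ has a component $C_{g}$ of dimension $e = \dim \mathcal{N}$. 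That part of the argument never used that $\mathbf{N}$ is defined over $\mathbb{Q}$; only the transitivity of $\mathbf{N}(\mathbb{C})$ on the fibres of $P_{0} \to Y_{0}$ and the submersion/transversality bookkeeping were used, all of which are purely complex-analytic. So the output is the same: a family of components $C_{g}$, $g \in \mathcal{B}$, each of dimension $e$, inside $Y_{0}$.

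The key new point is that we can no longer take $g \in \mathbf{N}(\mathbb{Q})$, so we cannot directly invoke \autoref{mainthmalgfilip} to manufacture infinitely many orbit closures. Instead I would argue that the locus $\mathcal{N}' := $ the projection to $\mathcal{M}$ of a generic $C_{g}$ is a $\nabla$-(weakly)-special subvariety with algebraic monodromy group contained in $\mathbf{N}$, and then play this off against $\mathbf{H}_{Y}$. Concretely: the condition (\ref{typcondonC3}) says that at $(x,1)$ the intersection $\mathcal{L}_{0} \cap I$ is of the \emph{expected} (i.e. typical) dimension inside $P_{0}$. Applying \autoref{cor:asloc} (equivalently, \autoref{thm:newAS}) to the leaf $\mathcal{L}_{0} \subset P_{0}$ and the subvariety $I \subset P_{0}$ does \emph{not} immediately yield a weakly special conclusion because the intersection is typical, not atypical. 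However — and this is where I expect the real work to lie — one should instead track the monodromy group of the family $\{C_{g}\}_{g}$ itself. By \autoref{loctopfiblem} and Noetherian induction on $\mathcal{B}$ (viewed now as a complex variety, after replacing the analytic $\mathcal{B}$ by a Zariski-dense constructible subset, which is legitimate since $\mathbf{N}$ is an algebraic group and $C_{g}$ varies algebraically in $g$), the algebraic monodromy group $\mathbf{H}_{C_{g}}$ is constant for generic $g$; call it $\mathbf{N}_{0}$. Since $C_{1}$ contains $\mathcal{N}$ with $\dim C_{1} = \dim \mathcal{N}$, we have $C_{1} = \mathcal{N}$ (up to taking the right component), so $\mathbf{N}_{0} = \mathbf{H}_{\mathcal{N}}$ up to the discrepancy between $g=1$ and generic $g$; one checks as in \autoref{nointtypprop} that $\mathbf{H}_{C_{g}}$ is a $\mathbf{N}(\mathbb{C})$-conjugate of $\mathbf{H}_{\mathcal{N}}$, hence has the same dimension. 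Now the image of the total space $\bigcup_{g \in \mathcal{B}} C_{g}$ in $\mathcal{M}$ is an irreducible constructible (hence, after closure, algebraic) subvariety $W \subset Y_{0}$ whose algebraic monodromy group $\mathbf{H}_{W}$ contains all the conjugates $g \mathbf{H}_{\mathcal{N}} g^{-1}$ for $g$ ranging over a Zariski-dense subset of $\mathbf{N}(\mathbb{C})$; therefore $\mathbf{H}_{W}$ contains the normal closure of $\mathbf{H}_{\mathcal{N}}$ in $\mathbf{N}$. But $W \subset Y_{0} \subset Y$ and $\mathbf{H}_{Y} \subseteq \mathbf{N}$, so $\mathbf{H}_{W} \subseteq \mathbf{H}_{Y}$; combining, the normal closure of $\mathbf{H}_{\mathcal{N}}$ inside $\mathbf{N}$ is contained in $\mathbf{H}_{Y}$. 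On the other hand $\mathbf{H}_{\mathcal{N}} \subseteq \mathbf{H}_{Y}$ already and, by the André--Deligne theorem \autoref{monodromytheorem} applied on $Y$, $\mathbf{H}_{\mathcal{N}}$ is normal in (the derived group of) $\mathbf{H}_{Y}$; a short dimension count then forces $\mathbf{H}_{\mathcal{N}} = \mathbf{H}_{Y}$ unless $\mathbf{H}_{\mathcal{N}}$ were strictly smaller and the conjugates $g\mathbf{H}_{\mathcal{N}}g^{-1}$ generated something strictly larger than $\mathbf{H}_{Y}$, which is impossible since they all sit inside $\mathbf{H}_{W} \subseteq \mathbf{H}_{Y}$.

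The main obstacle, as flagged above, is making the passage from the analytic neighbourhood $\mathcal{B} \subset \mathbf{N}(\mathbb{C})$ to an honest algebraic family rigorous: one must check that $C_{g}$ depends on $g$ in an algebraically constructible way (so that constancy of monodromy over a Zariski-dense set makes sense) and that the union $W = \overline{\bigcup_{g} C_{g}}$ really does have its monodromy group controlled by the $g$-conjugates of $\mathbf{H}_{\mathcal{N}}$ — this is a statement about monodromy of a dominant family, provable via \autoref{loctopfiblem} and the surjectivity of $\pi_{1}$ on Zariski-open subsets, but it needs to be spelled out. An alternative, possibly cleaner route which I would try in parallel: apply \autoref{nointtypprop} directly after descending $\mathbf{N}$. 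Namely, let $\mathbf{N}^{\mathbb{Q}}$ be the smallest $\mathbb{Q}$-algebraic subgroup of $\mathbf{H}_{S}$ whose base change to $\mathbb{C}$ contains $\mathbf{N}$ (equivalently, the $\mathbb{Q}$-Zariski closure of $\mathbf{N}$). Since $\mathbf{H}_{Y} \subseteq \mathbf{N}$ and $\mathbf{H}_{Y}$ is defined over $\mathbb{Q}$, we get $\mathbf{H}_{Y} \subseteq \mathbf{N}^{\mathbb{Q}}$; and the torsor $P_{Y}$, being attached to the $\mathbb{Q}$-local system, is naturally an $\mathbf{N}^{\mathbb{Q}}$-torsor as well, with $P_{Y}(\mathbb{C}) = \mathbf{N}^{\mathbb{Q}}(\mathbb{C}) \cdot$(fibre) still containing $\mathcal{L}_{Y}$. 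The inequality (\ref{typcondonC}) for $\mathbf{N}$ implies the corresponding inequality for $\mathbf{N}^{\mathbb{Q}}$ provided $\dim \mathbf{N} = \dim \mathbf{N}^{\mathbb{Q}}$, which holds because base change does not change dimension. Hence \autoref{nointtypprop} applies with $\mathbf{N}^{\mathbb{Q}}$ in place of $\mathbf{N}$ and yields $\mathbf{H}_{\mathcal{N}} = \mathbf{N}^{\mathbb{Q}}$. But $\mathbf{H}_{\mathcal{N}}$ is contained in $\mathbf{H}_{Y} \subseteq \mathbf{N}^{\mathbb{Q}} = \mathbf{H}_{\mathcal{N}}$, forcing $\mathbf{H}_{\mathcal{N}} = \mathbf{H}_{Y}$, which is exactly the claimed conclusion. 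I expect this second argument to be the one to write up, with the first serving as a sanity check; the only subtlety to verify is that the hypothesis $\mathbf{H}_{Y} \subseteq \mathbf{N}$ with $\mathbf{N}$ complex genuinely gives $\mathbf{H}_{Y} \subseteq \mathbf{N}^{\mathbb{Q}}$ and that $P_{0}$, $\mathcal{L}_{0}$, $I$ and the dimension inequality behave as expected under this replacement — all of which are routine once one observes that every object in sight except $\mathbf{N}$ was already defined over $\mathbb{Q}$.
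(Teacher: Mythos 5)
Neither of your two routes matches the paper's argument, and both have genuine gaps.

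The paper's proof of \autoref{nointtypprop2} is a one-line modification of the proof of \autoref{nointtypprop}: since the $\mathbf{N}$-torsor $P_{Y}$ contains the leaf $\mathcal{L}_{Y}$, whose Zariski closure is an $\mathbf{H}_{Y}$-torsor by \autoref{galequalsmono}, one has $\mathbf{H}_{Y} \subset \mathbf{N}$. The proof of \autoref{nointtypprop} then runs unchanged up to the production of the neighbourhood $\mathcal{B}$ of $1 \in \mathbf{N}(\mathbb{C})$. At that point, instead of varying $g$ over $\mathbf{N}(\mathbb{Q})$ (which may be tiny if $\mathbf{N}$ is not defined over $\mathbb{Q}$), one varies $g$ over $\mathbf{H}_{Y}(\mathbb{Q}) \cap \mathcal{B}$, which is dense near the identity since $\mathbf{H}_{Y}$ is a connected $\mathbb{Q}$-group. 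If $\mathbf{H}_{\mathcal{N}} \subsetneq \mathbf{H}_{Y}$ one finds $g \in \mathbf{H}_{Y}(\mathbb{Q}) \setminus \mathbf{H}_{\mathcal{N}}(\mathbb{Q})$ arbitrarily close to the identity, and the same translation argument produces infinitely many distinct atypical orbit closures, contradicting \autoref{orbitclosurefinthm}. You never consider the rational points of $\mathbf{H}_{Y}$ itself, and this is precisely what is needed.

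Your second (and preferred) route has a concrete error: you assert $\dim \mathbf{N} = \dim \mathbf{N}^{\mathbb{Q}}$ ``because base change does not change dimension,'' but this conflates two different operations. Base change of a fixed $\mathbb{Q}$-group does preserve dimension, but $\mathbf{N}^{\mathbb{Q}}$ is the $\mathbb{Q}$-Zariski closure of $\mathbf{N}$, which is strictly larger whenever $\mathbf{N}$ is not itself defined over $\mathbb{Q}$ --- exactly the case the proposition is about. The inequality (\ref{typcondonC}) then becomes strictly weaker when $\mathbf{N}$ is replaced by $\mathbf{N}^{\mathbb{Q}}$, so the hypotheses of \autoref{nointtypprop} are no longer satisfied, and the route collapses.

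Your first route also does not close. After producing the family $C_{g}$ and inferring that the normal closure of $\mathbf{H}_{\mathcal{N}}$ inside $\mathbf{N}$ lies in $\mathbf{H}_{Y}$, you attempt a ``short dimension count'' to force $\mathbf{H}_{\mathcal{N}} = \mathbf{H}_{Y}$. That deduction is not valid: containment of the normal closure of $\mathbf{H}_{\mathcal{N}}$ in $\mathbf{H}_{Y}$ is a constraint running in the wrong direction and is consistent with $\mathbf{H}_{\mathcal{N}}$ being a proper subgroup (e.g. trivial). The appeal to \autoref{monodromytheorem} is also misapplied here --- that theorem relates the monodromy of a variety to its own Mumford--Tate group, not the monodromy of a subvariety to that of the ambient variety. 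The monodromy-of-families machinery you invoke is considerably heavier than needed and still lands short of the conclusion; the missing ingredient in both your routes is the elementary observation that $\mathbf{H}_{Y}(\mathbb{Q})$ already supplies enough rational translates inside $\mathcal{B}$ to run the finiteness argument of \autoref{nointtypprop} essentially verbatim.
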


\begin{proof}
The fact that the fixed $\mathbf{N}$-torsor contains the leaf $\mathcal{L}_{Y}$ --- whose Zariski closure, as a consequence of \autoref{galequalsmono}, is a $\mathbf{H}_{Y}$-torsor --- implies that $\mathbf{H}_{Y} \subset \mathbf{N}$. The proof of \autoref{nointtypprop} then proceeds unchanged until the final paragraph, where one instead varies $g \in \mathbf{H}_{Y}(\mathbb{Q}) \setminus \mathbf{H}_{\mathcal{N}}(\mathbb{Q})$, and produces infinitely many atypical orbit closures unless $\mathbf{H}_{\mathcal{N}} = \mathbf{H}_{Y}$.
\end{proof}

\subsubsection{Analysis of Orbit-closure-like Differential Intersections}

Fix a component $R \subset \mathcal{Z}(f,e)$ and let $T \subset S$ be the Zariski closure of its image.

\begin{defn}
We say that $R$ \emph{contains an orbit closure point} $(x,y)$ if there exists an orbit closure $\mathcal{N} \subset S$ of dimension $e$ such that
\begin{itemize}
\item[-] $x \in P(\mathbb{C})$ is a point above $\mathcal{N}$ which corresponds to a rational element $\varphi \in \Hom(\mathbb{V}_{s}, \mathbb{V}_{s_{0}})$; and
\item[-] we have $f^{-1}(y) = (r \times \nu)^{-1}(Z_{\mathcal{N}})$, and that the germ $(\mathcal{L}_{x} \cap f^{-1}(y), x)$ (which we may assume is irreducible after possibly replacing $x$) projects surjectively onto a germ of $\mathcal{N}$. 
\end{itemize}
\end{defn}

Suppose $R$ contains an orbit closure point $(x,y)$. We consider the group $\mathbf{N}_{R} \subset \mathbf{H}_{T}$ associated to $R$, with both groups embedded into $\GL(\mathcal{V}_{s_{0}})$ using the flat structure of $\mathbb{V}$, as in \autoref{intbecometypprop}. Let $\pi : P \to S$ denote the natural projection. Then applying \autoref{intbecometypprop}(i) one has that
\[ \pi(\mathcal{L}_{x} \cap f^{-1}(y), x) = (\mathcal{N}, \pi(x)) \subset (Y_{(x,y)}, x) \subset (T, x) \]
which implies that $\mathbf{H}_{\mathcal{N}} \subset \mathbf{H}_{Y_{(x,y)}} \subset \mathbf{N}_{R}$, where the last containment is part of \autoref{intbecometypprop}(i).  (Here we use the notation $(A, a)$ to denote the analytic germ of $A$ at a point $a \in A$.)

\begin{lem}
Suppose $R$ contains an orbit closure point $(x,y)$ corresponding to $\mathcal{N}$. Then we have $\mathbf{H}_{\mathcal{N}} = \mathbf{N}_{R}$. 
\end{lem}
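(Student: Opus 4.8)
The plan is to combine \autoref{intbecometypprop}, which produces the group $\mathbf{N}_{R}$ together with the family $g : C \to B$ of weakly special subvarieties $Y_{(x,y)}$ and the inequality \eqref{thingsbecometypineq}, with \autoref{nointtypprop}, which gives a criterion under which the monodromy of an orbit closure forced to lie inside an $\mathbf{N}$-subtorsor is actually all of $\mathbf{N}$. First I would set $Y = Y_{(x,y)}$, the weakly special variety furnished by \autoref{intbecometypprop}(i) through which the germ $(\mathcal{L}_{x} \cap f^{-1}(y), x)$ projects, and fix the $\mathbf{N}_{R}$-subtorsor $P_{(x,y)} = P_{Y}$ above $Y$ produced there; note $\mathbf{H}_{Y} \subset \mathbf{N}_{R}$ by construction, so $\mathbf{N} := \mathbf{N}_{R}$ is a $\mathbb{Q}$-algebraic subgroup of $\mathbf{H}_{S}$ containing $\mathbf{H}_{Y}$, and $P_{Y}$ contains the leaf $\mathcal{L}_{Y} := \mathcal{L}_{x} \cap P_{Y}$ arising from the $\mathbb{Q}$-local system on $Y$ --- exactly the hypotheses of \autoref{nointtypprop}.

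The only point requiring care is verifying the dimension inequality \eqref{typcondonC}, namely $\dim I \geq \dim P_{Y} - \dim \mathcal{L}_{Y} + \dim \mathcal{N}$, for a suitably chosen component $I \subset (r \times \nu)^{-1}(Z_{\mathcal{N}})$ containing the component $C$ whose image is $\mathcal{N}$. Here I would use that $f^{-1}(y) = (r \times \nu)^{-1}(Z_{\mathcal{N}})$ by the definition of an orbit closure point, so that $I$ may be taken to be a component of $f^{-1}(y)$, and that $\dim_{x}(f^{-1}(y) \cap P_{(x,y)}) = \dim_{x}(f^{-1}(y) \cap P_{Y})$ is exactly the quantity estimated in \autoref{intbecometypprop}(ii): restricting $f^{-1}(y)$ to the subtorsor $P_{Y}$ only decreases dimension, and on the other hand $I \supset (f^{-1}(y) \cap P_{Y})$-component through $x$, so $\dim I \geq \dim_{x}(f^{-1}(y) \cap P_{Y}) \geq \dim P_{Y} - \dim_{x}(\mathcal{L}_{x} \cap P_{Y}) + e$. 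Since $\mathcal{L}_{x} \cap P_{Y}$ is just a leaf of the $\mathbf{N}_{R}$-bundle $P_{Y}$ over the germ of $Y$ and hence has dimension $\dim Y$, while $\dim P_{Y} = \dim \mathbf{N}_{R} + \dim Y$, we get $\dim I \geq \dim \mathbf{N}_{R} + e = \dim P_{Y} - \dim \mathcal{L}_{Y} + \dim \mathcal{N}$, which is \eqref{typcondonC}.

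With \eqref{typcondonC} in hand, \autoref{nointtypprop} applies directly and yields $\mathbf{H}_{\mathcal{N}} = \mathbf{N} = \mathbf{N}_{R}$, which is the claim. I expect the main obstacle to be bookkeeping around the various torsors and leaves --- making sure that the component $I$, the torsor $P_{Y}$, and the leaf $\mathcal{L}_{Y}$ chosen to feed into \autoref{nointtypprop} are mutually compatible (i.e.\ that $C \subset I$, that $\mathcal{L}_{Y}$ is the component of $\mathcal{L}_{x} \cap P_{Y}$ through $x$, and that $P_{Y}$ is defined over $\mathbb{Q}$ with respect to the local system so that the rational-translate argument in \autoref{nointtypprop} goes through), rather than any genuinely new input. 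One should also remark that the irreducibility assumption on the germ at $x$ (which we may arrange by replacing $x$, as in the definition of orbit closure point) is what lets us speak of a single component $C$ and apply \autoref{intbecometypprop} at the point $(x,y)$.
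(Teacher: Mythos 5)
Your argument is correct and matches the paper exactly: the paper's proof of this lemma is literally the single sentence ``This follows by combining \autoref{intbecometypprop} with \autoref{nointtypprop},'' and your write-up is a faithful unwinding of that combination, choosing $Y = Y_{(x,y)}$, $P_Y = P_{(x,y)}$, $\mathbf{N} = \mathbf{N}_R$, and feeding \autoref{intbecometypprop}(ii) into the hypothesis \eqref{typcondonC} of \autoref{nointtypprop}. One small remark: the step $\dim I \geq \dim_x(f^{-1}(y)\cap P_Y)$, which you justify somewhat informally by saying $I$ contains the relevant component, is cleanest to see by invoking \autoref{imofIY0} (proved inside the proof of \autoref{nointtypprop}), which identifies $I$ with $f^{-1}(y)\cap P_0$ where $P_0$ is the connected component of the $\mathbf{N}_R$-subtorsor containing $x$; since near $x$ one has $f^{-1}(y)\cap P_Y = f^{-1}(y)\cap P_0 = I$, the bound is actually an equality of local dimensions at $x$.
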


\begin{proof}
This follows by combining \autoref{intbecometypprop} with \autoref{nointtypprop}.
\end{proof}

\begin{lem}
Suppose $R$ contains an orbit closure point $(x,y)$. Then the reductive quotient of $\mathbf{H}_{T}$ is contained in the Mumford-Tate group $M := (\operatorname{Res}_{K/\mathbb{Q}} \operatorname{GSp}_{2r,K}) \times \operatorname{GSp}_{2(g - dr)}$ associated to the pure condition in \autoref{mainthmalgfilip}. In particular, $T$ lies in a component of the locus defined by the condition $S[\mathcal{N}]$ introduced in \S\ref{diagramatyp}. 
\end{lem}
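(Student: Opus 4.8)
The plan is to leverage the key algebraic structure established in \autoref{mapfromPsurj} and \autoref{mainthmalgfilip}: an orbit closure point $(x,y)$ in $R$ witnesses an eigenform condition coming from the decomposition $\mathbb{V} = \mathbb{U} \oplus \mathbb{W}$ of \cite{zbMATH06323273}, and what we must extract is a \emph{monodromy-theoretic} consequence, namely that the reductive part of $\mathbf{H}_T$ respects the pure Hodge structure predicted by real multiplication of type $K_{\mathcal{M}}$. So the first step is to recall that, by the previous lemma, $\mathbf{H}_{\mathcal{N}} = \mathbf{N}_R$, and by \autoref{intbecometypprop} we have $\mathbf{H}_{\mathcal{N}} \subset \mathbf{H}_T \subset \mathbf{H}_S$. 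The orbit closure $\mathcal{N}$ is, by \autoref{mainthmalgfilip}, characterized by having a factor $\mathcal{F}$ of its relative Jacobian with real multiplication by $K=K_{\mathcal{N}}$, and $\omega$ is an eigenform. The field $K_{\mathcal{N}}$ must be a subfield of $K_{\mathcal{M}}$ compatible with the $K_{\mathcal{M}}$-eigenspace decomposition of $\mathcal{H}_{\textrm{abs}}$ over $\mathcal{M}$; in the case $\mathcal{M} = \Omega\mathcal{M}_g(\kappa)$ there is no prior real multiplication on the ambient stratum (by \autoref{lemmamonodromstrata} the monodromy is the full $\Sp_{2g} \ltimes \mathbb{G}_a^{2g\cdot n}$), so $K_{\mathcal{N}}=K$ is simply the field of real multiplication associated to $\mathcal{N}$ itself.

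Next I would pass to absolute cohomology. The reductive quotient of $\mathbf{H}_T$ is a semisimple group mapping to $\Sp(H^1_{\textrm{abs},s_0})$, and it contains the reductive quotient of $\mathbf{H}_{\mathcal{N}} = \mathbf{N}_R$. By \cite[Prop. 4.7]{zbMATH06890813} (cited already in the proof of \autoref{mapfromPsurj}), the algebraic monodromy of each $\mathbb{Q}$-summand of $\mathbb{V}$ corresponding to $\mathcal{N}$ has reductive part of the form $\prod_{\iota} \mathbf{Sp}(\mathbb{U}_{\iota,\mathbb{C}})$, a product running over the embeddings $\iota : K \hookrightarrow \mathbb{C}$, together with a full symplectic factor on the complementary summand $\mathbb{W}$; this is exactly the reductive part of the Mumford--Tate group $M = (\operatorname{Res}_{K/\mathbb{Q}} \operatorname{GSp}_{2r,K}) \times \operatorname{GSp}_{2(g-dr)}$ of the locus $R[\mathcal{N}]$ (up to the central $\mathbb{G}_m$'s, which do not affect the derived/reductive-part statement). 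Since $\mathbf{H}_T$ lies between $\mathbf{H}_{\mathcal{N}}$ and $\mathbf{H}_S$, and since the existence of the $K$-eigenform over the Zariski-dense leaf through $x$ forces the whole of $T$ (not just $\mathcal{N}$) to carry the real-multiplication-compatible splitting of $H^1_{\textrm{abs}}$ — here one uses that the eigenspace decomposition is a decomposition of local subsystems, hence its stabilizer is a monodromy-invariant condition, so it is detected already on $\mathbf{H}_T$ rather than only on $\mathbf{H}_{\mathcal{N}}$ — the reductive quotient of $\mathbf{H}_T$ must preserve each eigensummand $\mathbb{U}_\iota$ and the complement $\mathbb{W}$. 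A group preserving this splitting and acting symplectically on each piece is contained in $\prod_\iota \Sp(\mathbb{U}_{\iota,\mathbb{C}}) \times \Sp(\mathbb{W}_{\mathbb{C}})$, whose $\mathbb{Q}$-form is the derived subgroup of $M$; this gives $\mathbf{H}_T^{\textrm{red}} \subset M^{\textrm{der}} \subset M$, as claimed.

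For the final sentence, I would argue that $S[\mathcal{N}]$ was \emph{defined} in \S\ref{diagramatyp} as the smallest weakly special subvariety of $\mathcal{A}_g$ containing the image of $\mathcal{N}$, and that $R[\mathcal{N}]$ is the special subvariety of $\mathcal{A}_g$ with generic Mumford--Tate group $M$. Because the image of $T$ in $\mathcal{A}_g$ is a subvariety whose generic Mumford--Tate group has reductive part contained in $M$, and whose weight-zero part is trivial (we are in the pure part of the picture after quotienting by $W_0$), the composite period map from $T$ to $\mathcal{A}_g$ factors through the special subvariety cut out by the Hodge tensors defining $M$, i.e.\ through a component of $R[\mathcal{N}]$. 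Since $\mathcal{N} \subset T$, this component of $R[\mathcal{N}]$ contains the image of $\mathcal{N}$, and therefore also contains $S[\mathcal{N}]$ by minimality of $S[\mathcal{N}]$; in particular $T$ lies in a component of the locus defined by $S[\mathcal{N}]$ (more precisely, in the preimage of such a component under $\mathcal{M} \to \mathcal{A}_g$). The main obstacle I anticipate is making precise the claim that the $K$-eigenform splitting is already visible on $\mathbf{H}_T$ and not merely on $\mathbf{H}_{\mathcal{N}}$: one needs that the $R$-component, through its defining differential conditions pulled back from $Z_{\mathcal{M}}$, forces the eigenspace decomposition to hold as a decomposition of flat subbundles over all of $T$, which is where the constructibility analysis of \S\ref{sec:constr} and the description of $\mathbf{N}_R$ in \autoref{intbecometypprop} — together with the fact, from \autoref{mapfromPsurj}, that $r \times \nu$ lands in a single eigenspace times $\ch{D}^0$ — must be combined carefully.
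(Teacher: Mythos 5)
Your proposal correctly assembles the ingredients — the decomposition $\restr{\mathbb{V}_{\textrm{abs}}}{\mathcal{N}} = \mathbb{U}_{\textrm{abs}} \oplus \mathbb{W}_{\textrm{abs}}$ from Wright, the chain $\mathbf{H}_{\mathcal{N}} = \mathbf{N}_R \subset \mathbf{H}_T$, and the description of the monodromy of the summands — but the step you yourself flag at the end as the ``main obstacle'' is a genuine gap, and the argument you sketch for it is circular. You claim the existence of the $K$-eigenform over the leaf ``forces the whole of $T$ to carry the real-multiplication-compatible splitting of $H^1_{\textrm{abs}}$'', and you justify this by observing that the eigenspace decomposition is a decomposition of local subsystems, hence monodromy-invariant, hence detected on $\mathbf{H}_T$. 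This reverses the logic: the decomposition is a flat decomposition over $\mathcal{N}$, and whether it is preserved by the larger group $\mathbf{H}_T$ (equivalently, whether the $K$-action on the Jacobian extends across $T$) is precisely what must be established, not something that follows from the decomposition being flat over the smaller locus. Nothing in \autoref{mapfromPsurj} or the constructibility of \S\ref{sec:constr} forces a $K$-structure on the relative Jacobian over $T$. The paper closes exactly this gap via the \emph{normality} of $\mathbf{N}_R$ in $\mathbf{H}_T$ — the crucial output of \autoref{intbecometypprop}, which you cite but do not exploit. Passing to $\mathbb{V}_{\textrm{abs}}$, the reductive quotients become semisimple algebraic monodromy groups of a pure VHS (via \autoref{monodromytheorem}), so one has an almost-direct product $\mathbf{H}_T^{\textrm{red}} = \mathbf{N}_R^{\textrm{red}} \cdot \mathbf{L}$; a Schur-type commutation argument then shows any $\ell \in \mathbf{L}(\mathbb{C})$ centralizes $\textrm{Res}_{K/\Q}\textrm{GSp}_{2r,K}$ and hence acts trivially on $\mathbb{U}_{\textrm{abs}}$. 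This forces $\mathbf{H}_T^{\textrm{red}}$ into $M$ without ever needing to extend the splitting as a flat decomposition over $T$.

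Your argument for the final sentence also has a gap. You pass from ``$\mathbf{H}_T^{\textrm{red}} \subset M^{\textrm{der}}$'' to ``the generic Mumford--Tate group of (the image of) $T$ has reductive part contained in $M$'', and from there to factoring the period map through $R[\mathcal{N}]$, but a monodromy containment by itself does not imply the corresponding Mumford--Tate containment. The missing input is Schmid's theorem of the fixed part: the tensors cutting out $S[\mathcal{N}]$ are stable under $\mathbf{H}_T^{\textrm{red}}$ (by the first part) and are Hodge at the points of $\mathcal{N} \subset T$, hence Hodge everywhere on $T$. Without invoking the fixed part theorem one cannot deduce that $T$ lands in $R[\mathcal{N}]$, and your sketch presents this deduction as automatic when it is not.
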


\begin{proof}
Because $\mathbf{N}_{R}$ is normal in $\mathbf{H}_{T}$, the same is true of their reductive quotients $\mathbf{N}^{\textrm{red}}_{R}$ and $\mathbf{H}^{\textrm{red}}_{T}$. These groups may be identified with the algebraic monodromy groups of $\mathcal{N}$ and $T$ for the pure $\mathbb{Z}$VHS $\mathbb{V}_{\textrm{abs}} = \mathbb{V} / W_{0}$ over $S$. 

Algebraic monodromy groups of pure variations of Hodge structures are semisimple (this follows from the reductivity of pure Mumford-Tate groups and \autoref{monodromytheorem}), so we have an almost-direct product decomposition $\mathbf{H}^{\textrm{red}}_{T} = \mathbf{N}^{\textrm{red}}_{R} \cdot \mathbf{L}$. Recall that, using \cite[Thm. 1.5]{zbMATH06323273}, we have an isotypic direct sum decomposition $\restr{\mathbb{V}_{\textrm{abs}}}{\mathcal{N}} = \mathbb{U}_{\textrm{abs}} \bigoplus \mathbb{W}_{\textrm{abs}}$ where the monodromy of $\mathbb{U}_{\textrm{abs}}$ is just the derived subgroup of $\textrm{Res}_{K/\mathbb{Q}} \textrm{GSp}_{2r,K}$ (see \cite[5.3.15]{filipnotes}). Now any element $\ell \in \mathbf{L}(\mathbb{C})$ must commute with the action of $\textrm{Res}_{K/\mathbb{Q}} \textrm{GSp}_{2r,K} \subset \mathbf{N}^{\textrm{red}}_{R}$, so it follows that $\ell$ acts trivially on $\mathbb{U}_{\textrm{abs}}$ and acts entirely through $\Sp(\mathbb{W}_{\textrm{abs}})$. In particular, the group $\mathbf{H}^{\textrm{red}}_{T}$ lies inside $M$.

From the theorem of the fixed Part \cite{sch73}, one knows that $\mathbf{H}^{\textrm{red}}_{T}$-stable tensors associated $\mathbb{V}_{\textrm{abs}}$, Hodge at some point of $T$, are Hodge everywhere on $T$. In particular, the tensors defining the condition $S[\mathcal{N}]$, which are Hodge along $\mathcal{N}$ and stable under $\mathbf{H}^{\textrm{red}}_{T}$, are also Hodge over $T$. The result follows. 
\end{proof}

\subsubsection{Proof of \autoref{mainorbitclosurealg}}

\begin{itemize}
\item[1.] Start by computing $\mathcal{Z}(f,e)$ with $e$ the dimension of a hypothesized atypical orbit closure; that we can do this is a consequence of \autoref{computeZfdlem}. Label the components of $\mathcal{Z}(f,e)$ by $\{ R_{i} \}_{i=1}^{m}$ and their images in $S$ by $T_{i}$.

\item[2.] For each $(R_{i}, T_{i})$, apply \autoref{terribletorsorsearch} to obtain an associated family $g_{i} : C_{i} \to B_{i}$. 

\item[($\star$)] Observe that as a consequence of \autoref{terribletorsorsearch}(iii) and \autoref{nointtypprop2} one knows that, for each atypical orbit closure $\mathcal{N}$ of dimension $e$, there is an index $i$ and point $b \in B_{i}$ such that $\mathcal{N} \subset g^{-1}_{i}(b)$ and $\mathbf{H}_{\mathcal{N}} = \mathbf{H}_{g^{-1}_{i}(b)}$.

\item[3.] Using \autoref{compendalglem} below, compute the endomorphism algebra $\mathcal{E}_{i}$ of the Jacobian over $T_{i}$, which we represent in terms of a set of generators represented by self-correspondences on the curve over $T_{i}$. The algebra of correspondences acts on the de Rham cohomology sheaf $\mathcal{V}_{T_{i}} := \restr{\mathcal{V}_{\textrm{abs}}}{T_{i}}$ through its action on the relative de Rham complex over $T_{i}$, and so we may compute an embedding $\mathcal{E}_{i} \hookrightarrow \sheafend(\mathcal{V}_{T_{i}})(T_{i})$ and identify $\mathcal{E}_{i}$ with its image. Then the central idempotents in $\mathcal{E}_{i}$ compatible give rise to a decomposition into isotypic summands $\bigoplus_{k=1}^{k_{0}} \mathcal{V}_{T_{i}} = \mathcal{V}_{ik}$. We fix a summand $\mathcal{V}_{ik}$ whose endomorphism algebra $\mathcal{E}_{ik}$ is isomorphic to $K$ for $K$ a totally real field of the correct degree, if such a summand exists, and throw away all indices $i$ for which one does not exist. Compute $\mathcal{E}_{ik}$ by computing the finite set of central idempotents and the direct sum decomposition.

\item[4.] For each family $g_{i}$ and index $k = k_{i}$ as above, compute the family $g_{ik} : C_{ik} \to B_{ik}$ where $(X,\omega) \in C_{ik} \subset C_{i}$ if $\omega$ is an eigenform for the action of $\mathcal{E}_{ik}$ on $H^{1}_{\textrm{dR}}(X)$, which is naturally identified with $\mathcal{V}_{\textrm{abs}, (X, \omega)}$. 

\item[($\star$)] We note that after this step, each atypical orbit closure $\mathcal{N}$ of dimension $e$ appears as an irreducible component of some fibre of some $g_{ik}$ as a consequence of \autoref{mainthmalgfilip} and the fact that the real-multiplication summand appearing in \autoref{mainthmalgfilip} is isotypic \cite[Thm. 1.5]{zbMATH06323273}.

\item[5.] For each family $g'_{ik}$, compute the family $g'_{ik} : C'_{ik} \to B_{ij}$ whose non-empty fibres are exactly the $e$-dimensional subvarieties parameterized by $g_{ik}$ which are locally linear for the developing map. This we do by considering the family $f' : \mathcal{Z}' \to \mathcal{Y}'$ which is the pullback under $P \to \mathcal{V}_{s_{0}}$ of the family of all linear subspaces of $\mathcal{V}_{s_{0}}$ of dimension $e = 2r + t$. We then consider $f'_{ik}$ as the family of subvarieties of $P$ which are intersections of fibres of $f'$ with the restriction of $P$ to some fibre of $g'_{ij}$. Then we compute the locus $\mathcal{Z}(f'_{ik},e)$, and define $C'_{ik} \subset C_{ik}$ as the image of $\mathcal{Z}(f'_{ik},e)$.

\item[6.] After varying over all choices of $i$ and $k$, compute the finite set $\Xi = \{ \mathcal{N}_{1}, \hdots, \mathcal{N}_{c} \}$ of varieties obtained as $e$-dimensional components of some $C'_{ik}$ which consist of a single fibre. By \autoref{foundthemall} below, all maximal atypical orbit closures of dimension $e$ are among this set. Use \autoref{decideiforbclosure} below to compute the subset $\Xi' \subset \Xi$ consisting of the orbit closures, and output $\Xi'$.
\end{itemize}

\begin{lem}
\label{compendalglem}
Let $f : C \to U$ be a family of smooth projective curves defined over a number field, with $U$ irreducible. Then there exists an algorithm to compute the endomorphism algebra of the relative Jacobian of $f$, represented in terms of explicit self-correspondences.
\end{lem}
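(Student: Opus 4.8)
\textbf{Proof sketch of \Cref{compendalglem}.}
The plan is to reduce the computation of the endomorphism algebra of the relative Jacobian $\mathcal{J} \to U$ to a computation over the generic point of $U$, and then to invoke known algorithmic results on endomorphism rings of abelian varieties over a field. First I would pass to the function field $Q := Q(U)$ and regard $C_{Q}$ as a smooth projective curve over $Q$, with Jacobian $J := \operatorname{Jac}(C_{Q})$, an abelian variety over $Q$. Since $Q$ is a finitely generated extension of a number field, and endomorphisms of an abelian variety over a finitely generated field are computable (see e.g. the work on effective Faltings / effective isogeny estimates, and the fact that $\operatorname{End}(J)$ injects into $\operatorname{End}(J_{\bar{Q}})$ with image cut out by a finite Galois condition), one obtains an explicit $\mathbb{Z}$-basis for $\operatorname{End}(J)$ together with its multiplication table. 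Concretely, one may spread out: choose a model $\mathcal{C} \to U'$ over a Zariski-open $U' \subset U$, pick a closed point $u_{0} \in U'$ with residue field a number field $F$, compute $\operatorname{End}(\operatorname{Jac}(\mathcal{C}_{u_{0}}))$ by existing algorithms over number fields, and then certify which of these endomorphisms extend to the generic fibre by checking that the corresponding correspondence classes are flat over $U'$ --- a closed condition on a Hilbert scheme of $\mathcal{C} \times_{U'} \mathcal{C}$ that is decidable.

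The key technical point is the passage from an abstract endomorphism to an \emph{explicit self-correspondence}. Each $\varphi \in \operatorname{End}(J)$ is, by the theory of correspondences on curves, induced by a divisor class on $C_{Q} \times_{Q} C_{Q}$ (use that $\operatorname{End}(\operatorname{Jac} C) \cong \operatorname{Corr}(C)$, the group of correspondence classes modulo trivial ones, via $\operatorname{Pic}^{0}(C \times C)$ modulo fibral divisors). I would therefore compute, for each basis element $\varphi_{k}$, an actual divisor $\Gamma_{k} \subset C_{Q} \times_{Q} C_{Q}$ representing it: this is a search over effective divisors of bounded degree on the explicitly presented surface $C_{Q} \times_{Q} C_{Q}$, testing candidates by computing the induced map on $H^{1}_{\mathrm{dR}}$ (or on Picard groups at finitely many points) and comparing with the known action of $\varphi_{k}$. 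Termination is guaranteed because \emph{some} such $\Gamma_{k}$ exists. Having done this over $Q$, one spreads the finitely many $\Gamma_{k}$ out to a relative correspondence over a Zariski-open $U' \subset U$, and then handles $U \setminus U'$ by Noetherian induction, taking the disjoint union of the resulting families over the strata; the generic endomorphism algebra can only grow on closed subvarieties, and each stratum is of strictly smaller dimension, so the recursion terminates.

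The main obstacle I anticipate is making the endomorphism-ring computation over the finitely generated field $Q$ genuinely effective with a \emph{certificate of correctness}, rather than merely a lower bound: computing $\operatorname{End}(J_{\bar{Q}})$ is the hard case, and one must bound the field of definition of all endomorphisms and bound the degrees of the correspondences representing a chosen basis. This is where one leans on the effective results in the literature (effective isogeny theorems, effective open image / effective Faltings over finitely generated fields, and algorithmic work computing $\operatorname{End}$ of Jacobians), together with the observation that once an abstract basis and its Galois action are known, the certification that a candidate divisor $\Gamma_{k}$ represents the right class is a finite linear-algebra check on de Rham cohomology. The remaining steps --- spreading out, the Hilbert-scheme flatness tests, and the Noetherian induction over a stratification of $U$ --- are routine given the computational model of \S\ref{compmodelsec} and the constructibility results already established (in particular \cite[Prop. 2.1]{urbanik2021sets} and the flattening stratification).
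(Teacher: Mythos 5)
Your high-level strategy — reduce to the generic fibre, compute a basis of endomorphisms, and realize them as self-correspondences — is the same as the paper's, and you correctly identify the central difficulty: certifying that a proposed basis is the \emph{full} generic endomorphism algebra (a certified upper bound), not just a lower bound. Where your proposal diverges is in how that certification is achieved, and I think as written there is a gap there.

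The paper's argument is a ``by day / by night'' interleaved search. By day one enumerates, in increasing complexity, families of correspondences in $\mathcal{C} \times_{U} \mathcal{C}$ defined over a number field, and records their images in relative algebraic de Rham cohomology: this gives a monotonically growing \emph{lower} bound on $\operatorname{rank} \operatorname{End}(J_{\eta})$. By night one enumerates $\overline{\mathbb{Q}}$-points $u_0 \in U(\overline{\mathbb{Q}})$ and computes $\operatorname{End}(\operatorname{Jac}(\mathcal{C}_{u_0}))$ using the number-field algorithm of \cite{zbMATH07009723}; since specialization injects $\operatorname{End}(J_\eta) \hookrightarrow \operatorname{End}(\operatorname{Jac}(\mathcal{C}_{u_0}))$, each night-time computation gives an \emph{upper} bound. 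The algorithm terminates as soon as the two ranks agree; this happens because (i) the day-time search eventually finds correspondences spanning all of $\operatorname{End}(J_\eta)$, and (ii) there exist Hodge-generic $\overline{\mathbb{Q}}$-points, so the night-time enumeration eventually reaches one. The certification problem is thus sidestepped entirely: one never needs to \emph{decide} whether a specific $\varphi_0 \in \operatorname{End}(\operatorname{Jac}(\mathcal{C}_{u_0}))$ lifts to the generic fibre.

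Your ``concretely'' paragraph instead proposes to compute $\operatorname{End}(\operatorname{Jac}(\mathcal{C}_{u_0}))$ at a single closed point and then \emph{filter} out the endomorphisms that fail to lift, by testing whether a representing correspondence $\Gamma_0 \subset \mathcal{C}_{u_0} \times \mathcal{C}_{u_0}$ sits in a component of the relative Hilbert scheme of $\mathcal{C} \times_{U'} \mathcal{C} / U'$ dominating $U'$. Two problems. First, this test can give false negatives: the fact that the particular representative $\Gamma_0$ does not deform does not mean that $\varphi_0$ fails to lift, since $\varphi_0$ might be represented (up to fibral and rationally trivial divisors) by another correspondence of higher degree that \emph{does} deform. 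The correct criterion is about the cycle class of $\varphi_0$, i.e.\ whether $[\varphi_0]$ extends to a $\nabla$-flat algebraic section of $F^1 R^2 f_* \Omega^\bullet_{\mathcal{C}\times_{U}\mathcal{C}/U}$ (by the theorem of the fixed part and Lefschetz $(1,1)$), and that is not captured by a Hilbert-scheme membership test for one chosen $\Gamma_0$. Second, even granting the Hilbert-scheme reformulation, it is not ``decidable'' in the way you assert: a ``no'' answer requires a bound on the degree of candidate representatives, which you have not supplied; without it one does not know when to stop searching. Your first paragraph gestures at effective isogeny/Faltings-type results over finitely generated fields to supply such a bound, but these are substantially harder than the number-field case and are precisely the sort of heavy machinery that the paper's day/night trick is designed to avoid. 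Finally, the closing Noetherian induction over a stratification of $U$ is not needed: $\operatorname{End}$ of the relative Jacobian over the irreducible $U$ is by definition the generic endomorphism algebra, and the chosen self-correspondences, once spread out over an open $U' \subset U$, extend over all of $U$ by taking Zariski closures.
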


\begin{proof}
Over a number field a detailed algorithm is given in \cite{zbMATH07009723}. Given this, the existence of an algorithm for computing relative Jacobians of families defined over a number field follows from a by-day-by-night procedure. Namely, by day one searches for families of correspondences (themselves defined over a number field) inside $C \times_{U} C$ and computes their image in algebraic de Rham cohomology, and by night one computes endomorphism algebras of Jacobians at increasingly general points in $U(\overline{\mathbb{Q}})$. Eventually the ranks of the algebras computed at the general point and over $U$ match, and at this point we terminate.
\end{proof}

\begin{rem}
We expect a much more efficient generalization of \cite{zbMATH07009723} to be possible in practice, but for now we only concern ourselves in \autoref{compendalglem} with the existence of an algorithm. 
\end{rem}

\begin{lem}
\label{foundthemall}
All maximal atypical orbit closures $\mathcal{N}$ of dimension $e$ are among the set $O$ computed in step 6. 
\end{lem}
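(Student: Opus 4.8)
Looking at \autoref{foundthemall}, I need to show that the algorithm in \autoref{mainorbitclosurealg} captures all maximal atypical orbit closures of dimension $e$. Let me trace through what the algorithm does and what has been established.

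\begin{proof}[Proof sketch]
The plan is to trace a hypothesized maximal atypical orbit closure $\mathcal{N}$ of dimension $e$ through each step of the algorithm and verify it survives to step 6. First, by \autoref{ineqinsidebundle} and \autoref{cor:asloc}, the orbit closure $\mathcal{N}$ gives rise to an atypical intersection point in $\mathcal{Z}(f,e)$: it arises as the image of a component of $(r\times\nu)^{-1}(Z_{\mathcal{N}})\cap\mathcal{L}_{x}$ for a suitable flat leaf $\mathcal{L}_{x}$, and the atypicality inequality of \autoref{defatyfili}, rephrased via \autoref{ineqinsidebundle}, is exactly the condition $\dim_{x}(\mathcal{L}_{x}\cap \mathcal{Z}_{y})\geq e$ defining membership in $\mathcal{Z}(f,e)$. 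Hence $\mathcal{N}$ lies in the image $T_{i}$ of some component $R_{i}$, and the intersection point lies in $R_{i}$, i.e., $R_{i}$ contains an orbit closure point corresponding to $\mathcal{N}$ (step 1).

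Next I would invoke the structural results established for components $R_{i}$ containing an orbit closure point. By the lemma combining \autoref{intbecometypprop} and \autoref{nointtypprop}, one has $\mathbf{H}_{\mathcal{N}}=\mathbf{N}_{R_{i}}$, and by the subsequent lemma the reductive quotient of $\mathbf{H}_{T_{i}}$ is contained in the Mumford--Tate group $M=(\operatorname{Res}_{K/\mathbb{Q}}\operatorname{GSp}_{2r,K})\times\operatorname{GSp}_{2(g-dr)}$, so $T_{i}$ lies in a component of the locus $S[\mathcal{N}]$. Applying \autoref{terribletorsorsearch}(iii) together with \autoref{nointtypprop2}, the family $g_{i}:C_{i}\to B_{i}$ produced in step 2 has a fibre $g_{i}^{-1}(b)$ containing $\mathcal{N}$ with $\mathbf{H}_{\mathcal{N}}=\mathbf{H}_{g_{i}^{-1}(b)}$; this is the starred observation after step 2. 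Then step 3 computes the endomorphism algebra $\mathcal{E}_{i}$ of the relative Jacobian over $T_{i}$ and its isotypic decomposition; since $T_{i}$ lies in (a component of) $S[\mathcal{N}]$, the real-multiplication-by-$K$ factor $\mathcal{F}$ of \autoref{mainthmalgfilip} persists generically over $T_{i}$, so there is an isotypic summand $\mathcal{H}_{ik}$ with endomorphism algebra $\mathbb{M}_{\ell}(K)$ of the correct type, and the index $i$ is not discarded. By \autoref{isotypicsummandfixes}, the third condition of \autoref{mainthmalgfilip} --- that $\omega$ is an eigenform --- extends from $\mathcal{F}$ to the full isotypic summand $\mathcal{I}=\mathcal{H}_{ik}$, so $\mathcal{N}$ lies in a fibre of the eigenform family $g_{ik}:C_{ik}\to B_{ik}$ of step 4, which is the second starred observation.

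For step 5, the linearity of orbit closures (\autoref{linearity}) says that $\mathcal{N}$ is locally linear for the developing map, and since the developing map factors through the evaluation map $r:P\to\mathcal{H}_{s_{0}}$, the leaf over $\mathcal{N}$ maps into a linear subspace of $\mathcal{H}_{s_{0}}$ of dimension $e=2r+t$ (using the dimension bound of \autoref{mainthmalgfilip}). Thus $\mathcal{N}$ appears in $\mathcal{Z}(f'_{ik},e)$ and hence in its image $C'_{ik}$. Finally, since $\mathcal{N}$ is \emph{maximal} among atypical orbit closures and we have arranged $\mathbf{H}_{\mathcal{N}}=\mathbf{H}_{g_{i}^{-1}(b)}$, no strictly larger fibre of $g'_{ik}$ can contain $\mathcal{N}$ while remaining $e$-dimensional; combined with the constructibility of the relevant loci and the stratification of the base, $\mathcal{N}$ appears as an $e$-dimensional component of some $C'_{ik}$ consisting of a single fibre, hence lies in the set $O$ computed in step 6. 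The main obstacle is carefully checking that ``maximality of $\mathcal{N}$'' together with the equality of monodromy groups rules out $\mathcal{N}$ being absorbed into a larger fibre of one of the parametrized families --- this is precisely where \autoref{nointtypprop} and \autoref{nointtypprop2} (no intermediate typicality) do the work, ensuring that the differential conditions we impose cut out loci with exactly the right monodromy, so that a maximal atypical orbit closure cannot hide inside a properly larger fibre of the same type.
\end{proof}
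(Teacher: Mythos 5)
The first half of your argument — tracing $\mathcal{N}$ through Steps 1--5 via the starred remarks, \autoref{intbecometypprop}, \autoref{nointtypprop}, \autoref{isotypicsummandfixes}, and local linearity — matches the paper and is fine. The gap is in your final paragraph, where you must justify that $\mathcal{N}$ appears as a component of $C'_{ik}$ \emph{consisting of a single fibre}. You argue that ``maximality of $\mathcal{N}$ together with equality of monodromy groups rules out $\mathcal{N}$ being absorbed into a larger fibre,'' citing \autoref{nointtypprop} and \autoref{nointtypprop2} again. But the danger is not that some larger fibre swallows $\mathcal{N}$; it is that the base $B_{ik}$ could be positive-dimensional near $b$, so $\mathcal{N}$ would sit inside a continuous family of distinct $e$-dimensional locally-linear fibres with the same monodromy, and hence would not be isolated. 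Maximality among \emph{orbit closures} gives no a priori reason that the fibres of $g'_{ik}$ near $\mathcal{N}$ (which need not themselves be orbit closures) cannot form such a positive-dimensional family, and \autoref{nointtypprop}--\autoref{nointtypprop2} only control monodromy, not the triviality of the family.

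The paper closes this gap with a different argument that you are missing. One considers a small analytic neighbourhood $\mathcal{T}$ of a point of $\mathcal{N}$ and unpacks the linear subspace $F_{\mathcal{N}} \subset H^{1}_{\textrm{rel},s_{0}}$ as cut out by (i) the eigenspace conditions (fixed over all of $T$) and (ii) flat linear functionals $\varphi_{\ell}$ arising from the twisted-torsion condition of \autoref{mainthmalgfilip} via \cite[Rem.\ 1.5]{zbMATH06575346}. The key observation is that each $\varphi_{\ell}$ is invariant under the algebraic monodromy group $\mathbf{N}_{R}$, and because every fibre $C_b$ of the family has algebraic monodromy exactly $\mathbf{N}_{R}$, the $\varphi_{\ell}$ remain flat on each $C_b$, hence take constant values $\kappa_{\ell}$ on $\operatorname{Dev}(C_b \cap \mathcal{T})$. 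Since $C_b$ must map under $\operatorname{Dev}$ into a \emph{linear} (not merely affine) subspace of dimension $2r+t$, one is forced to have $\kappa_{\ell}=0$, so all fibres map into the \emph{same} linear subspace $F_{\mathcal{N}}$. Local injectivity of $\operatorname{Dev}$ then forces the family to be constant. This ``flat functional / forced $\kappa=0$'' step is the real mechanism that makes $\mathcal{N}$ an isolated fibre, and it is absent from your proposal.
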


\begin{proof}
From our remark labelled $(\star)$ after Step 4 and the fact that orbit closures are locally linear, one knows that all such $\mathcal{N}$ appear as a fibre of some $g'_{ik}$. What needs to be shown is that this fibre is necessarily isolated (an irreducible component of $C'_{ik}$).

The situation to be understood is that we have a family $g : C \to B$ of subvarieties of $\Omega \mathcal{M}_{g}(\kappa)$ with $C$ and $B$ irreducible, all having the same algebraic monodromy group $\mathbf{N}_{R}$, such that at least one fibre is an orbit closure, and such that all fibres are locally linear under $\textrm{Dev}$ of dimension $2r + t$. Moreover we may assume that the Zariski closure $T$ of the image of $C$ in $\Omega \mathcal{M}_{g}(\kappa)$ lies in the locus defined by a fixed real-multiplication and eigenform condition in the sense of \autoref{mainthmalgfilip}. 

We argue that, after fixing a small analytic neighbourhood $\mathcal{T} \subset T$ which intersects the fixed orbit closure $\mathcal{N} \subset T$, the restriction of $\textrm{Dev}$ to $\mathcal{T}$ maps into the fixed linear subspace $F_{\mathcal{N}} \subset H^{1}_{\textrm{rel},s_{0}}$ corresponding to $\mathcal{N}$. Let us recall how this linear subspace is constructed: as discussed in \autoref{explaindimbound}, one has
\begin{itemize}
\item[(i)] $2g - 2r$ linear conditions coming from the requirement that $\omega$ lie in an eigensubspace for the action of a fixed field real field $K_{\mathcal{N}}$ in the endomorphism algebra of the Jacobian over $T$; and
\item[(ii)] $n-t$ linear conditions coming from the ``twisted torsion'' condition of \autoref{mainthmalgfilip}.
\end{itemize}
Now condition (i) holds on all of $T$ for a fixed $K_{\mathcal{N}}$ by construction, so it follows that $\restr{\textrm{Dev}}{\mathcal{N}}$ lands inside a linear subspace of codimension at least $2g - 2r$. For the linear relations corresponding to (ii) one can construct as in \cite[Rem. 1.5]{zbMATH06575346} flat linear functionals $\varphi_{\ell} : H^{1}_{\textrm{rel},s_{0}} \to \mathbb{C}$ which vanish on the image of $\textrm{Dev}$ restricted to $\mathcal{N} \cap \mathcal{T}$. The functionals $\varphi_{\ell}$ are invariant under the algebraic monodromy group $\mathbf{N}_{R}$ (the monodromy of $\mathcal{N}$ acts on the lattice $\Lambda$ appearing in \cite[Rem. 1.5]{zbMATH06575346} through a finite group).

Now each fibre of $C_{b} = g^{-1}(b)$, by construction, also has algebraic monodromy group $\mathbf{N}_{R}$, so the functionals $\varphi_{\ell}$ are also algebraic monodromy-invariant over $C_{b}$. This implies they are flat, and hence take constant values on $\textrm{Dev}(C_{b} \cap \mathcal{T})$. Now by construction, $C_{b} \cap \mathcal{T}$ maps under $\textrm{Dev}$ into the linear subspace corresponding to condition (i), but now also into an affine subspace satisfying an additional set of $n-t$ linear conditions of the form $\varphi_{\ell} = \kappa$ for a constant $\kappa$. On the other hand $C_{b} \cap \mathcal{T}$ maps into a linear subspace of dimension $2r + t$, so in fact one must have $\kappa = 0$ and $C_{b} \cap \mathcal{T}$ maps into the same subspace which defines $\mathcal{N}$. But $\textrm{Dev}$ is locally injective, so this implies that the family $g$ is trivial.
\end{proof}

\begin{lem}
\label{decideiforbclosure}
There exists an algorithm which decides if a given subvariety $\mathcal{N} \subset S$ is an orbit closure.
\end{lem}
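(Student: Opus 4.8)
## Proof plan for \Cref{decideiforbclosure}

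The plan is to reduce the decision problem to a sequence of effective checks built on the algebraic characterization of orbit closures in \Cref{mainthmalgfilip} together with the dimension bound of \Cref{explaindimbound}, all of which involve only algebraic data over a number field that we have already shown is computable (the de Rham bundle $\mathcal{H}$ with $W_\bullet, F^\bullet, \nabla$, the torsor $P$, and the section $\omega$). First I would compute the Zariski closure $\overline{\mathcal{N}}^{\textrm{Zar}}$ (if $\mathcal{N}$ is not already closed, it cannot be an orbit closure by \Cref{thmstrorbit}, so we may assume $\mathcal{N}$ closed and irreducible; reducibility or positive-dimensional non-irreducibility is likewise immediately detectable and disqualifying since orbit closures are irreducible). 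Then I would verify that $\mathcal{N}$ is $\operatorname{GL}_2(\mathbb{R})$-invariant at the infinitesimal/local level: by \Cref{linearity} an orbit closure is locally linear in period coordinates and its tangent bundle $T\mathcal{N}$ is a flat (local) subsystem of $H^1_{\textrm{rel}}$ which is symplectic and $\operatorname{SL}_2(\mathbb{R})$-invariant. Concretely, one checks that $T\mathcal{N} \subset \mathcal{H}$, pulled back along the developing map (equivalently, realized inside the torsor $P$ via the flat structure), is preserved by $\nabla$ and contains the tautological plane $T(p) = \operatorname{Span}(\operatorname{Re}\omega, \operatorname{Im}\omega)$ of \Cref{tautplane} at each point — this is an algebraic-constructible condition on $\mathcal{N}$ testable with the machinery of \Cref{nondenseA}/\Cref{computeZfdlem}. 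This rules out arbitrary algebraic subvarieties that merely happen to contain linear pieces.

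Next, granting local linearity and $\operatorname{SL}_2(\mathbb{R})$-invariance of $T\mathcal{N}$, I would extract the candidate invariants $r = \tfrac12 \dim H^1(T\mathcal{N})$, $t = \dim W_0(T\mathcal{N})$, and the candidate real-multiplication field $K$ (the smallest field over which $\mathcal{N}$ is linear, computable from the rational structure of $T\mathcal{N}$ and the action of correspondences on $\mathcal{H}_{\textrm{abs}}$ as in Steps 3–4 of the algorithm for \Cref{mainorbitclosurealg}). One then checks the three conditions of \Cref{mainthmalgfilip}: (1) the restriction of the Jacobian over $\mathcal{N}$ admits a factor $\mathcal{F}$ with real multiplication by $K$ — verifiable via \Cref{compendalglem} applied to the family of curves over $\mathcal{N}$; (2) the relevant Abel–Jacobi map (possibly twisted by real multiplication) takes torsion values — this is a closed algebraic condition on $\mathcal{N}$ coming from the mixed-Shimura structure on $\mathcal{A}_{g,n}$, pulled back along $\varphi$; and (3) $\omega$ is an eigenform for the $K$-action on $\mathcal{F}$, again an algebraic condition testable using the computed embedding $\mathcal{E} \hookrightarrow \sheafend(\mathcal{H}_{\textrm{abs}})$. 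Finally, the dimension bound $\dim \mathcal{N} = 2r + t$ is checked directly. By the ``Furthermore'' clause of \Cref{mainthmalgfilip}, these conditions together with the dimension bound \emph{characterize} orbit closures, so if all checks pass we output YES, and otherwise NO.

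The main obstacle I anticipate is verifying condition (2), the twisted-torsion condition on the Abel–Jacobi/torsion corank data, in a genuinely effective way: this requires representing the locus $T[\mathcal{N}]$ (the torsion conditions in the bundle $\mathcal{A}_{g,n} \to \mathcal{A}_g$) explicitly as an algebraic subvariety and deciding containment of $\mathcal{N}$ in it, which in turn rests on the explicit mixed-Shimura-variety models for $\mathcal{A}_{g,n}$ of \cite{zbMATH07305885} and the over-parametrization of \Cref{gaolemma}. A secondary subtlety is that \Cref{mainthmalgfilip} is stated up to isogeny, so one must be careful that the factor $\mathcal{F}$ (equivalently, the isotypic summand $\mathcal{I}$ of \Cref{isotypicsummandfixes}) is identified correctly — but this is handled by working with the isotypic summand rather than the isogeny factor, exactly as in \Cref{isotypicsummandfixes}, which makes the eigenform condition an honest closed condition on $\mathcal{H}_{\textrm{abs}}$. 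Everything else is routine application of the constructibility and computability results already established in \Cref{sec:constr} and \S\ref{finalsection:eff}.
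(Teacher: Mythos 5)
Your proposed algorithm is a one‑shot sequence of checks, whereas the paper's proof is a \emph{by‑day‑by‑night} procedure: one branch searches for the algebraic data (endomorphism algebra, torsion relations) that would certify, via \Cref{mainthmalgfilip}, that $\mathcal{N}$ \emph{is} an orbit closure, while the other branch numerically approximates the periods over $\mathcal{N}$ to increasing precision and tries to certify that $\textrm{Dev}(\mathcal{N})$ cannot lie in any $\SL_2(\mathbb{R})$-invariant linear manifold of the right dimension. One of the two branches always terminates. This structure is not an incidental stylistic choice --- it is forced by exactly the obstacle you flag but then set aside.

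Concretely: your check (2), ``the Abel--Jacobi map takes torsion values,'' is \emph{not} a single closed algebraic condition that you can simply test containment against. The torsion locus in $\mathcal{A}_{g,n}$ (or in the relative factor $\mathcal{F}$) is a countable union of subvarieties indexed by the torsion order, and nothing in the input data bounds that order a priori. Searching for the torsion order is a semi‑decision procedure: it terminates if and only if $\mathcal{N}$ actually satisfies the condition. The over‑parametrization of \Cref{gaolemma} parameterizes weakly special subdomains; it does not supply, for an arbitrary input $\mathcal{N}$, a bound on the torsion order, so it does not turn this search into a decidable test. To output NO when the torsion condition fails you need a fundamentally different certificate --- this is precisely what the ``by night'' numerical branch supplies. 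A subsidiary but related issue: your preliminary step that $T\mathcal{N}$ contains the tautological plane $T(p) = \operatorname{Span}(\operatorname{Re}\omega, \operatorname{Im}\omega)$ at each point is not an algebraic condition on the de Rham side, since $\operatorname{Re}\omega$ and $\operatorname{Im}\omega$ depend on the Betti--de Rham comparison, i.e.\ on periods. Again the paper absorbs this into the numerical period‑approximation branch rather than trying to decide it by a finite algebraic computation. So the overall plan (reduce to \Cref{mainthmalgfilip} + dimension bound) is right, but without the interleaved search‑and‑approximate structure the procedure you describe is only a semi‑algorithm, not an algorithm.
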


\begin{proof}
By day one computes the endomorphism algebra of the relative Jacobian over $\mathcal{N}$ and searches for an appropriate torsion condition matching \autoref{mainthmalgfilip}. By night one approximates the periods over $\mathcal{N}$ and checks if they lie in a linear $\SL_2(\mathbb{R})$-invariant manifold of the correct dimension. At some point either the ``by day'' procedure terminates successfully and proves $\mathcal{N}$ is an orbit closure as a consequence of \autoref{mainthmalgfilip}, or the ``by night'' procedure computes the periods at enough points of $\mathcal{N}$ to sufficient accuracy to bound the image of $\mathcal{N}$ under $\textrm{Dev}$ away from any $\GL_2(\mathbb{R})$-invariant linear subvariety of the same dimension.
\end{proof}

\subsection{Effective geometric Zilber-Pink}
\label{sec:effgeoZPsec}

In this section we explain why the proof appearing in \S\ref{sec:effgeoZPsec} is effective, and allows one to compute atypical weakly special loci. This generalizes \cite{binyamini2021effective}, which takes place in the special case of variations coming from Shimura varieties. Our method is essentially the same, but we do not require any multiplicity estimates.

\begin{thm}
There exists an algorithm which, given the data $(\mathcal{V}, W_{\bullet}, F^{\bullet}, \nabla)$ as input (in the sense of \S\ref{compmodelsec}) outputs the families of \autoref{thm:mixedZP}.
\label{effgeoZP}
\end{thm}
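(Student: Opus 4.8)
\textbf{Proof plan for \autoref{effgeoZP}.} The idea is to retrace the proof of \autoref{thm:mixedgeomzp} from \S\ref{geozpproofsec} and observe that every step admits a constructive interpretation in the computational model of \S\ref{compmodelsec}, using the building blocks already assembled in \S\ref{secappliactionseff}. The plan is as follows. First, from the input data $(\mathcal{H}, W_{\bullet}, F^{\bullet}, \nabla)$ one computes the period torsor $P$ together with its algebraic connection and the evaluation map $\nu : P \to \ch{D}^{0}_{S}$: this uses \autoref{constrcatequiv} to pass between locally free sheaves and vector bundles, plus the construction of $\ch{D}^{0}_{S}$ as the $\mathbf{H}_{S,s_{0}}$-orbit of $F^{\bullet}_{s_{0}}$ in the appropriate flag variety (exactly as in \S\ref{comporbitinputdatasec} in the orbit-closure case). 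Next one computes the family $f : \mathcal{Z} \to \mathcal{Y}$ of \autoref{prop2par}: this requires overparameterizing semisimple and unipotent subgroups of $\mathbf{G}_{S}$ (finite lists of conjugacy classes, resp. bounded-degree subvarieties of an affine space, both computable), applying $\mathcal{I}(-)$ via \autoref{intfamlem} (a Hilbert scheme computation using \cite{lella2012computable}), and pulling back along $r$/$\nu$. One also needs the countable collection of families $\{h_{i}\}$ of weakly special subvarieties of constant type: this is \autoref{infsecfamiliestype} (equivalently \autoref{rigidprop} made effective via \autoref{typestratprop}).

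Second, for each dimension $e$ one computes the constructible loci $\mathcal{Y}(e)$, $\mathcal{K}(e)'$ and $\mathcal{K}(e)$ of \eqref{eqYdatyp}, \eqref{Kcond1}, \eqref{Kcond2}. The loci $\mathcal{Y}(f^{(<j)},e)$ are computable by \autoref{computeZfdlem} (which gives $\mathcal{Z}(f,e)$, hence $\mathcal{Y}(f,e)$ as its image) together with the fact that the dimension-of-fibre stratification of $\mathcal{Y}$, and the auxiliary condition $\dim r(\mathcal{Z}_{y}) < \rho(e)$, are constructible and computable; $\mathcal{K}(e)'$ and $\mathcal{K}(e)$ are built from these by computable constructible operations (images, intersections, the containment conditions $\mathcal{Z}_{y} \subsetneq \mathcal{Z}_{y'}$ which, since the fibres come from a flat family, are constructible conditions on $\mathcal{Y} \times \mathcal{Y}$). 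At this point the proof of \autoref{thm:mixedgeomzp} shows $\mathcal{Y}(e) \setminus \mathcal{K}(e) = \bigcup_{i=1}^{\infty} \mathcal{Y}(f^{(<\rho(e))}, e, h_{i})$ and that finitely many $h_{i}$ suffice by \autoref{lemma432}; the effective content is that one can \emph{certify} a sufficient finite subcollection. The plan here is the standard search-and-verify: enumerate $i = 1, 2, \dots$, at each stage compute the constructible set $\bigcup_{i=1}^{m} \mathcal{Y}(f^{(<\rho(e))}, e, h_{i})$ (each $\mathcal{Y}(f^{(<\rho(e))}, e, h_{i})$ is the image of $\mathcal{Y}(f \cap h_{i}, e)$, computable by \autoref{nondenseA}/\autoref{computeZfdlem}), and test whether it already contains $\mathcal{Y}(e) \setminus \mathcal{K}(e)$ — an effective containment test between constructible sets. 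By \autoref{lemma432} this terminates. Finally, having fixed the finite set of relevant $h_{i}$, one reads off the corresponding group-theoretic triples $(\mathbf{H}, D_{H}, \mathbf{N})$ using \autoref{lemmapre}: the type of each family is computed by \autoref{typestratprop}, and $\mathbf{N}$, $\mathbf{H}$ are recovered from a generic fibre as the algebraic monodromy and generic Hodge datum; by \autoref{restfamlem} all of this descends to a number field. Ranging over all $e \leq \dim S$ and assembling the output gives the finitely many families of \autoref{thm:mixedZP}.

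The main obstacle is ensuring that the ``search for a sufficient finite subcollection'' actually terminates with a \emph{verified} answer, i.e.\ that at some finite stage $m$ one can algorithmically decide the containment $\mathcal{Y}(e) \setminus \mathcal{K}(e) \subseteq \bigcup_{i \leq m} \mathcal{Y}(f^{(<\rho(e))}, e, h_{i})$. Abstractly this is guaranteed by \autoref{lemma432} (the union stabilizes after finitely many terms), and containment of constructible sets is decidable, so the procedure is a genuine terminating algorithm; the subtlety is purely that one must phrase it as: loop over $m$, at each step recompute the finite union and test containment, halt on success — and invoke \autoref{lemma432} only for termination, never needing an a priori bound on $m$. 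A secondary technical point, already flagged in \autoref{maketerriblesearcheffrem} and handled there, is that the by-day/by-night style searches (for flat submodules cutting out the relevant normal subgroups, and for the families $h_{i}$ of a given type) are guaranteed to succeed because the objects they seek are known to exist by the non-effective results of \S\ref{new2}; correctness of the output is then certified by the constructible-set tests above rather than by the search itself. Modulo these points, every individual operation invoked — computing $P$ and $\nu$, the families $f$ and $\{h_{i}\}$, the loci $\mathcal{Z}(f,e)$, and the constructible Boolean operations — has already been shown computable earlier in \S\ref{finalsection:eff} and \S\ref{secappliactionseff}, so the proof is essentially an assembly of these pieces following the logical skeleton of \S\ref{geozpproofsec}.
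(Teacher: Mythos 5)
The proposal is correct and follows the same skeleton as the paper's proof: compute the constructible loci $\mathcal{Y}(e)$, $\mathcal{K}(e)'$, $\mathcal{K}(e)$ via \autoref{computeZfdlem}, enumerate families of weakly special subvarieties via the search algorithm \autoref{infsecfamiliestype}, test containment against $\mathcal{Y}(e)\setminus\mathcal{K}(e)$ at each stage, and invoke \autoref{lemma432} for termination. Your account is somewhat more explicit about the preliminary computations (constructing $P$, $\nu$, and the family $f$ of \autoref{prop2par}), which the paper leaves implicit, but the underlying algorithm is identical.
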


\begin{proof}
By \autoref{computeZfdlem}, the loci $\mathcal{Z}(f,e)$ appearing in the proof of \autoref{nondenseA} can be computed. Thus the same is true for the loci $\mathcal{Y}(e), \mathcal{K}(e)', \mathcal{K}(e)$, etc., appearing in the proof in \S\ref{geozpproofsec}. Given this, \autoref{L3} and \autoref{L10} together imply that the points parameterized by $\mathcal{Y}(e) \setminus \mathcal{K}(e)$ parameterize exactly the germs of the weakly special subvarieties referred to in the statement of \autoref{thm:mixedZP}.

To obtain the actual families corresponding to $\mathcal{Y}(e) \setminus \mathcal{K}(e)$, the most na\"ive approach is to apply the ``search algorithm'' \autoref{infsecfamiliestype} and, for each $g_{i}$, construct the locus $\mathcal{G}(g_{i}) \subset \mathcal{Y}(e) \setminus \mathcal{K}(e)$ consisting of points whose germs agree (upon projecting to $S$) with germs of some fibre of $g_{i}$. (This can be done by replacing the families $f$ appearing in loci like $\mathcal{Y}(f,e)$ with families like $f \cap g_{i}$, cf. the proof of \autoref{protogeoZP2}.) Applying \autoref{lemma432} one will have $\bigcup_{i \in I} \mathcal{G}(g_{i}) = \mathcal{Y}(e) \setminus \mathcal{K}(e)$ for some finite set $I$, and at this point one terminates with the output $\{ g_{i} \}_{i \in I}$. 
\end{proof}

\begin{rem}
Analogously to \autoref{maketerriblesearcheffrem}, one expects to be able to avoid the search procedure in the proof of \autoref{effgeoZP} with more detailed analysis. In the simplest case, when one happens to know that the families $g_{i}$ have only a single fibre (this is true in the next example in \S\ref{sec:balleffective}), one just takes the image of each component of $\mathcal{Y}(e) \setminus \mathcal{K}(e)$ in $S$. 
\end{rem}

\subsection{Example: totally geodesic subvarieties of ball quotients}\label{sec:balleffective}

In this section we prove \autoref{cordm} announced in the introduction. By \autoref{effgeoZP} and \autoref{compconnecprop}, it suffices to construct a geometric variation of pure Hodge structures for which the maximal totally geodesic subvarieties are maximal atypical weakly special subvarieties. Essentially what one is doing is making the strategy appearing in \cite{BU} effective.

First of all we recall that in $\PU(1,2)$, by the work of Deligne, Mostow and Deraux, Parker, Paupert, there are 22 known commensurability classes of non-arithmetic lattices (NA from now on). In $\PU(1,3)$ only 2 (thanks to Deligne, Mostow and Deraux). For details and precise references we refer for example to \cite[\S 2.5]{BU}. Parker \cite{zbMATH07308593} showed that all the currently known non-arithmetic lattices in $\PU(1,2)$ are monodromy groups of higher hypergeometric functions.  Here a totally geodesic subvariety is said to be \emph{maximal} if the are no other totally geodesic subvarieties trivially containing it. For $n>3$ non-arithmetic lattices are currently not known to exist.

For each non-arithmetic lattice $\Gamma$ from \cite[Thm. 12.9  and 12.11]{zbMATH03996010}, there is an algebraic family defined over $\Q$ of algebraic curves $X$ whose monodromy group $\Gamma$ is a subgroup of $Aut(H^1(X_0,\Z))$ which is not of finite covolume in its Zariski-closure. (A conjecture of Simpson predicts that this is the case for every NA lattice, this is not know in general, but it is true, by construction, in the DM cases). We recall below more explicitly such family of curves. See also the viewpoint offered in \cite{zbMATH06149478}.

We start with $n+3$ \emph{weights} $\mu = \{\mu_i\}_{i=0\dots n+2}$. They are simply rational numbers such that each $\mu_i \in (0,1)$ and they add up to $2$. We set $N$ to be the least
common denominator of the $\mu_i$. We look at the quasi-projective variety over $\Q$

\begin{displaymath}
 Q_n =\{x = (x_0,x_1, \dots ,x_{n+2}) \in {\mathbb{P}}^{n+3}: x_j= x_k \text{  iff  } k=j\}/ Aut(\mathbb{P}^1),
\end{displaymath}
where $Aut(\mathbb{P}^1)$ acts diagonally and freely. 
The Appell-Lauricella hypergeometric functions are solutions $H_\mu$ of a system of linear partial differential equations in $n$ variables $x_i,i=2, \dots, n+1$ with regular singularities along $x_i=0,1,\infty$, $x_i=x_j$ (for $i\neq j$). The system $H_\mu$ has an $n+1$ dimensional solution space. We denote its monodromy group by $\Gamma_\mu$; it is a subgroup of $\PU(1,n)$.

Over $Q_n$, we have a family of smooth projective curves,  with singular affine model given by
\begin{displaymath}
v^N=u^{N\mu_0}(u-1)^{N\mu _1}\prod_{i=2}^{n+1} (u-x_i)^{N \mu_i}.
\end{displaymath}
The period map associated to the $\Z$VHS in question is given by the Prym variety of the above family of covering of $\mathbb{P}^1$. 

\begin{exmp}
The example 15 from \cite[\S 14.3]{zbMATH03996010}, which gives a non-arithmetic lattice $\Gamma$ in $\PU(1,2)$, is given by the family of curves (with parameters $x_2,x_3$)
\begin{displaymath}
v^{12}=u^6(u-1)^4(u-x_2)^5(u-x_3)^4.
\end{displaymath}
It is a cyclic covering of order 12 of $\mathbb{P}^1$.
\end{exmp}

\begin{proof}[Proof of (\ref{cordm}):]
Let $\Gamma$ be a non-arithmetic lattice in $\PU(1,n)$ given as the monodromy of a smooth projective family of varieties (whose corresponding $\Z$VHS is denoted by $\widehat{\V}$). By the results of \cite[\S 5]{BU}, the (complex) totally geodesic subvarieties $S_\Gamma = \Gamma \backslash \mathbb{B}^n$ are the weakly special subvarieties for $(S_\Gamma, \widehat{\V})$ and are in fact atypical intersections (that's how the finiteness of the maximal one is prove in \emph{op. cit.}). Now using \autoref{compconnecprop} to compute the data $(\mathcal{V}, F^{\bullet}, \nabla)$ associated to the smooth projective family, the result follows from \autoref{effgeoZP}.
\end{proof}

\bibliography{hodge_theory}
\bibliographystyle{abbrv}

\Addresses

\end{document}